\title[Adaptive Acceleration without Strong Convexity Priors or Restarts]{Adaptive Acceleration without Strong Convexity Priors or Restarts}
\newtheorem{assumption}{Assumption}[section]
\newenvironment{sketch}{\begin{proof}}{\end{proof}}
\newcommand{\bigO}{\mathcal{O}}
\newcommand{\T}{\mathsf{T}}
\newcommand{\ct}{\mathsf{H}}
\newcommand{\rgd}{r_{\textup{GD}}}
\newcommand{\rnag}{r_{\textup{NAG}}}
\DeclarePairedDelimiter\ceil{\lceil}{\rceil}
\DeclareMathOperator{\diag}{diag}
\crefname{assumption}{Assumption}{Assumptions}
\Crefname{assumption}{Assumption}{Assumptions}
\crefname{proposition}{Proposition}{Propositions}
\Crefname{proposition}{Proposition}{Propositions}
\newtheorem{cor}{Corollary}[section]
\crefname{cor}{Corollary}{Corollaries}
\Crefname{cor}{Corollary}{Corollaries}
\begin{document}

\maketitle

\begin{abstract}%
A longstanding challenge in optimization is achieving optimal performance when the strong convexity parameter \(m\) is unknown. 
In this paper, we propose \emph{NAG-free}, a simple extension of Nesterov’s accelerated gradient (NAG) which is the first method capable of estimating \(m\) directly, without priors or restarts. 
Our estimator is inexpensive: it requires no additional function or gradient evaluations, only the storage of one extra iterate and gradient already computed by NAG. 
We prove that, by estimating the smoothness parameter \(L\) via backtracking, NAG-free converges globally at least as fast as gradient descent. 
We also prove that, given an upper bound on \(L\), NAG-free achieves accelerated convergence locally near the minimum under local smoothness of the Hessian and some mild additional assumptions. 
Finally, we present experiments with smooth and nonsmooth Hessians on both synthetic and real-world data which demonstrate that NAG-free is competitive with restart methods, and naturally adapts to favorable local curvature conditions.

\end{abstract}

\begin{keywords}%
    Convex optimization, parameter-free methods, smooth and strongly convex problems, restart methods
\end{keywords}

\section{Introduction}
\label{introduction}

Accelerated methods are special for achieving optimal convergence rates among first-order optimization algorithms on key problem classes~\citep{Nesterov2018}.
A notable example is \(\mathcal{S}(L,m)\), the class of Lipschitz-smooth, strongly convex functions characterized by the smoothness parameter \(L\) and the strong convexity parameter \(m\), which finds applications in signal processing \cite{Combettes2005}, imaging \cite{Chambolle2016} and machine learning \cite{Sra2011}.
To apply accelerated methods effectively in this setting, both \(L\) and \(m\) must be known; yet, as noted by \citet[p.463]{Boyd2004}, these parameters ``are known only in rare cases.'' 
While \(L\) can be bounded via backtracking~\citep{Tseng2008,Beck2009}, ``estimating the strong convexity parameter is much more challenging'' \citet[p.3]{ODonoghue2015}.
As \citet[p.21]{Su2016} put it: ``while it is relatively easy to bound the Lipschitz constant \(L\) by the use of backtracking, estimating the strong convexity parameter \(m\), if not impossible, is very challenging.'' 
In this context, applying the general idea of restarting an algorithm \citep{Pokutta2020} to accelerated methods has emerged as the only viable approach to handle unknown \(m\)~\citep[Sec.~6]{dAspremont2021}.
Roughly speaking, there are two ways in which robustness to unknown \(m\) can be achieved, depending on the purpose of restarting.
When restarting is used to accelerate a base method, the optimal restarting schedule is a function of \(m\).
Then, the first way to handle unknown \(m\) is by devising schedules without explicit knowledge of \(m\).
For example, \cite{Roulet2017} propose to run independent schedules in parallel, each of which corresponds to a guess of the true \(m\).
Then, by efficiently selecting the \(m\) guesses, one of the schedules is guaranteed to be close to optimal, resulting in a total logarithmic suboptimality factor when accounting for all the schedules.7
In contrast, in \citep{Aujol2024}, the schedule is determined one batch of iterations at a time, by estimating the condition number of the problem.
These are predetermined restarting criteria, in the sense that base methods are restarted after a fixed number of iterations, which cannot be changed during the actual execution of the base method.
The second way to achieve robustness to unknown \(m\) is when restarting is used to correct accelerated algorithms that underperform, according to some restarting criteria checked at runtime \citep{Nesterov2013,Lin2015a,ODonoghue2015,Liang2022,Renegar2022,Lan2023,Sujanani2025}.
For example, in \citep{Sujanani2025}, if the base method falls behind accelerated convergence predicted using an estimate of \(m\), then the estimate is updated and the base method is restarted.
A simpler way to preemptively restart, proposed by \cite{ODonoghue2015}, is to check for nonmonotonicity in function values, which is shown to indicate an underestimate of \(m\).

\subsection*{Contributions}

The main contribution of this paper is \emph{NAG-free}, a simple extension of Nesterov’s accelerated gradient method (NAG) which is the first method capable of estimating the strong convexity parameter \(m\) directly, without priors or restarts.
Our estimator is inexpensive: it requires no additional function or gradient evaluations, only the storage of one extra iterate and gradient already computed by NAG. 
We prove that, by estimating the smoothness parameter \(L\) via backtracking, NAG-free converges globally at least as fast as gradient descent.
A byproduct of this result, and a secondary contribution of this paper, is that NAG converges globally at least as fast as GD even if it is parameterized with an overestimate of \(m\).
Moreover, given an upper bound on \(L\), we show that NAG-free achieves accelerated convergence locally near the minimum under local smoothness of the Hessian and some mild additional assumptions.
Smooth and strongly convex problems with locally smooth Hessians include regularized logistic loss, exponential-family negative log-likelihoods with bounded natural parameters, and Moreau-envelope smoothing of any smooth and strongly convex function.
The main class of mainstream machine learning problems that are smooth and strongly convex but lack locally smooth Hessians are those with max-squared objectives plus \(\ell_{2}\) regularization--notably squared-hinge support vector machine and Huber regression.
We present experiments with smooth and nonsmooth Hessians on both synthetic and real-world data which demonstrate that NAG-free is competitive with restart-based methods, and naturally adapts to favorable local curvature conditions.

\section{Preliminaries}
\label{preliminaries}

Consider the task of finding \(x^{\star}(f)\), the unique solution of the unconstrained optimization problem 
\begin{align}
    \min_x f(x), 
    \label{prob:min}
\end{align}
where \(f \in \mathcal{S}(L,m)\), the set of Lipschitz-smooth strongly convex functions, defined below.

\begin{definition}[Lipschitz-Smooth and Strongly Convex Functions.]\label{def:lssc}
    We say that a differentiable function \(f : \mathbb{R}^d \to \mathbb{R}\) belongs to \(\mathcal{S}(L,m)\), the set of Lipschitz-smooth and strongly convex functions, if there exist \(L>0\) and \(m>0\) such that for all \(x, y \in \mathbb{R}^d\)
    \begin{align}
        f(y)
        \leq 
        f(x) 
        + \langle \nabla f(x), y - x \rangle
        + (L/2) \Vert y - x \Vert^{2}, \text{ and }
        \label{ineq:descent_lemma}
    \end{align}
    \begin{align}
        f(x) 
        + \langle \nabla f(x), y - x \rangle
        + (m/2) \Vert y - x \Vert^{2}
        \leq f(y).
        \label{ineq:strong_convexity}
    \end{align}
\end{definition}

The following will be useful for analyzing NAG-free locally.

\begin{definition}[Locally H\"{o}lder-smooth Hessian]\label{def:locally-holder-hessian}
    Let \(f \in \mathcal{S}(L,m)\) be twice differentiable at \(x^{\star}=x^{\ast}(f)\).
    Then, \(\nabla^{2}f\) is called locally H\"{o}lder-smooth at \(x^{\star}\) if there are \(\delta_{H}\), \(L_{H}\) and \(\alpha_{H}\) such that
    \begin{align}
        \Vert \nabla^{2}f(x) - \nabla^{2}f(x^{\star}) \Vert \leq L_{H}\Vert x - x^{\star} \Vert^{\alpha_{H}},
        &&
        \forall \Vert x - x^{\star} \Vert \leq \delta_{H}.
        \label{ineq:locally-holder-hessian}
    \end{align}
\end{definition}
\section{The NAG-free algorithm}
\label{nag-free}

If \(f \in \mathcal{S}(L,m)\), then for all \(x \neq y\)
\begin{align}
    m 
    \leq c(x,y) 
    \coloneqq \Vert \nabla f (x) - \nabla f(y) \Vert / \Vert x - y \Vert
    \leq L,
    \label{ineq:effective-curvature}
\end{align}
which follows from standard results on smooth and strongly convex functions~\cite[theorems 2.1.5 and 2.1.10]{Nesterov2018}. 
The quantity \(c(x,y)\) captures a local notion of curvature between two points and lies in the interval \([m, L]\).
Given iterates \(x_{t}\) and \(x_{t-1}\) produced by Nesterov's accelerated gradient method (NAG) and letting \(c_{t} = c(x_{t}, x_{t-1})\) and \(\gamma>1\), we propose to estimate \(m\) online via the following update: if \(c_{t} < m_{t-1}\), then \(m_{t} = \min(m_{t-1}/\gamma,c_{t})\), otherwise \(m_{t}=m_{t-1}\).
This update guarantees substantial improvement when the estimate decreases.
Moreover, since \(c_{t}\geq m\), it follows that \(m_{t}\) can only take finitely many values, which is important for theoretical reasons that will become clearer later.
Then, \(m_{t}\) parameterizes NAG to produce a new iterate, which in turn feeds \(c_{t+1}\), to update \(m_{t+1}\).
The resulting procedure is computationally lightweight: it reuses gradients already computed by NAG and only requires storing one additional iterate and gradient.
To initialize \(m_{0}\), we simply set \(m_{0}=L_{0}\).
The input \(L_{0}\) is an initial estimate of smoothness parameter \(L\), which is refined in every iteration through classical backtracking.
\cref{alg:nag-free} summarizes the complete procedure, which we call \emph{NAG-free}.

\begin{algorithm2e}
    \caption{NAG-free, an extension of NAG that estimates the strong convexity parameter.}
    \label{alg:nag-free}
    \SetAlgoLined
    \KwData{\(T>0, x_{0}=y_{0}, L_{0} > 0, \gamma > 1, \gamma_{L} > 1\)}
    \KwResult{\(x_{T}, y_{T}\)}
    $m_{0} \gets L_{0}$
    \tcp*{initialization}
    \For{$t=0,1,\ldots,T-1$}{
        $y_{t+1} \gets x_{t} - (1/L_{t})\nabla f(x_{t})$ \tcp*{NAG \#1}
        \lWhile{$f(y_{t+1}) > f(x_{t}) -(1/2L_{t})\Vert \nabla f(x_{t}) \Vert^{2}$}{
            \tcp*{BLS}
            \Indp 
            $L_{t} \gets \gamma_{L} L_{t}$\\
            $y_{t+1} \gets x_{t} - (1/L_{t})\nabla f(x_{t})$
        }
        \Indm
        $L_{t+1} \gets L_{t}$\\
        $x_{t+1} \gets y_{t+1} + \frac{\sqrt{L_{t}}-\sqrt{m_{t}}}{\sqrt{L_{t}}+\sqrt{m_{t}}}(y_{t+1}-y_{t})$\tcp*{NAG \#2}
        $c_{t+1} \gets \Vert \nabla f(x_{t+1}) - \nabla f(x_{t}) \Vert/\Vert x_{t+1}-x_{t} \Vert$\tcp*{estimate \(m\)}
        \eIf{$c_{t+1} < m_{t}$}{
            $m_{t+1} \gets \min(m_{t}/\gamma,c_{t+1})$\\
        }
        {
            $m_{t+1} \gets m_{t}$\\
        }
    }
\end{algorithm2e}

\subsection*{Convergence intuition}

Two key features underlie the convergence of NAG-free:

\begin{enumerate}[left=0pt]
    \item \textbf{Adaptive interpolation between GD and NAG.} The NAG-free iteration is a convex combination of GD and NAG, for \(m_{t}\geq m\).
    If \(m_{t} \to L\), then its momentum coefficient becomes zero, and NAG-free iterates as GD.
    Otherwise, if \(m_{t} \to m\), then NAG-free iterates as NAG.
    Backtracking preserves the convergence guarantees of both GD and NAG, up to a suboptimality factor of \(\gamma_{L}\).
    Thus, NAG-free converges globally at least as fast as GD, the slowest between GD and NAG.
    
    \item \textbf{Power iteration-like behavior near the optimum.} Near the optimum, the curvature estimate \(c_{t}\) evolves similarly to a power method applied to the Hessian, with some additional dynamics.
    As a result, the iterate \(x_{t}\) rapidly concentrates in the eigenspace corresponding to the least eigenvalue of the Hessian, \(m\), which translates into \(c_{t}\) quickly approaching \(m\), accelerating NAG-free.
\end{enumerate}

\section{Summary of convergence guarantees}
\label{convergence_guarantees}

In this section, we summarize the most important convergence results for NAG-free.
The full derivation of global convergence and local acceleration can be found in \cref{app:global_convergence,app:local_acceleration}, respectively.

\subsection{Global convergence}

The main global convergence result for NAG-free is that it always converges at least as fast as GD.

\begin{theorem}\label{thm:gc}
    Let \(f\in\mathcal{S}(L,m)\) and suppose that \(\kappa=L/m \geq 2\).
    If \(y_{t}\) are generated by \cref{alg:nag-free} for given \(L_{0}\geq m\), \(\gamma \leq 2\) and \(\gamma_{L}\leq 2\), then letting \(\bar{\kappa}=\max(L_{0},2L)/m\), we have that
    \begin{align}
        f(y_{t+1}) - f(x^{\star})
        \leq
        \Bigl( 1 - \frac{1}{\bar{\kappa}} \Bigr)^{t}
        8\max(L_{0},L)\bar{\kappa}^{3}
        \Vert x_{0} - x^{\star} \Vert^{2}.
        \label{ineq:gc}
    \end{align}
\end{theorem}

\begin{sketch}
    We split the proof in two cases: \(m_{t}\geq m\) and \(m_{t}< m\).
    The second case can arise if \(c_{t} \in [m, \gamma m)\) and \(c_{t} < m_{t}\), which implies that \(m_{t+1} \leq c_{t}/\gamma < m\).
    
    The key observation underpinning the analysis of the first case is that the NAG-free iterates are convex combination of GD and NAG iterates.
    Indeed, given  some \(x_{t}\) and an estimate \(L_{t}\), GD, NAG and NAG-free compute the same gradient descent step:
    \begin{align*}
        y_{t+1} = x_{t} - (1/L_{t})\nabla f(x_{t}).
    \end{align*}
    Thus, the NAG-free gradient step is a convex combination of GD's and NAG's for every \(\alpha_{t}\in [0,1]\).
    Moreover, if \(m_{t}\geq m\), then defining the momentum coefficients
    \begin{align*}
        \beta_{t}
        = \frac{\sqrt{L_{t}} - \sqrt{m_{t}}}{\sqrt{L_{t}} + \sqrt{m_{t}}},
        &&
        \text{and}
        &&
        \theta_{t}
        = \frac{\sqrt{L_{t}} - \sqrt{m}}{\sqrt{L_{t}} + \sqrt{m}},
    \end{align*}
    we can express the momentum step \(x_{t+1}\) of NAG-free as
    \begin{align*}
        x_{t+1} 
        = (1+\beta_{t})y_{t+1} -\beta_{t}y_{t}
        =&\ \Bigl( 1+\theta_{t}\frac{\beta_{t}}{\theta_{t}} + \frac{\beta_{t}}{\theta_{t}} - \frac{\beta_{t}}{\theta_{t}} \Bigr)y_{t+1} -\theta_{t}\frac{\beta_{t}}{\theta_{t}} y_{t}
        \\
        =&\ ( 1-\alpha_{t} ) y_{t+1} + \alpha_{t}((1+\theta_{t})y_{t+1} -\theta_{t} y_{t}),
    \end{align*}
    where \(\alpha_{t} = \beta_{t}/\theta_{t} \in [0,1]\).
    Now, \(y_{t+1}\) is exactly the (trivial) momentum step computed by GD, and \((1+\theta_{t})y_{t+1} -\theta_{t} y_{t}\) is the momentum step computed by NAG, parameterized by the true value of \(m\).
    Hence, the NAG-free momentum step is a convex combination of GD's and NAG's for \(\alpha_{t} = \beta_{t}/\theta_{t}\).
    Therefore, the entire NAG-free iteration is a convex combination of GD's and NAG's for this \(\alpha_{t}\).

    To conclude the analysis of the first case, we construct a convex Lyapunov function \(V^{\textup{GD}}_{t}\) with two components: \(W_{t}\)  and \(U_{t}\).
    The first is a common convex Lyapunov function \(W_{t}\) for GD and NAG.
    The second serves to "patch" the mismatch \(x_{t}\neq y_{t}\), which does not occur for GD.
    Then, we obtain \eqref{ineq:gc} by combining the convex property of NAG-free iterations with convexity of \(V^{\textup{GD}}_{t}\).

    In the second case, \eqref{ineq:gc} follows by constructing a Lyapunov function for the NAG-free iteration.
    
\end{sketch}

The full proof of \cref{thm:gc} can be found in \cref{app:global_convergence}.
A byproduct of this proof is that even if NAG uses an overestimate of \(m\), it still converges at least as fast as GD.
The precise statement is the following corollary, which is also proven in \cref{app:global_convergence}.

\begin{cor}\label{cor:nag}
    Let \(f\in\mathcal{F}(L,m)\), \(m' \in\mathopen{[}m,L\mathclose{]}\) and \(\kappa=L/m\).
    If \(y_{t}\) are generated by NAG parameterized with \(m'\) instead of \(m\), then
    \begin{align}
        f(y_{t+1}) - f(x^{\star})
        \leq
        \Bigl( 1 - \frac{1}{\kappa} \Bigr)^{t}
        2L\kappa^{2}
        \Vert x_{0} - x^{\star} \Vert^{2}.
        \label{ineq:nag}
    \end{align}
\end{cor}

\subsection{Local acceleration}

The main local convergence result for NAG-free is that it achieves acceleration near the minimum, under some additional assumptions.

\begin{assumption}\label{ass:locally-smooth-hessian}
    The Hessian of \(f\) is locally H\"{o}lder-smooth at \(x^{\star}\). 
That is, there exist \(\delta_{H} > 0\), \(L_{H} > 0\), and \(\alpha_{H} > 0\) such that if \(\Vert x - x^{\star} \Vert \leq \delta_{H}\), then \(\Vert \nabla^{2}f(x) - \nabla^{2}f(x^{\star}) \Vert \leq L_{H}\Vert x - x^{\star} \Vert^{\alpha_{H}}\).
\end{assumption}

\begin{assumption}\label{ass:known-lipschitz-upper-bound}
    Given \(f\in\mathcal{S}(L,m)\), there is some \(L_{0}>L\) that can be used by \cref{alg:nag-free}.
\end{assumption}

To present the remaining assumptions, we introduce some notation.

\paragraph{Notation}
\cref{ass:locally-smooth-hessian} implies that \(f\) is twice differentiable.
Then, the Hessian \(\nabla^{2}f(x^{\star})\) of \(f\) is real symmetric, by Schwarz's Theorem.
In turn, the eigenvectors \(v_{i}\) associated with the \(d\) eigenvalues \(\lambda_{i}\) of \(\nabla^{2}f(x^{\star})\) can be chosen to form an orthonormal basis for \(\mathbb{R}^{d}\), by the Spectral Theorem.
Hence, for each \(x_{t}\), there exist \(d\) unique coordinates \(x_{t,i}\) such that \(x_{t}-x^{\star}=\sum_{i=1}^{d}x_{t,i}v_{i}\).
We refer to \(x_{t,i}\) as the \(\lambda_{i}\)-coordinate of \(x_{t} - x^{\star}\).
Moreover, if \(f\in\mathcal{S}(L,m)\), then \(\lambda_{i}\in\mathopen{[}m,L\mathclose{]}\).
Without loss of generality, we assume \(m=\lambda_{1} \leq \ldots \leq \lambda_{d}=L\).
Then, for example, \(x_{0,1}\) denotes the \(m\)-coordinate of \(x_{0}-x^{\star}\).

\begin{assumption}\label{ass:mt-separation-from-eigs}
    There exists some \(\delta_{\lambda}\in\mathopen{(}0,1\mathclose{)}\) such that \(\vert m_{t} -\lambda_{i} \vert > \delta_{\lambda}L\) for every \(\lambda_{i} > m\).
\end{assumption}

\begin{assumption}\label{ass:xi0-lower-bound}
    There exists some \(\omega>0\) such that \(\omega x_{0,1}^{2} \geq  \Vert x_{0}-x^{\star} \Vert^{2}\).
\end{assumption}

\cref{ass:mt-separation-from-eigs} simplifies the analysis and is not strictly necessary.
\cref{ass:xi0-lower-bound} prevents pathological cases in which \(x_{0,1}\), the \(m\)-coordinate of \(x_{0}-x^{\star}\), is arbitrarily small compared with the other coordinates.
In fact, we shall see in \cref{numerical_experiments} that violations of this assumption translate into more favorable problem conditioning, actually improving the performance of NAG-free.

\begin{theorem}\label{thm:la}
    Let \(f\in\mathcal{S}(L,m)\), suppose that \cref{ass:locally-smooth-hessian,ass:known-lipschitz-upper-bound,ass:mt-separation-from-eigs,ass:xi0-lower-bound} hold and \(\bar{\kappa} = L_{0}/m > L/m \geq 4\).
    There is \(\epsilon>0\) such that if \(\Vert x_{0} - x^{\star} \Vert \leq \epsilon\), then the iterates \(x_{t}\) produced by NAG-free satisfy
    \begin{align}
        \Vert x_{t+1}-x^{\star} \Vert
        \leq 
        C
        \bigg(
        1 - \frac{1}{\sqrt{\sigma\bar{\kappa}}}
        \bigg)^{t} 
        \Vert x_{0} - x^{\star} \Vert,
        \label{ineq:la}
    \end{align}
    where \(\sigma\) depends on \(\gamma\), \(C\) depends on \(\bar{\kappa}\) and \(\omega\), with \(\omega\) given by \cref{ass:xi0-lower-bound}.
\end{theorem}

\begin{sketch}
    The dynamics of NAG-free around \(x^{\star}\) can be analyzed as two consecutive regimes.
    In the initial regime, \(m_{t}\) approaches \(m\) at an accelerated rate.
    After \(m_{t}\) becomes sufficiently accurate, the final regime begins and \(x_{t}\) converges to \(x^{\star}\) at an accelerated rate.

    We consider \(m_{t}\) to be sufficiently accurate when \(m_{t}\in[m/\gamma, (1+\delta_{m})m]\), for some appropriate \(\delta_{m}>0\).
    By design, \(m_{t}\) is nonincreasing, and decreases by a factor of at least \(\gamma>1\) every time it is updated to a new value.
    Moreover, updates can only occur if \(m_{t} > m\), since they are triggered when \(c_{t} < m_{t}\), and we have that \(c_{t}\geq m\), by \eqref{ineq:effective-curvature}.
    Therefore, \(m_{t}\) can take at most \(\log_{\gamma}(L/m)\) distinct values.
    In addition, \(m_{t}\) never leaves an interval \([m/\gamma,(1+\delta_{m})m]\) after reaching it.
    That is, NAG-free remains in the final regime after entering it.

    First, we analyze the two regimes considering the case where \(f\in \mathcal{S}(L,m)\) is quadratic.
    In this case, \(x_{t}-x^{\star}=\sum_{i=1}^{d}x_{t,i}v_{i}\) behaves as a linear system, and the dynamics of each coordinate \(x_{t,i}\) is determined by \(\lambda_{i}\), \(L_{t}\) and \(m_{t}\).
    By \cref{ass:known-lipschitz-upper-bound}, \(L_{0}>L\), for which the Descent Lemma is always satisfied, so that \(L_{t}\equiv L_{0}\) for all \(t \geq 0\).
    Since \(m_{t}\) can only take finitely many values, it follows that \(x_{t,i}\) evolve as a sequence of finitely many linear time-invariant (LTI) systems.
    Then, we note that \(\Vert \nabla^{2}f(x^{\star})(x_{t+1}-x_{t})\Vert^{2} = \sum_{i=1}^{d}\lambda_{i}^{2}(x_{t+1,i}-x_{t,i})^{2}\), since \(v_{i}\) are orthonormal.
    That is, \(c_{t+1}\) amounts to an average of eigenvalues weighted by \((x_{t+1,i}-x_{t,i})^{2}\), which can be seen as a power iteration with the additional dynamics of \(x_{t+1}-x_{t}\).
    We characterize the dynamics of the weights \((x_{t+1,i}-x_{t,i})^{2}\), using \cref{ass:mt-separation-from-eigs} to obtain slightly more convenient solutions.
    Then, we show that \(x_{t+1,i}-x_{t,i}\) for which \(\lambda_{i} > m_{t}\) decrease much faster than those for which \(\lambda_{i} \leq m_{t}\) while \(m_{t} > (1+\delta_{m})m\), for appropriate \(\delta_{m}\).
    We use this fact to further show that \(m_{t}\) reaches \([m/\gamma,(1+\delta_{m})m]\) at a rate \(1-1/\sqrt{\sigma_{\phi}\bar{\kappa}}\), where \(\sigma_{\phi}\) represents a suboptimality factor with respect to the nominal accelerated rate, \(1-1/\sqrt{\bar{\kappa}}\).
    Once \(m_{t}\) reaches \([m/\gamma,(1+\delta_{m})m]\), the final regime begins.
    The convergence of \(x_{t}-x^{\star}\) is dominated by the slowest coordinate \(x_{t,i}\) of \(x_{t}\), which we show is the \(m\)-coordinate.
    In turn, \(x_{t,1}\) converges at a rate determined by the worst case \(m\) estimates in the interval \([m/\gamma,(1+\delta_{m})m]\), which are precisely its endpoints, \(m/\gamma\) and \((1+\delta_{m})m\).
    These endpoints translate into suboptimality factors \(\gamma\) and \(\sigma_{m}=1+2\delta_{m}+2\sqrt{\delta_{m}(1+\delta_{m})}\), respectively.
    We conclude the analysis of the final regime with a Lyapunov argument, in which we use quadratic Lyapunov functions defined by positive definite matrices \(P_{i}\) for each \(x_{t,i}\).
    As in any argument involving a quadratic Lyapunov function for linear systems, there is a trade-off between the conditioning of \(P_{i}\) and the rate at which these functions decrease along \(x_{t,i}\) trajectories.
    A further suboptimality factor, \(\sigma_{P}\), accounts for this trade-off.
    Then, the overall convergence rate involves a suboptimality factor of \(\sigma_{P}\max\{\sigma_{\phi}, \sigma_{m}, \gamma\}\).

    Next, we consider the general case in which \(f\in \mathcal{S}(L,m)\) need not be quadratic.
    By \cref{thm:gc}, \(\Vert x_{t} - x^{\star} \Vert = \bigO(\Vert x_{0} - x^{\star} \Vert)\).
    Hence, for every \(\delta_{H}>0\), there is some \(\epsilon>0\) such that if \(\Vert x_{0} - x^{\star} \Vert \leq \epsilon\), then \(\Vert x_{t} - x^{\star} \Vert \leq \delta_{H}\) for all \(t\geq 0\).
    Thus, under \cref{ass:locally-smooth-hessian}, \(\epsilon\) can be chosen sufficiently small so that \(x_{t}\) remains in the region around \(x^{\star}\) where \(\nabla^{2}f(x^{\star})\) is H\"{o}lder-smooth.
    For such \(\epsilon\), \cref{thm:gc} also implies that the error \(\Vert \nabla^{2}f(x_{t}) - \nabla^{2}f(x^{\star}) \Vert\) converges to 0 at a linear rate.
    We can express the dynamics of \(x_{t}-x^{\star}\) as a perturbation of the dynamics in the quadratic case, where the perturbation error is \(\bigO(\Vert \nabla^{2}f(x_{t}) - \nabla^{2}f(x^{\star}) \Vert)\).
    Since the perturbation converges to 0 at a linear rate, the solutions in the general case correspond to perturbed solutions of the quadratic case, a result that follows from \cite[theorem 3.4]{Bodine2015} under \cref{ass:mt-separation-from-eigs}.
    Moreover, the size of the solution perturbation can be controlled by \(\epsilon\).
    Hence, under \cref{ass:xi0-lower-bound}, \(\epsilon\) can be taken sufficiently small such that the coordinate dynamics of \(x_{t,1}\) is roughly preserved from the quadratic case.
    Then, we can analyze the initial regime in the general case using similar arguments from the quadratic case.
    Finally, we analyze the final regime, again using a Lyapunov argument and controlling \(\epsilon\) to recover roughly the same suboptimality factors from the quadratic case.
    
\end{sketch}

The full proof of \cref{thm:la} and a thorough discussion of \(C\) and \(\sigma\) can be found in \cref{app:local_acceleration}.
For now, we mention that the suboptimality factor \(\sigma\) is comparable to those of restart schemes, where ``the convergence rate is slowed down by roughly a factor four'' \cite[page 167]{dAspremont2021}.
To see this, consider the terms involved in \(\sigma = \sigma_{P}\max\{\gamma, \sigma_{m}, \sigma_{\phi}\}\).
The first term, \(\sigma_{P}\), arises from the analysis of the second convergence regime of NAG-free, which is based on a Lyapunov function defined by a matrix \(P \succ 0\).
Specifically, \(\sigma_{P}\) reflects the trade-off between the condition number of \(P\) and the rate of convergence guaranteed by the Lyapunov function.
This trade-off is typically ignored in analyses of this kind \cite{Lessard2016}, since the condition number of \(P\) captures transient factors that quickly become negligible.
A realistic estimate for the remaining suboptimality factors can be obtained by taking \(\gamma=2\), \(\delta_{u}=0.01\), \(\delta_{m}=0.2\) and \(\delta_{\ell}=(1+\delta_{m})/(1+\delta_{u})-1\), which results in \(\sigma_{m}\approx 2.19\) and \(\sigma_{\phi}\lesssim 2.38\).
Therefore, \(\sigma \leq 3\).
\section{Numerical experiments}
\label{numerical_experiments}

In this section, we present numerical experiments to validate NAG-free.
Further results can be found in \cref{app:ne}.
Given problem-specific bounds \(\bar{L} \geq L\), we consider two instances of NAG-free: one with \(L_{0}=\bar{L}\) and the other with \(L_{0}=0.01\bar{L}\), which we respectively refer to as \(\text{NAG-free } (\bar{L})\) and \(\text{NAG-free } (0.01\bar{L})\).
Otherwise, both instances use the same values for \(\gamma=\gamma_{L}=1.5\).
Thus, \(\text{NAG-free } (\bar{L})\) satisfies the assumptions of both \cref{thm:gc} and \cref{thm:la}, whereas \(\text{NAG-free } (0.01\bar{L})\) only satisfies the assumptions of \cref{thm:gc}.
On the other hand, backtracking is active for NAG-free \((0.01\bar{L})\), but not NAG-free \((\bar{L})\).

To assess how well the two NAG-free instances perform, we consider four additional methods.
To initialize the methods that require \(m\) to be passed as an input, we use the regularization parameter \(\eta \leq m\).
The four methods are:
\begin{itemize}
    \item NAG using \(L=\bar{L}\) and \(m=\eta\);
    \item Triple momentum method \cite[TM]{VanScoy2017} using \(L=\bar{L}\) and \(m=\eta\);
    \item Restarting NAG \cite[NAG+R]{ODonoghue2015} when \(f(x_{t}) > f(x_{t-1})\), using \(L=\bar{L}\);
    \item Restarting NAG \cite[NAG+RB]{ODonoghue2015} when \(f(x_{t}) > f(x_{t-1})\), where an initial estimate \(L_{0}=\bar{L}/100\) is given and then refined through backtracking with \(\gamma_{L}=1.5\).
\end{itemize}
In the following, we collectively refer to NAG-free and restarting methods as \emph{adaptive}.
Out of the two restarting criteria proposed by \cite{ODonoghue2015}, we picked \(f(x_{t}) > f(x_{t-1})\) because it worked best in experiments.
Another remark is that the Descent Lemma condition can be unreliable numerically.
Therefore, in practice we enforce the relaxed condition that
\begin{align*}
    f(y_{t+1}) \leq (1+\texttt{tol})(f(x_{t}) -(1/2L_{t})\Vert \nabla f(x_{t})\Vert^{2}),
\end{align*}
where \(\texttt{tol}=\num{1e-3}\) for the last experiment (\cref{ne:svm}) and \(\texttt{tol}=\num{1e-6}\) for the rest.

\subsection{Logistic regression}

First, we consider the logistic regression problem, defined by the objective
\begin{align} 
    f(x) 
    = -\frac{1}{n}\sum_{i=1}^{n}
    \Bigl(
    b_{i}\log(s(A_{i}^{\T}x)
    + (1-b_{i})\log(s(A_{i}^{\T}x))
    \Bigr)
    + \frac{\eta}{2}\Vert x \Vert^{2},
    \label{prob:logreg}
\end{align}
where \(s(z)=1/(1+\exp(-z))\) is the sigmoid function, \(A_{i}\in\mathbb{R}^{d}\) and \(b_{i}\in\{0,1\}\) denote the features and labels of \(n\) datapoints, and \(\eta>0\) is a regularization parameter.
The datapoints are taken from LIBSVM datasets \cite{Chang2011}.
For \eqref{prob:logreg}, we have that \(L \leq \bar{L}=(1/4)\lambda_{\max}(A^{\T}A) + \eta\), where \(A\) is the matrix with rows given by \(A_{i}^{\T}\), and \(\lambda_{\max}(A^{\T}A)\) denotes the top eigenvalue of \(A^{\T}A\).

\begin{figure}[tb]
    \centering
    \includegraphics[width=.9\linewidth]{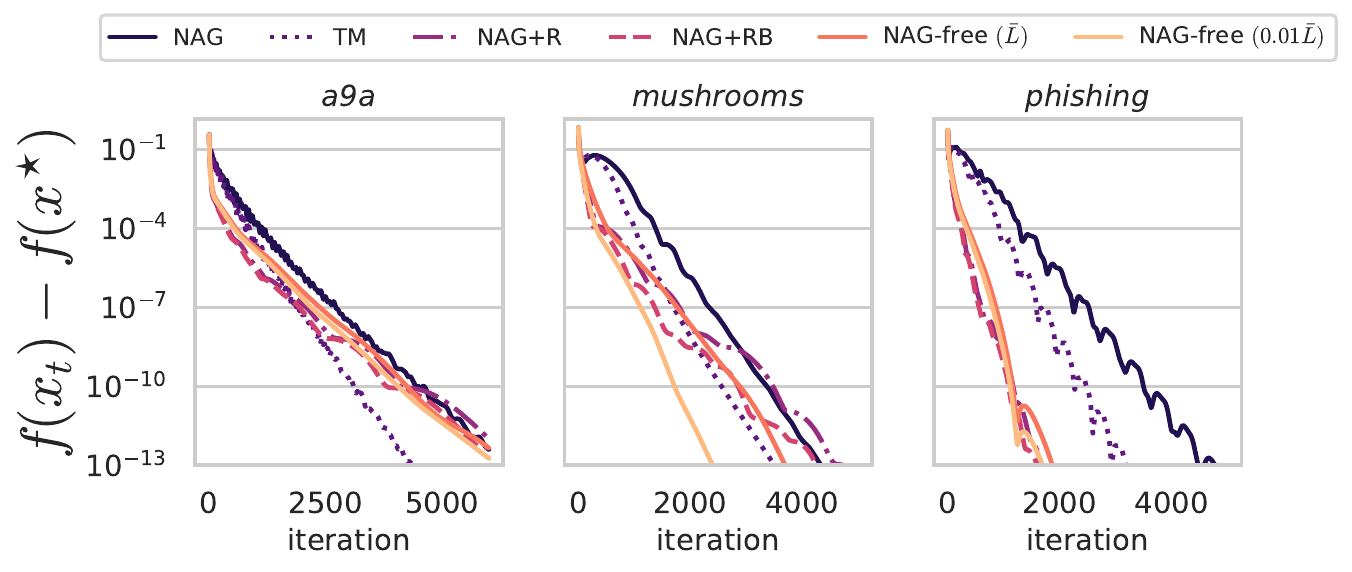}
    \caption{Suboptimality gap for logistic regression on three datasets.}
    \label{fig:logreg_summary}
\end{figure}

\cref{fig:logreg_summary} shows the suboptimality gap for three datasets: \textsc{a9a}, \textsc{mushrooms} and \textsc{phishing}.
A similar plot for three more datasets can be found in \cref{app:ne}.
For \textsc{a9a} and \textsc{mushrooms}, TM and \(\text{NAG-free } (0.01\bar{L})\) perform the best.
For \textsc{phishing}, the adaptive methods lead.
To interpret these results, we take a look at \(m_{\infty}\) and \(L_{\infty}\), the last \(L_{t}\) and \(m_{t}\) estimates held by \(\text{NAG-free } (0.01\bar{L})\).

We have \(L_{\infty}=\num{1.39}\) and \(m_{\infty}=\num{4.73e-6}\) for \textsc{a9a}, indicating that \(\bar{L}=\num{1.57}\) and \(\eta=4.83e-6\) are good estimates of \(L\) and \(m\).
Since TM has the best theoretical convergence rates among the methods above, we indeed expect it to outperform the other methods.
In contrast, \(\bar{L}=\num{2.59}\) and \(L_{\infty}=\num{0.80}\) for \textsc{mushrooms}, suggesting that \(\bar{L}\) is a loose estimate of \(L\).
Hence, even though \(m_{\infty}=\num{2.62e-5} \leq \num{3.18e-5} = \eta\) indicates that \(\eta\) is a good estimate of \(m\), \(\text{NAG-free } (0.01\bar{L})\) outperforms TM.
Finally, for \textsc{phishing} we have \(\bar{L}=\num{1.63e-1}\), \(L_{\infty}=\num{1.41e-1}\), \(\eta=\num{1.47e-6}\) and \(m_{\infty}=\num{8.35e-6}\).
That is, \(\bar{L}\) is a good estimate of \(L\), but \(m_{\infty}\) is over five times greater than \(\eta\).
Similarly, \(m_{\infty}=\num{8.02e-6}\) for \(\text{NAG-free } (\bar{L})\) is again over five times greater than \(\eta\).
This explains why backtracking only marginally improves the performance of NAG-free, which outperforms TM thanks to better estimates of \(m\).
Similarly, backtracking only marginally improves the performance of the restarting methods.

To confirm that \(\eta\) really is a loose estimate of the true strong convexity parameter for \textsc{phishing}, we compute the least eigenvalue of \(\nabla^{2}f(x^{\star})\).
But surprisingly, we find that they match.
At first, this seems to be at odds with the theory presented above, as we expect that at least \(m_{\infty}(\bar{L})\) to be an accurate estimate of \(m\).
To investigate this puzzle, we inspect \(c_{t}\) and \(m_{t}\) produced by \(\text{NAG-free } (\bar{L})\).

\begin{figure}[tb]
    \centering
    \subfigure[\(c_{t}\) and \(m_{t}\) for \(\text{NAG-free } (\bar{L})\) with two \(x_{0}\).]{
        \includegraphics[width=0.45\linewidth]{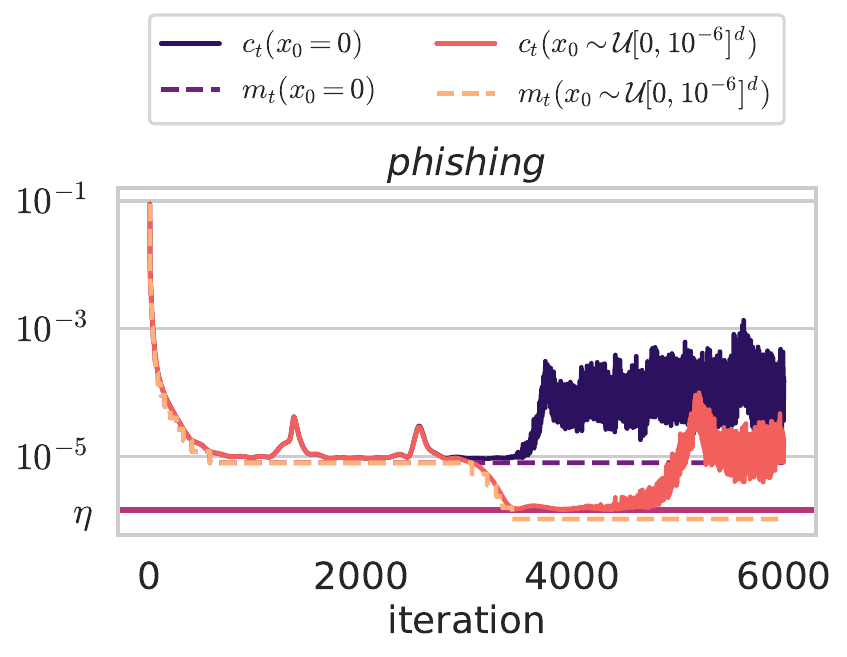}
        \label{fig:logreg_phishing_UB_m_estimates}
    }
    \hfill
    \subfigure[A simple quadratic problem.]{
        \centering
        \includegraphics[width=0.5\linewidth]{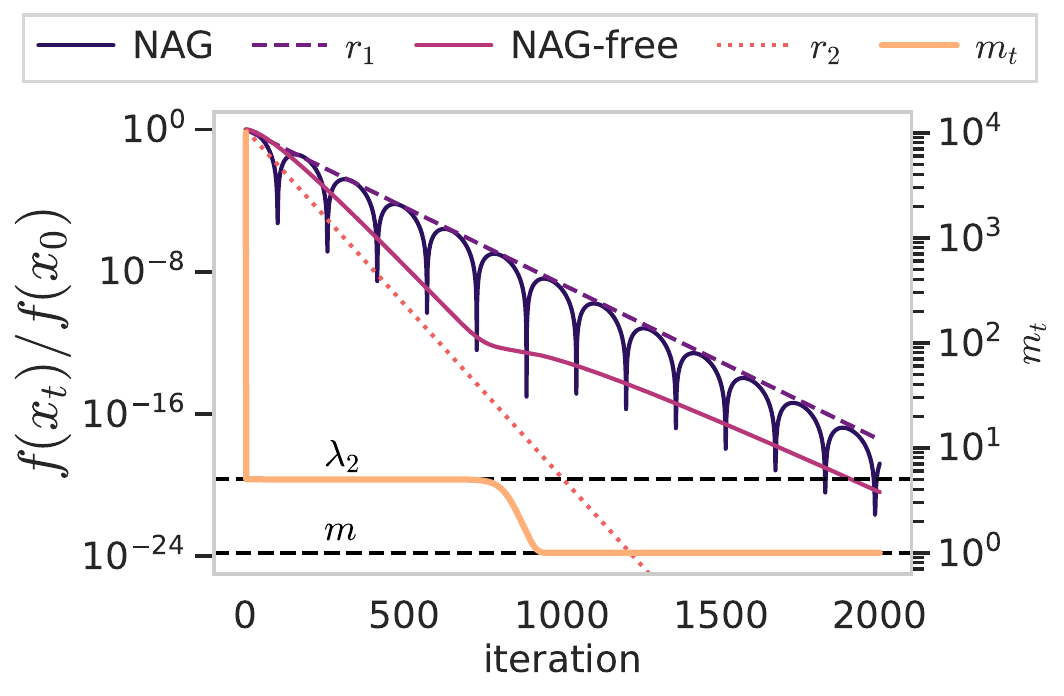}
        \label{fig:quadratics}
    }
    \caption{Left: \(c_{t}\) and \(m_{t}\) produced by \(\text{NAG-free } (\bar{L})\) for logistic regression on \textsc{phishing} dataset, when \(x_{0}=0\) and \(x_{0}\sim 10^{6}\times\mathcal{U}[0,10^{-6}]^{d}\).
    Right: left \(y\)-axis shows the normalized suboptimality gap for NAG and \(\text{NAG-free } (\bar{L})\) on a quadratic problem and the gaps corresponding to accelerated rates for \(m=1\) and \(m=5\), \(r_{\textup{1}}\) and \(r_{\textup{5}}\); right \(y\)-axis shows \(m_{t}\).}
\end{figure}

\cref{fig:logreg_phishing_UB_m_estimates} shows \(c_{t}\) and \(m_{t}\) when \(x_{0}=0\) and \(x_{0}\sim\mathcal{U}[0,10^{-6}]^{d}\).
In addition, a solid horizontal line shows \(\eta\).
When \(x_{0}=0\), \(m_{t}\) and \(c_{t}\) converge linearly to \(m_{\infty}\) at first, and then \(c_{t}\) begins to jitter, at which point the suboptimality gap has essentially reached machine precision.
In this light, we hypothesize that the \(m\)-coordinates of \(x_{t}\) are initially very small, and by the time they would become large enough for \(m_{t}\) to converge to \(m\), numerical errors corrupt \(m_{t}\).
Indeed, \(m_{t}\) eventually converge to \(\eta\) just before \(c_{t}\) start to jitter when \(x_{0}\sim \mathcal{U}[0,10^{-6}]^{d}\), which corroborates this hypothesis.

\subsection{Adapting to better local coordinates: a simple quadratic example}

In the \textsc{phishing} example above, we hypothesized that if the \(m\)-coordinates are relatively small, then the effective condition number improves, which NAG-free adapts to.
To reproduce this behavior, we consider a quadratic problem \(f(x)=(1/2)x^{\T}Hx\) with \(H=\diag(1,5,10^{4})\), and deliberately fix \(x_{0}=(1,10^{3},1)^{\T}\).
That is, the \(m\)-coordinate of \(x_{0}\) is relatively small, making \(\lambda_{2}=5\) the effective strong convexity parameter.
The left-hand y-axis on \cref{fig:quadratics} shows the normalized suboptimality gap obtained by NAG and \(\text{NAG-free } (\bar{L})\), while the right-hand y-axis shows the corresponding \(m_{t}\) for \(\text{NAG-free } (\bar{L})\).
The lines \(r_{1}\) and \(r_{2}\) represent the gaps for methods converging at accelerated rates \(r_{i}=(1-\sqrt{\lambda_{i}/L})^{2}\), where \(\lambda_{1}=1\), \(\lambda_{2}=5\) and \(L=10^{4}\).
When \(m_{t}=\lambda_{2}\), \(\text{NAG-free } (\bar{L})\) roughly converges at rate \(r_{2}\), and then at the nominal rate for this problem, \(r_{1}\), when \(m_{t}=\lambda_{1}\).

Therefore, as \textsc{phishing} shows, in practice \cref{ass:xi0-lower-bound} need not hold for \(x_{1,0}\), but it must for some other \(x_{i,0}\) with \(i\geq 1\).
In turn, the effective condition number improves to \(L/\lambda_{i} \leq L/m\).
Hence, the local strong convexity parameter may be better than the global one in two ways:

\begin{tcolorbox}[colback=gray!10, colframe=gray!80!black, boxrule=0.5pt, arc=0mm]
\begin{center}\textit{  when \(\lambda_{\min}(\nabla^{2}f(x^{\star})) > m\) and when the \(m\)-coordinate of \(x_{t}\) is relatively small.}%
\end{center}
\end{tcolorbox}

NAG-free is able to adapt to both scenarios, achieving better convergence rates.

\subsection{Log-sum-exp}

Next, we consider the smoothed and regularized log-sum-exp problem, defined by the objective
\begin{align}
    f(x) 
    = \theta \log\Biggl( \sum\limits_{i=1}^{n}\exp\biggl( \frac{A_{i}^\top x-b_{i}}{\theta} \biggr) \Biggr) 
    + \frac{\eta}{2}\Vert x \Vert^{2},
    \label{prob:log-sum-exp}
\end{align}
where \((A_{i},b_{i})\in\mathbb{R}^{d}\times\mathbb{R}\) are \(n\) datapoints sampled as \(A_{i}\sim \mathcal{U}[-1,1]^{d}\) and \(b_{i}\sim\mathcal{N}(-1,1)\) independently, while \(\eta > 0\) and \(\theta > 0\) are regularization and smoothing parameters.
The first term of \(f\) approximates the \(\max\) function, and the smaller \(\theta\), the tighter the approximation.
For this problem, we have \(L \leq \bar{L} = (1 + 1/\theta)\sigma_{\max}(A)^{2} + \eta\) and \(m\geq \eta\), where \(A\) and \(\sigma_{\max}(A)^{2}\) denote the matrix with rows given by \(A_{i}^{T}\) and the largest singular value of \(A\).

We fix \(n=600\) and \(d=100\), and take \(\theta \in \{\num{1e-2},\num{1e-1},\num{1e0}\}\) and \(\eta \in \{\num{1e-2},\num{1e-1},\num{1e0}\}\).
\cref{fig:log-sum-exp} shows the suboptimality gap for the methods described above under different \((\theta,\eta)\) settings.
The adaptive methods that use backtracking outperform all the others.
When \(m_{\infty}\) are significantly greater than \(\eta\), the other adaptive methods outperform TM.
For example, \(m_{\infty}=\num{6.68e-2}\) is over six times greater than \(\eta=0.01\) when \(\theta=1\).
As expected, increasing \(\theta\) makes \eqref{prob:log-sum-exp} harder to solve, and the opposite is true for \(\eta\).
Full plots and results can be found in \cref{app:ne}.

\begin{figure}[tb]
    \centering
    \includegraphics[width=\linewidth]{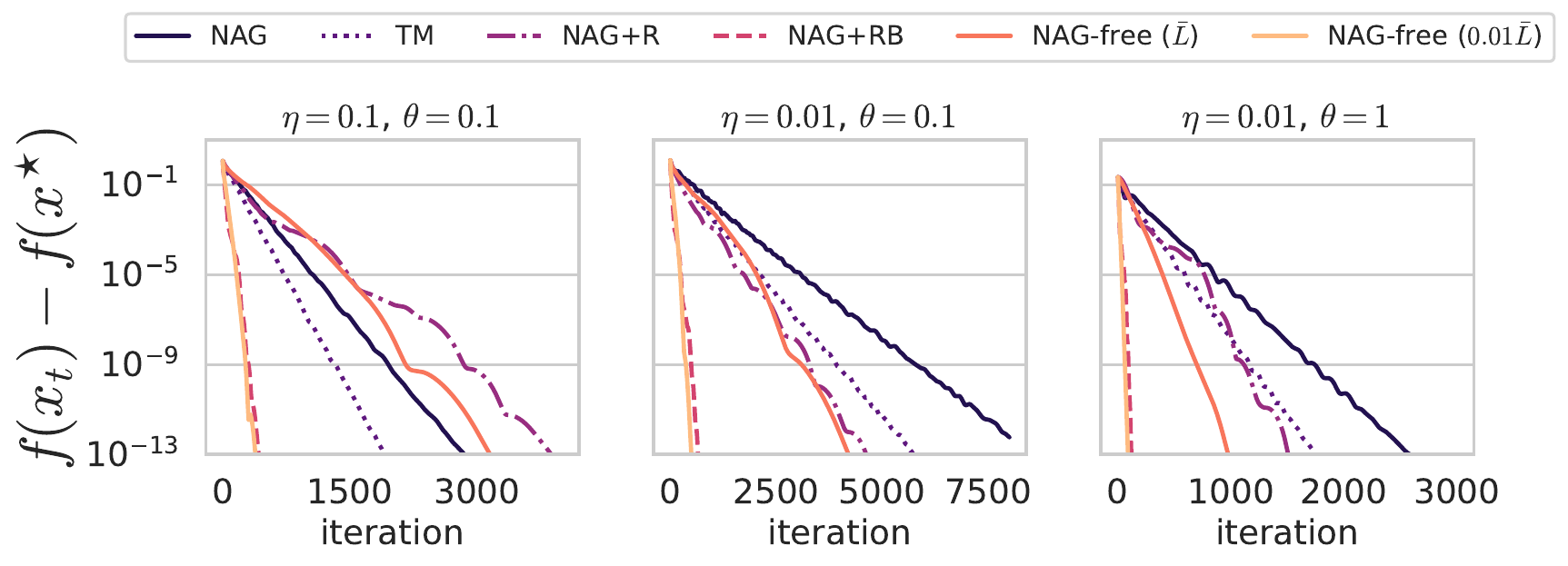}
    \caption{Suboptimality gap \(f(x_{t})-f(x^{\star})\) for log-sum-exp under different \((\eta,\theta)\) settings.}
    \label{fig:log-sum-exp}
\end{figure}

\subsection{Smooth support vector machine}
\label{ne:svm}

Our last experiment is the smooth support vector machine problem (SVM), defined by the objective
\begin{align}
    f(x) 
    = \frac{1}{n}\sum_{i=1}^{n}\max(0,1 - b_{i}A_{i}^{\T}x)^{2} 
    + \frac{\eta}{2}\Vert x \Vert^{2},
    \label{prob:svm}
\end{align}
where \(A_{i}\in \mathbb{R}^{d}\) and \(b_{i}\in \{-1,1\}\) are taken from LIBSVM datasets \cite{Chang2011}, and \(\eta>0\) is a regularization parameter.
Unlike standard SVM, the objective \eqref{prob:svm} is smooth, with \(L\leq \bar{L} = (2/n)\lambda_{\max}(A^{\T}A) + \eta\), because the \(\max\) terms in \eqref{prob:svm} are squared.
But \eqref{prob:svm} is not twice differentiable, therefore the assumptions of \cref{thm:gc} are satisfied, but not those of \cref{thm:la}.

\cref{fig:svm} shows the suboptimality gap for three datasets: \textsc{a9a}, \textsc{mushrooms} and \textsc{phishing}.
Compared with the logistic regression results, shown in \cref{fig:logreg_summary}, the most important difference is that backtracking does not improve performance, and may even degrade it.
The values of \(L_{\infty}\) produced by NAG+RB and \(\text{NAG-free } (0.01\bar{L})\) are similar to \(\bar{L}\), as shown in \cref{app:ne}.
Otherwise, we observe similar trends from the logistic regression experiment: NAG-free remains competitive with restart-based methods, and tighter \(m\) estimates lead to improved performance.
In particular, we note that the lack of Hessian smoothness does not impact NAG-free more severely than it impacts restart-based methods.
The corresponding values of \(m_{\infty}\) can be found in \cref{app:ne}.

\begin{figure}[tb]
    \centering
    \includegraphics[width=\linewidth]{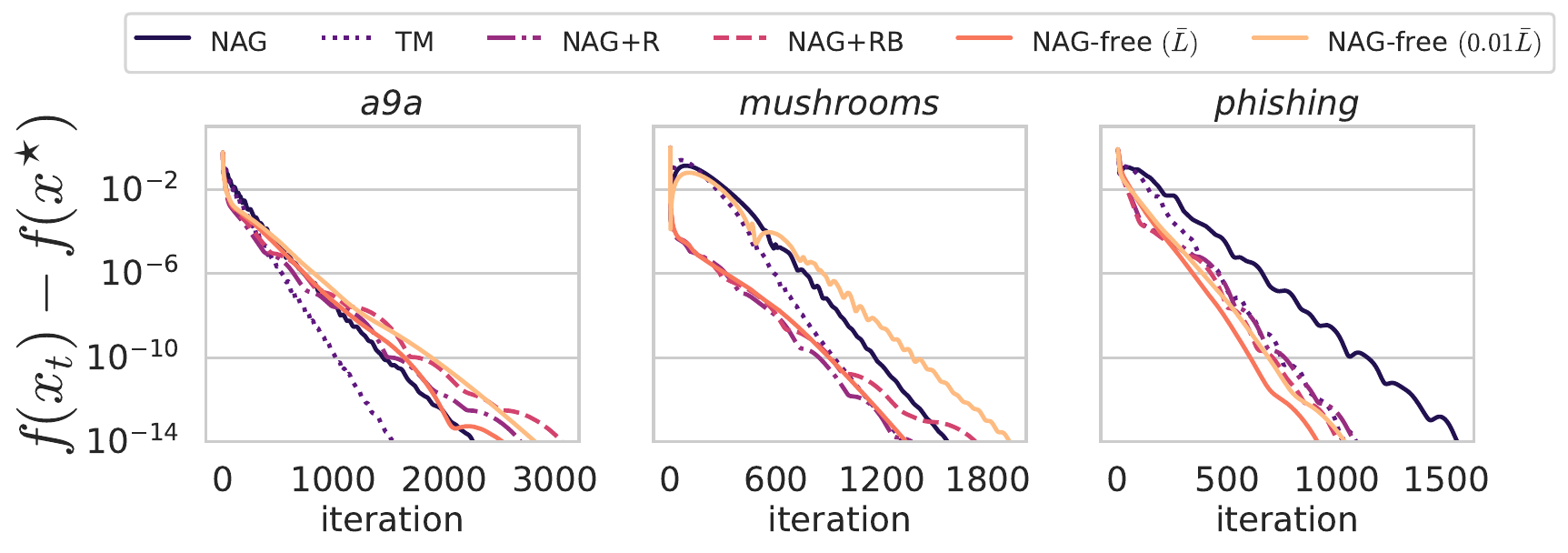}
    \caption{Suboptimality gap \(f(x_{t})-f(x^{\star})\) for SVM on three datasets.}
    \label{fig:svm}
\end{figure}
\section{Conclusion}
\label{conclusion}

In this paper, we proposed \emph{NAG-free}, a simple extension of Nesterov’s accelerated gradient (NAG) which is the first method capable of estimating \(m\) directly, without priors or restarts. 
Roughly speaking, the NAG-free iterations are convex combinations of gradient descent (GD) and NAG iterations controlled by the estimates \(m_{t}\).
In particular, if \(m_{t}=L\) then NAG-free reduces to GD, and if \(m_{t}=m\), then NAG-free reduces to NAG.
Using this property, we proved that, by estimating the smoothness parameter \(L\) via backtracking, NAG-free converges globally at least as fast as gradient descent. 
A byproduct of this analysis is that NAG converges as fast as GD even if \(m\) is overestimated.
The \(m\) estimator is simple and inexpensive, requiring no additional function or gradient evaluations, only the storage of one extra iterate and gradient already computed by NAG.
Essentially, NAG-free exploits a mechanism analogous to a power iteration hidden in NAG.
Building on this idea, we also proved that, given an upper bound on \(L\), NAG-free achieves accelerated convergence locally near the minimum under local smoothness of the Hessian and some mild additional assumptions.
Finally, we presented experiments with smooth and nonsmooth Hessians on both synthetic and real-world data which demonstrate that NAG-free is competitive with restart methods, and naturally adapts to favorable local curvature conditions.

\begin{figure}
    \centering
    \includegraphics[width=\linewidth]{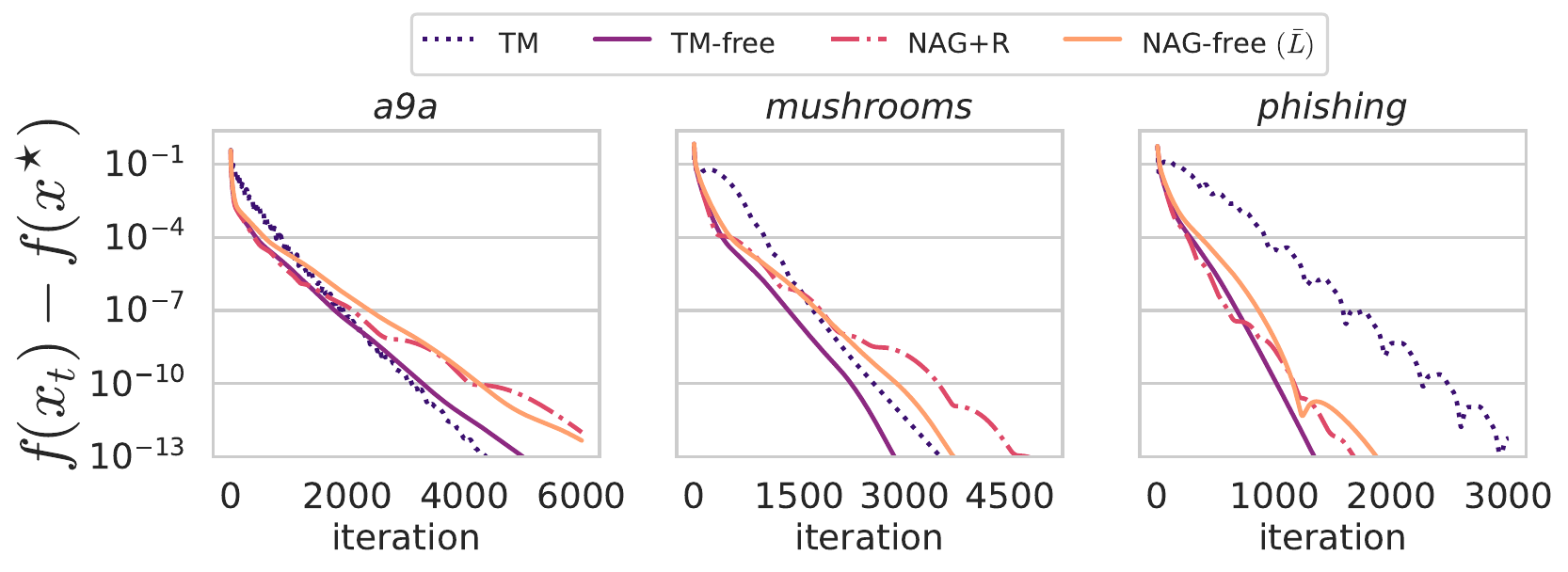}
    \caption{Suboptimality gap \(f(x_{t})-f(x^{\star})\) for logistic regression on three datasets.}
    \label{fig:logreg_tm_free}
\end{figure}

By definition, every first-order method can be endowed with a similar \(m\) estimator at no additional computational cost and a modest memory overhead.
The underlying power iteration mechanism is also rather general, but whether formal guarantees can be given for other extended methods is a question for future work.
Base methods with nominal convergence guarantees better than those of NAG are natural candidates for extension.
For the sake of illustration, we extended the triple momentum method (TM) with an analogous \(m\) estimator, which
\cref{fig:logreg_tm_free} shows can significantly boost the performance of TM.
And even in the worst case represented by \textsc{a9a}, when \(m\) can essentially be estimated offline, the TM extension (TM-free) recovers an accurate estimate of \(m\) without compromising performance.
More details can be found in \cref{app:ne:tm}.

\acks{We thank our colleagues and funding agencies;
  \texttt{\textbackslash documentclass[anon]\{alt2026\}}
automatically hides this text.}

\FloatBarrier           
\bibliography{bib/CTRL, bib/MATH, bib/ML, bib/NA, bib/OPT, bib/SC}

\appendix

\crefalias{section}{appendix} 

\newpage
\clearpage
\section{Global Convergence}
\label{app:global_convergence}

In this section, we prove \cref{thm:gc}, which establishes that \cref{alg:nag-free} converges globally at least as fast as gradient descent (GD).
To this end, we first analyze iterations in which \(m_{t}\geq m\).
Then, we analyze iterations in which \(m_{t}<m\), and the transition from the first kind of iteration to the second.

\subsection{Case 1: \texorpdfstring{\(m_{t} \geq m\)}{mt >= m}}
\label{app:nag-free:gc:case1}

The iterations in which \(m_{t} \geq m\) can be expressed as a convex combination of appropriate GD and NAG iterations. 
We exploit this property to prove that \cref{alg:nag-free} converges at least as fast as GD.
We use an argument based on a Lyapunov function that we denote by \(V^{\textup{GD}}_{t}\). 
The superscript ``GD'' indicates that \(V^{\textup{GD}}_{t}\) decreases at a gradient-descent type of rate along iterations in which \(m_{t} \geq m\).

The Lyapunov function \(V^{\textup{GD}}_{t}\) is the sum of two functions \(W_{t}\) and \(U_{t}\). 
First, we show \(W_{t}\) is a common Lyapunov function for GD and NAG, then we analyze \(U_{t}\) and finally combine all results to give \cref{alg:nag-free} the same type of convergence guarantees of GD. 
To analyze GD and NAG through a common Lyapunov function, we add a trivial momentum step \(x^{\textup{GD}}_{t+1}\) to GD, as in
\begin{align}
    y^{\textup{GD}}_{t+1} 
    =&\ x^{\textup{GD}}_{t} -(1/L_{t})\nabla f(x^{\textup{GD}}_{t}),
    \label{def:sm-gc-gd-descent}
    \\
    x^{\textup{GD}}_{t+1}
    =&\ y^{\textup{GD}}_{t+1},
    \label{def:sm-gc-gd-momentum}
\end{align}
conforming GD to the algorithmic structure of NAG:
\begin{align}
    y^{\textup{NAG}}_{t+1}
    =&\ x^{\textup{NAG}}_{t} -(1/L_{t})\nabla f(x^{\textup{NAG}}_{t}),
    \label{def:nag-descent}
    \\
    x^{\textup{NAG}}_{t+1}
    =&\ y^{\textup{NAG}}_{t+1} + \theta_{t}(y^{\textup{NAG}}_{t+1}-y^{\textup{NAG}}_{t}),
    \label{def:nag-momentum}
\end{align}
where the coefficient \(\theta\) defining the momentum step in \eqref{def:nag-momentum} is given by
\begin{align}
    \theta_{t} = (\sqrt{p_{t}}-1)/(\sqrt{p_{t}}+1),
    &&
    p_{t} = (L_{t}/m).
    \label{def:sm-gc-theta-p}
\end{align}
Similarly, the iterates of \cref{alg:nag-free} are given by
\begin{align}
    y_{t+1}
    =&\ x_{t} -(1/L_{t})\nabla f(x_{t}),
    \label{def:sm-gc-NEST-descent}
    \\
    x_{t+1}
    =&\ y_{t+1} + \beta_{t}(y_{t+1}-y_{t}),
    \label{def:sm-gc-NEST-momentum}
\end{align}
where \(y_{t+1}\) and \(L_{t}\) are such that
\begin{align}
    f(y_{t+1}) - f(x_{t}) 
    \leq -(1/2L_{t})\Vert \nabla f(x_{t}) \Vert^{2}
    \label{ineq:sm-gc-NEST-dl},
\end{align}
and \(\beta_{t}\) is the affine coefficient given by
\begin{align}
    \beta_{t} = (\sqrt{q_{t}}-1)/(\sqrt{q_{t}}+1),
    &&
    q_{t} = (L_{t}/m_{t}),
    \label{def:sm-gc-beta-q}
\end{align}
	
Expressed in the common structure of \labelcref{def:sm-gc-gd-descent,def:sm-gc-gd-momentum,def:nag-descent,def:nag-momentum}, GD and NAG can be analyzed with a common Lyapunov function very similar to the one used in \cite[section~5.5]{Bansal2019}, and given by
\begin{align}
    W_{t}(s_{t}) = \tilde{f}(y_{t}) + (m/2)\Vert z^{\star}_{t} \Vert^{2},
    \label{def:sm-gc-Lyap1}
\end{align}
where \(s_{t}\) stacks the descent and momentum steps into a single pair, as in 
\begin{align}
    s_{t}= (x_{t},y_{t}),
    &&
    s^{\textup{GD}}_{t}= (x^{\textup{GD}}_{t},y^{\textup{GD}}_{t}),
    &&
    \text{and}
    &&
    s^{\textup{NAG}}_{t}=(x^{\textup{NAG}}_{t},y^{\textup{NAG}}_{t}),
    \label{def:sm-gc-sk}
\end{align}
\(\tilde{f}\) denotes the objective function with minimum shifted to 0, meaning that
\begin{align}
    \tilde{f}= f-f(x^{\star}),
    \label{def:sm-gc-f-tilde}
\end{align}
and \(z^{\star}_{s} = z^{\star}_{s}(x_{t},y_{t})\) is the pseudo-state defined as
\begin{align}
    z^{\star}_{s}
    = z_{s}-x^{\star},
    &&
    z_{s} 
    = z_{s}(x_{t},y_{t})
    = 
    \begin{cases}
        x_{0} + \sqrt{p_{0}}(x_{0}-y_{0}), & s = 0,
        \\
        x_{t} + \sqrt{p_{s
        -1}}(x_{t}-y_{t}), & s \geq 1.
    \end{cases}
    \label{def:sm-gc-pseudo-z}
\end{align}

\begin{remark}
    In the definition of \(W_{t}\), we note that the subscript \(t\) determines the subscript of \(p_{0}\) or \(p_{t-1}\) in \(z_{t}\) independently of the subscript of \(x_{t}\) and \(y_{t}\).
    So, for example, we have that
    \begin{align*}
        W_{t+1}(s_{t})
        &=\tilde{f}(y_{t}) + (m/2)\Vert x_{t} + \sqrt{p_{t}}(x_{t}-y_{t})\Vert^{2}
        \\
        &\neq \tilde{f}(y_{t+1}) + (m/2)\Vert x_{t+1} + \sqrt{p_{t}}(x_{t+1}-y_{t+1})\Vert^{2}
        = W_{t+1}(s_{t+1}).
    \end{align*}
\end{remark}

\begin{remark}\label{app:nag-free:gd:rem:w-convex}
    By assumption \(f\in\mathcal{S}(L,m)\) is convex, thus so is \(\tilde{f}\).
    Moreover, the affine transformation that defines \(z^{\star}_{t}\) composed with the 2-norm yields a convex function. 
    Thus, \(V^{\textup{GD}}_{t}\) is the sum of convex functions and is therefore convex.
\end{remark}

In the following, we often use \(g_{t} = \nabla f(x_{t})\).
For brevity, we also define
\begin{align}
    x^{\star}_{t} = x_{t}-x^{\star}
    &&
    \text{and}
    && x^{y}_{t} = x_{t}-y_{t}.
    \label{def:sm-gc-pseudo-x}
\end{align}

\begin{remark}
    Superscripts carry over from \labelcref{def:sm-gc-gd-descent,def:sm-gc-gd-momentum,def:nag-descent,def:nag-momentum} to the notation above in the natural way. 
    For example, by \(g^{\textup{NAG}}_{t}\) we mean \(\nabla f(x^{\textup{NAG}}_{t})\) and by \(x^{\textup{GD},\star}_{t}\) we mean \(x^{\textup{GD}}_{t} -x^{\star}\).
\end{remark}

Although \(W_{t}\) serves as a Lyapunov function for GD and NAG, we should expect \(W_{t}\) to decrease at a faster rate along NAG iterations than along GD iterations.
We now show that \(W_{t}\) decreases at the expected rate for each of the two methods, namely \((1+\delta(p_{t}))^{-1}\) for GD and \((1+\delta(\sqrt{p_{t}}))^{-1}\) for NAG, where the rate increment \(\delta\) is defined by
\begin{align}
    \delta(p)
    = 1/(p-1).
    \label{def:sm-gc-deltas}
\end{align}
The following rate increments will also be convenient:
\begin{align}
    \delta^{\textup{GD}}_{t}
    = \delta(p_{t-1})
    = 1/(p_{t-1}-1)
    &&
    \text{and}
    &&
    \delta^{\textup{ACC}}_{t}
    = \delta(\sqrt{p_{t-1}})
    = 1/(\sqrt{p_{t-1}}-1).
    \label{def:sm-gc-deltas-k}
\end{align}
 
\begin{lemma}\label[lemma]{lem:sm-gc-Lyap1-descent-GD}
    Let \(f\in\mathcal{S}(L,m)\) and \(y^{\textup{GD}}_{t}=x^{\textup{GD}}_{t}\in \mathbb{R}^{d}\). 
    If \(y^{\textup{GD}}_{t+1}\) given by \eqref{def:sm-gc-gd-descent} and \(L_{t}>0\) are such that
    \begin{align}
        f(y^{\textup{GD}}_{t+1}) - f(x^{\textup{GD}}_{t}) 
        \leq -(1/2L_{t})\Vert g^{\textup{GD}}_{t} \Vert^{2},
        \label{ineq:sm-gc-Lyap1-descent-GD-dl}
    \end{align}
    and \(x^{\textup{GD}}_{t+1}\) is given by \eqref{def:sm-gc-gd-momentum}, then
    \begin{align}
        (1+\delta^{\textup{GD}}_{t+1})W_{t+1}(s^{\textup{GD}}_{t+1})
        - W_{t}(s^{\textup{GD}}_{t})
        \leq -(1/2L_{t})\Vert g^{\textup{GD}}_{t} \Vert^{2}.
        \label{ineq:sm-gc-Lyap1-descent-GD}
    \end{align}
\end{lemma}

\begin{proof}
    Let \(f\in \mathcal{S}(L,m)\) and \(\bar{L}\geq L\).
    Following the procedure described above, we start by expressing \((1+\delta^{\textup{GD}}_{t+1})\tilde{f}(y^{\textup{GD}}_{t+1}) - \tilde{f}(y^{\textup{GD}}_{t})\) as the sum of two differences:
    \begin{align*}
        (1+\delta^{\textup{GD}}_{t+1})\tilde{f}(y^{\textup{GD}}_{t+1})
        - \tilde{f}(y^{\textup{GD}}_{t})
        = (1+\delta^{\textup{GD}}_{t+1})(f(y^{\textup{GD}}_{t+1})-f(y^{\textup{GD}}_{t}))
        +\delta^{\textup{GD}}_{t+1}(f(y^{\textup{GD}}_{t})-f(x^{\star})).
    \end{align*}
    If \(y^{\textup{GD}}_{t} = x^{\textup{GD}}_{t}\), \(y^{\textup{GD}}_{t+1}\) is given by \eqref{def:sm-gc-gd-descent} and \eqref{ineq:sm-gc-Lyap1-descent-GD-dl} holds, then the first difference is bounded as
    \begin{align}
        (1+\delta^{\textup{GD}}_{t+1})(f(y^{\textup{GD}}_{t+1})-f(y^{\textup{GD}}_{t}))
        \leq -(1+\delta^{\textup{GD}}_{t+1})(1/2L_{t})\Vert g^{\textup{GD}}_{t} \Vert^{2}.
        \label{ineq:sm-gc-Lyap1-descent-GD-DL}
    \end{align}
    Applying \eqref{ineq:strong_convexity} with \(x=y^{\textup{GD}}_{t}=x^{\textup{GD}}_{t}\) and \(y=x^{\star}\), we bound the second difference as
    \begin{align}
        \delta^{\textup{GD}}_{t+1}(f(y^{\textup{GD}}_{t})-f(x^{\star}))
        \leq \delta^{\textup{GD}}_{t+1}\langle g^{\textup{GD}}_{t},x^{\textup{GD},\star}_{t} \rangle -\delta^{\textup{GD}}_{t+1}(m/2)\Vert x^{\textup{GD},\star}_{t} \Vert^{2}.
        \label{ineq:sm-gc-Lyap1-descent-GD-SC}
    \end{align}
    Also from \(x^{\textup{GD}}_{t}=y^{\textup{GD}}_{t}\), it follows that \(z^{\textup{GD}}_{t}=x^{\textup{GD}}_{t}\) and, likewise, \(z^{\textup{GD}}_{t+1}=y^{\textup{GD}}_{t+1}\). 
    Therefore
    \begin{align}
        (1+\delta^{\textup{GD}}_{t+1})\Vert z^{\textup{GD},\star}_{t+1} \Vert^{2}
        -\Vert z^{\textup{GD},\star}_{t} \Vert^{2}
        =&\ (1+\delta^{\textup{GD}}_{t+1})\Vert y^{\textup{GD},\star}_{t+1} \Vert^{2}-\Vert x^{\textup{GD},\star}_{t} \Vert^{2}
        \nonumber\\
        =&\ (1+\delta^{\textup{GD}}_{t+1})\Bigl( \frac{\Vert g^{\textup{GD}}_{t} \Vert^{2}}{L_{t}^{2}} -\frac{2\langle g^{\textup{GD}}_{t},x^{\textup{GD},\star}_{t} \rangle}{L_{t}} \Bigr)
        +\delta^{\textup{GD}}_{t+1}\Vert x^{\textup{GD},\star}_{t} \Vert^{2}.
        \label{ineq:sm-gc-Lyap1-descent-GD-2-norms}
    \end{align}
    To simplify the above and conclude the proof, we use the identities
    \begin{align*}
        (1+\delta^{\textup{GD}}_{t+1})\Bigl( 1-\frac{1}{p_{t}} \Bigr)
        = \frac{p_{t}}{p_{t}-1}\frac{p_{t}-1}{p_{t}}
        = 1
        &&
        \text{and}
        &&
        \frac{1+\delta^{\textup{GD}}_{t+1}}{p_{t}} 
        = \frac{p_{t}/(p_{t}-1)}{p_{t}}
        = \delta^{\textup{GD}}_{t+1}.
    \end{align*}
    Multiplying \eqref{ineq:sm-gc-Lyap1-descent-GD-2-norms} by \(m/2\), summing the result with \labelcref{ineq:sm-gc-Lyap1-descent-GD-DL} and \eqref{ineq:sm-gc-Lyap1-descent-GD-SC}, then using the identities above, we obtain
    \begin{align*}
        (1+\delta^{\textup{GD}}_{t})W_{t+1}(s^{\textup{GD}}_{t+1})-W_{t}(s^{\textup{GD}}_{t})
        \leq& 
        -(1+\delta^{\textup{GD}}_{t+1})\Bigl( 1-\frac{1}{p_{t}} \Bigr)\frac{1}{2L_{t}}\Vert g^{\textup{GD}}_{t} \Vert^{2}
        \\
        &- \Bigl(\delta^{\textup{GD}}_{t+1} - \frac{1+\delta^{\textup{GD}}_{t+1}}{p_{t}} \Bigr)\langle g^{\textup{GD}}_{t},x^{\textup{GD},\star}_{t} \rangle
        \\
        \leq& -(1/{2L_{t}})\Vert g^{\textup{GD}}_{t} \Vert^{2},
    \end{align*}
    proving \eqref{ineq:sm-gc-Lyap1-descent-GD}.
\end{proof}

The analysis of Lyapunov functions like \(W_{t}\) is challenging because it varies with \(t\), so we consider two types of changes for \(W_{t}\) and subsequent Lyapunov functions: the decrease in a fixed \(W_{t+1}\) from one iteration \(s_{t}\) to the next \(s_{t+1}\) and the increase from \(W_{t}\) to \(W_{t+1}\) for the same iteration \(s_{t}\).
For GD, the steps \(y^{\textup{GD}}_{t}\) and \(x^{\textup{GD}}_{t}\) coincide, hence both also coincide with \(z^{\textup{GD}}_{t}\). 
Therefore, \(W_{t}\) is effectively the same for all \(t\geq 0\) when evaluated at \(s^{\textup{GD}}_{t}\), that is
\begin{align*}
    W_{t}(s^{\textup{GD}}_{t}) = \tilde{f}(y^{\textup{GD}}_{t}) 
    + (m/2)\Vert x^{\textup{GD},\star}_{t} \Vert^{2}
    = W_{t+1}(s^{\textup{GD}}_{t}).
\end{align*}
In contrast, the steps \(y^{\textup{NAG}}_{t}\) and \(x^{\textup{NAG}}_{t}\) need not coincide. Therefore, as \(\sqrt{p_{t}}\) change with \(t\), each \(z^{\textup{NAG},\star}_{t}\) turns into a different affine combination of \(x^{\textup{NAG},\star}_{t}\) and \(y^{\textup{NAG},\star}_{t}\). 
That is, for \(t\geq 1\)
\begin{align*}
    z^{\textup{NAG}}_{t+1}(x_{t},y_{t})
    &= x^{\textup{NAG}}_{t} + \sqrt{p_{t}}(x^{\textup{NAG}}_{t}-y^{\textup{NAG}}_{t})
    \\
    &\neq x^{\textup{NAG}}_{t} + \sqrt{p_{t-1}}(x^{\textup{NAG}}_{t}-y^{\textup{NAG}}_{t})
    = z^{\textup{NAG}}_{t}(x_{t},y_{t})
\end{align*}
due to mismatching \(\sqrt{p_{t}}\) and \(\sqrt{p_{t-1}}\). 
To handle this mismatch, instead of analyzing the difference \((1+\delta^{\textup{ACC}}_{t+1})W_{t+1}(s^{\textup{NAG}}_{t+1})- W_{t}(s^{\textup{NAG}}_{t})\), in the next two results we analyze the difference \((1+\delta^{\textup{ACC}}_{t+1})W_{t+1}(s^{\textup{NAG}}_{t+1})- W_{t+1}(s^{\textup{NAG}}_{t})\), and then bound \(W_{t+1}(s^{\textup{NAG}}_{t+1})\) in terms of \(W_{t}(s^{\textup{NAG}}_{t})\).

\begin{lemma}\label[lemma]{lem:sm-gc-Lyap1-descent-N}
    Let \(f\in\mathcal{S}(L,m)\). 
    If \(y^{\textup{NAG}}_{t+1}\) given by \eqref{def:nag-descent} and \(L_{t}>0\) are such that
    \begin{align}
        f(y^{\textup{NAG}}_{t+1})-f(x^{\textup{NAG}}_{t}) 
        \leq -(1/2L_{t})\Vert g^{\textup{NAG}}_{t} \Vert^{2},
        \label{ineq:sm-gc-Lyap1-descent-N-dl}
    \end{align} 
    and \(x^{\textup{NAG}}_{t+1}\) is given by \eqref{def:nag-momentum}, then
    \begin{align}
        (1+\delta^{\textup{ACC}}_{t+1})W_{t+1}(s^{\textup{NAG}}_{t+1})-W_{t+1}(s^{\textup{NAG}}_{t})
        \leq 0.
        \label{ineq:sm-gc-Lyap1-descent-N}
    \end{align}
\end{lemma}
	
\begin{proof}
    To prove \eqref{ineq:sm-gc-Lyap1-descent-N}, we start by expressing \((1+\delta^{\textup{ACC}}_{t+1})\tilde{f}(y^{\textup{NAG}}_{t+1})-\tilde{f}(y^{\textup{NAG}}_{t})\) as the sum of three further differences:
    \begin{align*}
        (1+\delta^{\textup{ACC}}_{t+1})\tilde{f}(y^{\textup{NAG}}_{t+1})-\tilde{f}(y^{\textup{NAG}}_{t})
        =&\ (1+\delta^{\textup{ACC}}_{t+1})(f(y^{\textup{NAG}}_{t+1}) -f(x^{\textup{NAG}}_{t}))
        \\
        &+ f(x^{\textup{NAG}}_{t})-f(y^{\textup{NAG}}_{t})
        \\
        &+ \delta^{\textup{ACC}}_{t+1}(f(x^{\textup{NAG}}_{t})-f(x^{\star})).
    \end{align*}
    If \eqref{ineq:sm-gc-Lyap1-descent-N-dl} holds, then we bound the first difference as
    \begin{align*}
        (1+\delta^{\textup{ACC}}_{t+1})(f(y^{\textup{NAG}}_{t+1})-f(x^{\textup{NAG}}_{t})) 
        \leq&\ 
        -(1+\delta^{\textup{ACC}}_{t+1})(1/2L_{t})\Vert g^{\textup{NAG}}_{t} \Vert^{2}.
    \end{align*}
    Using convexity and applying \eqref{ineq:strong_convexity} with \(x=x^{\textup{NAG}}_{t}\) and \(y=x^{\star}\), we bound the second and third differences as
    \begin{align*}
        f(x^{\textup{NAG}}_{t})-f(y^{\textup{NAG}}_{t})
        \leq&\ \langle g^{\textup{NAG}}_{t},x^{\textup{NAG},y}_{t} \rangle,
        \\
        f(x^{\textup{NAG}}_{t})-f(x^{\star})
        \leq&\ \langle g^{\textup{NAG}}_{t},x^{\textup{NAG},\star}_{t} \rangle 
        -(m/2)\Vert x^{\textup{NAG},\star}_{t} \Vert^{2}.
    \end{align*}
    To address the rest of \((1+\delta^{\textup{ACC}}_{t+1})W_{t+1}(s^{\textup{NAG}}_{t+1})-W_{t+1}(s^{\textup{NAG}}_{t})\), we expand \(z^{\textup{NAG},\star}_{t+1}\), and then use the definition of \(\theta_{t}\) to simplify the resulting expression, as in
    \begin{align*}
        z^{\textup{NAG},\star}_{t+1}
        =&\ x^{\textup{NAG}}_{t+1} 
        + \sqrt{p_{t}}(x^{\textup{NAG}}_{t+1}-y^{\textup{NAG}}_{t+1}) 
        - x^{\star}
        \\
        =&\ y^{\textup{NAG}}_{t+1}+\theta_{t}(y^{\textup{NAG}}_{t+1}-y^{\textup{NAG}}_{t}) 
        + \sqrt{p_{t}}\theta_{t}(y^{\textup{NAG}}_{t+1}-y^{\textup{NAG}}_{t}) 
        -x^{\star}
        \\
        =&\ -(1/L_{t})(1+\theta_{t}(1+\sqrt{p_{t}}))g^{\textup{NAG}}_{t} 
        + \theta_{t}(1+\sqrt{p_{t}})x^{\textup{NAG},y}_{t} 
        + x^{\textup{NAG},\star}_{t}
        \\
        =&\ -(1/L_{t})\sqrt{p_{t}}g^{\textup{NAG}}_{t} 
        + (\sqrt{p_{t}}-1)x^{\textup{NAG},y}_{t} 
        + x^{\textup{NAG},\star}_{t}.
    \end{align*}
    Next, we note that the 2-norm term that goes into \(W_{t+1}(s^{\textup{NAG}}_{t})\) is \((m/2)\Vert x^{\textup{NAG},\star}_{t} + \sqrt{p_{t}}x^{\textup{NAG},\star}_{t} \Vert^{2}\), aand then we write the 2-norm difference in \((1+\delta^{\textup{ACC}}_{t+1})W_{t+1}(s^{\textup{NAG}}_{t+1})-W_{t+1}(s^{\textup{NAG}}_{t})\) as
    \begin{align*}
        (1+\delta^{\textup{ACC}}_{t+1})\frac{m}{2}\Vert z^{\textup{NAG},\star}_{t+1} \Vert^{2} 
        -\frac{m}{2}\Vert x^{\textup{NAG},\star}_{t} + \sqrt{p_{t}}x^{\textup{NAG},\star}_{t} \Vert^{2}
        =&\ \frac{1+\delta^{\textup{ACC}}_{t+1}}{2L_{t}}\Vert g^{\textup{NAG}}_{t} \Vert^{2}
        \\
        &-\langle g^{\textup{NAG}}_{t},x^{\textup{NAG},y}_{t} \rangle
        \\
        &-\delta^{\textup{ACC}}_{t+1}\langle g^{\textup{NAG}}_{t},x^{\textup{NAG},\star}_{t} \rangle
        \\
        &-\frac{m}{2}\sqrt{p_{t}}\Vert x^{\textup{NAG},y}_{t} \Vert^{2}			
        \\
        &+ \delta^{\textup{ACC}}_{t+1}\frac{m}{2}\Vert x^{\textup{NAG},\star}_{t} \Vert^{2},
    \end{align*}
    where we used the following identities after colons to simplify the coefficients of the terms before colons:
    \begin{align*}
        \langle g^{\textup{NAG}}_{t},x^{\textup{NAG},y}_{t} \rangle&:
        &
        (1+\delta^{\textup{ACC}}_{t+1})\sqrt{p_{t}}(\sqrt{p_{t}}-1)/p_{t}
        =&\ 1,
        \\
        \langle g^{\textup{NAG}}_{t},x^{\textup{NAG},\star}_{t} \rangle &:
        &
        (1+\delta^{\textup{ACC}}_{t+1})\sqrt{p_{t}}/p_{t}
        =&\ \delta^{\textup{ACC}}_{t+1},
        \\
        \Vert x^{\textup{NAG},y}_{t} \Vert^{2}&:
        &
        (1+\delta^{\textup{ACC}}_{t+1})(\sqrt{p_{t}}-1)^{2}
        =&\ \sqrt{p_{t}}(\sqrt{p_{t}}-1),
        \\
        \langle x^{\textup{NAG},y}_{t}, x^{\textup{NAG},\star}_{t} \rangle &:
        &
        (1+\delta^{\textup{ACC}}_{t+1})(\sqrt{p_{t}}-1)
        =&\ \sqrt{p_{t}}.
    \end{align*}
    Finally, we put everything together and then cancel several terms to obtain
    \begin{align*}
        (1+\delta^{\textup{ACC}}_{t+1})W_{t+1}(s^{\textup{NAG}}_{t+1})-W_{t+1}(s^{\textup{NAG}}_{t})
        \leq 
        -(m/2)\sqrt{p_{t}}\Vert x^{\textup{NAG},y}_{t} \Vert^{2} 
        \leq 0,
    \end{align*}
    proving \eqref{ineq:sm-gc-Lyap1-descent-N}.
\end{proof}

By pairing \(W_{t+1}(s^{\textup{NAG}}_{t+1})\) with \(W_{t+1}(s^{\textup{NAG}}_{t})\), we deferred the problem of mismatching \(\sqrt{p_{t}}\) and \(\sqrt{p_{t-1}}\) to obtain \eqref{ineq:sm-gc-Lyap1-descent-N}.
We now handle the mismatch problem by bounding \(W_{t+1}(s^{\textup{NAG}}_{t})\) in terms of \(W_{t}(s^{\textup{NAG}}_{t})\).
In contrast with \eqref{ineq:sm-gc-Lyap1-descent-N}, the inequality we prove next holds for arbitrary \(x_{t}\) and \(y_{t}\), not necessarily generated by NAG. 
We therefore drop the ``\textit{NAG}'' superscript.

\begin{lemma}\label[lemma]{lem:sm-gc-Lyap1-ascent}
    Let \(f\in\mathcal{S}(L,m)\), \(y_{t},x_{t}\in\mathbb{R}^{d}\). If \(L_{t}\geq L_{t-1}\geq m\), then 
    \begin{align}
        W_{t+1}
        &\leq \frac{p_{t}^{2}}{p_{t-1}^{2}}W_{t}.
        \label{ineq:sm-gc-Lyap1-ascent}
    \end{align}
\end{lemma}

\begin{proof}
    The key to prove \eqref{ineq:sm-gc-Lyap1-ascent} is to analyze the difference between the mismatching terms
    \begin{align}
        \Vert x^{\star}_{t} + \sqrt{p_{t}}x^{y}_{t} \Vert^{2}-\Vert z^{\star}_{t} \Vert^{2}
        =&\ 2(\sqrt{p_{t}}-\sqrt{p_{t-1}})\langle x^{\star}_{t},x^{y}_{t} \rangle
        + (p_{t}-p_{t-1})\Vert x^{y}_{t} \Vert^{2}.
        \label{id:sm-gc-Lyap1-ascent-mismatch}
    \end{align}
    We split the analysis in two cases, according to the sign of \(\langle x^{\star}_{t},x^{y}_{t} \rangle\). 
    First, we consider the case \(\langle x^{\star}_{t},x^{y}_{t} \rangle\geq 0\). 
    Assuming \(L_{t}\geq L_{t-1}\), then \(p_{t}\geq p_{t-1}\), which in turn implies
    \begin{align}
        \sqrt{p_{t}}-\sqrt{p_{t-1}}
        \leq \frac{p_{t}}{\sqrt{p_{t}}} - \sqrt{p_{t-1}}\frac{\sqrt{p_{t-1}}}{\sqrt{p_{t}}}
        = \frac{p_{t}-p_{t-1}}{\sqrt{p_{t}}}.
        \label{id:sm-gc-Lyap1-ascent-cp-coeff}
    \end{align}
    Hence, adding \((p_{t}-p_{t-1})p_{t}^{-1}\Vert x^{\star}_{t} \Vert^{2}\geq 0\) to \eqref{id:sm-gc-Lyap1-ascent-mismatch} and then using \eqref{id:sm-gc-Lyap1-ascent-cp-coeff}, we obtain
    \begin{align}
        \Vert x^{\star}_{t} + \sqrt{p_{t}}x^{y}_{t} \Vert^{2}-\Vert z^{\star}_{t} \Vert^{2}
        \leq&\ 2\frac{p_{t}-p_{t-1}}{\sqrt{p_{t}}}\langle x^{\star}_{t},x^{y}_{t} \rangle
        + (p_{t}-p_{t-1})\Vert x^{y}_{t} \Vert^{2}
        + \frac{p_{t}-p_{t-1}}{p_{t}}\Vert x^{\star}_{t} \Vert^{2}
        \nonumber\\
        =&\ \frac{p_{t}-p_{t-1}}{p_{t}}\Vert x^{\star}_{t} + \sqrt{p_{t}}x^{y}_{t} \Vert^{2}.
        \label{ineq:sm-gc-Lyap1-ascent-mismatch-case1}
    \end{align}
    In turn, multiplying right and left-hand side of \eqref{ineq:sm-gc-Lyap1-ascent-mismatch-case1} by \(m/2\), it follows from the definition \eqref{def:sm-gc-Lyap1} that
    \begin{align*}
        W_{t+1}(s_{t})-W_{t}(s_{t})
        = \frac{m}{2}(\Vert x^{\star}_{t} + \sqrt{p_{t}}x^{y}_{t} \Vert^{2}-\Vert z^{\star}_{t} \Vert^{2})
        \leq \frac{p_{t}-p_{t-1}}{p_{t}}W_{t+1}(s_{t}).
    \end{align*}
    Moving terms around then multiplying both sides by \(p_{t}/p_{t-1}\), we obtain
    \begin{align*}
        W_{t+1}(s_{t})
        \leq \frac{p_{t}}{p_{t-1}} W_{t}(s_{t})
        \leq \frac{p_{t}^{2}}{p_{t-1}^{2}}W_{t}(s_{t}),
    \end{align*}
    where the second inequality follows from the fact that \(p_{t}\geq p_{t-1}\).
    
    Now, suppose \(\langle x^{\star}_{t},x^{y}_{t} \rangle<0\). Expressing \((p_{t}-p_{t-1})\Vert x^{y}_{t} \Vert^{2}\) in \eqref{id:sm-gc-Lyap1-ascent-mismatch} as
    \begin{align*}
        (p_{t}-p_{t-1})\Vert x^{y}_{t} \Vert^{2}
        = (\sqrt{p_{t}}(\sqrt{p_{t}}-\sqrt{p_{t-1}}) + \sqrt{p_{t-1}}(\sqrt{p_{t}}-\sqrt{p_{t-1}}))\Vert x^{y}_{t} \Vert^{2}
    \end{align*}
    and then adding \(\pm (\sqrt{p_{t}}-\sqrt{p_{t-1}})\Vert x^{\star}_{t} \Vert^{2}/\sqrt{p_{t}}\) to \eqref{id:sm-gc-Lyap1-ascent-mismatch} to complete a square, we obtain
    \begin{align}
        \Vert x^{\star}_{t} + \sqrt{p_{t}}x^{y}_{t} \Vert^{2}-\Vert z^{\star}_{t} \Vert^{2}
        =&\ 2\frac{\sqrt{p_{t}}-\sqrt{p_{t-1}}}{\sqrt{p_{t}}}\langle x^{\star}_{t},\sqrt{p_{t}} x^{y}_{t} \rangle		
        + \sqrt{p_{t}}(\sqrt{p_{t}}-\sqrt{p_{t-1}})\Vert x^{y}_{t} \Vert^{2}
        \nonumber\\
        &+ \sqrt{p_{t-1}}(\sqrt{p_{t}}-\sqrt{p_{t-1}})\Vert x^{y}_{t} \Vert^{2}
        \pm \frac{\sqrt{p_{t}}-\sqrt{p_{t-1}}}{\sqrt{p_{t}}}\Vert x^{\star}_{t} \Vert^{2}
        \nonumber\\
        =&\ \frac{\sqrt{p_{t}}-\sqrt{p_{t-1}}}{\sqrt{p_{t}}}\Vert x^{\star}_{t} + \sqrt{p_{t}}x^{y}_{t} \Vert^{2}
        + \sqrt{p_{t-1}}(\sqrt{p_{t}}-\sqrt{p_{t-1}})\Vert x^{y}_{t} \Vert^{2}
        \nonumber\\
        &- \frac{\sqrt{p_{t}}-\sqrt{p_{t-1}}}{\sqrt{p_{t}}}\Vert x^{\star}_{t} \Vert^{2}.
        \label{id:sm-gc-Lyap1-ascent-aux}
    \end{align}
    Next, we bound the \(\Vert x^{y}_{t} \Vert^{2}\) term on \eqref{id:sm-gc-Lyap1-ascent-aux} using \(\Vert z^{\star}_{t} \Vert^{2}\) and \(\Vert x^{\star}_{t} \Vert^{2}\) terms. 
    To this end, we use an elementary inequality for 2-norms. 
    If \(a,b\in\mathbb{R}^{d}\) and \(c\in\mathbb{R}\setminus \{0\}\), then
    \begin{align*}
        (1/c^{2})\Vert a \Vert^{2} +2\langle a,b \rangle + c^{2}\Vert b \Vert^{2}
        =\Vert a/c+bc \Vert^{2}
        \geq 0,
    \end{align*}
    so that \(-2\langle a,b \rangle\leq (1/c^{2})\Vert a \Vert^{2} + c^{2}\Vert b \Vert^{2}\), which implies
    \begin{align}
        \Vert a-b \Vert^{2}
        =\Vert a \Vert^{2} -2\langle a,b \rangle + \Vert b \Vert^{2}
        \leq (1+1/c^{2})\Vert a \Vert^{2} + (1+c^{2})\Vert b \Vert^{2}.
        \label{ineq:sm-gc-Lyap1-ascent-elementary}
    \end{align}
    Applying \eqref{ineq:sm-gc-Lyap1-ascent-elementary} with \(a=z^{\star}_{t}, b=x^{\star}_{t}\) and some \(c\neq 0\), we obtain
    \begin{align}
        \Vert x^{y}_{t} \Vert^{2}
        = \Vert x^{y}_{t} \pm x^{\star}_{t}/\sqrt{p_{t-1}} \Vert^{2}
        = \frac{1}{p_{t-1}}\Vert z^{\star}_{t} -x^{\star}_{t}\Vert^{2}
        \leq \frac{1+1/c^{2}}{p_{t-1}}\Vert x^{\star}_{t} \Vert^{2}
        +\frac{1+c^{2}}{p_{t-1}}\Vert z^{\star}_{t} \Vert^{2}.
        \label{ineq:sm-gc-Lyap1-ascent-xyk}
    \end{align}
    We then choose \(c\) such that
    \begin{align}
        \frac{\sqrt{p_{t}}-\sqrt{p_{t-1}}}{\sqrt{p_{t-1}}}(1+c^{2})
        = \frac{p_{t}-p_{t-1}}{p_{t-1}}
        = \frac{\sqrt{p_{t}}-\sqrt{p_{t-1}}}{\sqrt{p_{t-1}}}
        \frac{\sqrt{p_{t}}+\sqrt{p_{t-1}}}{\sqrt{p_{t-1}}}.
        \label{id:sm-gc-Lyap1-ascent-c-id}
    \end{align}
    Cancelling \((\sqrt{p_{t}}-\sqrt{p_{t-1}})/\sqrt{p_{t-1}}\) on both sides of \eqref{id:sm-gc-Lyap1-ascent-c-id} yields
    \begin{align*}
        c^{2} 
        = \frac{\sqrt{p_{t}}+\sqrt{p_{t-1}}}{\sqrt{p_{t-1}}}-1
        = \frac{\sqrt{p_{t}}}{\sqrt{p_{t-1}}}.
    \end{align*}
    Having fixed \(c\) as above, it follows that the coefficient multiplying \(\Vert x^{\star}_{t} \Vert^{2}\) in \eqref{ineq:sm-gc-Lyap1-ascent-xyk} is
    \begin{align}
        \frac{1+1/c^{2}}{p_{t-1}}
        = \frac{1}{p_{t-1}}\Bigl( 1 + \frac{\sqrt{p_{t-1}}}{\sqrt{p_{t}}} \Bigr)
        = \frac{\sqrt{p_{t}} + \sqrt{p_{t-1}}}{p_{t-1}\sqrt{p_{t}}}.
        \label{id:sm-gc-Lyap1-ascent-xastk-coeff}
    \end{align}
    Plugging \eqref{id:sm-gc-Lyap1-ascent-c-id} and \eqref{id:sm-gc-Lyap1-ascent-xastk-coeff} back into \eqref{ineq:sm-gc-Lyap1-ascent-xyk}, we obtain
    \begin{align}
        \sqrt{p_{t-1}}(\sqrt{p_{t}}-\sqrt{p_{t-1}})\Vert x^{y}_{t} \Vert^{2}
        \leq \frac{p_{t}-p_{t-1}}{\sqrt{p_{t}}\sqrt{p_{t-1}}}\Vert x^{\star}_{t} \Vert^{2}
        +\frac{p_{t}-p_{t-1}}{p_{t-1}}\Vert z^{\star}_{t} \Vert^{2}.
        \label{ineq:sm-gc-Lyap1-ascent-xyk-2}
    \end{align}
    In turn, plugging \eqref{ineq:sm-gc-Lyap1-ascent-xyk-2} back into \eqref{id:sm-gc-Lyap1-ascent-aux} yields
    \begin{align}
        \Vert x^{\star}_{t} + \sqrt{p_{t}}x^{y}_{t} \Vert^{2}-\Vert z^{\star}_{t} \Vert^{2}
        &\leq\ \frac{\sqrt{p_{t}}-\sqrt{p_{t-1}}}{\sqrt{p_{t}}}\Vert x^{\star}_{t} + \sqrt{p_{t}}x^{y}_{t} \Vert^{2}
        + \frac{p_{t}-p_{t-1}}{p_{t-1}}\Vert z^{\star}_{t} \Vert^{2}
        \nonumber\\
        &+ \frac{\sqrt{p_{t}}-\sqrt{p_{t-1}}}{\sqrt{p_{t-1}}}\Vert x^{\star}_{t} \Vert^{2},
        \label{ineq:sm-gc-Lyap1-ascent-aux2}
    \end{align}
    where the coefficient multiplying \(\Vert x^{\star}_{t} \Vert^{2}\) is the result of summing that in \eqref{id:sm-gc-Lyap1-ascent-aux} and the one in \eqref{ineq:sm-gc-Lyap1-ascent-xyk-2}
    \begin{align*}
        \frac{p_{t}-p_{t-1}}{\sqrt{p_{t}}\sqrt{p_{t-1}}}
        -\frac{\sqrt{p_{t}}-\sqrt{p_{t-1}}}{\sqrt{p_{t}}}
        = \frac{\sqrt{p_{t}} - \sqrt{p_{t-1}}}{\sqrt{p_{t}}}\Bigl( \frac{\sqrt{p_{t}} + \sqrt{p_{t-1}}}{\sqrt{p_{t-1}}} -1 \Bigr)
        = \frac{\sqrt{p_{t}} - \sqrt{p_{t-1}}}{\sqrt{p_{t-1}}}.
    \end{align*}
    Multiplying both sides of \eqref{ineq:sm-gc-Lyap1-ascent-aux2} by \(m/2\) and using the definition \eqref{def:sm-gc-Lyap1}, we obtain
    \begin{align*}
        W_{t+1}(s_{t})-W_{t}(s_{t})
        \leq&\ \frac{\sqrt{p_{t}}-\sqrt{p_{t-1}}}{\sqrt{p_{t}}}W_{t+1}(s_{t})
        +\frac{p_{t}-p_{t-1}}{p_{t-1}}W_{t}(s_{t})
        \\
        & +\frac{\sqrt{p_{t}}-\sqrt{p_{t-1}}}{\sqrt{p_{t-1}}}\frac{m}{2}\Vert x^{\star}_{t} \Vert^{2}.
    \end{align*}
    Moving all \(W_{t+1}(s_{t})\) terms above to the left-hand side, all \(W_{t}(s_{t})\) above to the right-hand side and then multiplying both sides by \(\sqrt{p_{t}}/\sqrt{p_{t-1}}\), we get
    \begin{align}
        W_{t+1}(s_{t})
        &\leq \frac{p_{t}}{p_{t-1}}\frac{\sqrt{p_{t}}}{\sqrt{p_{t-1}}}W_{t}(s_{t})
        + \frac{\sqrt{p_{t}}}{\sqrt{p_{t-1}}}\frac{\sqrt{p_{t}}-\sqrt{p_{t-1}}}{\sqrt{p_{t-1}}}\frac{m}{2}\Vert x^{\star}_{t} \Vert^{2}
        \nonumber\\
        &\leq \frac{p_{t}}{p_{t-1}}\frac{\sqrt{p_{t}}}{\sqrt{p_{t-1}}}W_{t}(s_{t})
        + \frac{p_{t}}{p_{t-1}}\frac{\sqrt{p_{t}}-\sqrt{p_{t-1}}}{\sqrt{p_{t-1}}}\frac{m}{2}\Vert y^{\star}_{t} \Vert^{2},
        \label{ineq:sm-gc-Lyap1-ascent-aux3}
    \end{align}
    where the second inequality follows from \(\sqrt{p_{t}}\geq \sqrt{p_{t-1}}\) and \(\langle x^{\star}_{t},x^{y}_{t} \rangle<0\), the assumption underpinning the case we are analyzing, which implies
    \begin{align*}
        \Vert y^{\star}_{t} \Vert^{2}
        =\Vert y^{\star}_{t}\pm x^{\star}_{t} \Vert^{2}
        =\Vert x^{\star}_{t}-x^{y}_{t} \Vert^{2}
        =\Vert x^{\star}_{t} \Vert^{2} -2\langle x^{\star}_{t},x^{y}_{t} \rangle+\Vert x^{y}_{t} \Vert^{2}
        \geq \Vert x^{\star}_{t} \Vert^{2} + \Vert x^{y}_{t} \Vert^{2}
        \geq \Vert x^{\star}_{t} \Vert^{2}.
    \end{align*}
    Finally, bounding \((m/2)\Vert y^{\star}_{t} \Vert^{2}\) by \(\tilde{f}(y_{t})\) on \eqref{ineq:sm-gc-Lyap1-ascent-aux3} using \eqref{ineq:strong_convexity} with \(x=x^{\star}\) and \(y=y_{t}\), then bounding \(\tilde{f}(y_{t})\) by \(W_{t}\) directly from the definition of \(W_{t}\), we obtain
    \begin{align*}
        W_{t+1}(s_{t})
        \leq \frac{p_{t}}{p_{t-1}}\Bigl( \frac{\sqrt{p_{t}}}{\sqrt{p_{t-1}}}
        + \frac{\sqrt{p_{t}}-\sqrt{p_{t-1}}}{\sqrt{p_{t-1}}} \Bigr)W_{t}(s_{t})
        \leq \frac{p_{t}^{2}}{p_{t-1}^{2}}W_{t}(s_{t}),
    \end{align*}
    where the last inequality follows from
    \begin{align*}
        \frac{\sqrt{p_{t}}}{\sqrt{p_{t-1}}} + \frac{\sqrt{p_{t}}-\sqrt{p_{t-1}}}{\sqrt{p_{t-1}}}
        \leq \frac{p_{t}}{p_{t-1}},
    \end{align*}
    which holds because \(\sqrt{p_{t}}\geq \sqrt{p_{t-1}}\) implies
    \begin{align*}
        \sqrt{p_{t-1}}(\sqrt{p_{t}}-\sqrt{p_{t-1}})
        \leq \sqrt{p_{t}}(\sqrt{p_{t}}-\sqrt{p_{t-1}}).
    \end{align*}
    Therefore, both when \(\langle x^{\star}_{t},x^{y}_{t} \rangle\geq 0\) and when \(\langle x^{\star}_{t},x^{y}_{t} \rangle< 0\), the inequality
    \begin{align*}
        W_{t+1}(s_{t}) \leq \frac{p_{t}^{2}}{p_{t-1}^{2}}W_{t}(s_{t})
    \end{align*}
    holds generically for all \(s_{t}\), which proves \eqref{ineq:sm-gc-Lyap1-ascent}.
\end{proof}

Now that we have shown that \(W_{t}\) is a common Lyapunov function for GD and NAG, we introduce the second piece of \(V^{\textup{GD}}_{t}\), the function \(U_{t}\) defined by
\begin{align}
    U_{t}(s_{t}) =
    \begin{cases}
        \tilde{f}(y_{0}) 
        + (L_{0}/2)\Vert y^{\star}_{0} \Vert^{2}, & t = 0,
        \\[10pt]
        \tilde{f}(y_{t}) 
        + (L_{t-1}/2)\Vert y^{\star}_{t} \Vert^{2}, & t \geq 1,
    \end{cases}
    \label{def:sm-gc-Lyap2}
\end{align}
where \(\tilde{f}= f-f(x^{\star})\) and \(y^{\star}_{t}\) is a pseudo-state defined by
\begin{align}
    y^{\star}_{t} = y_{t}-x^{\star}.
    \label{def:sm-gc-pseudo-y}
\end{align}

\begin{remark}
    The subscript \(t\) of \(U_{t}\) determines the subscript of \(L_{0}\) or \(L_{t-1}\) independently of the argument of \(U_{t}\).
    So, for example, if \(t\geq 1\), then
    \begin{align*}
        U_{t+1}(s_{t})
        &= \tilde{f}(y_{t}) + (L_{t}/2)\Vert y^{\star}_{t} \Vert^{2}
        \\
        &\neq \tilde{f}(y_{t}) + (L_{t-1}/2)\Vert y^{\star}_{t} \Vert^{2}
        = U_{t}(s_{t}).
    \end{align*}
\end{remark}

Analogously to \(W_{t+1}(s_{t+1})\) and \(W_{t}(s_{t})\), \(U_{t+1}(s_{t+1})\) and \(U_{t}(s_{t})\) have a mismatch in the coefficients of their 2-norm terms, in this case \((L_{t}/2)\Vert y^{\star}_{t+1} \Vert^{2}\) and \((L_{t-1}/2)\Vert y^{\star}_{t} \Vert^{2}\).
Hence, as with \(W_{t}\), we pair \(U_{t+1}(s_{t+1})\) with \(U_{t+1}(s_{t})\) instead of \(U_{t}(s_{t})\) to avoid the mismatch and then address the mismatch problem immediately after. In contrast with the first piece, however, we analyze \(U_{t}\) along NEST iterations explicitly.

\begin{lemma}\label[lemma]{lem:sm-gc-Lyap2-descent}
    Let \(f\in\mathcal{S}(L,m)\).
    If \(s_{t+1}=(x_{t+1},y_{t+1})\) is given by \labelcref{def:sm-gc-NEST-descent,def:sm-gc-NEST-momentum}, then
    \begin{align}
        (1+\delta^{\textup{GD}}_{t+1})U_{t+1}(s_{t+1})
        -U_{t+1}(s_{t})
        \leq L_{t}\langle x^{y}_{t},x^{\star}_{t} \rangle 
        -(L_{t}/2)\Vert x^{y}_{t} \Vert^{2}.
        \label{ineq:sm-gc-Lyap2-descent}
    \end{align}
\end{lemma}

\begin{proof}
    First, we address the difference \((1+\delta^{\textup{GD}}_{t+1})\tilde{f}(y_{t+1}) -\tilde{f}(y_{t})\). By definition, \(f(x^{\star})\leq f(y_{t})\), thus \(-\tilde{f}(y_{t})\leq 0\). 
    Hence, adding \(\pm(1+\delta^{\textup{GD}}_{t+1})f(x_{t})\) to \((1+\delta^{\textup{GD}}_{t+1})\tilde{f}(y_{t+1}) -\tilde{f}(y_{t})\) and discarding \(-\tilde{f}(y_{t})\), we get
    \begin{align*}
        (1+\delta^{\textup{GD}}_{t+1})\tilde{f}(y_{t+1}) 
        - \tilde{f}(y_{t})
        \leq&\ 
        (1+\delta^{\textup{GD}}_{t+1})(f(y_{t+1})-f(x_{t}))
        +(1+\delta^{\textup{GD}}_{t+1})(f(x_{t})-f(x^{\star})).
    \end{align*}
    By assumption \(y_{t+1}\) is given by \eqref{def:sm-gc-NEST-descent}, with \(y_{t+1}\) and \(L_{t}\) such that \eqref{ineq:sm-gc-NEST-dl} holds.
    Therefore
    \begin{align*}
        (1+\delta^{\textup{GD}}_{t+1})(f(y_{t+1})-f(x_{t}))
        \leq
        -(1+\delta^{\textup{GD}}_{t+1})(1/2L_{t})\Vert g_{t} \Vert^{2}.
    \end{align*}
    To address the second difference above, we apply \eqref{ineq:strong_convexity} with \(x=x_{t}\) and \(y=x^{\star}\), obtaining
    \begin{align*}
        (1+\delta^{\textup{GD}}_{t+1})(f(x_{t})-f(x^{\star})) 
        \leq&\ (1+\delta^{\textup{GD}}_{t+1})\Bigl( \langle g_{t},x^{\star}_{t} \rangle 
        -(m/2)\Vert x^{\star}_{t} \Vert^{2} \Bigr).
    \end{align*}
    Then, we put the two bounds together to get
    \begin{align}
        (1+\delta^{\textup{GD}}_{t+1})\tilde{f}(y_{t+1}) -\tilde{f}(y_{t})
        \leq&\ -\frac{1+\delta^{\textup{GD}}_{t+1}}{2L_{t}}\Vert g_{t} \Vert^{2}
        +(1+\delta^{\textup{GD}}_{t+1})\langle g_{t},x^{\star}_{t} \rangle 
        -\delta^{\textup{GD}}_{t+1}\frac{L_{t}}{2}\Vert x^{\star}_{t} \Vert^{2},
        \label{ineq:sm-gc-Lyap2-descent-f-diff}
    \end{align}
    where the coefficient multiplying \(\Vert x^{\star}_{t} \Vert^{2}\) on the right-hand side above follows from the identity
    \begin{align*}
        (1+\delta^{\textup{GD}}_{t+1})\frac{m}{2}
        = \frac{p_{t}}{p_{t}-1}\frac{m}{2}
        = \delta^{\textup{GD}}_{t+1}\frac{L_{t}}{2}.
    \end{align*}
    To address the 2-norm difference in \((1+\delta^{\textup{GD}}_{t+1})U_{t+1}(s_{t+1})-U_{t}(s_{t})\), we expand pseudo-states inside 2-norms as:
    \begin{align*}
        (1+\delta^{\textup{GD}}_{t+1})\Vert y^{\star}_{t+1} \Vert^{2}
        =&\ (1+\delta^{\textup{GD}}_{t+1})\Bigl( \frac{1}{L_{t}^{2}}\Vert g_{t} \Vert^{2} 
        -\frac{2}{L_{t}}\langle g_{t},x^{\star}_{t} \rangle +\Vert x^{\star}_{t} \Vert^{2} \Bigr),
        \\
        \Vert y^{\star}_{t} \Vert^{2}
        =&\ \Vert x^{y}_{t} \Vert^{2} -2\langle x^{y}_{t},x^{\star}_{t} \rangle + \Vert x^{\star}_{t} \Vert^{2}.
    \end{align*}
    Expanding \(\Vert y^{\star}_{t+1} \Vert^{2}\) and \(\Vert y^{\star}_{t} \Vert^{2}\) as above, we get
    \begin{align}
        \frac{L_{t}}{2}((1+\delta^{\textup{GD}}_{t+1})\Vert y^{\star}_{t+1} \Vert^{2}-\Vert y^{\star}_{t} \Vert^{2})
        =&\ (1 + \delta^{\textup{GD}}_{t+1})\Bigl( \frac{\Vert g_{t} \Vert^{2} }{2L_{t}}
        -\langle g_{t},x^{\star}_{t} \rangle \Bigr)
        \nonumber\\
        &+\frac{L_{t}}{2}(
        -\Vert x^{y}_{t} \Vert^{2} 
        + 2\langle x^{y}_{t},x^{\star}_{t} \rangle 
        + \delta^{\textup{GD}}_{t+1}\Vert x^{\star}_{t} \Vert^{2}).
        \label{ineq:sm-gc-Lyap2-descent-y-diff}
    \end{align}
    Finally, combining \eqref{ineq:sm-gc-Lyap2-descent-f-diff} and \eqref{ineq:sm-gc-Lyap2-descent-y-diff}, several terms cancel each other and we are left with
    \begin{align*}
        (1+\delta^{\textup{GD}}_{t+1})U_{t+1}(s_{t+1})-U_{t}(s_{t})
        \leq&\ L_{t}\langle x^{y}_{t},x^{\star}_{t} \rangle 
        -\frac{L_{t}}{2}\Vert x^{y}_{t} \Vert^{2},
    \end{align*}
    proving \eqref{ineq:sm-gc-Lyap2-descent}.
\end{proof}

\begin{lemma}\label[lemma]{lem:sm-gc-Lyap2-ascent}
    Let \(f\in\mathcal{S}(L,m)\). 
    If \(L_{t}\geq L_{t-1}\), then
    \begin{align}
        U_{t+1} \leq \frac{p_{t}}{p_{t-1}}U_{t}.
        \label{ineq:sm-gc-Lyap2-ascent}
    \end{align}
\end{lemma}

\begin{proof}
    Expanding \(U_{t+1}(s_{t})-U_{t}(s_{t})\), multiplying the result by \(L_{t-1}/L_{t-1}\), using that \(\frac{1}{2}L_{t-1}\Vert y^{\star}_{t} \Vert^{2}\leq U_{t}(s_{t})\) and assuming \(L_{t}\geq L_{t-1}\), we obtain
    \begin{align*}
        U_{t+1}(s_{t})-U_{t}(s_{t})
        = \frac{L_{t}-L_{t-1}}{2}\Vert y^{\star}_{t} \Vert^{2}
        = \frac{L_{t}-L_{t-1}}{L_{t-1}}\frac{L_{t-1}}{2}\Vert y^{\star}_{t} \Vert^{2}
        \leq \frac{L_{t}-L_{t-1}}{L_{t-1}}U_{t}(s_{t}).
    \end{align*}
    Multiplying the right-hand side by \(m/m\) to substitute \(p_{t}\) and \(p_{t-1}\) for \(L_{t}\) and \(L_{t-1}\), and then moving \(-U_{t}(s_{t})\) to the right-hand side, we get
    \begin{align*}
        U_{t+1}(s_{t}) \leq \frac{p_{t}}{p_{t-1}}U_{t}(s_{t}).
    \end{align*}
    Since \(s_{t}\) is arbitrary, \eqref{ineq:sm-gc-Lyap2-ascent} follows.
\end{proof}

With \cref{lem:sm-gc-Lyap1-descent-GD,lem:sm-gc-Lyap1-descent-N,lem:sm-gc-Lyap1-ascent,lem:sm-gc-Lyap2-descent,lem:sm-gc-Lyap2-ascent}, we can prove the main result for NEST iterations in which \(m_{t}>m\), using the Lyapunov function \(V^{\textup{GD}}_{t}\) given by
\begin{align}
    V^{\textup{GD}}_{t}
    =
    \begin{cases}
        W_{0} + (\bar{\alpha}_{0}/\sqrt{p_{0}}) U_{0}, & t = 0,
        \\[10pt]
        W_{t} + (\bar{\alpha}_{t-1}/\sqrt{p_{t-1}}) U_{t}, & t \geq 1,
    \end{cases}
    &&
    \text{with}
    && 
    \bar{\alpha}_{t}= 1-\alpha_{t}
    && 
    \text{and}
    &&
    \alpha_{t} = \beta_{t}/\theta_{t}.
    \label{def:sm-gc-Lyapgd}
\end{align}

\begin{remark}
    The subscript \(t\) on \(V^{\textup{GD}}_{t}\) determines the subscripts on \(W_{0} + (\bar{\alpha}_{0}/\sqrt{p_{0}})U_{0}\) or \(W_{t} + (\bar{\alpha}_{t-1}/\sqrt{p_{t-1}})U_{t}\) independently of the argument.
\end{remark}

First, we show that \(V^{\textup{GD}}_{t}\geq 0\) for iterations in which \(m_{t}>m\).
For future reference, we also show that \(\bar{\alpha}_{j}\) is nonincreasing for all \(0\leq j\leq t\).

\begin{lemma}\label[lemma]{lem:sm-gc-bar-alpha}
    If \(m_{t} \geq m\) and, in addition, \(L_{t}\) and \(m_{t}\) are respectively nondecreasing and nonincreasing, then \(V^{\textup{GD}}_{j}\geq 0\) and \(\bar{\alpha}_{j}\) is nonincreasing for all \(0\leq j\leq t\).
\end{lemma}

\begin{proof}
    The assumptions that \(m_{t} \geq m\) and that \(m_{t}\) is nonincreasing imply that \(m_{j} \geq m\) for all \(0\leq j\leq t\).
    Moreover, \(m_{t} \geq m\) implies that \(q_{t} = L_{t}/m_{t} \leq L_{t}/m=p_{t}\), therefore
    \begin{align*}
        \beta_{t}
        = \frac{\sqrt{q_{t}}-1}{\sqrt{q_{t}}+1}
        \leq \frac{\sqrt{q_{t}}-1}{\sqrt{q_{t}}+1}
        = \theta_{t}.
    \end{align*}
    Hence, \(\beta_{j} \leq \theta_{j}\) for all \(0\leq j\leq t\). 
    Therefore, \(\alpha_{j},\bar{\alpha}_{j}\in \mathopen{[}0,1\mathclose{]}\) and, in turn, \(V^{\textup{GD}}_{j}\geq 0\) for all \(0\leq j\leq t\).
    Then, expanding \(\beta_{t}\) and \(\theta_{t}\) in \(\alpha_{t}\), we obtain
    \begin{align}
        \frac{\beta_{t}}{\theta_{t}}
        = \frac{\sqrt{q_{t}}-1}{\sqrt{q_{t}}+1}
        \frac{\sqrt{p_{t}}+1}{\sqrt{p_{t}}-1}
        = \frac{\sqrt{L_{t}}-\sqrt{m_{t}}}{\sqrt{L_{t}}+\sqrt{m_{t}}}
        \frac{\sqrt{L_{t}}+\sqrt{m}}{\sqrt{L_{t}}-\sqrt{m}}
        = \frac{L_{t} -(\sqrt{m}_{t}-\sqrt{m})\sqrt{L_{t}} -\sqrt{m_{t}m}}{L_{t} +(\sqrt{m}_{t}-\sqrt{m})\sqrt{L_{t}} -\sqrt{m_{t}m}}.
        \label{id:sm-gc-Lyapgd-main-cvx-coeff}
    \end{align}
    Letting \(l=L_{t}\), \(d=m_{t}-m\geq 0\) and \(a=\sqrt{m_{t}m}\), then after simplifying several terms, we obtain
    \begin{align*}
        \frac{\partial}{\partial l}\eqref{id:sm-gc-Lyapgd-main-cvx-coeff}
        = \frac{\partial}{\partial l}\frac{l-d\sqrt{l} -a}{l+d\sqrt{l}-a}
        &= \frac{(1-d/2\sqrt{l})(l+d\sqrt{l}-a)-(1+d/2\sqrt{l})(l-d\sqrt{l}-a)}{(l+d\sqrt{l}-a)^{2}}
        \\
        &= \frac{d\sqrt{l} + ad/\sqrt{l}}{(l+d\sqrt{l}-a)^{2}}
        \geq 0.
    \end{align*}
    That is, \(\alpha_{t}\) is nondecreasing in \(L_{t}\) while \(\alpha_{t}\) is decreasing in \(m_{t}\), because \(\beta_{t}\) is decreasing in \(m_{t}\) and \(\theta_{t}\) is not a function of \(m_{t}\). 
    By assumption \(L_{t}\) and \(m_{t}\) are respectively nondecreasing and nonincreasing, therefore \(\alpha_{j}\) is nondecreasing, so that \(\bar{\alpha}_{j}\) is nonincreasing for all \(0\leq j\leq t\).
\end{proof}

\begin{lemma}\label{lem:sm-gc-Lyapgd-main-descent}
    Let \(f\in\mathcal{S}(L,m)\), \(L_{t} \geq m_{t}\) and let \(s_{t+1}\) denote the iterate generated by \cref{alg:nag-free} from \(s_{t}\). 
    If \(m_{t}\geq m\), then
    \begin{align}
        (1+\delta^{\textup{GD}}_{t+1})V^{\textup{GD}}_{t+1}(s_{t+1})
        \leq V^{\textup{GD}}_{t+1}(s_{t}).
        \label{ineq:sm-gc-Lyapgd-main-descent}
    \end{align}
\end{lemma}

\begin{proof}
    To bound \((1+\delta^{\textup{GD}}_{t+1})V^{\textup{GD}}_{t+1}(s_{t+1})\) in terms of \(V^{\textup{GD}}_{t+1}(s_{t})\), we analyze their difference, which is the sum of one difference involving \(W_{t}\) and another one involving \(U_{t}\). 
    We address the one involving \(W_{t}\) first.
    To this end, we use the assumption that \(m_{t} \geq m\) to show that \cref{alg:nag-free} iterations can be expressed as a convex combination of appropriate GD and NAG iterations and then we exploit the fact that \(W_{t}\) is convex to bound the corresponding difference.
    
    To show that \cref{alg:nag-free} iterations are a convex combination of GD and NAG iterations, we consider fictitious ``one-shot'' GD and NAG iterations taking the value of \(T\) into account and appropriately initialized at a given iteration of \cref{alg:nag-free}.
    We let \(y^{\textup{GD}}_{t}=x^{\textup{GD}}_{t}=x^{\textup{NAG}}_{t}=x_{t}\) and \(y^{\textup{NAG}}_{t}=y_{t}\). 
    We initialize \(x^{\textup{GD}}_{t}\) and \(y^{\textup{GD}}_{t}\) ``backwards'' from \(x_{t}\) to conform them to the GD iteration constraint that \(y^{\textup{GD}}_{t}=x^{\textup{GD}}_{t}\).
    On the other hand, since NAG works with arbitrary initial points, we initialize NAG at the \(t\)-th NEST iteration exactly.
    With these initial points in mind, let \(y^{\textup{GD}}_{t+1}\), \(x^{\textup{GD}}_{t+1}\), \(y^{\textup{NAG}}_{t+1}\) and \(x^{\textup{NAG}}_{t+1}\) be the GD and NAG iterations produced by \labelcref{def:sm-gc-gd-descent,def:sm-gc-gd-momentum,def:nag-descent,def:nag-momentum}. Then, GD, NAG and \cref{alg:nag-free} produce the same descent step:
    \begin{align*}
        y^{\textup{GD}}_{t+1} 
        = x^{\textup{GD}}_{t} -(1/L_{t})\nabla f(x^{\textup{GD}}_{t})
        = \underbrace{x_{t} -(1/L_{t})\nabla f(x_{t})}_{y_{t+1}}
        = x^{\textup{NAG}}_{t} -(1/L_{t})\nabla f(x^{\textup{NAG}}_{t})
        = y^{\textup{NAG}}_{t+1}.
    \end{align*}
    In turn, \(x^{\textup{NAG}}_{t+1}\) reduces to an affine combination of the \cref{alg:nag-free} descent steps \(y_{t+1}\) and \(y_{t}\):
    \begin{align*}
        x^{\textup{NAG}}_{t+1} = (1+\theta_{t})y^{\textup{NAG}}_{t+1} -\theta_{t}y^{\textup{NAG}}_{t}
        = (1+\theta_{t})y_{t+1} -\theta_{t}y_{t}.
    \end{align*}
    It follows that, for all \(t\geq 0\) such that \(m_{t} \geq m\), \(x_{t+1}\) is a convex combination of \(x^{\textup{GD}}_{t+1}=y^{\textup{GD}}_{t+1}=y_{t+1}\) and \(x^{\textup{NAG}}_{t+1}\), as in
    \begin{align*}
        x_{t+1} 
        = (1+\beta_{t})y_{t+1} -\beta_{t}y_{t}
        =&\ \Bigl( 1+\theta_{t}\frac{\beta_{t}}{\theta_{t}} \pm \frac{\beta_{t}}{\theta_{t}} \Bigr)y_{t+1} -\theta_{t}\frac{\beta_{t}}{\theta_{t}} y_{t}
        \\
        =&\ \Bigl( 1-\frac{\beta_{t}}{\theta_{t}} \Bigr)y_{t+1} + \frac{\beta_{t}}{\theta_{t}}((1+\theta_{t})y_{t+1} -\theta_{t} y_{t})
        \\
        =&\ \Bigl( 1-\frac{\beta_{t}}{\theta_{t}} \Bigr)y^{\textup{GD}}_{t+1} + \frac{\beta_{t}}{\theta_{t}}((1+\theta_{t})y^{\textup{NAG}}_{t+1} -\theta_{t} y^{\textup{NAG}}_{t})
        \\
        =&\ \bar{\alpha}_{t}x^{\textup{GD}}_{t+1} + \alpha_{t}x^{\textup{NAG}}_{t+1},
    \end{align*}
    where, as defined in \eqref{def:sm-gc-Lyapgd}, the coefficients defining the convex combination are given by
    \begin{align*}
        \alpha_{t} = \beta_{t}/\theta_{t} \in \mathopen{[}0,1\mathclose{]},
        &&
        \bar{\alpha}_{t}= 1-\alpha_{t}\in\mathopen{[]}0,1\mathclose{]}.
    \end{align*}
    Likewise, \(y^{\textup{GD}}_{t+1}=y^{\textup{NAG}}_{t+1}=y_{t+1}\) implies \(y_{t+1}=\bar{\alpha}_{t}y^{\textup{GD}}_{t+1} + \alpha_{t}y^{\textup{NAG}}_{t+1}\) so, in fact, the entire iteration of \cref{alg:nag-free} can be expressed as a convex combination of GD and NAG iterations, as in
    \begin{align*}
        s_{t+1} = \bar{\alpha}_{t}s^{\textup{GD}}_{t+1} + \alpha_{t}s^{\textup{NAG}}_{t+1},
    \end{align*}
    where \(s_{t+1}\), \(s^{\textup{GD}}_{t+1}\) and \(s^{\textup{NAG}}_{t+1}\) comprise the iterations of \cref{alg:nag-free}, GD and NAG:
    \begin{align*}
        s_{t+1} 
        &= (x_{t+1}, y_{t+1}),
        &&
        \\
        s^{\textup{GD}}_{t+1} 
        &= (x^{\textup{GD}}_{t+1}, y^{\textup{GD}}_{t+1})
        =(y_{t+1},y_{t+1}),
        \\
        s^{\textup{NAG}}_{t+1} 
        &= (x^{\textup{NAG}}_{t+1},y^{\textup{NAG}}_{t+1})
        =(x^{\textup{NAG}}_{t+1},y_{t+1}).
    \end{align*}
    Hence, since \(W_{t}\) is convex (see \cref{app:nag-free:gd:rem:w-convex}), we can bound \(W_{t+1}(s_{t+1})\) in terms of GD and NAG iterations, as in
    \begin{align*}
        W_{t+1}(s_{t+1})
        \leq&\ \bar{\alpha}_{t} W_{t+1}(s^{\textup{GD}}_{t+1}) + \alpha_{t} W_{t+1}(s^{\textup{NAG}}_{t+1}).
    \end{align*}
    Then, it follows from \(W_{t+1}(s_{t})=\bar{\alpha}_{t}W_{t+1}(s_{t}) + \alpha_{t}W_{t+1}(s_{t})\) that
    \begin{align*}
        (1+\delta^{\textup{GD}}_{t+1})W_{t+1}(s_{t+1}) -W_{t+1}(s_{t})
        =&\ \bar{\alpha}_{t}((1+\delta^{\textup{GD}}_{t+1})W_{t+1}(s^{\textup{GD}}_{t+1})-W_{t+1}(s_{t})) 
        \\
        &+ \alpha_{t}
        \Bigl( (1+\delta^{\textup{ACC}}_{t+1})W_{t+1}(s^{\textup{NAG}}_{t+1})
        -W_{t+1}(s_{t}) 
        \Bigr)
        \\
        &+ \alpha_{t}(\delta^{\textup{GD}}_{t+1}-\delta^{\textup{ACC}}_{t+1})W_{t+1}(s^{\textup{NAG}}_{t+1}).
    \end{align*}
    Since \(y^{\textup{NAG}}_{t+1}=y_{t+1}\) and \(x^{\textup{NAG}}_{t}=x_{t}\), then the fact that \(y_{t+1}\) and \(L_{t}\) satisfy \eqref{ineq:sm-gc-NEST-dl} implies that
    \begin{align*}
        f(y^{\textup{NAG}}_{t+1})-f(x^{\textup{NAG}}_{t})
        =f(y_{t+1})-f(x_{t})
        \leq -(1/2L_{t})\Vert g_{t} \Vert^{2}
        =-(1/2L_{t})\Vert g^{\textup{NAG}}_{t} \Vert^{2}.
    \end{align*}		
    Moreover, \(L_{t}>0\) is nondecreasing and \(m_{t}>0\) is nonincreasing.
    Therefore, \cref{lem:sm-gc-Lyap1-descent-N} applies because \mbox{\cref{lem:sm-gc-Lyap1-descent-N}} imposes no restrictions on neither \(y^{\textup{NAG}}_{t}\) nor \(x^{\textup{NAG}}_{t}\). 
    So, letting
    \begin{align*}
        s^{\textup{NAG}}_{t} 
        = (x^{\textup{NAG}}_{t}, y^{\textup{NAG}}_{t})
        = (x_{t},y_{t})
        = s_{t},
    \end{align*}
    then \cref{lem:sm-gc-Lyap1-descent-N} combined with both the fact that \(\delta^{\textup{ACC}}_{t+1}\geq \delta^{\textup{GD}}_{t+1}\) and that \(\alpha_{t}>0\), imply
    \begin{align}
        \alpha_{t}\Bigl( (1+\delta^{\textup{ACC}}_{t+1})W_{t+1}(s^{\textup{NAG}}_{t+1})-W_{t+1}(s_{t})
        +(\delta^{\textup{GD}}_{t+1}-\delta^{\textup{ACC}}_{t+1})W_{t+1}(s^{\textup{NAG}}_{t+1}) \Bigr)
        \leq 0.
        \label{ineq:sm-gc-Lyapgd-main-Wk-dl-N}
    \end{align}
    The natural next move would be to address \((1+\delta^{\textup{GD}}_{t+1})W_{t+1}(s^{\textup{GD}}_{t+1})-W_{t+1}(s_{t})\) in an analogous way.
    The caveat, however, is that although \cref{lem:sm-gc-Lyap1-descent-N} applies to NAG iterations with arbitrary \(x^{\textup{NAG}}_{t}\) and \(y^{\textup{NAG}}_{t}\), the same is not true of \cref{lem:sm-gc-Lyap1-descent-GD}.
    That is, \cref{lem:sm-gc-Lyap1-descent-GD} applies to consecutive GD iterations, requiring that \(y^{\textup{GD}}_{t}=x^{\textup{GD}}_{t}\). 
    Hence, to be able to apply \cref{lem:sm-gc-Lyap1-descent-GD}, we add \(\mp W_{t+1}(s^{\textup{GD}}_{t})\) to the difference involving \(W_{t+1}\), using a GD iteration \(s^{\textup{GD}}_{t}\) such that \(y^{\textup{GD}}_{t}=x^{\textup{GD}}_{t}\). 
    That is, we define a fictitious GD iteration \(s^{\textup{GD}}_{t}\) ``backwards'' from \(x_{t}\) using the points that we already defined as \(y^{\textup{GD}}_{t}=x^{\textup{GD}}_{t}\), as in
    \begin{align}
        s^{\textup{GD}}_{t}
        = (x^{\textup{GD}}_{t},y^{\textup{GD}}_{t})
        = (x_{t},x_{t}).
        \label{app:nag-free:def:st-gd}
    \end{align}
    Although \(s^{\textup{GD}}_{t}\) need not equal \(s_{t}\), \(y^{\textup{GD}}_{t+1}=y_{t+1}\) and \(x^{\textup{GD}}_{t}=x_{t}\), thus
    \begin{align*}
        f(y^{\textup{GD}}_{t+1})-f(x^{\textup{GD}}_{t})
        = f(y_{t+1})-f(x_{t})
        \leq -(1/2L_{t})\Vert g_{t} \Vert^{2}
        = -(1/2L_{t})\Vert g^{\textup{GD}}_{t} \Vert^{2}.
    \end{align*}
    Therefore, since \(L_{t}>0\), \cref{lem:sm-gc-Lyap1-descent-GD} applies with \(s^{\textup{GD}}_{t}\) given by \eqref{app:nag-free:def:st-gd}, and implies that
    \begin{align}
        (1+\delta^{\textup{GD}}_{t+1})W_{t+1}(s^{\textup{GD}}_{t+1})-W_{t+1}(s^{\textup{GD}}_{t}) 
        \leq -(1/2L_{t})\Vert g^{\textup{GD}}_{t} \Vert^{2}
        = -(1/2L_{t})\Vert g_{t} \Vert^{2}.
        \label{ineq:sm-gc-Lyapgd-main-dl-gd}
    \end{align}
    Moreover, \(y^{\textup{GD}}_{t}=x^{\textup{GD}}_{t}=x_{t}\) implies that \(x^{\textup{GD},y}_{t}=0\), thus
    \begin{align*}
        z^{\textup{GD},\star}_{t}
        = x^{\textup{GD},\star}_{t} + \sqrt{p_{t-1}}x^{\textup{GD},y}_{t} 
        = x^{\textup{GD},\star}_{t}
        = x^{\star}_{t}
        &&
        \text{and}
        &&
        f(y^{\textup{GD}}_{t}) =f(x_{t}),
    \end{align*}
    and it follows that
    \begin{align}
        W_{t+1}(s^{\textup{GD}}_{t}) -W_{t+1}(s_{t})
        =&\ f(x_{t}) - f(y_{t}) 
        + (m/2)(\Vert x^{\star}_{t} \Vert^{2} -\Vert x^{\star}_{t} + \sqrt{p_{t}}x^{y}_{t} \Vert^{2}).
        \label{id:sm-gc-Lyapgd-main-Wk1-gap}
    \end{align}
    Applying \eqref{ineq:strong_convexity} with \(x=x_{t}\) and \(y=y_{t}\), then using the fact that \(2\langle g_{t},x^{y}_{t} \rangle \leq (1/L_{t})\Vert g_{t} \Vert^{2} + L_{t}\Vert x^{y}_{t} \Vert^{2}\), we obtain
    \begin{align*}
        f(x_{t}) -f(y_{t})
        \leq \langle g_{t},x^{y}_{t} \rangle 
        - (m/2)\Vert x^{y}_{t} \Vert^{2}
        \leq (1/2L_{t})\Vert g_{t} \Vert^{2} 
        + ((L_{t} - m)/2)\Vert x^{y}_{t} \Vert^{2}.
    \end{align*}
    Hence, expanding \(\Vert x^{\star}_{t} + \sqrt{p_{t}}x^{y}_{t} \Vert^{2}\) on \eqref{id:sm-gc-Lyapgd-main-Wk1-gap} and then using the above inequality, we get
    \begin{align}
        W_{t+1}(s^{\textup{GD}}_{t}) -W_{t+1}(s_{t})
        \leq&\ 
        \frac{1}{L_{t}}\Vert g_{t} \Vert^{2} 
        + \frac{L_{t}-m}{2}\Vert x^{y}_{t} \Vert^{2}
        +\frac{m}{2}(-2\sqrt{p_{t}}\langle x^{y}_{t},x^{\star}_{t} \rangle -p_{t}\Vert x^{y}_{t} \Vert^{2})
        \nonumber\\
        =&\ \frac{1}{2L_{t}}\Vert g_{t} \Vert^{2} 
        -\frac{m}{2}\Vert x^{y}_{t} \Vert^{2}
        -\sqrt{L_{t}m}\langle x^{y}_{t},x^{\star}_{t} \rangle,
        \label{ineq:sm-gc-Lyapgd-main-Wk-gap}
    \end{align}
    where \(m\sqrt{p_{t}}=\sqrt{L_{t}m}\) follows directly \eqref{def:sm-gc-theta-p}.
    Then, combining \eqref{ineq:sm-gc-Lyapgd-main-dl-gd} and \eqref{ineq:sm-gc-Lyapgd-main-Wk-gap} yields
    \begin{align}
        (1+\delta^{\textup{GD}}_{t+1})W_{t+1}(s^{\textup{GD}}_{t+1})\mp W_{t+1}(s^{\textup{GD}}_{t})-W_{t+1}(s_{t})
        \leq
        -\sqrt{L_{t}m}\langle x^{y}_{t},x^{\star}_{t} \rangle.
        \label{ineq:sm-gc-Lyapgd-main-Wk-gd}
    \end{align}
    Therefore, since \(\delta^{\textup{GD}}_{t+1}\leq \delta^{\textup{ACC}}_{t+1}\), combining \eqref{ineq:sm-gc-Lyapgd-main-Wk-dl-N} and \eqref{ineq:sm-gc-Lyapgd-main-Wk-gd}, we obtain
    \begin{align}
        (1+\delta^{\textup{GD}}_{t+1})W_{t+1}(s_{t+1})-W_{t+1}(s_{t})
        \leq&\ \bar{\alpha}_{t}((1+\delta^{\textup{GD}}_{t+1})W_{t+1}(s^{\textup{GD}}_{t+1})\mp W_{t+1}(s^{\textup{GD}}_{t}) -W_{t+1}(s_{t}))
        \nonumber\\
        &+ \alpha_{t}
        \Bigl( 
        (1+\delta^{\textup{ACC}}_{t+1})W_{t+1}(s^{\textup{NAG}}_{t+1})-W_{t+1}(s_{t})
        \Bigr)
        \nonumber\\
        &+ \alpha_{t}
        (\delta^{\textup{GD}}_{t+1}-\delta^{\textup{ACC}}_{t+1})W_{t+1}(s^{\textup{NAG}}_{t+1})
        \nonumber\\
        \leq&\ -\bar{\alpha}_{t}\sqrt{L_{t}m}\langle x^{y}_{t},x^{\star}_{t} \rangle.
        \label{ineq:sm-gc-Lyapgd-main-Wk-diff}
    \end{align}
    Next, we address the difference on \((1+\delta^{\textup{GD}}_{t+1})V^{\textup{GD}}_{t+1}(s_{t+1})-V^{\textup{GD}}_{t+1}(s_{t})\) involving \(U_{t}\). 
    \cref{lem:sm-gc-Lyap2-descent} implies that
    \begin{align}
        (\bar{\alpha}_{t}/\sqrt{p_{t}})((1+\delta^{\textup{GD}}_{t+1})U_{t+1}(s_{t+1})-U_{t+1}(s_{t}))
        \leq 
        (\bar{\alpha}_{t}/\sqrt{p_{t}})L_{t}\langle x^{y}_{t},x^{\star}_{t} \rangle
        = \bar{\alpha}_{t}\sqrt{L_{t}m}\langle x^{y}_{t},x^{\star}_{t} \rangle,
        \label{ineq:sm-gc-Lyapgd-main-Uk-diff}
    \end{align}
    since \(L_{t}/\sqrt{p_{t}}=\sqrt{L_{t}m}\).
    Then, combining \eqref{ineq:sm-gc-Lyapgd-main-Wk-diff} with \eqref{ineq:sm-gc-Lyapgd-main-Uk-diff} yields
    \begin{align*}
        (1+\delta^{\textup{GD}}_{t+1})V^{\textup{GD}}_{t+1}(s_{t+1})
        \leq V^{\textup{GD}}_{t+1}(s_{t}),
    \end{align*}
    proving \eqref{ineq:sm-gc-Lyapgd-main-descent}.
\end{proof}

\begin{lemma}\label{lem:sm-gc-Lyapgd-main-ascent}
    Let \(f\in\mathcal{F}(,m)\).
    If \(L_{t}\geq L_{t-1} \geq m\) and \(m_{t}\leq m_{t-1} \leq L\), then
    \begin{align}
        V^{\textup{GD}}_{t+1}
        \leq \frac{p_{t}^{2}}{p_{t-1}^{2}}V^{\textup{GD}}_{t}.
        \label{ineq:sm-gc-Lyapgd-main-ascent}
    \end{align}
\end{lemma}

\begin{proof}
    If \(L_{t}\geq L_{t-1} \geq m\) and \(m_{t}\leq m_{t-1} \leq L\), then \cref{lem:sm-gc-Lyap1-ascent} and \cref{lem:sm-gc-Lyap2-ascent} apply.
    It also follows that \(\sqrt{p_{t}} \geq \sqrt{p_{t-1}}\) and, by \cref{lem:sm-gc-bar-alpha}, that \(\bar{\alpha}_{t} \leq \bar{\alpha}_{t-1}\) 
    Thus, \(\bar{\alpha}_{t}/\sqrt{p_{t}} \leq \bar{\alpha}_{t-1}/\sqrt{p_{t-1}}\).
    Hence, combining \cref{lem:sm-gc-Lyap1-ascent} and \cref{lem:sm-gc-Lyap2-ascent} and then using the definition of \(V^{\textup{GD}}_{t}\), we obtain
    \begin{align*}
        V^{\textup{GD}}_{t+1}
        = W_{t+1} + \frac{\bar{\alpha}_{t}}{\sqrt{p}_{t}}U_{t+1}
        \leq \frac{p_{t}^{2}}{p_{t-1}^{2}}W_{t} + \frac{\bar{\alpha}_{t-1}}{\sqrt{p}_{t-1}}\frac{p_{t}}{p_{t-1}}U_{t}
        \leq \frac{p_{t}^{2}}{p_{t-1}^{2}}\Bigl( W_{t} + \frac{\bar{\alpha}_{t-1}}{\sqrt{p}_{t-1}}U_{t} \Bigr)
        = \frac{p_{t}^{2}}{p_{t-1}^{2}}V^{\textup{GD}}_{t},
    \end{align*}
    proving \eqref{ineq:sm-gc-Lyapgd-main-ascent}.
\end{proof}

\begin{theorem}\label{thm:sm-gc-Lyapgd-main}
    Let \(f\in\mathcal{S}(L,m)\) and let \(s_{t}\) denote the iterates generated by \cref{alg:nag-free}.
    If \(m_{t}\geq m\), then
    \begin{align}
        V^{\textup{GD}}_{t+1}(s_{t+1})
        \leq 2\max(L, L_{0})\frac{p_{t}^{2}}{p_{0}^{2}}\Vert x_{0} - x^{\star} \Vert^{2}
        \prod\limits_{i=1}^{t+1}(1+\delta^{\textup{GD}}_{i})^{-1}.
        \label{ineq:sm-gc-Lyapgd-main}
    \end{align}
\end{theorem}

\begin{proof}
    Under the above assumptions, \cref{lem:sm-gc-Lyapgd-main-descent} and \cref{lem:sm-gc-Lyapgd-main-ascent} hold.
    Hence, combining \eqref{ineq:sm-gc-Lyapgd-main-descent} and \eqref{ineq:sm-gc-Lyapgd-main-ascent}, for all \(s_{t+1}\) and \(s_{t}\) such that \(m_{t} \geq m\) we have that
    \begin{align}
        V^{\textup{GD}}_{t+1}(s_{t+1})
        \leq (1+\delta^{\textup{GD}}_{t+1})^{-1} \frac{p_{t}^{2}}{p_{t-1}^{2}} V^{\textup{GD}}_{t}(s_{t}).
        \label{ineq:sm-gc-Lyapgd-main-step}
    \end{align}
    We proceed with an inductive argument based on \eqref{ineq:sm-gc-Lyapgd-main-step}.
    To establish the base case, we apply \eqref{ineq:descent_lemma} with \(y=y_{0}\) and \(x=x^{\star}\), obtaining \(\tilde{f}(y_{0}) \leq (L/2)\Vert y^{\star}_{0} \Vert^{2}\).
    Then, since \(y_{0}=x_{0}\), it follows that \(z^{\star}_{0}=x^{\star}_{0}=y^{\star}_{0}\), and
    \begin{alignat*}{3}
        W_{0}(s_{0})
        &= \tilde{f}(y_{0}) + (m/2)\Vert x^{\star}_{0} \Vert^{2}
        \leq ((L+m)/2)\Vert x^{\star}_{0} \Vert^{2}
        \leq L\Vert x^{\star}_{0} \Vert^{2},
        \\
        U_{0}(s_{0})
        &= \tilde{f}(y_{0}) + (L_{0}/2)\Vert y^{\star}_{0} \Vert^{2}
        \leq \max(L, L_{0})\Vert x^{\star}_{0} \Vert^{2}.
    \end{alignat*}
    Since \(\bar{\alpha}_{0}/\sqrt{p_{0}}\in\mathopen{[}0,1\mathclose{]}\), the above inequalities imply
    \begin{align}
        V^{\textup{GD}}_{0}(s_{0})
        &= W_{0}(s_{0}) 
        + (\bar{\alpha}_{0}/\sqrt{p_{0}})U_{0}(s_{0})
        \leq 2\max(L, L_{0})\Vert x^{\star}_{0} \Vert^{2}.
        \label{ineq:sm-gc-Lyapgd-main-0-ub}
    \end{align}
    Moreover, \(W_{1}=W_{0}\) and \(U_{1}=U_{0}\), so that \(V^{\textup{GD}}_{1}=V^{\textup{GD}}_{0}\). 
    Hence, if \(m_{1} \geq m\), then combining \eqref{ineq:sm-gc-Lyapgd-main-descent} with \eqref{ineq:sm-gc-Lyapgd-main-0-ub}, we obtain
    \begin{align}
        V^{\textup{GD}}_{1}(s_{1})
        \leq 
        2\max(L, L_{0})\Vert x^{\star}_{0} \Vert^{2}(1+\delta^{\textup{GD}}_{1})^{-1}.
        \label{ineq:sm-gc-Lyapgd-main-induction-base-case}
    \end{align}
    Having established the base case \eqref{ineq:sm-gc-Lyapgd-main-induction-base-case}, suppose that
    \begin{align}
        V^{\textup{GD}}_{j+1}(s_{j+1})
        \leq 
        2\max(L, L_{0})\frac{p_{j}^{2}}{p_{0}^{2}}
        \Vert x^{\star}_{0} \Vert^{2}
        \prod\limits_{i=1}^{j+1}(1+\delta^{\textup{GD}}_{i})^{-1},
        \label{ineq:sm-gc-Lyapgd-main-induction-hypothesis}
    \end{align}
    holds for all \(0\leq j\leq t-1\) such that \(m_{j} \geq m\).
    Then, suppose \(m_{t} \geq m\). 
    Since the \(m\) estimates are nonincreasing, it follows that \(m_{j} \geq m\) for all \(0\leq j \leq t\).
    Hence, plugging the induction hypothesis \eqref{ineq:sm-gc-Lyapgd-main-induction-hypothesis} with \(j=t-1\) into \eqref{ineq:sm-gc-Lyapgd-main-step}, the \(p_{t-1}^{2}\) term on the numerator of \eqref{ineq:sm-gc-Lyapgd-main-induction-hypothesis} and on the denominator of \eqref{ineq:sm-gc-Lyapgd-main-step} cancel each other and we obtain
    \begin{align*}
        V^{\textup{GD}}_{t+1}(s_{t+1})
        \leq 
        2\max(L, L_{0})\frac{p_{t}^{2}}{p_{0}^{2}}
        \Vert x^{\star}_{0} \Vert^{2}
        \prod\limits_{i=1}^{t+1}(1+\delta^{\textup{GD}}_{i})^{-1}.
    \end{align*}
    Therefore, we conclude by induction that \eqref{ineq:sm-gc-Lyapgd-main-induction-hypothesis} holds for all \(j \geq 0\), proving \eqref{ineq:sm-gc-Lyapgd-main}.
\end{proof}

The same arguments above directly imply \cref{cor:nag}.

\begin{proof}[Proof of \cref{cor:nag}]
    The proof of \cref{thm:gc} does not use of the particular dynamics of \(m_{t}\) induced by NAG-free (\cref{alg:nag-free}), and the initialization of \(m_{0}\) is also not important as long as \(m_{0} \in \mathopen{[}m,L\mathclose{]}\).
    Hence, the same arguments also apply to the analysis of NAG \labelcref{def:nag-descent,def:nag-momentum}.
    In this case, \(L_{t}\equiv L\) and \(m_{t}\equiv m\) for all \(t\).
    Therefore, denoting by \(s_{t}=(x_{t},y_{t})\) the iterates of NAG \labelcref{def:nag-descent,def:nag-momentum} and using the definition of \(V^{\textup{GD}}_{t}\), \eqref{def:sm-gc-Lyapgd}, it follows that
    \begin{align*}
        f(y_{t+1}) - f(x^{\star})
        \leq V^{\textup{GD}}_{t+1}(s_{t+1})
        \leq 
        \Big(
        1 - \frac{1}{\kappa}
        \Big)^{t}
        2L\kappa^{2}
        \Vert x^{\star}_{0} \Vert^{2}.
    \end{align*}
\end{proof}

\subsection{Case 2: \texorpdfstring{\(m_{t} < m\)}{mt < m}}
\label{gc:case_2}

In the previous section, we analyzed iterations in which \(m_{t} \geq m\).
Now, we analyze iterations in which \(m_{t} < m\) and also the iteration \(t\) in which \(m_{t} \geq m\) and \(m_{t+1} < m\).
Since \(m_{t}\) are nonincreasing, there is a most one such transition iteration.

In \cref{thm:sm-gc-Lyapgd-main}, we proved that if \(m_{t} \geq m\), then \cref{alg:nag-free} converges at least as fast as GD.
Now, we prove that if \(m_{t} \geq m\), then \cref{alg:nag-free} converges evvn faster.
Specifically, if \(m_{t} < m\), then \cref{alg:nag-free} achieves the accelerated rate \((1+\delta(\sqrt{\bar{\kappa}_{t})})^{-1}\), where
\begin{align}
    \hat{\delta}^{\textup{ACC}}_{t}
    = 
    \begin{cases}
        1/(\sqrt{q_{0}}-1), & t =0,
        \\
        1/(\sqrt{q_{t-1}}-1), & t \geq 1.
    \end{cases}
    \label{def:sm-gc-Lyapacc-deltahat}
\end{align}

The proof once again consists in an inductive argument based on descending and ascending bounds on a Lyapunov function. 
The function we work with this time is \(V^{\textup{ACC}}_{t}\), given by
\begin{align}
    V^{\textup{ACC}}_{t}(s_{t}) = 
    \begin{cases}
        \tilde{f}(y_{0}) + (m_{0}/2)\Vert w^{\star}_{0} \Vert^{2}, & t = 0,
        \\
        \tilde{f}(y_{t}) + (m_{t-1}/2)\Vert w^{\star}_{t} \Vert^{2}, & t \geq 1,
    \end{cases}
    \label{def:sm-gc-Lyapacc}
\end{align}
where the pseudo-state \(w^{\star}_{t}\), analogous to \(z^{\star}_{t}\), is given by
	\begin{align}
		w^{\star}_{t} = w_{t}-x^{\star},
		&&
		w_{t} =
		\begin{cases}
			x_{0} + \sqrt{q_{0}}(x_{0}-y_{0}), & t = 0,
			\\
			x_{t} + \sqrt{q_{t-1}}(x_{t}-y_{t}), & t \geq 1.
		\end{cases}
		\label{def:sm-gc-Lyapacc-w}
	\end{align}
We first prove the descending bound and then prove the ascending one.
	
\begin{lemma}\label[lemma]{lem:sm-gc-Lyapacc-descent}
    Let \(f\in\mathcal{S}(L,m)\), and let \(s_{t+1}\) be generated by \cref{alg:nag-free}.
    If \(m_{t} \leq m\), then
    \begin{align}
        (1+\hat{\delta}^{\textup{ACC}}_{t+1})V^{\textup{ACC}}_{t+1}(s_{t+1}) -V^{\textup{ACC}}_{t+1}(s_{t})\leq 0.
        \label{ineq:sm-gc-Lyapacc-descent}
    \end{align}
\end{lemma}

\begin{proof}
    The difference \((1+\hat{\delta}^{\textup{ACC}}_{t+1})V^{\textup{ACC}}_{t+1}(s_{t+1})-V^{\textup{ACC}}_{t+1}(s_{t})\) is the sum of a difference involving \(\tilde{f}\) and another one involving 2-norms.
    We first analyze the difference involving \(\tilde{f}\), splitting it into three further differences:
    \begin{align*}
        (1+\hat{\delta}^{\textup{ACC}}_{t+1})\tilde{f}(y_{t+1})-\tilde{f}(y_{t})
        =&\ (1+\hat{\delta}^{\textup{ACC}}_{t+1})(f(y_{t+1})-f(x_{t}))
        \\
        &+ \hat{\delta}^{\textup{ACC}}_{t+1}(f(x_{t})-f(x^{\star}))
        \\
        &+ f(x_{t})-f(y_{t}).
    \end{align*}
    Since \(y_{t+1}\) produced by \cref{alg:nag-free} satisfies \eqref{ineq:sm-gc-NEST-dl}, we have that
    \begin{align}
        (1+\hat{\delta}^{\textup{ACC}}_{t+1})(f(y_{t+1})-f(x_{t}))
        \leq&\ -(1+\hat{\delta}^{\textup{ACC}}_{t+1})(1/2L_{t})\Vert g_{t} \Vert^{2}.
        \label{ineq:sm-gc-Lyapacc-fdiff1}
    \end{align}
    Moreover, if \(m_{t}\leq m\), then \eqref{ineq:strong_convexity} implies that for all \(x\) and \(y\)
    \begin{align}
        f(x) + \langle \nabla f(x),y-x \rangle + (m_{t}/2)\Vert x-y \Vert^{2}
        \leq f(y).
        \label{ineq:sm-gc-Lyapacc-sc}
    \end{align}
    Hence, plugging \(x=x_{t}\) and \(y=x^{\star}\) in \eqref{ineq:sm-gc-Lyapacc-sc} and using the fact that \(f\) is convex, we obtain
    \begin{align}
        \hat{\delta}^{\textup{ACC}}_{t+1}(f(x_{t})-f(x^{\star}))
        \leq&\ \hat{\delta}^{\textup{ACC}}_{t+1}\left\langle g_{t},x^{\star}_{t} \right\rangle -\hat{\delta}^{\textup{ACC}}_{t+1}(m_{t}/2)\Vert x^{\star}_{t} \Vert^{2},
        \label{ineq:sm-gc-Lyapacc-fdiff2}
        \\
        f(x_{t})-f(y_{t})
        \leq&\ \left\langle g_{t},x^{y}_{t} \right\rangle.
        \label{ineq:sm-gc-Lyapacc-fdiff3}
    \end{align}
    
    Next, we address the 2-norm difference in \((1+\hat{\delta}^{\textup{ACC}}_{t+1})V^{\textup{ACC}}_{t+1}(s_{t+1})-V^{\textup{ACC}}_{t+1}(s_{t})\) by expanding the pseudo-states inside 2-norms.
    One pseudo-state is \(w^{\star}_{t+1}\) which, using \labelcref{def:sm-gc-beta-q,def:sm-gc-Lyapacc-w}, we express as
    \begin{align*}
        w^{\star}_{t+1}
        =&\ x_{t+1} + \sqrt{q_{t}}(x_{t+1}-y_{t+1}) - x^{\star}
        \\
        =&\ y_{t+1}+\beta_{t}(y_{t+1}-y_{t}) + \sqrt{q_{t}}\beta_{t}(y_{t+1}-y_{t}) -x^{\star}
        \\
        =&\ -(1/L_{t})(1+\beta_{t}(1+\sqrt{q_{t}}))g_{t} + \beta_{t}(1+\sqrt{q_{t}})x^{y}_{t} + x^{\star}_{t}
        \\
        =&\ -(1/L_{t})\sqrt{q_{t}}g_{t} + (\sqrt{q_{t}}-1)x^{y}_{t} + x^{\star}_{t}.
    \end{align*}
    After expanding \(w^{\star}_{t+1}\) inside the 2-norm, we use the following identities after colons to simplify the coefficients of terms before colons:
    \begin{align*}
        \Vert g_{t} \Vert^{2}&:
        &
        (q_{t}/L_{t}^{2})(m_{t}/2)
        =&\ 1/2L_{t},
        \\
        \langle g_{t},x^{y}_{t} \rangle&:
        &
        m_{t}(1+\hat{\delta}^{\textup{ACC}}_{t+1})\sqrt{q_{t}}(\sqrt{q_{t}}-1)/L_{t}
        =&\ 1,
        \\
        \langle g_{t},x^{\star}_{t} \rangle&:
        &
        m_{t}(1+\hat{\delta}^{\textup{ACC}}_{t+1})\sqrt{q_{t}}/L_{t}
        =&\ \delta^{\textup{ACC}}_{t},
        \\
        \Vert x^{y}_{t} \Vert^{2}&:
        &
        (1+\hat{\delta}^{\textup{ACC}}_{t+1})(\sqrt{q_{t}}-1)^{2}
        =&\ \sqrt{q_{t}}(\sqrt{q_{t}}-1),
        \\
        \langle x^{y}_{t},x^{\star}_{t} \rangle&:
        &
        (1+\hat{\delta}^{\textup{ACC}}_{t+1})(\sqrt{q_{t}}-1)
        =&\ \sqrt{q_{t}}.
    \end{align*}
    Thus, the 2-norm difference in \((1+\hat{\delta}^{\textup{ACC}}_{t+1})V^{\textup{ACC}}_{t+1}(s_{t+1})-V^{\textup{ACC}}_{t+1}(s_{t})\) reduces to
    \begin{align}
        &(1+\hat{\delta}^{\textup{ACC}}_{t+1})\frac{m_{t}}{2}\Vert w^{\star}_{t+1} \Vert^{2} 
        -\frac{m_{t}}{2}\Vert x^{\star}_{t} + \sqrt{q_{t}}x^{y}_{t} \Vert^{2}
        \nonumber\\
        =& \frac{1+\hat{\delta}^{\textup{ACC}}_{t+1}}{2L_{t}}\Vert g_{t} \Vert^{2}
        -\left\langle g_{t},x^{y}_{t} \right\rangle
        -\delta^{\textup{ACC}}_{t}\left\langle g_{t},x^{\star}_{t} \right\rangle
        +\frac{m_{t}}{2}\sqrt{q_{t}}(\sqrt{q_{t}}-1)\Vert x^{y}_{t} \Vert^{2}
        \nonumber\\
        &+\frac{m_{t}}{2}(2\sqrt{q_{t}}\langle x^{y}_{t},x^{\star}_{t} \rangle
        +(1+\hat{\delta}^{\textup{ACC}}_{t+1})\Vert x^{\star}_{t} \Vert^{2})
        -\frac{m_{t}}{2}(q_{t}\Vert x^{y}_{t} \Vert^{2} + 2\sqrt{q_{t}}\langle x^{y}_{t},x^{\star}_{t} \rangle
        +\Vert x^{\star}_{t} \Vert^{2})
        \nonumber\\
        =& \frac{1+\hat{\delta}^{\textup{ACC}}_{t+1}}{2L_{t}}\Vert g_{t} \Vert^{2}
        -\left\langle g_{t},x^{y}_{t} \right\rangle
        -\delta^{\textup{ACC}}_{t}\left\langle g_{t},x^{\star}_{t} \right\rangle
        -\frac{m_{t}}{2}\sqrt{q_{t}}\Vert x^{y}_{t} \Vert^{2}
        + \delta^{\textup{ACC}}_{t}\frac{m_{t}}{2}\Vert x^{\star}_{t} \Vert^{2}.
        \label{id:sm-gc-Lyapacc-2normdiff}
    \end{align}
    Finally, combining \labelcref{ineq:sm-gc-Lyapacc-fdiff1,ineq:sm-gc-Lyapacc-fdiff2,ineq:sm-gc-Lyapacc-fdiff3,id:sm-gc-Lyapacc-2normdiff}, cancelling terms and then using the assumption that \(m_{t}>0\), we obtain
    \begin{align*}
        (1+\hat{\delta}^{\textup{ACC}}_{t+1})V^{\textup{ACC}}_{t+1}(s_{t+1})-V^{\textup{ACC}}_{t+1}(s_{t})
        \leq -(m_{t}/2)\sqrt{q_{t}}\Vert x^{y}_{t} \Vert^{2} 
        \leq 0.
    \end{align*}
\end{proof}

\begin{lemma}\label[lemma]{lem:sm-gc-Lyapacc-ascent}
    Let \(f\in\mathcal{S}(L,m)\). 
    If \(L_{t}\geq L_{t-1}\geq m_{t-1}\geq m_{t}\) and \(m_{t}\leq m\), then
    \begin{align}
        V^{\textup{ACC}}_{t+1}
        \leq 
        \frac{q_{t}^{2}}{q_{t-1}^{2}}
        V^{\textup{ACC}}_{t}.
        \label{ineq:sm-gc-Lyapacc-ascent}
    \end{align}
\end{lemma}

\begin{proof}
    If \(m_{t} \leq m_{t-1}\), then
    \begin{align}
        V^{\textup{ACC}}_{t+1}(s_{t})-V^{\textup{ACC}}_{t}(s_{t})
        =&\ \frac{m_{t}}{2}\Vert x^{\star}_{t} + \sqrt{q_{t}}x^{y}_{t} \Vert^{2}
        -\frac{m_{t-1}}{2}\Vert w^{\star}_{t} \Vert^{2}
        \nonumber\\
        \leq&\ \frac{m_{t}}{2}(\Vert x^{\star}_{t} + \sqrt{q_{t}}x^{y}_{t} \Vert^{2}-\Vert w^{\star}_{t} \Vert^{2}).
        \label{ineq:sm-gc-Lyapacc-ascent-V-gap}
    \end{align}
    Hence, to prove \eqref{ineq:sm-gc-Lyapacc-ascent}, we express bounds on \eqref{ineq:sm-gc-Lyapacc-ascent-V-gap} in terms of \(V^{\textup{ACC}}_{t+1}\) and \(V^{\textup{ACC}}_{t}\).
    To this end, we first note that the term in parenthesis on the right-hand side of \eqref{ineq:sm-gc-Lyapacc-ascent-V-gap} can be expressed as
    \begin{align}
        \Vert x^{\star}_{t} + \sqrt{q_{t}}x^{y}_{t} \Vert^{2}-\Vert w^{\star}_{t} \Vert^{2}
        =&\ 2(\sqrt{q_{t}}-\sqrt{q_{t-1}})\langle x^{\star}_{t},x^{y}_{t} \rangle
        + (q_{t}-q_{t-1})\Vert x^{y}_{t} \Vert^{2}.
        \label{id:sm-gc-Lyapacc-ascent-2norm-gap}
    \end{align}
    We consider two cases, each representing a possible sign of \(\langle x^{y}_{t},x^{\star}_{t} \rangle\). 
    
    First, suppose \(\langle x^{y}_{t},x^{\star}_{t} \rangle\geq 0\). 
    If \(L_{t}\geq L_{t-1}\), then \(\sqrt{q_{t-1}}/\sqrt{q_{t}}\leq 1\), so that
    \begin{align}
        \sqrt{q_{t}}-\sqrt{q_{t-1}}
        \leq q_{t}/\sqrt{q_{t}} - \sqrt{q_{t-1}}(\sqrt{q_{t-1}}/\sqrt{q_{t}})
        = (q_{t}-q_{t-1})/\sqrt{q_{t}}.
        \label{ineq:sm-gc-Lyapacc-ascent-case-aux1}
    \end{align}
    Plugging \eqref{ineq:sm-gc-Lyapacc-ascent-aux1} into \eqref{id:sm-gc-Lyapacc-ascent-2norm-gap} and then adding a nonnegative \(\Vert x^{\star}_{t} \Vert^{2}\) term, we get
    \begin{align}
        \Vert x^{\star}_{t} + \sqrt{q_{t}}x^{y}_{t} \Vert^{2}-\Vert w^{\star}_{t} \Vert^{2}
        \leq&\ 2\frac{q_{t}-q_{t-1}}{\sqrt{q_{t}}}\langle x^{\star}_{t},x^{y}_{t} \rangle
        + (q_{t}-q_{t-1})\Vert x^{y}_{t} \Vert^{2}
        + \frac{q_{t}-q_{t-1}}{q_{t}}\Vert x^{\star}_{t} \Vert^{2}
        \nonumber\\
        =&\ \frac{q_{t}-q_{t-1}}{q_{t}}\Vert x^{\star}_{t} + \sqrt{q_{t}}x^{y}_{t} \Vert^{2}.
        \label{ineq:sm-gc-Lyapacc-ascent-case1-aux2}
    \end{align}
    Then, plugging \eqref{ineq:sm-gc-Lyapacc-ascent-case1-aux2} back into \eqref{ineq:sm-gc-Lyapacc-ascent-V-gap} yields
    \begin{align}
        V^{\textup{ACC}}_{t+1}(s_{t})-V^{\textup{ACC}}_{t}(s_{t})
        \leq \frac{q_{t}-q_{t-1}}{q_{t}}\frac{m_{t}}{2}\Vert x^{\star}_{t} + \sqrt{q_{t}}x^{y}_{t} \Vert^{2}
        \leq \frac{q_{t}-q_{t-1}}{q_{t}}V^{\textup{ACC}}_{t+1}(s_{t}),
        \label{ineq:sm-gc-Lyapacc-ascent-case1-aux3}
    \end{align}
    where the last inequality follows from the definition of \(V^{\textup{ACC}}_{t}\), \eqref{def:sm-gc-Lyapacc}, as \(\tilde{f}\geq 0\) implies
    \begin{align}
        V^{\textup{ACC}}_{t+1}(s_{t})
        = \tilde{f}(y_{t}) + \frac{m_{t}}{2}\Vert x^{\star}_{t} + \sqrt{q_{t}}x^{y}_{t} \Vert^{2}
        \geq \frac{m_{t}}{2}\Vert x^{\star}_{t} + \sqrt{q_{t}}x^{y}_{t} \Vert^{2}.
        \label{ineq:sm-gc-Lyapacc-ascent-lb}
    \end{align}
    Thus, rearranging terms in \eqref{ineq:sm-gc-Lyapacc-ascent-case1-aux3} and then multiplying both sides by \(q_{t}/q_{t-1}\), we obtain
    \begin{align*}
        V^{\textup{ACC}}_{t+1}(s_{t})
        \leq \frac{q_{t}}{q_{t-1}}V^{\textup{ACC}}_{t}(s_{t})
        \leq \frac{q_{t}^{2}}{q_{t-1}^{2}}V^{\textup{ACC}}_{t}(s_{t}),
    \end{align*}
    where the second inequality holds because \(q_{t}/q_{t-1}\geq 1\).
    
    Now, suppose \(\langle x^{y}_{t},x^{\star}_{t} \rangle < 0\).
    As in the previous case, we start by bounding the gap \eqref{id:sm-gc-Lyapacc-ascent-2norm-gap}. 
    But given the negative sign of \(\langle x^{y}_{t},x^{\star}_{t} \rangle\) term, we bound the \(\Vert x^{y}_{t} \Vert^{2}\) term instead.
    To this end, we first use the assumption that \(\langle x^{y}_{t},x^{\star}_{t} \rangle < 0\) to establish that
    \begin{align}
        \Vert y^{\star}_{t} \Vert^{2}
        = \Vert y^{\star}_{t} \mp x^{\star}_{t} \Vert^{2}
        = \Vert x^{\star}_{t} - x^{y}_{t} \Vert^{2}
        = \Vert x^{\star}_{t} \Vert^{2} -2\langle x^{\star}_{t},x^{y}_{t} \rangle + \Vert x^{y}_{t} \Vert^{2}
        \geq \Vert x^{\star}_{t} \Vert^{2}.
        \label{ineq:sm-gc-Lyapacc-ascent-yk-ub}
    \end{align}
    To use the above inequality on \eqref{id:sm-gc-Lyapacc-ascent-2norm-gap}, first we rewrite \eqref{id:sm-gc-Lyapacc-ascent-2norm-gap} as
    \begin{align}
        \Vert x^{\star}_{t} + \sqrt{q_{t}}x^{y}_{t} \Vert^{2}-\Vert w^{\star}_{t} \Vert^{2}
        =&\ 2\frac{\sqrt{q_{t}}-\sqrt{q_{t-1}}}{\sqrt{q_{t}}}\langle x^{\star}_{t},\sqrt{q_{t}} x^{y}_{t} \rangle		
        + \sqrt{q_{t}}(\sqrt{q_{t}}-\sqrt{q_{t-1}})\Vert x^{y}_{t} \Vert^{2}
        \nonumber\\
        &+ \sqrt{q_{t-1}}(\sqrt{q_{t}}-\sqrt{q_{t-1}})\Vert x^{y}_{t} \Vert^{2}
        \pm \frac{\sqrt{q_{t}}-\sqrt{q_{t-1}}}{\sqrt{q_{t}}}\Vert x^{\star}_{t} \Vert^{2}
        \nonumber\\
        =&\ \frac{\sqrt{q_{t}}-\sqrt{q_{t-1}}}{\sqrt{q_{t}}}\Vert x^{\star}_{t} + \sqrt{q_{t}}x^{y}_{t} \Vert^{2}
        + \sqrt{q_{t-1}}(\sqrt{q_{t}}-\sqrt{q_{t-1}})\Vert x^{y}_{t} \Vert^{2}
        \nonumber\\
        &- \frac{\sqrt{q_{t}}-\sqrt{q_{t-1}}}{\sqrt{q_{t}}}\Vert x^{\star}_{t} \Vert^{2}.
        \label{id:sm-gc-Lyapacc-ascent-aux1}
    \end{align}
    Then, we apply \eqref{ineq:sm-gc-Lyap1-ascent-elementary} with \(a=w^{\star}_{t}\), \(b=x^{\star}_{t}\) and \(c^{2}=\sqrt{q_{t-1}}/\sqrt{q_{t}}\) to bound the \(\Vert x^{y}_{t} \Vert^{2}\) term on \eqref{id:sm-gc-Lyapacc-ascent-aux1}, as in
    \begin{align}
        \sqrt{q_{t-1}}(\sqrt{q_{t}}-\sqrt{q_{t-1}})\Vert x^{y}_{t} \Vert^{2}
        =&\ \sqrt{q_{t-1}}(\sqrt{q_{t}}-\sqrt{q_{t-1}})\Vert x^{y}_{t} \pm x^{\star}_{t}/\sqrt{q_{t-1}}\Vert^{2}
        \nonumber\\
        =&\ \frac{\sqrt{q_{t}}-\sqrt{q_{t-1}}}{\sqrt{q_{t-1}}}\Vert w^{\star}_{t} - x^{\star}_{t}\Vert^{2}
        \nonumber\\
        \leq&\ \frac{\sqrt{q_{t}}-\sqrt{q_{t-1}}}{\sqrt{q_{t-1}}}\Bigl( 1+\frac{\sqrt{q_{t}}}{\sqrt{q_{t-1}}} \Bigr)\Vert w^{\star}_{t} \Vert^{2}
        \nonumber\\
        & + \frac{\sqrt{q_{t}}-\sqrt{q_{t-1}}}{\sqrt{q_{t-1}}}\Bigl( 1+\frac{\sqrt{q_{t-1}}}{\sqrt{q_{t}}} \Bigr)\Vert x^{\star}_{t} \Vert^{2}
        \nonumber\\
        =&\ \frac{q_{t}-q_{t-1}}{q_{t-1}}\Vert w^{\star}_{t} \Vert^{2}
        +\frac{\sqrt{q_{t}}-\sqrt{q_{t-1}}}{\sqrt{q_{t}}}\frac{\sqrt{q_{t}}+\sqrt{q_{t-1}}}{\sqrt{q_{t-1}}}\Vert x^{\star}_{t} \Vert^{2}.
        \label{ineq:sm-gc-Lyapacc-ascent-aux1}
    \end{align}
    Plugging \eqref{ineq:sm-gc-Lyapacc-ascent-aux1} back into \eqref{id:sm-gc-Lyapacc-ascent-aux1} and then using \eqref{ineq:sm-gc-Lyapacc-ascent-yk-ub}, we get
    \begin{align}
        \Vert x^{\star}_{t} + \sqrt{q_{t}}x^{y}_{t} \Vert^{2}-\Vert w^{\star}_{t} \Vert^{2}
        \leq&\ \frac{\sqrt{q_{t}}-\sqrt{q_{t-1}}}{\sqrt{q_{t}}}\Vert x^{\star}_{t} + \sqrt{q_{t}}x^{y}_{t} \Vert^{2}
        +\frac{q_{t}-q_{t-1}}{q_{t-1}}\Vert w^{\star}_{t} \Vert^{2}
        \nonumber\\
        &+ \frac{\sqrt{q_{t}}-\sqrt{q_{t-1}}}{\sqrt{q_{t}}}\Bigl( \frac{\sqrt{q_{t}}+\sqrt{q_{t-1}}}{\sqrt{q_{t-1}}} -1 \Bigr)\Vert x^{\star}_{t} \Vert^{2}
        \nonumber\\
        \leq&\ \frac{\sqrt{q_{t}}-\sqrt{q_{t-1}}}{\sqrt{q_{t}}}\Vert x^{\star}_{t} + \sqrt{q_{t}}x^{y}_{t} \Vert^{2}
        +\frac{q_{t}-q_{t-1}}{q_{t-1}}\Vert w^{\star}_{t} \Vert^{2}
        \nonumber\\
        &+ \frac{\sqrt{q_{t}}-\sqrt{q_{t-1}}}{\sqrt{q_{t-1}}}\Vert y^{\star}_{t} \Vert^{2}.
        \label{ineq:sm-gc-Lyapacc-ascent-aux2}
    \end{align}
    In turn, since \(m_{t-1}\geq m_{t}\) and \(m_{t}\leq m\), plugging \eqref{ineq:sm-gc-Lyapacc-ascent-aux2} back into \eqref{ineq:sm-gc-Lyapacc-ascent-V-gap} we obtain
    \begin{align}
        V^{\textup{ACC}}_{t+1}(s_{t})-V^{\textup{ACC}}_{t}(s_{t})
        \leq&\ \frac{m_{t}}{2}\frac{\sqrt{q_{t}}-\sqrt{q_{t-1}}}{\sqrt{q_{t}}}\Vert x^{\star}_{t} + \sqrt{q_{t}}x^{y}_{t} \Vert^{2}
        + \frac{m_{t}}{2}\frac{q_{t}-q_{t-1}}{q_{t-1}}\Vert w^{\star}_{t} \Vert^{2}
        \nonumber\\
        &+ \frac{m_{t}}{2}\frac{\sqrt{q_{t}}-\sqrt{q_{t-1}}}{\sqrt{q_{t-1}}}\Vert y^{\star}_{t} \Vert^{2}
        \nonumber\\
        \leq&\ \frac{\sqrt{q_{t}}-\sqrt{q_{t-1}}}{\sqrt{q_{t}}}\frac{m_{t}}{2}\Vert x^{\star}_{t} + \sqrt{q_{t}}x^{y}_{t} \Vert^{2}
        + \frac{q_{t}-q_{t-1}}{q_{t-1}}\frac{m_{t-1}}{2}\Vert w^{\star}_{t} \Vert^{2}
        \nonumber\\
        &+ \frac{\sqrt{q_{t}}-\sqrt{q_{t-1}}}{\sqrt{q_{t-1}}}\frac{m}{2}\Vert y^{\star}_{t} \Vert^{2}.
        \label{ineq:sm-gc-Lyapacc-ascent-aux3}
    \end{align}
    Now, as in \eqref{ineq:sm-gc-Lyapacc-ascent-lb}, applying \(\tilde{f}\geq 0\) to the definition of \(V^{\textup{ACC}}_{t}\), we get
    \begin{align}
        V^{\textup{ACC}}_{t}(s_{t})
        = \tilde{f}(y_{t}) + \frac{m_{t-1}}{2}\Vert w^{\star}_{t}\Vert^{2}
        \geq \frac{m_{t-1}}{2}\Vert w^{\star}_{t}\Vert^{2}.
        \label{ineq:sm-gc-Lyapacc-ascent-lb2}
    \end{align}
    In the same vein, applying \eqref{ineq:strong_convexity} with \(x=x^{\star}\) and \(y=y_{t}\) to the definition of \(V^{\textup{ACC}}_{t}\), we obtain
    \begin{align}
        V^{\textup{ACC}}_{t}(s_{t})
        = \tilde{f}(y_{t}) + \frac{m_{t-1}}{2}\Vert w^{\star}_{t}\Vert^{2}
        \geq \frac{m}{2}\Vert y^{\star}_{t}\Vert^{2}.
        \label{ineq:sm-gc-Lyapacc-ascent-lb3}
    \end{align}
    Plugging in \eqref{ineq:sm-gc-Lyapacc-ascent-lb}, \eqref{ineq:sm-gc-Lyapacc-ascent-lb2} and \eqref{ineq:sm-gc-Lyapacc-ascent-lb3} back into \eqref{ineq:sm-gc-Lyapacc-ascent-aux3}, and then moving all \(V^{{acc}}_{t+1}(s_{t})\) terms to the left-hand side and all \(V^{\textup{ACC}}_{t}(s_{t})\) to the right-hand side, we obtain
    \begin{align}
        \frac{\sqrt{q_{t-1}}}{\sqrt{q_{t}}}V^{\textup{ACC}}_{t+1}(s_{t})
        \leq&\ \Bigl( \frac{q_{t}}{q_{t-1}} + \frac{\sqrt{q_{t}}-\sqrt{q_{t-1}}}{\sqrt{q_{t-1}}} \Bigr)V^{\textup{ACC}}_{t}(s_{t})
        \label{ineq:sm-gc-Lyapacc-ascent-aux4}
    \end{align}
    Multiplying both sides of \eqref{ineq:sm-gc-Lyapacc-ascent-aux4} by \(\sqrt{q_{t}}/\sqrt{q_{t-1}}\), and then using the fact that \(\sqrt{q_{t}}\geq \sqrt{q_{t-1}}\) yields
    \begin{align*}
        V^{\textup{ACC}}_{t+1}(s_{t})
        \leq \frac{\sqrt{q}_{t}}{\sqrt{q_{t-1}}}\Bigl(\frac{q_{t}}{q_{t-1}} + \frac{\sqrt{q_{t}}-\sqrt{q_{t-1}}}{\sqrt{q_{t-1}}} \Bigr)V^{\textup{ACC}}_{t}(s_{t})
        \leq \frac{q_{t}^{2}}{q_{t-1}^{2}}V^{\textup{ACC}}_{t}(s_{t}),
    \end{align*}
    where the last inequality above is a consequence of the following equivalences:
    \begin{align*}
        \frac{q_{t}}{q_{t-1}} + \frac{\sqrt{q_{t}}-\sqrt{q_{t-1}}}{\sqrt{q_{t-1}}}
        \leq \frac{q_{t}^{3/2}}{q_{t-1}^{3/2}}
        \iff&\
        \sqrt{q_{t-1}}q_{t} + q_{t-1}(\sqrt{q_{t}}-\sqrt{q_{t-1}})
        \leq q_{t}^{3/2},
        \\
        \iff&\
        q_{t-1}(\sqrt{q_{t}}-\sqrt{q_{t-1}})
        \leq q_{t}(\sqrt{q_{t}}-\sqrt{q_{t-1}}),
    \end{align*}
    which hold since \(q_{t}\geq q_{t-1}\).
    Therefore, whether \(\langle x^{\star}_{t},x^{y}_{t} \rangle\geq 0\) or \(\langle x^{\star}_{t},x^{y}_{t} \rangle< 0\), we have that
    \begin{align*}
        V^{\textup{ACC}}_{t+1}(s_{t})
        \leq 
        \frac{q_{t}^{2}}{q_{t-1}^{2}}
        V^{\textup{ACC}}_{t}(s_{t})
    \end{align*}
    for all \(s_{t}\), proving \eqref{ineq:sm-gc-Lyapacc-ascent}.
\end{proof}

\begin{theorem}\label{thm:sm-gc-Lyapacc-main}
    Let \(f\in\mathcal{S}(L,m)\) and let \(s_{t+1}\) be generated by \cref{alg:nag-free}.
    If \(m_{N} \leq m\), then for all \(t\geq N\) we have that
    \begin{align}
        f(y_{t+1})-f(x^{\star})
        \leq \frac{q_{t}^{2}}{q_{N}^{2}}\prod\limits_{i=N}^{t+1}(1+\hat{\delta}^{\textup{ACC}}_{i})^{-1}V^{\textup{ACC}}_{N}(s_{N}).
        \label{ineq:sm-gc-Lyapacc-main}
    \end{align}
\end{theorem}

\begin{proof}
    Since \(m_{t}\) is nonincreasing, if \(m_{N} \leq m\), then \(m_{t} \leq m\) for all \(t \geq N\).
    Therefore, if \(m_{N}\leq m\), then \cref{lem:sm-gc-Lyapacc-descent,lem:sm-gc-Lyapacc-ascent} hold for all \(t\geq N\).
    So, plugging \eqref{ineq:sm-gc-Lyapacc-ascent} into \eqref{ineq:sm-gc-Lyapacc-descent} and proceeding with a simple inductive argument, it follows that
    \begin{align*}
        f(y_{t+1})-f(x^{\star})
        \leq V^{\textup{ACC}}_{t+1}(s_{t+1})
        \leq \frac{q_{t}^{2}}{q_{N}^{2}}\prod\limits_{i=N}^{t+1}(1+\hat{\delta}^{\textup{ACC}}_{i})^{-1}V^{\textup{ACC}}_{N}(s_{N}),
    \end{align*}
    where the first inequality follows directly from \eqref{def:sm-gc-Lyapacc}, the definition of \(V^{\textup{ACC}}_{t}\), since
    \begin{align*}
        V^{\textup{ACC}}_{t+1}(s_{t+1})
        = \tilde{f}(y_{t+1}) 
        + (m_{t}/2)\Vert w^{\star}_{t+1} \Vert^{2}
        \geq \tilde{f}(y_{t+1}).
    \end{align*}
\end{proof}

\subsection*{The transition iteration from \(m_{t} \geq m\) to \(m_{t} < m\)}

We now have analyzed iterations of \cref{alg:nag-free} in which \(m_{t}\geq m\) and iterations in which \(m_{t} < m\).
To prove \cref{thm:gc}, it remains to join the two analyses, considering the transition from the first kind of iteration to the second kind.
Since \(m_{t}\) is nonincreasing, there can be at most one such transition.
We start by bounding \(V^{\textup{ACC}}_{t}\) in terms of \(V^{\textup{GD}}_{t}\).

\begin{lemma}\label[lemma]{lem:sm-gc-Lyapgd-2-Lyapacc-ascent}
    If \(f\in \mathcal{S}(L,m)\) and \(L_{t-1}\geq m_{t-1}\geq m>0\), then
    \begin{align}
        V^{\textup{ACC}}_{t} 
        \leq (m_{t-1}/m) V^{\textup{GD}}_{t}.
        \label{ineq:sm-gc-Lyapgd-2-Lyapacc-ascent}
    \end{align}
\end{lemma}

\begin{proof}
    To prove \eqref{ineq:sm-gc-Lyapgd-2-Lyapacc-ascent}, we split the analysis according to the sign of \(\langle x^{\star}_{t},x^{y}_{t} \rangle\) in
    \begin{align}
        V^{\textup{ACC}}_{t}(s_{t})-V^{\textup{GD}}_{t}(s_{t})
        =&\ \frac{m_{t-1}}{2}\Vert x^{\star}_{t} + \sqrt{q_{t-1}}x^{y}_{t} \Vert^{2}
        -\frac{m}{2}\Vert x^{\star}_{t} + \sqrt{p_{t-1}}x^{y}_{t} \Vert^{2}
        \nonumber\\
        =&\ \frac{m_{t-1}-m}{2}\Vert x^{\star}_{t} \Vert^{2}
        +2\frac{\sqrt{L_{t-1}}(\sqrt{m_{t-1}}-\sqrt{m})}{2}\langle x^{\star}_{t},x^{y}_{t} \rangle
        \label{id:sm-gc-Lyapgd-2-Lyapacc-ascent-V-gap}
    \end{align}
    in terms of \(V^{\textup{GD}}_{t}\) or \(V^{\textup{ACC}}_{t}\). 
    We consider the case \(\langle x^{\star}_{t},x^{y}_{t} \rangle \geq 0\) first.
    
    Multiplying the \(\langle x^{\star}_{t},x^{y}_{t} \rangle\) coefficient on \eqref{id:sm-gc-Lyapgd-2-Lyapacc-ascent-V-gap} by \((\sqrt{m_{t-1}}+\sqrt{m})/\sqrt{m_{t-1}} \geq 1\), we obtain
    \begin{align}
        \sqrt{L_{t-1}}(\sqrt{m_{t-1}}-\sqrt{m})
        \leq \sqrt{q_{t-1}}(m_{t-1}-m).
        \label{ineq:sm-gc-Lyapgd-2-Lyapacc-ascent-aux}
    \end{align}
    Hence, if \(\langle x^{\star}_{t},x^{y}_{t} \rangle \geq 0\), then plugging \eqref{ineq:sm-gc-Lyapgd-2-Lyapacc-ascent-aux} into \eqref{id:sm-gc-Lyapgd-2-Lyapacc-ascent-V-gap}, adding a nonnegative \(\Vert x^{y}_{t} \Vert^{2}\) term, completing a square to form a \(\Vert w^{\star}_{t} \Vert^{2}\) term and then applying \eqref{ineq:sm-gc-Lyapacc-ascent-lb2}, we get
    \begin{align*}
        V^{\textup{ACC}}_{t}(s_{t})-V^{\textup{GD}}_{t}(s_{t})
        &\leq \frac{m_{t-1}-m}{m_{t-1}}\frac{m_{t-1}}{2}(\Vert x^{\star}_{t} \Vert^{2}
        + 2\sqrt{q_{t-1}}\langle x^{\star}_{t},x^{y}_{t} \rangle
        + q_{t-1}\Vert x^{y}_{t} \Vert^{2})
        \\
        &\leq \frac{m_{t-1}-m}{m_{t-1}}V^{\textup{ACC}}_{t}(s_{t}).
    \end{align*}		
    Moving terms around and then multiplying both sides by \(m_{t-1}/m > 0\), we get
    \begin{align*}
        V^{\textup{ACC}}_{t}(s_{t}) \leq (m_{t-1}/m)V^{\textup{GD}}_{t}(s_{t}).
    \end{align*}
    Now, suppose \(\langle x^{\star}_{t},x^{y}_{t} \rangle<0\).
    In this case, we cannot increase the \(\langle x^{\star}_{t},x^{y}_{t} \rangle\) coefficient to complete a square as we did before. 
    Instead, we complete a square with the given \(\langle x^{\star}_{t},x^{y}_{t} \rangle\) coefficient by splitting the \(\Vert x^{\star}_{t} \Vert^{2}\) term using the following identity:
    \begin{align}
        \frac{m_{t-1}-m}{m}
        =\frac{\sqrt{m_{t-1}} - \sqrt{m}}{\sqrt{m}}
        \frac{\sqrt{m_{t-1}} + \sqrt{m}}{\sqrt{m}}
        =\frac{\sqrt{m_{t-1}} - \sqrt{m}}{\sqrt{m}}
        \Bigl( 1 + \frac{\sqrt{m_{t-1}}}{\sqrt{m}} \Bigr).
        \label{id:sm-gc-Lyapgd-2-Lyapacc-ascent-xastk-coeff}
    \end{align}
    To handle the \(\Vert x^{\star}_{t} \Vert^{2}\) term that stays out of the square, we use the fact that
    \begin{align}
        \Vert y^{\star}_{t} \Vert^{2}
        = \Vert y^{\star}_{t} \pm x^{\star}_{t} \Vert^{2}
        = \Vert x^{\star}_{t}-x^{y}_{t} \Vert^{2}
        = \Vert x^{\star}_{t} \Vert^{2} -2\langle x^{\star}_{t},x^{y}_{t} \rangle + \Vert x^{y}_{t} \Vert^{2}
        \geq \Vert x^{\star}_{t} \Vert^{2},
        \label{ineq:sm-gc-Lyapgd-2-Lyapcc-ascent-x-ub}
    \end{align}
    which follows since \(\langle x^{\star}_{t},x^{y}_{t} \rangle<0\).
    By the definition of \(\bar{\alpha}_{t-1}\), the assumption that \(m_{t-1}\geq m\) yields \(\bar{\alpha}_{t-1}\geq 0\), thus \(\bar{\alpha}_{t-1}/\sqrt{p_{t-1}}\geq 0\). 
    Moreover, \(U_{t}\geq 0\) because of the assumption that \(L_{t-1}>0\). Since \(\tilde{f}\) is also nonnegative, from \eqref{def:sm-gc-Lyapgd} we obtain
    \begin{align}
        V^{\textup{GD}}_{t}(s_{t})
        \geq W_{t}(s_{t})
        = \tilde{f}(y_{t})
        +(m/2)\Vert z^{\star}_{t} \Vert^{2}
        \geq (m/2)\max( \Vert z^{\star}_{t} \Vert^{2}, \Vert y^{\star}_{t} \Vert^{2} ),
        \label{ineq:sm-gc-Lyapgd-lb}
    \end{align}
    where the right-hand side follows from applying \eqref{ineq:strong_convexity} with \(x=x^{\star}\) and \(y=y_{t}\).
        
    Hence, splitting the coefficient of \(\Vert x^{\star}_{t} \Vert^{2}\) on \eqref{id:sm-gc-Lyapgd-2-Lyapacc-ascent-V-gap} according to \eqref{id:sm-gc-Lyapgd-2-Lyapacc-ascent-xastk-coeff}, adding a positive \(\Vert x^{y}_{t} \Vert^{2}\) term to form a \(\Vert z^{\star}_{t} \Vert^{2}\) term, applying \eqref{ineq:sm-gc-Lyapgd-2-Lyapcc-ascent-x-ub} and then using \eqref{ineq:sm-gc-Lyapgd-lb}, we obtain
    \begin{align*}
        V^{\textup{ACC}}_{t}(s_{t})-V^{\textup{GD}}_{t}(s_{t})
        =&\ \frac{\sqrt{m_{t-1}}-\sqrt{m}}{\sqrt{m}}\frac{m}{2}
        \Bigl( \Bigl( 1 + \frac{\sqrt{m_{t-1}}}{\sqrt{m}} \Bigr)\Vert x^{\star}_{t} \Vert^{2} + 2\sqrt{p_{t-1}}\langle x^{\star}_{t},x^{y}_{t} \rangle \Bigr)
        \\
        \leq&\ \frac{\sqrt{m_{t-1}}-\sqrt{m}}{\sqrt{m}}\frac{m}{2}\Vert z^{\star}_{t} \Vert^{2}
        + \frac{\sqrt{m_{t-1}}-\sqrt{m}}{\sqrt{m}}\frac{\sqrt{m_{t-1}}}{\sqrt{m}}\frac{m}{2}\Vert y^{\star}_{t} \Vert^{2}
        \\
        \leq&\ \frac{m_{t-1}-m}{m}V^{\textup{GD}}_{t}(s_{t}).
    \end{align*}    
    Finally, moving terms around, we get
    \begin{align*}
        V^{\textup{ACC}}_{t}(s_{t})
        \leq (m_{t-1}/m)V^{\textup{GD}}_{t}(s_{t}).
    \end{align*}
    Therefore, both when \(\langle x^{\star}_{t},x^{y}_{t} \rangle\geq 0\) and when \(\langle x^{\star}_{t},x^{y}_{t} \rangle<0\), the inequality
    \begin{align*}
        V^{\textup{ACC}}_{t}(s_{t})
        \leq&\ (m_{t-1}/m)V^{\textup{GD}}_{t}(s_{t})
    \end{align*}
    holds generically for all \(s_{t}\), and we recover \eqref{ineq:sm-gc-Lyapgd-2-Lyapacc-ascent}.
\end{proof}

Now, we are ready to prove \cref{thm:gc}, which combines \cref{thm:sm-gc-Lyapgd-main,thm:sm-gc-Lyapacc-main} into a single result that holds for all iterations.
First, we note that if \(L_{0}\geq m\), then the NAG-free initialization implies that \(m_{0}=L_{0}\geq m\).
Hence, since \(c_{t}\in \mathopen{[}m,L\mathclose{]}\) by \eqref{ineq:effective-curvature}, it follows that \(m_{t}\geq m/\gamma\) for all \(t\geq 0\).
If \(L_{0} \geq L\), then since \eqref{ineq:descent_lemma} holds for all \(\bar{L}\geq L\), it follows that \(L_{t} = L_{0}\) for all \(t\).
Otherwise, if \(L_{0} \leq m\), then since \(L_{t}\) is adjusted by a factor of \(\gamma_{L}\leq 2\) every time \eqref{ineq:descent_lemma} is violated, and \eqref{ineq:descent_lemma} holds for all \(\bar{L}\geq L\), it follows that \(L_{t}\leq 2L\) for all \(t\).
With that in mind, let \(\bar{L}=\max(L_{0},2L)\) and \(\bar{\kappa}=\bar{L}/m\).
Then, we have that
\begin{align}
    p_{t}
    = (L_{t}/m)
    \leq \bar{\kappa}
    &&
    \text{and}
    &&
    q_{t}
    = (L_{t}/m_{t})
    \leq \gamma\bar{\kappa}.
    \label{ineq:sm-gc-p-q-bounds}
\end{align}
Plugging \eqref{ineq:sm-gc-p-q-bounds} into the definitions of \(\delta^{\textup{GD}}_{t}\) and \(\hat{\delta}^{\textup{ACC}}_{t}\), we obtain
\begin{align}
    \delta^{\textup{GD}}_{t+1}
    =&\ 1/(p_{t}-1)
    \geq 1/(\bar{\kappa}-1)
    = \delta(\bar{\kappa}),
    \label{ineq:sm-gc-deltagd-ub}
    \\
    \hat{\delta}^{\textup{ACC}}_{t+1}
    =&\ 1/(\sqrt{q_{t}}-1)
    \geq 1/(\sqrt{}-1)
    = \delta(\sqrt{\gamma\bar{\kappa}}).
    \label{ineq:sm-gc-deltaacc-ub}
\end{align}
Moreover, by design \(m_{0} \geq L_{0}\) and \(m_{t}\) is nonincreasing, therefore \(m_{t} \leq L_{0}\) for all \(t\), and it follows from \eqref{ineq:sm-gc-p-q-bounds} that
\begin{align}
    \frac{q_{t}^{2}}{q_{j}^{2}}\frac{p_{j}^{2}}{p_{0}^{2}}
    = q_{t}^{2}\frac{m_{j}^{2}}{L_{j}^{2}}\frac{L_{j}^{2}}{L_{0}^{2}}
    = q_{t}^{2}\frac{m_{j}^{2}}{L_{0}^{2}}
    \leq q_{t}^{2}
    \leq \gamma^{2}\bar{\kappa}^{2}.
    \label{ineq:sm-gc-qp-prod-ub}
\end{align}

\begin{proof}[Proof of \cref{thm:gc}]
    By \eqref{ineq:sm-gc-deltagd-ub}, we have that
    \begin{align*}
        (1+\delta^{\textup{GD}}_{t+1})^{-1}
        \leq (1+\delta(\bar{\kappa}))^{-1}
        = (\bar{\kappa}-1)/\bar{\kappa}
    \end{align*}
    for all \(t\) such that \(m_{t} \geq m\).
    Hence, by \cref{thm:sm-gc-Lyapgd-main} and \eqref{ineq:sm-gc-p-q-bounds}, it follows that
    \begin{align*}
        f(y_{t})-f(x^{\star})
        \leq 
        \Bigl( 
        1 - \frac{1}{\bar{\kappa}} 
        \Bigr)^{t}
        2\max(L_{0},L)\bar{\kappa}^{2}\Vert x^{\star}_{0} \Vert^{2}
    \end{align*}
    for all \(t\) such that \(m_{t} \geq m\).
    If \(m_{t} \geq m\) for all \(t\), then \eqref{ineq:gc} holds for all \(t\).
    Otherwise, let \(N\) be the first iteration for which \(m_{N+1} \leq m\). 
    By simple manipulations, we have that
    \begin{align*}
        (\sqrt{\gamma\bar{\kappa}} - 1)/\sqrt{\gamma\bar{\kappa}}
        = (1+\delta(\sqrt{\gamma\bar{\kappa}}))^{-1}
        \leq
        (1+\delta(\bar{\kappa}))^{1}
        = (\bar{\kappa} - 1)/\bar{\kappa}
    \end{align*}
    if and only if \(\gamma \leq \bar{\kappa}\).
    Hence, since by assumption \(\bar{\kappa} \geq \kappa \geq 2 = \gamma\), from \eqref{ineq:sm-gc-p-q-bounds} it follows that
    \begin{align*}
        (1+\delta^{\textup{ACC}}_{t+1})^{-1}
        \leq
        (1+\delta(\sqrt{\gamma\bar{\kappa}}))^{-1}
        \leq
        (1+\delta(\bar{\kappa}))^{-1}
    \end{align*}
    for all \(t \geq N\).
    Combining \cref{lem:sm-gc-Lyapgd-2-Lyapacc-ascent} with \cref{thm:sm-gc-Lyapacc-main}, then applying \cref{thm:sm-gc-Lyapgd-main} and plugging in \eqref{ineq:sm-gc-qp-prod-ub} with \(\gamma\leq 2\) and \(m_{N}\leq L_{0}\) into the result, it follows that for all \(t \geq N\),
    \begin{align*}
        f(y_{t+1})-f(x^{\star})
        \leq&\ 
        \frac{q_{t}^{2}}{q_{N}^{2}}
        \Bigl( 
        1-\frac{1}{\bar{\kappa}} 
        \Bigr)^{t-N}
        \frac{m_{N}}{m}
        2\max(L_{0},L)\frac{p_{N}^{2}}{p_{0}^{2}}
        \Bigl( 
        1-\frac{1}{\bar{\kappa}} 
        \Bigr)^{N}
        \Vert x^{\star}_{0} \Vert^{2}
        \\
        \leq&\ 
        \Bigl( 
        1-\frac{1}{\bar{\kappa}} 
        \Bigr)^{t}
        8\max(L_{0},L)\bar{\kappa}^{3}
        \Vert x^{\star}_{0} \Vert^{2}.
    \end{align*}
    Therefore, \eqref{ineq:gc} holds for all \(t \geq 0\).
\end{proof}

\newpage
\clearpage
\section{Local Acceleration}
\label{app:local_acceleration}

In this section, we prove \cref{thm:la}, establishing that NAG-free (\cref{alg:nag-free}) converges at an accelerated rate to \(x^{\star}\), the minimum of \(f\in\mathcal{S}(L,m)\), when its iterates get sufficiently close to \(x^{\star}\).
\begin{align}
    \rnag(z)=\frac{\sqrt{z}-1}{\sqrt{z}}.
    \label{la:def:racc}
\end{align}

\begin{remark}[Deriving most results assuming \(L_{0}=L\).]
    To prove \cref{thm:la}, we assume that some \(L_{0}>L\) is known and can be used to initialize \cref{alg:nag-free}.
    If \(L_{0}>L\), then \(L_{0}\) also satisfies \eqref{ineq:descent_lemma}.
    In turn, if \cref{alg:nag-free} is initialized with \(L_{0}\), then the descent lemma condition given by \(f(y_{t+1}) -f(x_{t}) \leq -(1/2L_{t})\Vert \nabla f(x_{t}) \Vert^{2}\) is always satisfied.
    Hence, \(L_{t}\equiv L_{0}\) for all \(t\).
    Another consequence is that \(m_{0}=L_{0}\leq L\) for all \(t\).
    However, we derive most of the results in this section using \(L\) to avoid working with the cluttered notation \(L_{0}\), since essentially all of the results below hold for any \(L_{0} \geq L\).
    Once we prove the local acceleration results, we plug \(L_{0}>L\) back in.
\end{remark}

To prove \cref{thm:la}, we first consider the simplified case where \(f\) is quadratic, and then analyze the general case as a perturbation of the quadratic case.
To this end, we use the fact established by \cref{thm:gc} that the iterates of \cref{alg:nag-free} converge to \(x^{\star}\) at a rate no worse than that of gradient descent, regardless of the initial point \(x_{0}\).
By that we mean 
\begin{align*}
    f(y_{t}) - f(x^{\star})
    \leq 
    \rgd(\bar{\kappa})^{t}
    8\max(L_{0},L)\bar{\kappa}^{3}
    \Vert x^{\star}_{0} \Vert^{2},
\end{align*}
where \(\bar{\kappa}=\max(L_{0},2L)/m\) and \(\rgd\) is defined over \(\mathopen{[}1,+\infty\mathclose{)}\) as
\begin{align}
    \rgd(z)=\frac{z-1}{z}.
    \label{la:def:rgd}
\end{align}

\subsection{Quadratic Case}\label{la:sub:setup}

First, we assume the objective function is given by \(f(x)=(1/2)(x-x^{\star})^{\T}H(x-x^{\star})\), with \(H\in\mathbb{R}^{d\times d}\).
Every quadratic function \((1/2)x^{\T}Hx + x^{\T}g + f(0)\) can be expressed\footnote{Since \(H\) is strongly convex, \(H\) is invertible and the first-order condition \(Hx^{\star} + g = 0\) admits a unique solution \(x^{\star}\). Plugging \(x=x^{\star}\) back into \(f(x)\), and solving for \(f(0)\), we get that \(f(0)=-\frac{1}{2}x^{\star,\T}Hx^{\star}\). Then, plugging \(f(0)\) back into \(f(x)\) and replacing the inner-product \(g^{\T}x\) with \(g^{\T}x=-x^{\star,\T}Hx=-\frac{1}{2}x^{\star,\T}Hx-\frac{1}{2}x^{\T}Hx^{\star}\) yields the desired form of \(f(x)\).} in the form \((1/2)(x-x^{\star})^{\T}H(x-x^{\star}) + f(x^{\star})\), and minimizing the latter is equivalent to minimizing \((1/2)(x-x^{\star})^{\T}H(x-x^{\star})\).
Thus, \(\nabla f(x) = H(x-x^{\star})\).
Moreover, since \(f \in \mathcal{S}(L,m)\), \(H\) must be positive definite with all \(d\) eigenvalues \(\lambda_{i}\) inside \(\mathopen{[}m,L\mathclose{]}\). 
Hence, assuming \(\lambda_{i}\) ordered by their indices, we have that
\begin{align*}
    m =\lambda_{1} \leq \dots \leq \lambda_{d} = L.
\end{align*}
Since \(\nabla^{2}f\) is locally smooth at \(x^{\star}\), it is also continuous at \(x^{\star}\).
Hence, \(H=\nabla^{2}f(x^{\star})\) is real symmetric in general, not only in the case where \(f\) is quadratic.
Therefore, by the spectral theorem \cite{Conway2019} we can pick eigenvectors \(v_{i}\) associated with \(\lambda_{i}\) such that \(\{v_{i}\}_{i=1}^{d}\) form an orthonormal basis for \(\mathbb{R}^{d}\). 
Then, \(x_{t}-x^{\star}\) and \(y_{t+1}-x^{\star}\) can be uniquely decomposed in this eigenbasis as
\begin{align}
    x_{t} -x^{\star}
    &= \sum\limits_{i=1}^{d} x_{t,i}v_{i},
    \label{la:eq:eigendecomposition}
    \\
    y_{t+1} -x^{\star}
    &= x_{t}-\frac{1}{L}\nabla f(x_{t}) -x^{\star}
    = \sum\limits_{i=1}^{d}\Bigl( 1-\frac{\lambda_{i}}{L} \Bigr)x_{t,i}v_{i}.
    \label{la:id:GD-step}
\end{align}
Substituting \labelcref{la:id:GD-step} for the descent steps yields
\begin{align}
    \sum\limits_{i=1}^{d}x_{t+1,i}v_{i}
    &= x_{t+1}-x^{\star}
    \nonumber\\
    &= (1+\beta_{t})y_{t+1} -\beta_{t}y_{t} -x^{\star} \mp\beta_{t}x^{\star}
    \nonumber\\
    &= (1+\beta_{t})(y_{t+1}-x^{\star}) -\beta_{t}(y_{t}-x^{\star})
    \nonumber\\
    &= \sum\limits_{i=1}^{d}\left[ (1+\beta_{t})\Bigl( 1 -\frac{\lambda_{i}}{L} \Bigr) x_{t,i} -\beta_{t}\Bigl(1 -\frac{\lambda_{i}}{L}\Bigr)x_{t-1,i} \right]v_{i},
    \label{la:eq:momentum-step}
\end{align}
where \(\beta_{t}=\beta(m_{t})\) is a particular value taken by the function \(\beta:\mathopen{(}0,L\mathclose{]}\to \mathopen{[}0,1\mathclose{)}\) defined by
\begin{align}
    \beta(m_{t})
    =\frac{\sqrt{L}-\sqrt{m_{t}}}{\sqrt{L}+\sqrt{m_{t}}}.
    \label{la:def:momentum-map}
\end{align}

That is, each component \(x_{t,i}\) of \(x_{t}-x^{\star}\) behaves as an LTV system \cite{Hespanha2009}. 
But if \(\gamma>1\), then by design \(m_{t}\) decreases by a factor of at least \(\gamma\) every time it is updated, which implies \(m_{t}\) only changes finitely many times.
Hence, each \(x_{t,i}\) behaves as a sequence of linear time-invariant (LTI) systems described by
\begin{align}
    X_{t+1,i}
    = G_{i}(m_{t})
    X_{t,i},
    \label{la:eq:Xit-dynamics}
\end{align}
where \(X_{t,i}\) denote the vectors of current and past coordinates stacked together as in
\begin{align}
    X_{t,i} 
    =\begin{cases}
        \left[ x_{0,i} \quad x_{0,i} \right]^{\T}, & t=0,
        \\[2mm]
        \left[ x_{t-1,i} \quad x_{t,i} \right]^{\T}, & t>0,
    \end{cases}
    \label{la:eq:Xit}
\end{align}
and \(G_{i}:\mathopen{(}0,L\mathclose{]}\to\mathbb{R}^{2\times 2}\) map estimates \(m_{t}\) to system matrices given by
\begin{align}
    G_{i}(m_{t}) 
    = 
    \begin{bmatrix} 
        0 & 1 \\ 
        -\beta(m_{t})\Bigl( 1-\dfrac{\lambda_{i}}{L} \Bigr) & (1+\beta(m_{t}))\Bigl( 1-\dfrac{\lambda_{i}}{L} \Bigr) 
    \end{bmatrix}.
    \label{la:def:Git}
\end{align}

Hence, the dynamics of \eqref{la:eq:Xit-dynamics} is determined by the eigenvalues of \(G_{i}(m_{t})\), which are given by
\begin{align}
    \lambda(G_{i}(m_{t}))
    =
    \frac{1+\beta(m_{t})}{2}\Bigl( 1-\dfrac{\lambda_{i}}{L} \Bigr)
    \pm \sqrt{\dfrac{(1+\beta(m_{t}))^{2}}{4}\Bigl( 1-\dfrac{\lambda_{i}}{L} \Bigr)^{2} - \beta(m_{t})\Bigl( 1-\dfrac{\lambda_{i}}{L} \Bigr)}.
    \label{la:id:eig}
\end{align}
The greatest between the two eigenvalues given by \eqref{la:id:eig} defines the so-called spectral radius \cite{Golub2013} of \(G_{i}\), captured by the function \(\rho:\mathopen{(}0,L\mathclose{]}\times \mathbb{R}_{\geq 0}\to\mathbb{R}_{\geq 0}\) defined by
\begin{align}
    \rho(s,\ell)
    =\max
    \left\lvert 
    \frac{1+\beta(m_{t})}{2}\Bigl( 1-\dfrac{\ell}{L} \Bigr)
    \pm\sqrt{\dfrac{(1+\beta(m_{t}))^{2}}{4}\Bigl( 1-\dfrac{\ell}{L} \Bigr)^{2} - \beta(m_{t})\Bigl( 1-\dfrac{\ell}{L} \Bigr)}
    \right\rvert.
    \label{la:def:rho}
\end{align}
We also define a function \(\varrho:\mathopen{(}0,L\mathclose{]}\times \mathbb{R}_{\geq 0}\to\mathbb{R}_{\geq 0}\) for the least of the two eigenvalues:
\begin{align}
    \varrho(s,\ell)
    =\min
    \left\lvert 
    \frac{1+\beta(m_{t})}{2}\Bigl( 1-\dfrac{\ell}{L} \Bigr)
    \pm\sqrt{\dfrac{(1+\beta(m_{t}))^{2}}{4}\Bigl( 1-\dfrac{\ell}{L} \Bigr)^{2} - \beta(m_{t})\Bigl( 1-\dfrac{\ell}{L} \Bigr)}
    \right\rvert.
    \label{la:def:varrho}
\end{align}
Note that \(\rho\) and \(\varrho\) take an argument ``\(\ell\)'' that need not be an actual eigenvalue \(\lambda_{i}\) of \(H\), which will be convenient later on. 
Next, we derive several auxiliary results on \(\rho\) and \(\varrho\) that will be useful later on.

\subsection*{Properties of the Spectral Radius \texorpdfstring{\(\rho\)}{rho}}

\begin{lemma}\label{la:lem:rho-expression}
    Let \(s,\ell\in\mathopen{(}0,L\mathclose{]}\). 
    The two numbers
    \begin{align}
        \frac{1+\beta(s)}{2}\Bigl( 1-\dfrac{\ell}{L} \Bigr)
        \pm\sqrt{\dfrac{(1+\beta(s))^{2}}{4}\Bigl( 1-\dfrac{\ell}{L} \Bigr)^{2} - \beta(s)\Bigl( 1-\dfrac{\ell}{L} \Bigr)}
        \label{la:def:rho-expression-eigs}
    \end{align}
    have nonzero imaginary part if and only if \(s < \ell < L\). If \eqref{la:def:rho-expression-eigs} have zero imaginary part, then
    \begin{align}
        \rho(s,\ell)
        = \frac{1+\beta(s)}{2}\Bigl( 1-\dfrac{\ell}{L} \Bigr)
        +\sqrt{\dfrac{(1+\beta(s))^{2}}{4}\Bigl( 1 - \dfrac{\ell}{L} \Bigr)^{2} - \beta(s)\Bigl( 1-\dfrac{\ell}{L} \Bigr)},
        \label{la:id:rho-expression-real}
    \end{align}
    otherwise, if \eqref{la:def:rho-expression-eigs} have nonzero imaginary part, then
    \begin{align}
        \rho(s,\ell) = \sqrt{\beta(s)\Bigl( 1-\dfrac{\ell}{L} \Bigr)}.
        \label{la:id:rho-expression-complex}
    \end{align}
\end{lemma}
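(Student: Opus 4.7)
The plan is to split the lemma into two independent sub-tasks: deciding when the two quantities in \eqref{la:def:rho-expression-eigs} are complex, and computing their maximum modulus in each regime. Let
\[
    D(s,\ell) = \dfrac{(1+\beta(s))^{2}}{4}\Bigl( 1-\dfrac{\ell}{L} \Bigr)^{2} - \beta(s)\Bigl( 1-\dfrac{\ell}{L} \Bigr)
\]
denote the expression under the radical, so that the quantities in \eqref{la:def:rho-expression-eigs} have nonzero imaginary part exactly when \(D(s,\ell)<0\).

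For the first sub-task, I would factor \(D(s,\ell) = (1-\ell/L)\bigl[(1+\beta(s))^{2}(1-\ell/L)/4 - \beta(s)\bigr]\). Since \(\ell\in\mathopen{(}0,L\mathclose{]}\) forces \(1-\ell/L\geq 0\), the sign of \(D\) is controlled by the bracket, so \(D(s,\ell)<0\) iff \(\ell<L\) and \((1+\beta(s))^{2}(1-\ell/L)<4\beta(s)\). The only real calculation is to simplify \(4\beta(s)/(1+\beta(s))^{2}\) using the elementary identities \(1+\beta(s)=2\sqrt{L}/(\sqrt{L}+\sqrt{s})\) and \(\beta(s)=(\sqrt{L}-\sqrt{s})/(\sqrt{L}+\sqrt{s})\) that come straight from \eqref{la:def:momentum-map}; this gives \(4\beta(s)/(1+\beta(s))^{2}=1-s/L\), so the condition reduces cleanly to \(s<\ell<L\), as stated.

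For the second sub-task, when \(D(s,\ell)\geq 0\) the two expressions in \eqref{la:def:rho-expression-eigs} are real, and both the principal term \((1+\beta(s))(1-\ell/L)/2\) and \(\sqrt{D(s,\ell)}\) are nonnegative (the former because \(\beta(s)\geq 0\) and \(\ell\leq L\)), so the larger modulus is attained by the \(+\) branch, giving \eqref{la:id:rho-expression-real}. When \(D(s,\ell)<0\) the two quantities are a conjugate pair \(A\pm iB\) with \(A=(1+\beta(s))(1-\ell/L)/2\) and \(B^{2}=-D(s,\ell)\); summing \(A^{2}+B^{2}\) cancels the \((1+\beta(s))^{2}(1-\ell/L)^{2}/4\) contribution and leaves \(\beta(s)(1-\ell/L)\), which after taking the square root is exactly \eqref{la:id:rho-expression-complex}.

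I do not anticipate a real obstacle: the lemma is essentially a repackaging of the quadratic formula together with the single identity \(4\beta(s)/(1+\beta(s))^{2}=1-s/L\). The only subtlety is the edge case \(\ell=L\), where \(D(s,\ell)=0\) and both candidate formulas collapse to zero, so the strict inequality \(\ell<L\) in the complex regime is needed only to unambiguously assign each \((s,\ell)\) to one of the two expressions.
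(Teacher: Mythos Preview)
Your proposal is correct and follows essentially the same route as the paper: both arguments reduce the sign of the discriminant to a comparison between \(1-\ell/L\) and \(4\beta(s)/(1+\beta(s))^{2}\), simplify the latter via the definition \eqref{la:def:momentum-map} (you get \(1-s/L\) directly, the paper equivalently computes \((1+\beta)/(1-\beta)=\sqrt{L/s}\)), and then read off \(\rho\) from the sign of the principal term in the real case and from \(|A\pm iB|^{2}=A^{2}+B^{2}\) in the complex case. Your handling of the real branch is slightly slicker, since \(A,B\ge 0\) already gives \(|A+B|\ge|A-B|\) without needing the paper's intermediate check that \(A\ge\sqrt{D}\).
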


\begin{proof}
    Let \(r_{+}\) be defined by
    \begin{align*}
        r_{+}
        = \frac{1+\beta(s)}{2}\Bigl( 1-\dfrac{\ell}{L} \Bigr)
        +\sqrt{\dfrac{(1+\beta(s))^{2}}{4}\Bigl( 1-\dfrac{\ell}{L} \Bigr)^{2} - \beta(s)\Bigl( 1-\dfrac{\ell}{L} \Bigr)},
    \end{align*}
    and let \(r_{-}\) be defined by
    \begin{align*}
        r_{-}
        = \frac{1+\beta(s)}{2}\Bigl( 1-\dfrac{\ell}{L} \Bigr)
        -\sqrt{\dfrac{(1+\beta(s))^{2}}{4}\Bigl( 1-\dfrac{\ell}{L} \Bigr)^{2} - \beta(s)\Bigl( 1-\dfrac{\ell}{L} \Bigr)}.
    \end{align*}
    Also, let \(\Delta\) be defined by
    \begin{align}
        \Delta(s,\ell)
        = \dfrac{(1+\beta(s))^{2}}{4}\Bigl( 1-\dfrac{\ell}{L} \Bigr)^{2} - \beta(s)\Bigl( 1-\dfrac{\ell}{L} \Bigr).
        \label{la:def:Delta}
    \end{align}
    
    If \(\ell=0\), then \(\ell\leq s\) since \(s\geq 0\), because \(s\in \mathopen{(}0,L\mathclose{]}\). 
    Moreover, plugging \(\ell=0\) into \eqref{la:def:Delta}, we obtain
    \begin{align*}
        \Delta(s,\ell) = \frac{(1+\beta(s))^{2}}{4} - \beta(s) < 0
        \iff (1-\beta(s))^{2} = (1+\beta(s))^{2} -4\beta(s) < 0.
    \end{align*}
    Hence, \(\Delta(s,\ell)\geq 0\) because \((1-\beta)^{2}\geq 0\). 
    Furthermore, \(1-\ell/L=1\) and \(\rho(s,\ell)\) trivially reduces to form \eqref{la:id:rho-expression-real}.
    
    Now, suppose \(\ell>0\). 
    If \(\ell=L\), then \(1-\ell/L=0\) and \(\rho(s,\ell)=0\) trivially has zero imaginary part and takes the form \eqref{la:id:rho-expression-real}.
    Otherwise, if \(\ell < L\), then \(1 - \ell/L > 0\) and \(\Delta < 0\) if and only if
    \begin{align}
        (1+\beta)^{2}\Bigl( 1-\frac{\ell}{L} \Bigr) - 4\beta < 0
        \iff (1-\beta)^{2}L < (1+\beta)^{2}\ell
        \iff \frac{L}{\ell} < \left( \frac{1+\beta}{1-\beta} \right)^{2},
        \label{la:iff:Delta=0}
    \end{align}
    where \(L/\ell\) is well-defined since \(\ell>0\), by assumption, while \((1-\beta)^{-1}\) is well-defined because \(0\leq \beta(s)<1\) for all \(s\in\mathopen{(}0,L\mathclose{]}\). 
    Plugging \eqref{la:def:momentum-map} into \(\beta\), the squared factor on the right-hand side of \eqref{la:iff:Delta=0} turns into
    \begin{align}
        \frac{1+\beta}{1-\beta}
        = \frac{2\sqrt{L}/(\sqrt{L} + \sqrt{s})}{2\sqrt{s}/(\sqrt{L}+\sqrt{s})}
        = \sqrt{L/s}.
        \label{la:id:ratio-beta}
    \end{align}
    Thus, by \eqref{la:iff:Delta=0}, \(\Delta(s,\ell)\) is negative if and only if \( s < \ell\). 
    Hence, if \(s \geq \ell\), then \(\Delta \geq 0\), which combined with the the assumption that \(L > \ell\) implies
    \begin{align*}
        1-\dfrac{\ell}{L}
        = \left\lvert 1-\dfrac{\ell}{L} \right\rvert
        >0,
    \end{align*}
    so that
    \begin{align*}
        \frac{1+\beta}{2}\Bigl( 1-\dfrac{\ell}{L} \Bigr)
        \geq \sqrt{\frac{(1+\beta)^{2}}{4}\Bigl( 1 - \dfrac{\ell}{L} \Bigr)^{2} -\beta\Bigl( 1-\dfrac{\ell}{L} \Bigr)}
        =\sqrt{\Delta}.
    \end{align*}
    Plugging the above inequality back into \(r_{+}\) and \(r_{-}\), we obtain
    \begin{align*}
        \lvert r_{+} \rvert
        &= r_{+}
        \\
        &= \frac{1+\beta}{2}\Bigl( 1-\dfrac{\ell}{L} \Bigr)
        +\sqrt{\Delta}
        \\
        &\geq \frac{1+\beta}{2}\Bigl( 1- \dfrac{\ell}{L}\Bigr) 
        -\sqrt{\Delta}
        \\
        &= \left\lvert \frac{1+\beta}{2}\Bigl( 1-\dfrac{\ell}{L} \Bigr) -\sqrt{\Delta} \right\rvert
        \\
        &= \lvert r_{-} \rvert.
    \end{align*}
    That is, \(\rho(s,\ell)\) takes the form \eqref{la:id:rho-expression-real}.
    
    Finally, if \(s< \ell\), then \(\Delta(s,\ell)\) is negative, so \(r_{+}\) and \(r_{-}\) are complex conjugates with the same norm given by
    \begin{align*}
        \lvert r_{+} \rvert
        = \sqrt{\frac{1+\beta(s)^{2}}{4}\Bigl( 1-\dfrac{\ell}{L} \Bigr)^{2}
        + \beta(s)\Bigl( 1-\dfrac{\ell}{L} \Bigr) -\dfrac{(1+\beta(s))^{2}}{4}\Bigl( 1-\dfrac{\ell}{L} \Bigr)^{2}}
        = \sqrt{\beta(s)\Bigl( 1-\dfrac{\ell}{L} \Bigr)}.
    \end{align*}
    Therefore, \(\rho(s,\ell)\) takes the form \eqref{la:id:rho-expression-complex}.
\end{proof}

\begin{corollary}\label[corollary]{la:cor:Gi-eig}
    If \(m_{t} \in \mathopen{(}0,L\mathclose{]}\), then the eigenvalues of \(G_{i}(m_{t})\) have nonzero imaginary part if and only if \(m_{t}<\lambda_{i}<L\). 
    Moreover, if \(\lambda_{i}<L\), then the eigenvalues of \(G_{i}(m_{t})\) coincide if and only if \(m_{t}=\lambda_{i}\).
    Furthermore, if \(\lambda_{i} < m_{t}\), then the eigenvalues of \(G_{i}(m_{t})\) are positive and distinct.
\end{corollary}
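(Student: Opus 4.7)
The plan is to derive all three claims directly from Lemma \ref{la:lem:rho-expression} specialized to $s = m_t$ and $\ell = \lambda_i$, exploiting the fact that the two eigenvalues of $G_i(m_t)$ given in (\ref{la:id:eig}) coincide with the expressions (\ref{la:def:rho-expression-eigs}) that the lemma analyzes. For the first claim, Lemma \ref{la:lem:rho-expression} states exactly that (\ref{la:def:rho-expression-eigs}) has nonzero imaginary part if and only if $s < \ell < L$, which under this substitution is $m_t < \lambda_i < L$.

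For the second claim, I would reuse the discriminant analysis already carried out in the proof of Lemma \ref{la:lem:rho-expression}: the eigenvalues coincide precisely when $\Delta(m_t,\lambda_i) = 0$, where $\Delta$ is defined in (\ref{la:def:Delta}). The equivalences (\ref{la:iff:Delta=0}) together with the algebraic simplification (\ref{la:id:ratio-beta}) show that, under $0 < \lambda_i < L$, the sign of $\Delta$ is governed by comparing $m_t$ with $\lambda_i$: strictly negative when $m_t < \lambda_i$ and strictly positive when $m_t > \lambda_i$. Hence $\Delta = 0$ if and only if $m_t = \lambda_i$, proving the claim.

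For the third claim, the same discriminant analysis applied to $\lambda_i < m_t$ gives $\Delta > 0$, so the eigenvalues are real and distinct. To conclude positivity, I would read off Vieta's formulas from the characteristic polynomial $\mu^2 - (1+\beta(m_t))(1 - \lambda_i/L)\mu + \beta(m_t)(1 - \lambda_i/L)$ of the $2\times 2$ matrix $G_i(m_t)$: the sum of the roots is $(1+\beta(m_t))(1 - \lambda_i/L)$ and their product is $\beta(m_t)(1 - \lambda_i/L)$. Since $\lambda_i < m_t \leq L$ forces $\lambda_i < L$, and $\beta(m_t) > 0$ whenever $m_t < L$ by definition (\ref{la:def:momentum-map}), both sum and product are strictly positive, so the two distinct real roots must themselves be strictly positive. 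There is no substantive obstacle here; the only thing to keep straight is the sign chart of $\Delta$ across the three regimes $m_t < \lambda_i$, $m_t = \lambda_i$, and $m_t > \lambda_i$, which has already been done inside the proof of the preceding lemma and can simply be cited.
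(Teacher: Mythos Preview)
Your proposal is correct and follows essentially the same route as the paper's proof: both invoke Lemma~\ref{la:lem:rho-expression} with $s=m_t$, $\ell=\lambda_i$ for the first claim and reuse the discriminant analysis \eqref{la:iff:Delta=0}--\eqref{la:id:ratio-beta} for the second. For the third claim the paper argues directly from the quadratic formula that $\tfrac{1+\beta}{2}(1-\lambda_i/L)\geq\sqrt{\Delta}>0$, whereas you use Vieta's formulas on the characteristic polynomial; these are equivalent one-line observations, so there is no substantive difference.
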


\begin{proof}
    Plugging \(s=m_{t}\) and \(\ell=\lambda_{i}\) into \eqref{la:def:rho-expression-eigs}, we recover the two eigenvalues of \(G_{i}(m_{t})\) which, by \cref{la:lem:rho-expression}, have nonzero imaginary part if and only if \(m_{t} < \lambda_{i} < L\). 

    Moreover, the eigenvalues of \(G_{i}(m_{t})\) coincide if and only if the discriminant \eqref{la:def:Delta} is zero for \(\ell=\lambda_{i}\) and \(s=m_{t}\).
    In turn, by \eqref{la:iff:Delta=0} and \eqref{la:id:ratio-beta}, the discriminant \eqref{la:def:Delta} is zero for \(\ell=\lambda_{i}\) and \(s=m_{t}\) if and only if \(m_{t}=\lambda_{i}\).

    Furthermore, if \(\lambda_{i} < m_{t}\), then for all \(\lambda_{i}\in\mathopen{(}0,L\mathclose{]}\), we have that
    \begin{align*}
        \frac{1+\beta}{2}\Bigl( 1 - \frac{\lambda_{i}}{L} \Bigr)
        \geq \sqrt{\Delta(m_{t},\lambda_{i})}
        > 0.
    \end{align*}
    Therefore, the eigenvalues of \(G_{i}(m_{t})\) are positive and distinct.
\end{proof}

\begin{lemma}\label{la:lem:top-rho}
    Given \(a\) and \(b\) such that \(0\leq a<b\leq L\), then \(\rho(s,b) < \rho(s,a)\) for all \(s \in \mathopen{(}0,L\mathclose{]}\).
    In particular, if \(b \in \mathopen{(}m,L\mathclose{]}\), then \(\rho(s,b) < \rho(s,m)\) for all \(s \in \mathopen{(}0,L\mathclose{]}\).
\end{lemma}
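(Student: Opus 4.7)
The plan is to fix $s \in \mathopen{(}0, L\mathclose{]}$ and show that $\ell \mapsto \rho(s, \ell)$ is strictly decreasing on $\mathopen{[}0, L\mathclose{]}$; the conclusion $\rho(s, b) < \rho(s, a)$ then follows immediately from $a < b$, and the ``in particular'' assertion is the case $a = m$. Abbreviating $\beta = \beta(s) \in \mathopen{[}0, 1\mathclose{)}$, I would change variables to $u = 1 - \ell/L$ (so increasing $\ell$ corresponds to decreasing $u$) and split according to the two regimes of \cref{la:lem:rho-expression}, which are separated by $\ell = s$, equivalently $u = 1 - s/L$. The algebraic identity that aligns the two pieces is $1 - s/L = 4\beta/(1+\beta)^{2}$, which follows from \eqref{la:id:ratio-beta} and will be used repeatedly.

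In the complex regime $s < \ell < L$ (so $0 < u < 1 - s/L$), \cref{la:lem:rho-expression} gives $\rho = \sqrt{\beta u}$, which is manifestly strictly increasing in $u$, hence strictly decreasing in $\ell$. In the real regime $\ell \leq s$ (so $u \geq 1 - s/L$), the same lemma gives
\begin{align*}
    \rho = \frac{1+\beta}{2}\,u + \sqrt{\Delta(u)},
    \qquad \Delta(u) = u \cdot \Bigl( \tfrac{(1+\beta)^{2}}{4}\,u - \beta \Bigr).
\end{align*}
The key observation is that the threshold $u = 1 - s/L$ is exactly $4\beta/(1+\beta)^{2}$, which is precisely the zero of the linear factor of $\Delta$. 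Consequently, on $\mathopen{[}1 - s/L,\, 1\mathclose{]}$ both factors of $\Delta(u)$ are nonnegative and nondecreasing in $u$, so $\Delta$ is nondecreasing; combined with the strictly increasing linear term $(1+\beta)u/2$, this yields strict monotonicity of $\rho$ in $u$ on this interval.

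To glue the two regimes together I would verify continuity at $u = 1 - s/L$, where both formulas collapse to the common value $2\beta/(1+\beta)$, and at $u = 0$, where the real formula gives $\rho(s, L) = 0$. Strict monotonicity inside each open regime together with continuity at the boundaries then upgrades to strict monotonicity on all of $\mathopen{[}0, 1\mathclose{]}$, yielding the claim. The only real obstacle is the piecewise nature of $\rho$; once the regime boundary is recognized as the zero of the linear factor of $\Delta$, both the monotonicity on each side and the matching at the boundary become elementary algebra.
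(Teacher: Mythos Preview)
Your proof is correct and takes a genuinely different route from the paper. The paper fixes the pair $(a,b)$ and does a case split on the position of $b$ relative to $s$: in Case~1 ($a<b\leq s$, both in the real regime) it proves $\Delta(s,b)<\Delta(s,a)$ by a direct algebraic manipulation amounting to $(1+\beta)^{2}(a+b)/L<2(1+\beta^{2})$; in Case~2 ($s<b$) it bounds the complex-form $\rho(s,b)$ by $\tfrac{1+\beta}{2}(1-a/L)$ and then by $\rho(s,a)$. You instead establish monotonicity of $\ell\mapsto\rho(s,\ell)$ once and for all via the change of variables $u=1-\ell/L$ and the factorization $\Delta(u)=u\bigl(\tfrac{(1+\beta)^{2}}{4}u-\beta\bigr)$, observing that the regime boundary $u=1-s/L$ coincides with the root $4\beta/(1+\beta)^{2}$ of the linear factor. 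This makes the nonnegativity and monotonicity of $\Delta$ on the real side immediate, replacing the paper's Case~1 algebra, and the continuity-and-glue argument handles all relative positions of $a,b,s$ uniformly (including $s<a<b$, which the paper's Case~2 as written does not explicitly treat, since the step $\Delta(s,a)\geq 0$ there tacitly assumes $a\leq s$). Your approach is more structural; the paper's is more direct but requires more bookkeeping.
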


\begin{proof}
    Consider the following two cases:
    
    \hypertarget{case1-la:lem:top-spectral-radii}{\textbf{case 1 (\(b \leq s \)).}} 
    By assumption, \(s \in \mathopen{(}0,L\mathclose{]}\), hence \(s \leq L\) and if \(b \leq s\), then \(1 - b/L \geq 0\). 
    Moreover, \(a < b \leq s\), so \cref{la:lem:rho-expression} implies \(\rho(s,a)\) and \(\rho(s,b)\) both take form \eqref{la:id:rho-expression-real}.
    If, in addition \(s=L\), then \(\beta=0\), which when substituted back into \eqref{la:id:rho-expression-real} yields
    \begin{align*}
        \rho(s,b) 
        = 1 - b/L
        < 1 - a/L
        = \rho(s,a).
    \end{align*}
    Otherwise, if \(s<L\), then \(\beta>0\). Moreover, \(a < b \leq s\), so that \(b-a>0\), therefore
    \begin{align*}
        \Delta(s,b)
        < \Delta(s,a)
        \iff&&\ (1+\beta)^{2}\frac{b^{2}-a^{2}}{L}
        &< 2((1+\beta)^{2}-2\beta)(b-a) 
        \\
        \iff&&\ (1+\beta)^{2}\frac{b+a}{L}
        &< 2(1+\beta^{2}) 
        \\
        \iff&&\ \frac{4(L/s)}{(\sqrt{L/s}+1)^{2}}\frac{b+a}{L}
        &< 2(1+\beta^{2}),
    \end{align*}
    where the last equivalence follows at once from \eqref{la:def:momentum-map}.
    Furthermore, \(a < b \leq s < L\), thus \(\sqrt{L/s}+1> 2\) and
    \begin{align*}
        \frac{4L/s}{(\sqrt{L/s}+1)^{2}}\frac{b+a}{L}
        =\frac{4}{(\sqrt{L/s}+1)^{2}}\frac{b+a}{s}
        \leq \frac{8}{(\sqrt{L/s}+1)^{2}}
        < 2(1+\beta^{2}).
    \end{align*}
    Thus, \(\Delta(s,b) < \Delta(s,a)\). 
    Hence, since \(\rho(s,a)\) and \(\rho(s,b)\) are given by \eqref{la:id:rho-expression-real} and \(1-b/L < 1-a/L\), it follows that
    \begin{align*}
        \rho(s,b) 
        = \frac{1+\beta}{2}\Bigl( 1 - \dfrac{b}{L}\Bigr)
        + \sqrt{\Delta(s,b)}
        < \frac{1+\beta}{2}\Bigl( 1 - \dfrac{a}{L}\Bigr)
        + \sqrt{\Delta(s,a)}
        = \rho(s,a).
    \end{align*}
    
    \hypertarget{case2-la:lem:top-spectral-radii}{\textbf{case 2 (\(s < b \)).}} 
    By assumption \(b \leq L\), so \(a < b \leq L\) and it follows that
    \begin{align*}
        \dfrac{(1+\beta)^{2}}{4}\Bigl( 1 - \dfrac{a}{L} \Bigr)^{2} - \beta\Bigl( 1 - \dfrac{b}{L}\Bigr)
        >
        \dfrac{(1+\beta)^{2}}{4}\Bigl( 1 - \dfrac{a}{L} \Bigr)^{2} - \beta\Bigl( 1 - \dfrac{a}{L}\Bigr)
        \geq 0,
    \end{align*}
    that is
    \begin{align*}
        0\leq \beta\Bigl( 1 - \dfrac{b}{L}\Bigr) < \dfrac{(1+\beta)^{2}}{4}\Bigl( 1 - \dfrac{a}{L} \Bigr)^{2}.
    \end{align*}
    If, in addition \(b=L\), then \(\rho(s,b)=0\) the above inequality implies \(\rho(s,b)<\rho(s,a)\).
    Otherwise, it must be that \(s < b\), in which case \(\rho(s,b)\) takes the form \eqref{la:id:rho-expression-complex} by \cref{la:lem:rho-expression} and the above inequality yields
    \begin{align*}
        \rho(s,b)
        = \sqrt{\beta\Bigl( 1 - \dfrac{b}{L}\Bigr)}
        < \dfrac{1+\beta}{2}\Bigl( 1 - \dfrac{a}{L} \Bigr)
        \leq \rho(s,a).
    \end{align*}
\end{proof}

\begin{lemma}\label{la:lem:rho-lt-1}
    For every \(s\in \mathopen{(}0,L\mathclose{]}\) and every \(\ell \in \mathopen{[}m,L\mathclose{]}\), \(\rho(s,\ell) \leq \rgd(\kappa) < 1\).
\end{lemma}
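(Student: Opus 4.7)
The plan is to reduce the two-variable claim to the worst-case eigenvalue \(\ell = m\) via monotonicity, then resolve the resulting one-variable bound by splitting according to whether the eigenvalues of \(G_{i}(s)\) are real or complex.

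First, \cref{la:lem:top-rho} gives \(\rho(s,\ell)\leq \rho(s,m)\) for all \(\ell\in[m,L]\) and \(s\in(0,L]\), reducing the claim to showing \(\rho(s,m)\leq \rgd(\kappa)<1\) uniformly in \(s\in(0,L]\).

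Next, \cref{la:lem:rho-expression} dichotomises the analysis at \(\ell=m\): the complex regime occurs iff \(s<m<L\), and the real regime otherwise. In the complex regime, the closed form \eqref{la:id:rho-expression-complex} gives \(\rho(s,m)=\sqrt{\beta(s)(1-m/L)}\), which I would combine with the identity \(1-m/L=\rgd(\kappa)\) and the bound \(\beta(s)<1\) to control \(\rho\). In the real regime, \eqref{la:id:rho-expression-real} applies; both roots of the characteristic polynomial are nonnegative since their sum \((1+\beta(s))(1-m/L)\) and product \(\beta(s)(1-m/L)\) are both \(\geq 0\), so \(\rho(s,m)\) equals the larger root. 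The two endpoints are transparent: at \(s=m\) the discriminant vanishes and \(\rho(m,m)=1-1/\sqrt{\kappa}\) (the classical NAG rate), while at \(s=L\) we have \(\beta(L)=0\) and \(\rho(L,m)=1-m/L=\rgd(\kappa)\). A direct monotonicity check on \(s\mapsto\rho(s,m)\) on \([m,L]\)---for example via the substitution \(\beta(s)=(\sqrt{L}-\sqrt{s})/(\sqrt{L}+\sqrt{s})\) and differentiation, or by noting that the larger root is increasing in \(\beta\) and that \(\beta\) is decreasing in \(s\)---then shows the supremum over the real regime equals \(\rgd(\kappa)\).

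The strict inequality \(\rgd(\kappa)<1\) follows from \(\kappa\geq 1\) and is strict whenever \(m<L\). The main obstacle I anticipate is closing the complex regime: the direct estimate there yields \(\rho^{2}<\rgd(\kappa)\), which does not automatically give \(\rho\leq \rgd(\kappa)\), since \(\sqrt{\rgd(\kappa)}>\rgd(\kappa)\) whenever \(\kappa>1\). To close this gap I would exploit that \(\beta(s)\) itself is bounded by \(\rgd(\kappa)\) over the relevant range of \(s\)---concretely, under \cref{ass:geometric-update} (which enforces \(s=m_{t}\geq m/\gamma\)), a short calculation shows \(\beta(m/\gamma)=(\sqrt{\gamma\kappa}-1)/(\sqrt{\gamma\kappa}+1)\leq (\kappa-1)/\kappa = \rgd(\kappa)\) for \(\gamma\geq 2\), which upgrades the complex-regime estimate to \(\rho^{2}\leq \rgd(\kappa)^{2}\) and hence \(\rho\leq \rgd(\kappa)\) as required.
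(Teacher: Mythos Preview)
Your overall plan---reduce to \(\ell=m\) via \cref{la:lem:top-rho}, then split real/complex via \cref{la:lem:rho-expression}---is exactly the paper's. In the real regime the paper does not argue via monotonicity: it rewrites \(\rho(s,m)\leq\rgd(\kappa)\) as \(\sqrt{\Delta(s,m)}\leq\tfrac{1-\beta}{2}\,\rgd(\kappa)\), squares, and closes with the identity \((1+\beta)^{2}=(1-\beta)^{2}+4\beta\) together with \(\rgd(\kappa)<1\). Your endpoint-plus-monotonicity route reaches the same bound, but your parenthetical hint has the sign reversed: on \([m,L]\) the map \(s\mapsto\rho(s,m)\) is \emph{increasing} (equivalently decreasing in \(\beta\)), as one sees from the closed form \(\rho(s,m)=\tfrac{\sqrt{L-m}}{\sqrt{L}}\cdot\tfrac{\sqrt{L-m}+\sqrt{s-m}}{\sqrt{L}+\sqrt{s}}\) derived in \cref{la:lem:rho-leq-rhoeps}. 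The supremum is therefore attained at \(s=L\) and equals \(\rgd(\kappa)\), so your conclusion stands even though the stated monotonicity does not.

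Your worry about the complex regime is well founded and in fact exposes a gap in the paper's own argument. There the paper reduces \(\rho(s,m)\leq\rgd(\kappa)\) to \(\beta\leq(\kappa-1)/\kappa\) and then verifies only \((\sqrt{\kappa}-1)/(\sqrt{\kappa}+1)\leq(\kappa-1)/\kappa\); that is, it checks \(\beta(m)\) rather than \(\beta(s)\) for \(s<m\). Since \(\beta(s)\to1\) as \(s\to0^{+}\), one has \(\rho(s,m)\to\sqrt{\rgd(\kappa)}>\rgd(\kappa)\), so the bound \(\rho\leq\rgd(\kappa)\) genuinely fails over the full range \(s\in(0,L]\). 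Your proposed restriction to \(s\geq m/\gamma\) via \cref{ass:geometric-update} is the right instinct but not uniform in \(\gamma\): \(\beta(m/\gamma)\leq\rgd(\kappa)\) is equivalent to \(\sqrt{\gamma\kappa}\leq 2\kappa-1\), i.e., \(\gamma\leq 4\kappa-4+1/\kappa\), which fails for large \(\gamma\). Fortunately, the only consequence the paper ever uses downstream is \(\rho<1\), and that holds for all \(s\in(0,L]\) in the complex regime trivially, since \(\beta(s)<1\) and \(1-m/L<1\).
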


\begin{proof}
    Let \(s\in \mathopen{(}0,L\mathclose{]}\) and \(\ell \in \mathopen{[}m,L\mathclose{]}\).
    By \cref{la:lem:top-rho}, \(\rho(s,\ell) \leq \rho(s,m)\), so it suffices to show \(\rho(s,m) \leq \rgd(\kappa)\).
    If \(m < m_{t}\), then by \cref{la:lem:rho-expression}, the eigenvalues of \(G_{1}(s)\) have zero imaginary part and, omitting the argument \(s\) in \(\beta=\beta(s)\), \(\rho(s,m)\) is given by
    \begin{align*}
        \rho(s,m) = \frac{1+\beta}{2}\Bigl( 1 - \frac{m}{L} \Bigr) + \sqrt{\frac{(1+\beta)^{2}}{4}\Bigl(1 - \frac{m}{L}\Bigr)^{2} - \beta\Bigl( 1 - \frac{m}{L} \Bigr)}.
    \end{align*}
    Hence, after simple manipulations, we obtain the equivalences
    \begin{align*}
        \rho(s,m) \leq \rgd(\kappa)
        \iff&& 
        \sqrt{\frac{(1+\beta)^{2}}{4}\Bigl(1 - \frac{1}{\kappa}\Bigr)^{2} - \beta\Bigl(1 - \frac{1}{\kappa}\Bigr)} 
        &\leq \frac{1-\beta}{2}\frac{\kappa-1}{\kappa}
        \\
        \iff&& \frac{(1+\beta)^{2}}{4}\Bigl(\frac{\kappa-1}{\kappa}\Bigr)^{2}
        &\leq \frac{(1-\beta)^{2}}{4}\Bigl(\frac{\kappa-1}{\kappa}\Bigr)^{2} + \beta\frac{\kappa-1}{\kappa}.
    \end{align*}
    Since \((1+\beta)^{2} = (1-\beta)^{2} + 4\beta\), \(\beta \geq 0\) and \((\kappa-1) < \kappa\), it follows that
    \begin{align*}
        \frac{(1+\beta)^{2}}{4}\Bigl(\frac{\kappa-1}{\kappa}\Bigr)^{2}
        = \frac{(1-\beta)^{2} + 4\beta}{4}\Bigl(\frac{\kappa-1}{\kappa}\Bigr)^{2}
        \leq \frac{(1-\beta)^{2}}{4}\Bigl(\frac{\kappa-1}{\kappa}\Bigr)^{2} + \beta\frac{\kappa-1}{\kappa}.
    \end{align*}
    Therefore, \(\rho(s,m)\leq \rgd(\kappa)\).
    Otherwise, if \(s\leq m\), then by \cref{la:lem:rho-expression} the eigenvalues of \(G_{1}(s)\) are complex, so that
    \begin{align*}
        \rho(s,m) = \sqrt{\beta\Bigl( 1 - \frac{m}{L} \Bigr)}.
    \end{align*}
    Hence, after simple manipulations, we obtain the equivalences
    \begin{align*}
        \rho(s,m) \leq \rgd(\kappa)
        \iff
        \beta\frac{\kappa-1}{\kappa}
        \leq \Bigl( \frac{\kappa-1}{\kappa} \Bigr)^{2}
        \iff
        \frac{\sqrt{\kappa}-1}{\sqrt{\kappa}+1}
        \leq \frac{\sqrt{\kappa}-1}{\sqrt{\kappa}}\frac{\sqrt{\kappa}+1}{\sqrt{\kappa}}.
    \end{align*}
    Since the right-hand side inequality above holds, so does \(\rho(s,m)\leq \rgd(\kappa)\) and we are done.
\end{proof}

\begin{lemma}\label{la:lem:Gi-eigenvalue-separation}
    If \cref{ass:mt-separation-from-eigs} holds, then \(\vert \zeta_{i} -\xi_{i} \vert \geq \sqrt{\delta_{L}\delta_{\lambda}}\), where \(\zeta_{i}=\zeta_{i}(m_{t})\) and \(\xi_{i}=\xi_{i}(m_{t})\) denote the eigenvalues of \(G_{i}(m_{t})\) and \(\delta_{L}=(L_{0}-L)/L_{0}\).
\end{lemma}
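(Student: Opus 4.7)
The plan is to compute $|\zeta_i - \xi_i|$ in closed form via the discriminant of the characteristic polynomial of $G_i(m_t)$, and then lower-bound each of the three natural factors using Assumption \ref{ass:separation-from-eigs} together with the bounds $\lambda_i \le L < \bar{L}$ and $m_t \le \bar{L}$.

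First, I would invoke the expressions \eqref{la:def:rho-expression-eigs} from \cref{la:lem:rho-expression}: in the general case where the step size is $1/\bar{L}$ rather than $1/L$, the two eigenvalues of $G_i(m_t)$ are
\[
\zeta_i,\xi_i = \frac{1+\beta(m_t)}{2}\Bigl(1-\frac{\lambda_i}{\bar{L}}\Bigr) \pm \sqrt{\Delta(m_t,\lambda_i)},
\]
so that $|\zeta_i - \xi_i|^2 = 4\,|\Delta(m_t,\lambda_i)|$ regardless of whether $\Delta$ is nonnegative (real eigenvalues) or negative (complex conjugate eigenvalues).

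The key simplification is to factor $\Delta$. Substituting $\beta(s) = (\sqrt{\bar{L}}-\sqrt{s})/(\sqrt{\bar{L}}+\sqrt{s})$ so that $(1+\beta(s))^2/4 = \bar{L}/(\sqrt{\bar{L}}+\sqrt{s})^2$, a direct computation (analogous to the one underlying \eqref{la:id:ratio-beta}) collapses the two terms of $\Delta$ into
\[
\Delta(m_t,\lambda_i)
= \frac{(\bar{L}-\lambda_i)(m_t-\lambda_i)}{\bar{L}\,(\sqrt{\bar{L}}+\sqrt{m_t})^{2}}.
\]
This identity is the content of the lemma; once established, the rest is bookkeeping.

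From here I would lower-bound each factor separately. Since $\lambda_i \le L$, we have $\bar{L}-\lambda_i \ge \bar{L}-L = \delta_L \bar{L}$. Under \cref{ass:separation-from-eigs}, $|m_t - \lambda_i| \ge \delta_\lambda L$ whenever $\lambda_i > m$ (the case $\lambda_i = m$ being treated separately or by restricting the statement to the indices where separation is meaningful). Finally, since $m_t \le \bar{L}$ by \eqref{gc:ineq:Lyapgd-main-surr-conditions}, the denominator is controlled by $(\sqrt{\bar{L}}+\sqrt{m_t})^2 \le 4\bar{L}$. Plugging these bounds into the factored form of $\Delta$ and taking square roots yields
\[
|\zeta_i - \xi_i| = 2\sqrt{|\Delta(m_t,\lambda_i)|} \ge \sqrt{\delta_L\,\delta_\lambda},
\]
up to the constant of proportionality implicit in the denominator bound.

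The main obstacle I expect is purely algebraic: getting the clean factored form of $\Delta$ correct with $\bar{L}$ in place of $L$, and in particular matching the precise constants in the lemma's conclusion. The interesting conceptual point is that the separation bound decouples into the product of two independent margins — $\delta_L$, which measures how strictly $\bar{L}$ overestimates $L$, and $\delta_\lambda$, which measures how far the current estimate $m_t$ sits from the spectrum of $\nabla^2 f(x^\star)$ — reflecting the two distinct ways that $G_i(m_t)$ can become defective (via $\lambda_i = \bar{L}$ or via $\lambda_i = m_t$).
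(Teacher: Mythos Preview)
Your proposal is correct and follows essentially the same route as the paper: compute $|\zeta_i-\xi_i|=2\sqrt{|\Delta|}$, factor the discriminant as $\Delta=\frac{(\bar L-\lambda_i)(m_t-\lambda_i)}{\bar L(\sqrt{\bar L}+\sqrt{m_t})^2}$ (which the paper writes equivalently as $\frac{1+\beta}{\bar L}\sqrt{|(\bar L-\lambda_i)(m_t-\lambda_i)|}$ via the identity \eqref{la:id:beta-ratio}), and then lower-bound the two factors using $\bar L-\lambda_i\ge\delta_L\bar L$ and $|m_t-\lambda_i|\ge\delta_\lambda L$. Your caveat about matching the precise constant is well placed, since the paper's own final inequality is also only correct up to the benign factor $\sqrt{L/\bar L}$.
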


\begin{proof}
    If \cref{ass:known-lipschitz-upper-bound,ass:mt-separation-from-eigs} hold, then there exists some \(\delta_{\lambda}>0\) such that \(\vert m_{t} - \lambda_{i} \vert \geq \delta_{\lambda}\).
    Moreover, since \(L_{0}>L\), we have that \(\delta_{L}=(L_{0}-L)/L_{0}\).
    Moreover, whether \(\zeta_{i}=\zeta_{i}(m_{t})\) and \(\xi_{i}=\xi_{i}(m_{t})\) are complex or real, we have that
    \begin{align*}
        \vert \zeta_{i} -\xi_{i} \vert
        = 2\biggl\vert 
        \frac{(1+\beta)^{2}}{4}\frac{(L_{0}-\lambda_{i})^{2}}{L_{0}^{2}} 
        -\beta\frac{L_{0}-\lambda_{i}}{L_{0}}
        \biggr\vert^{1/2}
        = \frac{1+\beta}{L_{0}}\vert (L_{0}-\lambda_{i})(m_{t}-\lambda_{i}) \vert^{1/2}
        \geq \sqrt{\delta_{L}\delta_{\lambda}},
    \end{align*}
    where in the last equality we have replaced \(L\) with \(L_{0}\) in the identity
    \begin{align}
        \frac{4\beta L}{(1+\beta)^{2}}
        = 4\frac{\sqrt{L} - \sqrt{s}}{\sqrt{L} + \sqrt{s}}\frac{(\sqrt{L} + \sqrt{s})^{2}}{4L}L
        = L - s.
        \label{la:id:beta-ratio}
    \end{align}
\end{proof}

\subsection*{Sufficiently Accurate \texorpdfstring{\(m_{t}\)}{mt} Estimates}

In this section, we determine how good the estimate \(m_{t}\) must be for \(x_{t}\) to converge to \(x^{\star}\) at an accelerated rate.
From \cref{la:lem:top-rho}, it follows that \(\rho(m_{t},m)\) dominates the convergence of \(x_{t}\), therefore our goal is to characterize \(\sigma=\sigma(m_{t})\) such that \(\rho(m_{t},m)\leq \rnag(\sigma \kappa)\), where \(\sigma\) represents a suboptimality factor relative to the optimal convergence rate of \(\rnag(\kappa)\).

By \cref{thm:sm-gc-Lyapacc-main}, if \(m_{t} < m\), then the iterates converge at an accelerated rate.
So, in this section, we focus on \(m_{t} \in \mathopen{[}m,(1+\delta_{m})m\mathclose{]}\), where \(\delta_{m}>0\) is a small number.
We proceed in two steps.
First, we bound \(\rho(m_{t},m)\) for \(m_{t} \in \mathopen{[}m,(1+\delta)m\mathclose{]}\) in terms of a rate \(r_{\delta}\) that depends on the relative precision \(\delta>0\) and the condition number \(\kappa\).
Second, given some \(\sigma > 0\), we characterize \(\delta_{\sigma}\) for which \(r_{\delta}(\kappa)\leq \rnag(\sigma\kappa)\) holds for all \(\delta \in \mathopen{(}0,\delta_{\sigma}\mathclose{]}\) and \(\kappa \geq 1 + \delta\).
The rate \(r_{\delta}\) is parameterized by \(\delta\in\mathopen{(}0,1\mathclose{)}\) and defined over \(z\geq 1+\delta\) as
\begin{align}
    r_{\delta}(z)
    = \frac{1+\beta_{\delta}(z)}{2}\dfrac{z-1}{z}
    + \sqrt{\dfrac{(1+\beta_{\delta}(z))^{2}}{4}\Bigl( \dfrac{z-1}{z} \Bigr)^{2} - \beta_{\delta}(z)\dfrac{z-1}{z}},
\end{align}
where \(\beta_{\delta}\) is also defined over \(z\geq 1+\delta\) as
\begin{align}
    \beta_{\delta}(z)
    = \frac{\sqrt{z}-\sqrt{1+\delta}}{\sqrt{z}+\sqrt{1+\delta}}.
\end{align}

\begin{lemma}\label{la:lem:rho-leq-rhoeps}
    If \(m\leq s\leq (1+\delta)m \leq L\), then \(\rho(s,m) \leq r_{\delta}(\kappa)\) for all \(\kappa \geq 1 + \delta\).
\end{lemma}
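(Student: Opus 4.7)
The plan is to show that on the interval $s \in \mathopen{[}m, (1+\delta)m\mathclose{]}$, the quantity $\rho(s,m)$ is maximized at the right endpoint $s = (1+\delta)m$, where it coincides with $r_\delta(\kappa)$. First, since $s \geq m$, \cref{la:lem:rho-expression} applies with $\ell = m$ and puts us in the real case, so $\rho(s,m)$ is given by formula \eqref{la:id:rho-expression-real}. Next, by dividing the numerator and denominator of $\beta((1+\delta)m)$ by $\sqrt{m}$ and using $\kappa = L/m$, one checks directly that $\beta((1+\delta)m) = \beta_\delta(\kappa)$; combined with $1 - m/L = (\kappa-1)/\kappa$, this shows that substituting $s = (1+\delta)m$ in \eqref{la:id:rho-expression-real} recovers exactly the defining expression of $r_\delta(\kappa)$. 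So the target upper bound is attained at the right endpoint, and it remains to establish monotonicity.

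The main step is to show that $\rho(s,m)$ is non-decreasing in $s$ on $\mathopen{[}m,L\mathclose{]}$. Since $\beta(s)$ is strictly decreasing in $s$ and $\rho(s,m)$ depends on $s$ only through $\beta(s)$ (with $c := 1 - m/L = (\kappa-1)/\kappa$ held fixed), it suffices to prove that
\[
    \rho(\beta) := \tfrac{1+\beta}{2}c + \sqrt{\tfrac{(1+\beta)^2}{4}c^2 - \beta c}
\]
is strictly decreasing in $\beta$ on the relevant range. I would compute
\[
    \frac{d\rho}{d\beta} = \frac{c}{2}\left[1 + \frac{(1+\beta)c/2 - 1}{\sqrt{\Delta(s,m)}}\right],
\]
with $\Delta$ defined in \eqref{la:def:Delta}, and then reduce the negativity of this derivative to the inequality $\sqrt{\Delta(s,m)} < 1 - (1+\beta)c/2$. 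Both sides are positive, since $c < 1$ and $\beta \in \mathopen{[}0,1\mathclose{)}$ imply $(1+\beta)c/2 < 1$, so squaring gives $(1+\beta)^2 c^2 / 4 - \beta c < 1 - (1+\beta)c + (1+\beta)^2 c^2 / 4$, which after cancellation collapses to $1 - c > 0$. This last inequality holds because $\kappa \geq 1 + \delta > 1$ forces $c = (\kappa-1)/\kappa < 1$.

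Combining the two steps, for $s \in \mathopen{[}m, (1+\delta)m\mathclose{]}$ we have $\beta(s) \geq \beta((1+\delta)m) = \beta_\delta(\kappa)$, and $\rho$ is decreasing in $\beta$ throughout the real case, so $\rho(s, m) \leq \rho((1+\delta)m, m) = r_\delta(\kappa)$, as claimed. I do not foresee a significant obstacle: the calculus cleanly reduces to $c < 1$, and the boundary evaluation is a direct normalization. The only subtlety is to ensure the real-case formula applies throughout, which is immediate from $\ell = m \leq s$ via \cref{la:lem:rho-expression}; monotonicity then extends across the mild singularity of $d\rho/d\beta$ at $s = m$ by continuity of $\rho$.
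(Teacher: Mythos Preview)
Your proof is correct and follows the same overall strategy as the paper: both show that $\rho(s,m)$ is increasing in $s$ on $[m,(1+\delta)m]$ and then evaluate at the right endpoint, where it equals $r_\delta(\kappa)$. The execution of the monotonicity step differs: the paper first simplifies $\rho(s,m)$ to the closed form
\[
\rho(s,m)=\frac{\sqrt{L-m}}{\sqrt{L}}\cdot\frac{\sqrt{L-m}+\sqrt{s-m}}{\sqrt{L}+\sqrt{s}}
\]
via the identity \eqref{la:id:beta-ratio}, and then differentiates in $s$ directly, reducing positivity of the derivative to $Ls>(L-m)(s-m)$. You instead differentiate in $\beta$ and reduce negativity to $1-c>0$. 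Both reductions are clean one-liners; the paper's route has the side benefit of yielding the compact expression \eqref{la:id:rhogeps-short} for $r_\delta(\kappa)$, which is reused in the next lemma, whereas your route avoids needing the algebraic simplification in the first place.
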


\begin{proof}
    Let \(m\leq s\leq (1+\delta)m \leq L\).
    Since \(m \leq s \leq L\), then by \cref{la:lem:rho-expression}
    \begin{align*}
        \rho(s,m)
        = \frac{1+\beta(L,s)}{2}\Bigl( 1-\dfrac{m}{L} \Bigr)
        +\sqrt{\dfrac{(1+\beta(L,s))^{2}}{4}\Bigl( 1 - \dfrac{m}{L} \Bigr)^{2} - \beta(L,s)\Bigl( 1-\dfrac{m}{L} \Bigr)}.
    \end{align*}
    Omitting the arguments in \(\beta=\beta(L,s)\) and using the identity \eqref{la:id:beta-ratio}, the discriminant above can be expressed as
    \begin{align*}
        \frac{(1+\beta)^{2}}{4}\Bigl( 1-\dfrac{m}{L} \Bigr)^{2} - \beta\Bigl( 1-\dfrac{m}{L} \Bigr)
        = \frac{4L(L-m)(s-m)}{4L^{2}(\sqrt{L}+\sqrt{s})^{2}}
        =&\ \frac{(L-m)(s-m)}{L(\sqrt{L}+\sqrt{s})^{2}}.
    \end{align*}
    Plugging the above expression back into \(\rho(s,m)\), we obtain
    \begin{align*}
        \rho(s,m)
        = \frac{\sqrt{L}}{\sqrt{L}+\sqrt{s}}\frac{L-m}{L} + \frac{\sqrt{L-m}}{\sqrt{L}}\frac{\sqrt{s-m}}{\sqrt{L}+\sqrt{s}}
        = \frac{\sqrt{L-m}}{\sqrt{L}}\frac{\sqrt{L-m}+\sqrt{s-m}}{\sqrt{L}+\sqrt{s}}.
    \end{align*}
    The right-hand side above is increasing in \(s\geq m\) since \(Ls > (L-m)(s-m)\), which implies that
    \begin{align*}
        \frac{\partial}{\partial s}\frac{\sqrt{L-m}+\sqrt{s-m}}{\sqrt{L}+\sqrt{s}}
        &= \frac{1}{2\sqrt{s-m}}\frac{1}{\sqrt{L}+\sqrt{s}}-\frac{\sqrt{L-m}+\sqrt{s-m}}{2\sqrt{s}(\sqrt{L}+\sqrt{s})^{2}}
        \\
        &= \frac{m+\sqrt{Ls}- \sqrt{(L-m)(s-m)}}{2\sqrt{s}\sqrt{s-m}(\sqrt{L}+\sqrt{s})^{2}}
        \\
        &>0.
    \end{align*}
    Therefore, for all \(m\leq s \leq (1+\delta)m\) and \(\kappa \geq 1+\delta\), we have that
    \begin{align*}
        \rho(s,m)
        \leq \rho((1+\delta)m,m)
        = r_{\delta}(\kappa).
    \end{align*}
\end{proof}

Next, we bound \(r_{\delta}\) in terms of \(\rnag\).
We start with an identity involving \(\beta_{\delta}(\kappa)\), analogous to \eqref{la:id:beta-ratio}:
\begin{align*}
    4\frac{\beta_{\delta}(\kappa)}{(1+\beta_{\delta}(\kappa))^{2}}
    =4\frac{\sqrt{\kappa}-\sqrt{1+\delta}}{\sqrt{\kappa}+\sqrt{1+\delta}}
    \frac{(\sqrt{\kappa}+\sqrt{1+\delta})^{2}}{4\kappa}
    =\frac{\kappa-(1+\delta)}{\kappa}.
\end{align*}
Plugging the above identity into the discriminant of \(r_{\delta}(\kappa)\) yields
\begin{align*}
    \frac{(1+\beta_{\delta}(\kappa))^{2}}{4}\Bigl( \frac{\kappa-1}{\kappa} \Bigr)^{2}
    -\beta_{\delta}(\kappa)\frac{\kappa-1}{\kappa}
    &= \frac{\kappa}{(\sqrt{\kappa}+\sqrt{1+\delta})^{2}}
    \frac{\kappa-1}{\kappa}\Bigl( \frac{\kappa-1}{\kappa}-\frac{\kappa-(1+\delta)}{\kappa} \Bigr)
    \\
    &= \frac{\kappa-1}{(\sqrt{\kappa}+\sqrt{1+\delta})^{2}}
    \frac{\delta}{\kappa}.
\end{align*}
In turn, plugging the above expression for the discriminant back into \(r_{\delta}(\kappa)\), we obtain an alternative expression for \(r_{\delta}(\kappa)\):
\begin{align}
    r_{\delta}(\kappa)
    =\frac{\sqrt{\kappa}}{\sqrt{\kappa}+\sqrt{1+\delta}}\frac{\kappa-1}{\kappa}
    +\frac{\sqrt{\kappa-1}}{\sqrt{\kappa}+\sqrt{1+\delta}}\frac{\sqrt{\delta}}{\sqrt{\kappa}}
    =\frac{\sqrt{\kappa-1}}{\sqrt{\kappa}}
    \frac{\sqrt{\kappa-1}+\sqrt{\delta}}{\sqrt{\kappa}+\sqrt{1+\delta}}.
    \label{la:id:rhogeps-short}
\end{align}
Using this alternative expression, we obtain the following.

\begin{lemma}\label{la:lem:rhoeps-leq-racc}
    Given \(\sigma > 1\), then \(r_{\delta}(\kappa)\leq \rnag(\sigma'\kappa)\) for all \(\delta\in \mathopen{(}0,\delta_{\sigma}\mathclose{]}\), \(\sigma'\geq \sigma\) and \(\kappa \geq 1 + \delta\), where \(\delta_{\sigma}=(\sigma-1)^{2}/4\sigma\).
    Conversely, given \(\delta>0\), then \(r_{\delta'}(\kappa)\leq \rnag(\sigma\kappa)\) for all \(\delta'\in \mathopen{(}0,\delta\mathclose{]}\), \(\sigma\geq \sigma_{\delta}\) and \(\kappa\geq 1 + \delta'\), where \(\sigma_{\delta}=1+2\delta+2\sqrt{\delta(1+\delta)}\).
\end{lemma}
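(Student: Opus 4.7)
The plan is to reduce both claims to a one-variable monotonicity statement governed by a clean algebraic identity. I would begin by rationalizing the expression for \(r_\delta(\kappa)\) in~\eqref{la:id:rhogeps-short}: multiplying the second factor by \((\sqrt{\kappa}-\sqrt{1+\delta})/(\sqrt{\kappa}-\sqrt{1+\delta})\) and using the factorization \(\kappa-1-\delta=(\sqrt{\kappa-1}-\sqrt{\delta})(\sqrt{\kappa-1}+\sqrt{\delta})\) yields the cleaner form
\begin{align*}
    r_\delta(\kappa) = \frac{\sqrt{\kappa-1}(\sqrt{\kappa}-\sqrt{1+\delta})}{\sqrt{\kappa}(\sqrt{\kappa-1}-\sqrt{\delta})}.
\end{align*}
Since \(r_{\textup{acc}}(\sigma\kappa)=1-1/\sqrt{\sigma\kappa}\), computing \(1-r_\delta(\kappa)\) and clearing positive denominators reduces the inequality \(r_\delta(\kappa)\leq r_{\textup{acc}}(\sigma\kappa)\) to
\begin{align*}
    F(\kappa) \coloneqq \sqrt{\kappa-1}\bigl(\sqrt{\sigma(1+\delta)}-1\bigr)-\sqrt{\delta}\bigl(\sqrt{\sigma\kappa}-1\bigr)\geq 0.
\end{align*}

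A direct substitution gives \(F(1+\delta)=0\), so it suffices to show that \(F\) is nondecreasing on \([1+\delta,+\infty)\). Computing \(F'(\kappa)\) and clearing positive denominators yields the equivalent condition \(\sqrt{\kappa}\bigl(\sqrt{\sigma(1+\delta)}-1\bigr)\geq\sqrt{\sigma\delta}\sqrt{\kappa-1}\). Since \(\sigma>1\) makes the left factor nonnegative, squaring and rearranging produces
\begin{align*}
    \kappa\bigl[\sigma+1-2\sqrt{\sigma(1+\delta)}\bigr]\geq -\sigma\delta,
\end{align*}
and the bracketed quantity is nonnegative precisely when \((\sigma+1)^2\geq 4\sigma(1+\delta)\). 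The identity \((\sigma+1)^2-4\sigma(1+\delta)=(\sigma-1)^2-4\sigma\delta\) then makes this equivalent to \(\delta\leq(\sigma-1)^2/(4\sigma)=\delta_\sigma\). Under this hypothesis the left-hand side is nonnegative while the right-hand side is nonpositive, so \(F'(\kappa)\geq 0\) throughout, proving \(r_\delta(\kappa)\leq r_{\textup{acc}}(\sigma\kappa)\). Monotonicity of \(r_{\textup{acc}}\) in its argument then extends the bound to any \(\sigma'\geq\sigma\), completing the first claim.

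For the converse, I would observe that the boundary relation \(\delta=(\sigma-1)^2/(4\sigma)\) rewrites as the quadratic \(\sigma^2-(2+4\delta)\sigma+1=0\), whose larger root is exactly \(\sigma_\delta=1+2\delta+2\sqrt{\delta(1+\delta)}\); hence \(\sigma\geq\sigma_\delta\) is equivalent to \(\delta\leq\delta_\sigma\). Given \(\delta'\leq\delta\) and \(\sigma\geq\sigma_\delta\), this gives \(\delta'\leq\delta\leq\delta_\sigma\), so applying the first claim with \(\delta\) replaced by \(\delta'\) yields \(r_{\delta'}(\kappa)\leq r_{\textup{acc}}(\sigma\kappa)\) for every \(\kappa\geq 1+\delta'\). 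The main technical work is the derivative analysis of the second paragraph; once \(F(1+\delta)=0\) is noticed, the whole argument collapses onto the single algebraic equivalence \((\sigma-1)^2\geq 4\sigma\delta\) that encodes the definition of \(\delta_\sigma\).
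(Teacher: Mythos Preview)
Your argument is correct and somewhat cleaner than the paper's. Both proofs start from the short form~\eqref{la:id:rhogeps-short}, algebraically reformulate the target inequality, establish monotonicity in \(\kappa\) via a derivative computation, and ultimately reduce everything to the single condition \((\sigma-1)^2\geq 4\sigma\delta\). The difference lies in the reformulation: the paper rewrites \(r_\delta(\kappa)\leq r_{\textup{acc}}(\sigma\kappa)\) as \(G(\kappa,\delta)\leq\sqrt{\sigma}\) for a certain ratio \(G\), shows \(G\) is increasing in \(\kappa\), and bounds it by its limit \(1/(\sqrt{1+\delta}-\sqrt{\delta})\) as \(\kappa\to+\infty\); you instead rationalize \(r_\delta\), reduce to \(F(\kappa)\geq 0\), observe \(F(1+\delta)=0\), and show \(F'\geq 0\). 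Your reduction has two advantages: the rationalized form makes \(1-r_\delta(\kappa)\) collapse to a very simple expression, and since \(F\) is merely a linear combination of \(\sqrt{\kappa-1}\) and \(\sqrt{\kappa}\), the derivative analysis is shorter than the paper's computation for \(G\). The paper's route, on the other hand, makes the extremal value \(\sup_\kappa G=1/(\sqrt{1+\delta}-\sqrt{\delta})\) explicit, which is informative even if not strictly needed here. One tiny point: your equivalence between \(F(\kappa)\geq 0\) and the original inequality is obtained by clearing denominators that vanish at \(\kappa=1+\delta\), so strictly speaking it holds for \(\kappa>1+\delta\); the endpoint follows by continuity of \(r_\delta\) and \(r_{\textup{acc}}\), which you should mention.
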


\begin{proof}
    Let \(\sigma > 1\).
    From \eqref{la:id:rhogeps-short} and \eqref{la:def:racc}, it follows that the condition that \(r_{\delta}(\kappa) \leq \rnag(\sigma\kappa)\) for some \(\delta>0\) and \(\kappa \geq 1 + \delta\) is equivalent to
    \begin{align}
        \frac{\sqrt{\kappa-1}}{\sqrt{\kappa}}
        \frac{\sqrt{\kappa-1}+\sqrt{\delta}}{\sqrt{\kappa}+\sqrt{1+\delta}}
        \leq \frac{\sqrt{\sigma\kappa}-1}{\sqrt{\sigma\kappa}}.
        \label{la:ineq:rdelta-leq-racc}
    \end{align}
    By successively manipulating \eqref{la:ineq:rdelta-leq-racc}, it follows that
    \begin{align}
        r_{\delta}(\kappa)
        \leq \rnag(\sigma\kappa)
        &\iff \sqrt{\kappa-1}(\sqrt{\kappa-1} + \sqrt{\delta})\sqrt{\sigma}
        \leq (\sqrt{\sigma\kappa} - 1)(\sqrt{\kappa} + \sqrt{1+\delta})
        \nonumber\\
        &\iff 
        \sqrt{\kappa} + \sqrt{1+\delta}
        \leq (1 + \sqrt{(1+\delta)\kappa} - \sqrt{\delta(\kappa-1)})\sqrt{\sigma}
        \nonumber\\
        &\iff
        \frac{\sqrt{\kappa} + \sqrt{1+\delta}}{1 + \sqrt{(1+\delta)\kappa} -\sqrt{\delta(\kappa-1)}}
        \leq \sqrt{\sigma}.
        \label{la:ineq:rhoeps-leq-racc-equiv}
    \end{align}
    Taking the derivative of the left-hand side of the \eqref{la:ineq:rhoeps-leq-racc-equiv} with respect to \(\kappa\), we obtain
    \begin{align*}
        \frac{\partial}{\partial \kappa}\frac{\sqrt{\kappa} + \sqrt{1+\delta}}{1 + \sqrt{(1+\delta)\kappa} -\sqrt{\delta(\kappa-1)}}
        =& \frac{\delta\sqrt{\kappa} + \delta\kappa\sqrt{(1+\delta)} - \delta\sqrt{\delta(\kappa-1)\kappa}}{2\kappa\sqrt{\delta(\kappa-1)}(1+\sqrt{(1+\delta)\kappa} -\sqrt{\delta(\kappa-1)})^{2}}
        \\
        =& \frac{\delta}{2\sqrt{\delta\kappa(\kappa-1)}(1+\sqrt{(1+\delta)\kappa} -\sqrt{\delta(\kappa-1)})}> 0.
    \end{align*}
    That is, the left-hand side of \eqref{la:ineq:rhoeps-leq-racc-equiv} is increasing in \(\kappa \geq 1 + \delta\) and it follows that
    \begin{align*}
        \frac{\sqrt{\kappa} + \sqrt{1+\delta}}{1 + \sqrt{(1+\delta)\kappa} -\sqrt{\delta(\kappa-1)}}
        \leq \lim_{k\to+\infty}\frac{\sqrt{\kappa} + \sqrt{1+\delta}}{1 + \sqrt{(1+\delta)\kappa} -\sqrt{\delta(\kappa-1)}}
        = \frac{1}{\sqrt{1+\delta}-\sqrt{\delta}}.
    \end{align*}
    Moreover, \(1/(\sqrt{1+\delta} - \sqrt{\delta})\) is increasing in \(\delta>0\).
    Therefore, if \(\delta_{\sigma}=(\sigma-1)^{2}/4\sigma\), then for all \(\delta\in \mathopen{(}0,\delta_{\sigma}\mathclose{]}\), \(\kappa \geq 1 + \delta\) and \(\sigma'\geq\sigma\), we have that
    \begin{align*}
        \frac{\sqrt{\kappa} + \sqrt{1+\delta}}{1 + \sqrt{(1+\delta)\kappa} -\sqrt{\delta(\kappa-1)}}
        &\leq \frac{1}{\sqrt{1+\delta} -\sqrt{\delta}}
        \\
        &\leq \frac{1}{\sqrt{1+\delta_{\sigma}} -\sqrt{\delta_{\sigma}}}
        \\
        &= \frac{2\sqrt{\sigma}}{\sqrt{(1+\sigma)^{2}} - \sqrt{(\sigma-1)^{2}}}
        \\
        &= \sqrt{\sigma}
        \\
        &\leq \sqrt{\sigma'}.
    \end{align*}
    Conversely, given \(\delta>0\), if \(\delta' \in \mathopen{(}0,\delta\mathclose{]}\) and \(\sigma\geq \sigma_{\delta}\), where \(\sigma_{\delta}=1+2\delta+2\sqrt{\delta(1+\delta)}\), then
    \begin{align*}
        \frac{1}{\sqrt{1+\delta'} -\sqrt{\delta'}}
        \leq \frac{1}{\sqrt{1+\delta} -\sqrt{\delta}}
        = \sqrt{\sigma_{\delta}}
        \leq \sqrt{\sigma}.
    \end{align*}
    Therefore, \(r_{\delta'}(\kappa)\leq \rnag(\sigma\kappa)\) for all \(\delta'\leq \delta\), \(\kappa\geq 1 + \delta'\) and \(\sigma\geq \sigma_{\delta}\).
\end{proof}

\begin{corollary}\label{la:cor:rho-leq-racc}
    Given \(\sigma > 1\), then \(\rho(s,m) \leq \rnag(\sigma'\kappa)\) for all \(s\in\mathopen{[}m,(1+\delta)m\mathclose{]}\), \(\delta\in \mathopen{(}0,\delta_{\sigma}\mathclose{]}\), \(\sigma'\geq \sigma\) and \(\kappa \geq 1 + \delta\), where \(\delta_{\sigma}=(\sigma-1)^{2}/4\sigma\).
    Conversely, given \(\delta>0\), then \(\rho(s,m)\leq \rnag(\sigma\kappa)\) for all \(s\in\mathopen{[}m,(1+\delta)m\mathclose{]}\), \(\delta'\in \mathopen{(}0,\delta\mathclose{]}\), \(\sigma\geq \sigma_{\delta}\) and \(\kappa\geq 1 + \delta'\), where \(\sigma_{\delta}=1+2\delta+2\sqrt{\delta(1+\delta)}\).
\end{corollary}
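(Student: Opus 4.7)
The plan is to obtain the corollary by straightforward transitivity, chaining the bound $\rho(s,m) \leq r_{\delta}(\kappa)$ supplied by \cref{la:lem:rho-leq-rhoeps} with the bound $r_{\delta}(\kappa) \leq r_{\textup{acc}}(\sigma\kappa)$ supplied by \cref{la:lem:rhoeps-leq-racc}. Both statements of the corollary — the forward direction parameterized by $\sigma$ and the converse parameterized by $\delta$ — follow the same two-step pattern, so I would prove them in parallel.

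For the forward direction, suppose $\sigma > 1$ is given, let $\delta_\sigma = (\sigma-1)^2/4\sigma$, and fix $\delta \in (0, \delta_\sigma]$, $\sigma' \geq \sigma$, and $\kappa \geq 1+\delta$. The condition $\kappa \geq 1+\delta$ is equivalent to $(1+\delta)m \leq L$, so \cref{la:lem:rho-leq-rhoeps} applies and yields $\rho(s,m) \leq r_{\delta}(\kappa)$ for every $s \in [m, (1+\delta)m]$. Then \cref{la:lem:rhoeps-leq-racc} applied with the same $\delta$, $\sigma'$, $\kappa$ gives $r_{\delta}(\kappa) \leq r_{\textup{acc}}(\sigma'\kappa)$. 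Composing these two inequalities gives the claim.

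For the converse direction, suppose $\delta > 0$ is given, set $\sigma_\delta = 1 + 2\delta + 2\sqrt{\delta(1+\delta)}$, and fix $\delta' \in (0, \delta]$, $\sigma \geq \sigma_\delta$, and $\kappa \geq 1+\delta'$. Again $\kappa \geq 1+\delta'$ ensures $(1+\delta')m \leq L$, so \cref{la:lem:rho-leq-rhoeps} (with relative precision $\delta'$) yields $\rho(s,m) \leq r_{\delta'}(\kappa)$ for every $s \in [m, (1+\delta')m]$, and in particular for $s \in [m, (1+\delta)m]$ after restricting the admissible range if needed. The converse half of \cref{la:lem:rhoeps-leq-racc} then gives $r_{\delta'}(\kappa) \leq r_{\textup{acc}}(\sigma\kappa)$, and transitivity concludes.

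I do not foresee any real obstacle: the corollary is essentially bookkeeping, and the only subtlety is confirming that the hypothesis $(1+\delta)m \leq L$ of \cref{la:lem:rho-leq-rhoeps} is implied by the quantifier $\kappa \geq 1+\delta$ already present in the corollary's statement, so no additional constraint on $\delta$ needs to be imposed beyond those inherited from the two lemmas. The statement of the corollary for $s\in[m,(1+\delta)m]$ in the converse part should, strictly speaking, read $s\in[m,(1+\delta')m]$ to be consistent with the relative precision $\delta'$ used by \cref{la:lem:rho-leq-rhoeps}; since $\delta'\leq \delta$ this interval is contained in $[m,(1+\delta)m]$, so the statement as written follows immediately.
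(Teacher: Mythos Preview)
Your approach is correct and matches the paper's proof exactly: the paper simply writes ``The corollary follows by combining \cref{la:lem:rho-leq-rhoeps,la:lem:rhoeps-leq-racc},'' and you have spelled out precisely that chaining. One small slip in your last paragraph: from the bound on the smaller interval $[m,(1+\delta')m]$ you cannot conclude the bound on the larger interval $[m,(1+\delta)m]$ (the containment goes the wrong way for that inference), so your observation that the converse part should really read $s\in[m,(1+\delta')m]$ stands as a genuine quibble with the corollary's statement rather than something that ``follows immediately.''
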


\begin{proof}
    The corollary follows by combining \cref{la:lem:rho-leq-rhoeps,la:lem:rhoeps-leq-racc}.
\end{proof}

\subsection*{Iterate Dynamics Between \texorpdfstring{\(m_{t}\)}{mt} Updates}

Through \(G_{i}(m_{t})\), the dynamics of \(X_{t,i}\) are determined by \(m_{t}\), which is updated by \cref{alg:nag-free} \emph{after} the \(t\)-th iterate is computed.
Moreover, if \(\gamma>1\), then \(m_{t}\) is updated at most \(\log_{\gamma}\kappa + 1\) times.
So, suppose the estimates \(m_{t}\) take \(M+1 \leq \log_{\gamma}\kappa + 1\) values.
Then, let \(t_{j}\) denote the iteration in which \(m_{t}\) is adjusted to its \(j\)-th value \(\mu_{j}\), \(j=0,\ldots,M\).
Since NAG-free computes the iterate \(x_{t}\) and then adjusts \(m_{t}\) in iteration \(t\), this means that \(t_{j}+1\) is the first iteration in which the estimate \(\mu_{j}\) takes effect, and \cref{alg:nag-free} computes iterates for \(t \in \mathopen{(}t_{j},t_{j+1}\mathclose{]}\) using \(m_{t} = \mu_{j}\).
For example, \(t_{0}=0\) and \(m_{t}=\mu_{0}=m_{0}\) for all \(t<t_{1}\).
Therefore, given \(t\) and \(t'\) such that \(t_{j} < t' \leq t_{j+1} \leq t_{J} < t \leq t_{J+1}\),
\begin{align}
    X_{t,i} 
    = \prod_{k=0}^{t-1}G_{i}(\mu_{k})X_{0,i}
    = G_{i}(\mu_{J})^{t-t_{J}}\Biggl( \prod_{k=j+1}^{J-1}G_{i}(\mu_{k})^{t_{k+1}-t_{k}} \Biggr)G_{i}(\mu_{j})^{t_{j+1}-t'}X_{t',i}.
    \label{la:eq:Xit-piecewise-lti}
\end{align}
Now, if \(m_{t} > m\), then under \cref{ass:mt-separation-from-eigs}, \cref{la:cor:Gi-eig} implies that the eigenvalues of \(G_{i}(m_{t})\) are distinct.
So, letting \(\zeta_{i}=\zeta_{i}(m_{t})\) and \(\xi_{i}=\xi_{i}(m_{t})\) denote the eigenvalues of \(G_{i}(m_{t})\), we define
\begin{align}
    T_{i}(m_{t})
    = \begin{bmatrix}
        1 & 1
        \\
        \zeta_{i} & \xi_{i}
    \end{bmatrix}.
    \label{la:def:diagonalizing-matrix-comp}
\end{align}
It can be checked that the columns of \(T_{i}(m_{t})\) are eigenvectors of \(G_{i}(m_{t})\), therefore \(T_{i}(m_{t})\) diagonalizes \(G_{i}(m_{t})\):
\begin{align}
    G_{i}(m_{t}) = T_{i}(m_{t})D_{i}(m_{t})T_{i}(m_{t})^{-1}.
    \label{la:eq:Git-diagonal-form}
\end{align}
That is, \(D_{i}(m_{t})\) is a diagonal matrix whose diagonal entries are the eigenvalues of \(G_{i}(m_{t})\):
\begin{align}
    D_{i}(m_{t})
    = \begin{bmatrix}
        \zeta_{i} & 0
        \\
        0 & \xi_{i}
    \end{bmatrix}
    .
    \label{la:def:Dit}
\end{align}
Combining \labelcref{la:eq:Xit-piecewise-lti,la:eq:Git-diagonal-form,la:def:Dit}, then applying \cref{la:lem:top-rho} it follows that for every \(t_{j} <  t \leq t_{j+1}\)
\begin{align}
    \Vert X_{t,i} \Vert^{2}
    &\leq 
    \overline{C}_{i} \rho(\mu_{j},\lambda_{i})^{2(t-t_{j})}
    \biggl(
    \prod_{k=0}^{j-1}\rho(\mu_{k},\lambda_{i})^{2(t_{k+1}-t_{k})}
    \biggr)
    x_{0,i}^{2},
    \label{la:ineq:Xit-quad-norm-sq-bound}
\end{align}
where the constant \(\overline{C}_{i}\) that is uniformly bounded, since
\begin{align*}
    \Vert X_{t,i} \Vert^{2}
    &= \biggl\Vert
    T_{i}(\mu_{j})D_{i}(\mu_{j})^{t-t_{j}}T_{i}(\mu_{j})^{-1}
    \Biggl(
    \prod_{k=0}^{j-1}
    T_{i}(\mu_{k})D_{i}(\mu_{k})^{t_{k+1}-t_{k}}T_{i}(\mu_{k})^{-1}
    \Biggr)
    X_{0,i}
    \biggr\Vert^{2}
    \\
    &\leq 
    \Vert T_{i}(\mu_{j}) D_{i}(\mu_{j})^{t-t_{j}} T_{i}(\mu_{j})^{-1} \Vert^{2}
    \Biggl(
    \prod_{k=0}^{j-1}
    \Vert T_{i}(\mu_{k}) D_{i}(\mu_{k})^{t_{k+1}-t_{k}}T_{i}(\mu_{k})^{-1} \Vert^{2}
    \Biggr)
    x_{0,i}^{2}
    \\
    &\leq 
    \Biggl(
    \prod_{k=0}^{M} 
    \Vert T_{i}(\mu_{k}) \Vert^{2}
    \Vert T_{i}(\mu_{k})^{-1} \Vert^{2}
    \Biggr)
    \Vert D_{i}(\mu_{j})^{t-t_{j}} \Vert^{2}
    \Biggl(
    \prod_{k=0}^{j-1}
    \Vert D_{i}(\mu_{k})^{t_{k+1}-t_{k}} \Vert^{2}
    \Biggr)
    2x_{0,i}^{2}
    \\
    &\leq 
    \Biggl(
    2\prod_{k=0}^{\log_{\gamma}\kappa + 1} 
    \Vert T_{i}(\mu_{k}) \Vert^{2}
    \Vert T_{i}(\mu_{k})^{-1} \Vert^{2}
    \Biggr)
    \rho(\mu_{j},\lambda_{i})^{2(t-t_{j})}
    \Biggl(
    \prod_{k=0}^{j-1}
    \rho(\mu_{k},\lambda_{i})^{2(t_{k+1}-t_{k})}
    \Biggr)
    x_{i.0}^{2}
\end{align*}
and, by applying \cref{la:lem:rho-lt-1,la:lem:Gi-eigenvalue-separation} to \eqref{la:def:diagonalizing-matrix-comp}, for all \(\mu_{k}\) we obtain
\begin{align*}
    \Vert T_{i}(\mu_{k}) \Vert^{2}
    \leq 4,
    &&
    \Vert T_{i}(\mu_{k})^{-1} \Vert^{2}
    = \frac{1}{\vert \zeta_{i} - \xi_{i} \vert^{2}}
    \biggl\Vert
    \begin{bmatrix}
        \xi_{i} & -1
        \\
        -\zeta_{i} & 1
    \end{bmatrix}
    \biggr\Vert^{2}
    \leq \frac{4}{\sqrt{\delta_{\lambda}\delta_{L}}},
\end{align*}
where \(\delta_{L}=(L_{0}-L)/L_{0}\) and \(\delta_{\lambda}\) is given by \cref{ass:mt-separation-from-eigs}.
Furthermore, omitting the \(m_{t}\) arguments, for \(t \in \mathopen{(}t_{j},t_{j+1}\mathclose{]}\), we have that
\begin{align*}
    X_{t,i}
    &= G_{i}^{t-t_{j}}X_{t_{j},i}
    \\
    &= T_{i}
    D_{i}^{t-t_{j}}
    T_{i}^{-1}X_{t_{j},i}
    \\
    &= \begin{bmatrix}
        1 & 1
        \\
        \zeta_{i} & \xi_{i}
    \end{bmatrix}
    \begin{bmatrix}
        \zeta_{i}^{t-t_{j}} & 0
        \\
        0 & \xi_{i}^{t-t_{j}}
    \end{bmatrix}
    \frac{1}{\xi_{i} - \zeta_{i}}
    \begin{bmatrix}
        \xi_{i} & -1
        \\
        -\zeta_{i} & 1
    \end{bmatrix}
    \begin{bmatrix}
        x_{t_{j},i-1}
        \\
        x_{t_{j},i}
    \end{bmatrix}
    \\
    &= \frac{1}{\xi_{i} - \zeta_{i}}
    \begin{bmatrix}
        \zeta_{i}^{t-t_{j}} & \xi_{i}^{t-t_{j}}
        \\
        \zeta_{i}^{t+1-t_{j}} & \xi_{i}^{t+1-t_{j}}
    \end{bmatrix}
    \begin{bmatrix}
        \xi_{i} & -1
        \\
        -\zeta_{i} & 1
    \end{bmatrix}
    \begin{bmatrix}
        x_{t_{j},i-1}
        \\
        x_{t_{j},i}
    \end{bmatrix}
    \\
    &= \frac{1}{\xi_{i} - \zeta_{i}}
    \begin{bmatrix}
        \xi_{i}\zeta_{i}^{t-t_{j}}-\zeta_{i}\xi_{i}^{t-t_{j}} & \xi_{i}^{t-t_{j}}-\zeta_{i}^{t-t_{j}}
        \\
        \xi_{i}\zeta_{i}^{t+1-t_{j}}-\zeta_{i}\xi_{i}^{t+1-t_{j}} & \xi_{i}^{t+1-t_{j}}-\zeta_{i}^{t+1-t_{j}}
    \end{bmatrix}
    \begin{bmatrix}
        x_{t_{j},i-1}
        \\
        x_{t_{j},i}
    \end{bmatrix}
    \\
    &= \frac{1}{\xi_{i}-\zeta_{i}}
    \begin{bmatrix}
        (\xi_{i}x_{t_{j},i-1}-x_{t_{j},i})\zeta_{i}^{t-t_{j}} + (x_{t_{j},i}-\zeta_{i}x_{t_{j},i-1})\xi_{i}^{t-t_{j}}
        \\
        (\xi_{i}x_{t_{j},i-1}-x_{t_{j},i})\zeta_{i}^{t+1-t_{j}} + (x_{t_{j},i}-\zeta_{i}x_{t_{j},i-1})\xi_{i}^{t+1-t_{j}}
    \end{bmatrix}.
\end{align*}
Therefore, \(X_{t,i}\) can be decomposed into two modes:
\begin{align}
    X_{t,i}
    = A_{t_{j},i}\zeta_{i}^{t-t_{j}} + B_{t_{j},i}\xi^{t-t_{j}},
    \label{la:eq:Xit-quad-modal-decomposition}
\end{align}
where \(A_{i}\) and \(B_{i}\) are two-dimensional vectors given by
\begin{align}
    A_{t_{j},i} 
    = \frac{x_{t_{j},i}-\xi_{i}x_{t_{j},i-1}}{\zeta_{i}-\xi_{i}}
    \begin{bmatrix}
        1
        \\
        \zeta_{i}
    \end{bmatrix}
    &&
    \text{and}
    &&
    B_{t_{j},i}
    = \frac{\zeta_{i}x_{t_{j},i-1}-x_{t_{j},i}}{\zeta_{i}-\xi_{i}}
    \begin{bmatrix}
        1
        \\
        \xi_{i}
    \end{bmatrix},
    \label{la:def:Ai-Bi-quad}
\end{align}
which are well-defined, by \cref{la:lem:Gi-eigenvalue-separation}.
In particular, for \(t_{0} < t \leq t_{1}\), we have that
\begin{align*}
    X_{t,i}
    = \frac{(1-\xi_{i})x_{0,i}}{\zeta_{i}-\xi_{i}}
    \begin{bmatrix}
        1
        \\
        \zeta_{i}
    \end{bmatrix}\zeta_{i}^{t} 
    + \frac{(\zeta_{i}-1)x_{0,i}}{\zeta_{i}-\xi_{i}}
    \begin{bmatrix}
        1
        \\
        \xi_{i}
    \end{bmatrix}
    \xi_{i}^{t}.
\end{align*}
In turn, if without loss of generality we assume \(x_{0,1}>0\), then
\begin{align*}
    x_{t,1}-x_{t-1,1}
    &= \frac{(1-\xi_{1})(\zeta_{1}-1)\zeta_{1}^{t}x_{0,1} + (\zeta_{1}-1)(\xi_{1}-1)\xi_{1}^{t}x_{0,1}}{\zeta_{1}-\xi_{1}}
    \leq \kappa^{-1}\zeta_{1}^{t-1}x_{0,1} < 0,
\end{align*}
where in first inequality above we used the fact that \(0 < \xi_{1} < \zeta_{1}\) and the identity
\begin{align*}
    (1-\zeta_{i})(1-\xi_{i})
    = \biggl( 
    1 -\frac{1+\beta}{2}\Bigl( 1 - \frac{\lambda_{i}}{L}\Bigr)
    \biggr)^{2}
    -\frac{(1+\beta)^{2}}{4}\Bigl( 1 - \frac{\lambda_{i}}{L}\Bigr)^{2}
    +\beta\Bigl( 1 - \frac{\lambda_{i}}{L}\Bigr)
    = \frac{\lambda_{i}}{L}.
\end{align*}
Moreover, for \(t_{0} < t \leq t_{1}\) we also have that
\begin{align*}
    x_{t,1}-\xi_{1}x_{t-1,1}
    &= \frac{(1-\xi_{1})(\zeta_{1}-\xi_{1})\zeta_{1}^{t}x_{0,1} + (\zeta_{1}-1)(\xi_{1}-\xi_{1})\xi_{1}^{t}x_{0,1}}{\zeta_{1}-\xi_{1}}
    = (1-\xi_{1})\zeta_{1}^{t}x_{0,1} < 0,
    \\
    \zeta_{1}x_{t-1,1}-x_{t,1}
    &= \frac{(1-\xi_{1})(\zeta_{1}-\zeta_{1})\zeta_{1}^{t}x_{0,1} + (\zeta_{1}-1)(\zeta_{1}-\xi_{1})\xi_{1}^{t}x_{0,1}}{\zeta_{1}-\xi_{1}}
    = (\zeta_{1}-1)\xi_{1}^{t}x_{0,1} > 0.
\end{align*}
It follows that, for \(t_{1} < t \leq t_{2}\)
\begin{align*}
    x_{t,1} - x_{t-1,1}
    &= \frac{
    (x_{1,t_{j}}-\xi_{1}x_{1,t_{1}-1})(\zeta_{1}-1)\zeta_{1}^{t-t_{1}} 
    + (\zeta_{1}x_{1,t_{1}-1}-x_{1,t_{1}})(\xi_{1}-1)\xi_{1}^{t-t_{1}}
    }{\zeta_{1}-\xi_{1}}
    \\
    &\leq 
    \zeta_{1}^{t-t_{1}}
    \frac{
    (x_{1,t_{j}}-\xi_{1}x_{1,t_{1}-1})(\zeta_{1}-1) 
    + (\zeta_{1}x_{1,t_{1}-1}-x_{1,t_{1}})(\xi_{1}-1)
    }{\zeta_{1}-\xi_{1}}
    \\
    &= \zeta_{1}^{t-t_{1}}(x_{1,t_{1}}-x_{1,t_{1}-1})
    \\
    &\leq \kappa^{-1}\zeta_{1}(\mu_{1})^{t-t_{1}}\zeta_{1}(\mu_{0})^{t_{1}-1}x_{0,1}
    \\
    &< 0,
\end{align*}
since \(0<\xi_{1}(m_{1})<\zeta_{1}(m_{1})\), and moreover
\begin{align*}
    x_{t,1} - \xi_{1}x_{t-1,1}
    &= \zeta_{1}^{t-t_{1}}(x_{1,t_{1}} -\xi_{1}x_{1,t_{1}-1})
    = \zeta_{1}(\mu_{1})^{t-t_{1}}(1-\xi_{1}(\mu_{0}))\zeta_{1}(\mu_{0})^{t}x_{0,1} < 0,
    \\
    \zeta_{1}x_{t-1,1}-x_{t,1}
    &= \xi_{1}^{t-t_{1}}(\zeta_{1}x_{1,t_{1}-1}-x_{1,t_{1}})
    = \xi_{1}(\mu_{1})^{t-t_{1}}(\zeta_{1}(\mu_{0})-1)\xi_{1}(\mu_{0})^{t}x_{0,1} > 0.
\end{align*}
Therefore, using the fact that \(\zeta_{1}(m_{t})=\rho(m_{t},m)\), it follows by induction that for \(t_{j} < t \leq t_{j+1}\)
\begin{align}
    (x_{t+1,1}-x_{t,1})^{2}
    \geq
    \underline{C}_{1}\rho(\mu_{j},m)^{2t-t_{j}}
    \Biggl(
    \prod_{k=0}^{j-1}\rho(\mu_{k},m)^{2(t_{k+1} - t_{k})}
    \Biggr)
    x_{0,1}^{2}
    \geq 0,
    \label{la:ineq:x1t-diff-quad-lower-bound}
\end{align}
for some \(\underline{C}_{1}\geq\kappa^{-2}\).


\subsection*{The Dynamics of \texorpdfstring{\(c_{t}\)}{ct}}

\cref{la:lem:rhoeps-leq-racc} bounds the suboptimality factor in the convergence rate of \(x_{t}\) when \(m_{t}\in\mathopen{[}m,(1+\delta)m\mathclose{]}\), for a given \(\delta>0\).
Now, we determine how long \(m_{t}\) takes to reach the interval \(\mathopen{[}m,(1+\delta)m\mathclose{]}\).
Our starting point is to determine the dynamics of \(c_{t+1}\). 
To this end, we plug \eqref{la:eq:eigendecomposition} and \eqref{la:eq:momentum-step} into \eqref{ineq:effective-curvature}, obtaining\footnote{Note that \(x_{t+1}-x_{t}=(x_{t+1}-x^{\star}) -(x_{t}-x^{\star})=\sum_{i=1}^{d}(x_{t+1,i}-x_{t,i})v_{i}\).}
\begin{align}
    c^{2}_{t+1}
    = \left\Vert \frac{\nabla f(x_{t+1}) -\nabla f(x_{t})}{x_{t+1} - x_{t}} \right\Vert^{2}
    = \left\Vert \frac{\sum_{i=1}^{d}(x_{t+1,i} - x_{t,i})\lambda_{i}v_{i}}{\sum_{i=1}^{d}(x_{t+1,i} - x_{t,i})v_{i}} \right\Vert^{2}
    = \frac{\sum_{i=1}^{d}(x_{t+1,i} - x_{t,i})^{2}\lambda_{i}^{2}}{\sum_{i=1}^{d}(x_{t+1,i} - x_{t,i})^{2}}
    .
    \label{la:eq:curvature-weighted-average}
\end{align}
The identity \eqref{la:eq:curvature-weighted-average} reveals that \(c^{2}_{t+1}\) can be expressed as an average of the squared eigenvalues \(\lambda_{i}^{2}\) weighted by \((x_{t+1,i} - x_{t,i})^{2}\).
Since the weights are a static map of \(x_{t,i}\), the dynamics of \(x_{t,i}\) determine the dynamics of the estimated effective curvature \(c_{t+1}\).
In particular, \(x_{t,i}\) determine if one weight can outweigh the others, in which case \(c_{t+1}\) tends to \(\lambda_{i}\).

By \cref{la:lem:top-rho}, \(\rho(s,\lambda_{i}) < \rho(s,m)\) for all \(\lambda_{i} \in \mathopen{(}m,L\mathclose{]} \).
Hence, from \eqref{la:ineq:Xit-quad-norm-sq-bound} and \eqref{la:ineq:x1t-diff-quad-lower-bound}, we conclude that the weight associated with \(m\) eventually dominates the other weights, so that \(c_{t+1}\) converges to \(m\).
In the following, we show that this happens at an accelerated rate.
To this end, we define\footnote{Note that \(\rho < 0\) cannot occur by the definition of \(\rho\), \eqref{la:def:rho}.} \(\phi:\mathcal{D} \to \mathopen{[}0,1\mathclose{]}\) as
\begin{align}
    \phi(s,a,b) = 
    \begin{cases}
        \min \left\{ 1, \dfrac{\rho(s,a)}{\rho(s,b)} \right\}, & \rho(s,b) > 0,
        \\
        1, & \rho(s,b) = 0,
    \end{cases}
    \label{la:def:phi}
\end{align}
where the domain \(\mathcal{D}\) is given by
\begin{align}
    \mathcal{D} = \mathopen{(}0,L\mathclose{]} \times \left\{ (a,b)\in \mathbb{R}^{2}_{> 0}: a\neq b \right\},
    \label{la:def:phi-domain}
\end{align}
\(\mathbb{R}_{>0}\) being the set of positive real numbers.
With \(\phi\), we can bound how fast \(c_{t+1}\) takes to decrease below \((1+\delta)\ell\) for a given \(\ell\in\mathopen{[}m,L\mathclose{]}\), not necessarily an eigenvalue of \(H\), where \(\delta>0\) represents some estimate precision relative to \(\ell\).
To this end, we characterize \(\phi((1+\delta)\ell, \ell, m)^{2}\), first showing that it is decreasing in \(\ell\).

\begin{lemma}\label{la:lem:phi-decreasing}
    If \(\delta \in \mathopen{(}0,1\mathclose{]}\) and \(\kappa \geq 2\), then \(\phi((1+\delta)\ell, \ell, m)\) is decreasing in \(\ell \geq m > 0\).
\end{lemma}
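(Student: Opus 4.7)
The plan is to reduce the minimum in $\phi$ to a single ratio, write it in closed form, and show its logarithmic derivative in $\ell$ is nonpositive. First I would observe that $\phi((1+\delta)\ell,\ell,m)$ is defined only when its second and third arguments differ (so $\ell > m$) and when its first argument lies in $\mathopen{(}0,L\mathclose{]}$ (so $(1+\delta)\ell \leq L$); the hypotheses $\delta \leq 1$ and $\kappa \geq 2$ guarantee these two constraints are compatible. Then $s := (1+\delta)\ell \geq \ell > m$ places both $\rho$-values in the real case of \cref{la:lem:rho-expression}, and the algebraic simplification already performed in the proof of \cref{la:lem:rho-leq-rhoeps} yields
\begin{align*}
\rho((1+\delta)\ell,\ell)
&= \frac{\sqrt{L-\ell}\,(\sqrt{L-\ell}+\sqrt{\delta\ell})}{\sqrt{L}\,(\sqrt{L}+\sqrt{(1+\delta)\ell})},
\\
\rho((1+\delta)\ell,m)
&= \frac{\sqrt{L-m}\,(\sqrt{L-m}+\sqrt{(1+\delta)\ell-m})}{\sqrt{L}\,(\sqrt{L}+\sqrt{(1+\delta)\ell})}.
\end{align*}
By \cref{la:lem:top-rho} the first is strictly smaller than the second, so the minimum in \eqref{la:def:phi} equals the ratio of these two expressions, with the common denominators cancelling.

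Next I would differentiate $\log\phi$ with respect to $\ell$. Setting $A=\sqrt{L-\ell}$, $B=\sqrt{\delta\ell}$, $C=\sqrt{L-m}$, $E=\sqrt{(1+\delta)\ell-m}$ so that $\log\phi = \log A + \log(A+B) - \log C - \log(C+E)$, a direct computation gives
\begin{align*}
\frac{d\log\phi}{d\ell}
= -\frac{1}{2(L-\ell)}
+ \frac{\sqrt{\delta(L-\ell)}-\sqrt{\ell}}{2\sqrt{\ell(L-\ell)}\,(A+B)}
- \frac{1+\delta}{2E(C+E)}.
\end{align*}
The first and third contributions are strictly negative, and the middle contribution has the sign of $\delta L - (1+\delta)\ell$. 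When $\ell \geq \delta L/(1+\delta)$ the middle term is nonpositive and the claim is immediate. In the complementary regime I would rationalise $\sqrt{\delta(L-\ell)}-\sqrt{\ell} = (\delta L - (1+\delta)\ell)/(\sqrt{\delta(L-\ell)}+\sqrt{\ell})$ and use the identity $(\sqrt{\delta(L-\ell)}+\sqrt{\ell})(A+B) = \sqrt{\delta}\,L + (1+\delta)\sqrt{\ell(L-\ell)}$ (obtained by direct expansion) to place all three terms over a common denominator; the resulting polynomial inequality is tractable under $\delta \leq 1$ and $\kappa \geq 2$ (equivalently $L \geq (1+\delta)m$).

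The main obstacle is precisely this complementary case. Near the left endpoint $\ell \to m$ the positive middle contribution does not vanish while $1/(L-\ell)$ stays bounded, so the first negative term alone cannot absorb it; the third negative term, which diverges as $E \to 0$, is essential. The three contributions must therefore be combined globally rather than compared termwise, which is where the algebraic identity above does the real work.
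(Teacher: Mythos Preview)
Your log-derivative decomposition is correct, and the identity $(\sqrt{\delta(L-\ell)}+\sqrt{\ell})(A+B)=\sqrt{\delta}\,L+(1+\delta)\sqrt{\ell(L-\ell)}$ is a nice simplification. The case split on the sign of the middle contribution is a reasonable route, genuinely different from the paper's. However, there is a concrete error in your handling of the hard case: you claim that as $\ell\to m$ the third term diverges because $E\to 0$, but in fact $E=\sqrt{(1+\delta)\ell-m}\to\sqrt{\delta m}>0$, so the third term stays bounded. Your heuristic for why the hard case closes (``the third negative term, which diverges as $E\to 0$, is essential'') is therefore invalid, and you have not actually established the inequality on the interval $m<\ell<\delta L/(1+\delta)$; the assertion that the polynomial inequality is ``tractable'' is unsubstantiated, and none of the three terms dominates near $\ell=m$, so real work remains.

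The paper takes a different route. It differentiates $\phi$ directly (not $\log\phi$), upper-bounds the derivative by a single fraction, and reduces matters to showing
\[
(L-m)\sqrt{(L-\ell)\delta\ell}\;\geq\;\delta(L-2\ell)\sqrt{(L-m)((1+\delta)\ell-m)}
\]
(together with a further nonnegative contribution on the left). Rather than a sign-based case split, the paper maximises the right-hand side over $\ell$ explicitly: the critical point of $(L-2\ell)\sqrt{(1+\delta)\ell-m}$ is $\ell=\tfrac{1}{6}L+\tfrac{2}{3(1+\delta)}m$, and at that point the hypotheses $\delta\leq 1$ and $\kappa\geq 2$ give just enough room for the left side to dominate. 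Your approach could plausibly be completed with an honest three-term comparison on the hard interval, but it needs that comparison, not a divergence argument.
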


\begin{proof}
    Let \(L>m>0\).
    Given \(\ell\) and \(\delta>0\) such that \(m \leq \ell < (1+\delta)\ell \leq L\), by \eqref{la:def:phi} and \cref{la:lem:rho-expression}, we have that
    \begin{align*}
        \phi((1+\delta)\ell, \ell, m)
        =& \frac{L - \ell + \sqrt{(L - \ell)\delta\ell}}{L - m + \sqrt{(L - m)((1+\delta)\ell - m)}}.
    \end{align*}
    Letting \(\phi_{\ell}\) the derivative of \(\phi((1+\delta)\ell, \ell, m)\) with respect to \(\ell\), we obtain
    \begin{align*}
        \phi_{\ell}
        =& \frac{-(L-m)\delta^{2}\ell - (L-m)\sqrt{(L-\ell)\delta\ell}(L + \ell + \sqrt{(L-m)((1+\delta)\ell - m)} - 2m)}{2\sqrt{(L-\ell)\delta\ell}\sqrt{(L-m)((1+\delta)\ell - m)}(L + \sqrt{(L-m)((1+\delta)\ell - m)} - m)^{2}}
        \\
        &- \frac{(L-m)\delta(\ell^{2} + \ell(\sqrt{(L-\ell)\delta\ell} + 2\sqrt{(L-m)((1+\delta)\ell - m)} - 2m))}{2\sqrt{(L-\ell)\delta\ell}\sqrt{(L-m)((1+\delta)\ell - m)}(L + \sqrt{(L-m)((1+\delta)\ell - m)} - m)^{2}}
        \\
        &- \frac{(L-m)\delta L(\sqrt{(L-\ell)\delta\ell} - \sqrt{(L-m)((1+\delta)\ell - m)} + m)}{2\sqrt{(L-\ell)\delta\ell}\sqrt{(L-m)((1+\delta)\ell - m)}(L + \sqrt{(L-m)((1+\delta)\ell - m)} - m)^{2}}
        \\
        \leq& -\frac{(L-m)( (L-m)\sqrt{(L-\ell)\delta\ell} - \delta(L-2\ell)\sqrt{(L-m)((1+\delta)\ell - m)} )}{2\sqrt{(L-\ell)\delta\ell}\sqrt{(L-m)((1+\delta)\ell - m)}(L + \sqrt{(L-m)((1+\delta)\ell - m)} - m)^{2}}.
    \end{align*}
    So, to show \(\phi((1+\delta)\ell, \ell, m)\) is decreasing in \(\ell\), it suffices to show the numerator above is positive.
    To this end, since \(L>m\), it suffices to show that the second factor is positive:
    \begin{align}
        &(L-m)\sqrt{(L-\ell)\delta\ell} + \sqrt{(L-\ell)\delta\ell}\sqrt{(L-m)((1+\delta)\ell - m)} 
        \nonumber\\
        &- \delta(L-2\ell)\sqrt{(L-m)((1+\delta)\ell - m)}
        > 0.
        \label{la:ineq:phi-decreasing}
    \end{align}
    The negative term on the left-hand side above is maximized at the critical point characterized by
    \begin{align*}
        \frac{\partial}{\partial \ell}(L - 2\ell)\sqrt{((1+\delta)\ell - m)}
        &= -2\sqrt{(1+\delta)\ell - m} + \frac{(L - 2\ell)(1+\delta)}{2\sqrt{(1+\delta)\ell - m}}
        \\
        &= \frac{(1+\delta)(L - 2\ell) - 4((1+\delta)\ell -m)}{2\sqrt{(1+\delta)\ell - m}}
        \\
        &= 0.
    \end{align*}
    Taking \(\ell\) at this critical point, \(\ell = \frac{1}{6}L + \frac{2}{3(1+\delta)}m \), and using the assumptions that \(\kappa \geq 2\) and \(\delta\leq 1\), it follows that
    \begin{align*}
        (L-\ell)\ell
        \geq \frac{5L-4m}{6}\frac{(1+\delta)L + 4m}{6(1+\delta)}
        = \frac{5(1+\delta)L^{2} + 4(5-(1+\delta))Lm -16m^{2}}{36(1+\delta)}
        \geq \frac{5}{36}L^{2}.
    \end{align*}
    Hence, plugging \(\ell = \frac{1}{6}L + \frac{2}{3(1+\delta)}m\) back into \eqref{la:ineq:phi-decreasing} and using the assumptions that \(\delta\leq 1\) and \(\kappa\geq 2\) yields
    \begin{align*}
        &(\delta(L-2\ell) - \sqrt{(L-\ell)\delta\ell})\sqrt{(L-m)((1+\delta)\ell - m)}
        \\
        &\leq \delta\frac{(4 -\sqrt{5})L}{6}
        \sqrt{(L-m)\frac{1+\delta}{6}\biggl(L - \frac{2m}{1+\delta}\biggr)}
        \\
        &\leq \frac{2\delta\sqrt{1+\delta}}{6\sqrt{6}}L(L-m),
    \end{align*}
    and, similarly
    \begin{align*}
        \sqrt{(L-\ell)\delta\ell}(L-m)
        \geq \sqrt{\delta\frac{5L-4m}{6}\frac{L}{6}}(L-m)
        \geq \frac{\sqrt{\delta}}{3\sqrt{2}}L(L-m).
    \end{align*}
    Hence, canceling the common factor \(\sqrt{\delta}L(L-m)\) above and then rearranging, we conclude that \eqref{la:ineq:phi-decreasing} holds if
    \begin{align*}
        \sqrt{\delta}\sqrt{1 + \delta}
        \leq \sqrt{3},
    \end{align*}
    which is true since \(\sqrt{\delta}\leq 1\).
\end{proof}

In fact, \(\phi((1+\delta)\ell,\ell,m)\) is decreasing for any \(\delta>0\), which can be seen in its graph, but the case where \(\delta \in \mathopen{(}0,1\mathclose{]}\) suffices for the upcoming results.
Namely, given \(\delta_{\ell}>0\) and \(\delta_{u} \in \mathopen{(}0,1\mathclose{]}\), by \cref{la:lem:phi-decreasing} we have that for every \(\ell \in \mathopen{[}(1+\delta_{\ell})m,L\mathclose{]}\)
\begin{align}
    \phi((1+\delta_{u})\ell, \ell, m)
    =& \frac{L - \ell + \sqrt{(L - \ell)\delta_{u}\ell}}{L - m + \sqrt{(L - m)((1+\delta_{u})\ell - m)}}
    \nonumber\\
    \leq& \frac{L - (1+\delta_{\ell})m + \sqrt{(L - (1+\delta_{\ell})m)\delta_{u}(1+\delta_{\ell})m}}{L - m + \sqrt{(L - m)((1+\delta_{u})(1+\delta_{\ell})m - m)}}
    \nonumber\\
    =& \frac{\kappa - (1+\delta_{\ell}) + \sqrt{(\kappa - (1+\delta_{\ell}))\delta_{u}(1+\delta_{\ell})}}{\kappa - 1 + \sqrt{(\kappa - 1)(\delta_{u} + \delta_{\ell} + \delta_{u}\delta_{\ell})}}
    \nonumber\\
    \eqqcolon& r_{\phi}(\delta_{u}, \delta_{\ell}, \kappa).
    \label{la:ineq:phi-leq-rphi}
\end{align}
Hence, to show \(\phi((1+\delta_{u})\ell, \ell, m)^{2}\) is an accelerated rate, suffices to show that \(r_{\phi}(\delta_{u}, \delta_{\ell}, \kappa)^{2}\) is an accelerated rate for appropriate \(\kappa, \delta_{u}\) and \(\delta_{\ell}\), which we do in the next result.
The function \(r_{\phi}\) is well-defined for \(\delta_{\ell}>0\), \(\delta_{u}>0\) and \(k\geq 1 + \delta_{\ell}\) and, by simple inspection, it follows that \(r_{\phi}(\delta_{u}, \delta_{\ell}, \kappa)\in\mathopen{(}0,1\mathclose{)}\).

\begin{lemma}\label{la:lem:rphi-sq-leq-racc}
    Given \(\delta_{u}>0\), \(\delta_{\ell} > 0\) and \(\kappa \geq 1 + \delta_{\ell}\), there is a \(\sigma_{\phi}=\sigma_{\phi}(\delta_{u}, \delta_{\ell}, \kappa)\) such that 
    \begin{align*}
        r_{\phi}(\delta_{u},\delta_{\ell},\kappa)^{2}
        \leq \rnag(\sigma_{\phi}\kappa).
    \end{align*}
    Moreover, the function \(\kappa \mapsto \sigma_{\phi}(\delta_{u}, \delta_{\ell}, \kappa)\) is bounded and satisfies
    \begin{align*}
        \lim_{\kappa\to+\infty} \sigma_{\phi}(\delta_{u}, \delta_{\ell}, \kappa) 
        = \frac{1}{4(\sqrt{\delta_{u} + \delta_{\ell} + \delta_{u}\delta_{\ell}} - \sqrt{\delta_{u}(1+\delta_{\ell})})^{2}}.
    \end{align*}
\end{lemma}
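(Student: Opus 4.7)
The plan is to reverse-engineer $\sigma_\phi$ from the target inequality, then verify well-definedness, boundedness, and the limit by elementary calculus on the explicit formula.

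Since $r_{\textup{acc}}(z)=1-1/\sqrt{z}$, the inequality $r_\phi^{2}\le r_{\textup{acc}}(\sigma_\phi\kappa)$ is equivalent to $\sigma_\phi\ge 1/[\kappa(1-r_\phi^{2})^{2}]$, provided $r_\phi<1$. I would therefore \emph{define}
\begin{align*}
\sigma_\phi(\delta_u,\delta_\ell,\kappa) \coloneqq \frac{1}{\kappa\bigl(1-r_\phi(\delta_u,\delta_\ell,\kappa)^{2}\bigr)^{2}},
\end{align*}
so the bound holds with equality by construction. To check this is well-defined, I would factor
\begin{align*}
r_\phi \;=\; \frac{\sqrt{\kappa-(1+\delta_\ell)}}{\sqrt{\kappa-1}}\cdot\frac{\sqrt{\kappa-(1+\delta_\ell)}+a}{\sqrt{\kappa-1}+b},
\end{align*}
with $a\coloneqq\sqrt{\delta_u(1+\delta_\ell)}$ and $b\coloneqq\sqrt{\delta_u+\delta_\ell+\delta_u\delta_\ell}$. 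The identity $b^{2}-a^{2}=\delta_\ell>0$ gives $b>a$, while $\sqrt{\kappa-(1+\delta_\ell)}\le\sqrt{\kappa-1}$, so both factors lie in $[0,1)$; hence $r_\phi\in[0,1)$ on the admissible range, with $r_\phi=0$ at $\kappa=1+\delta_\ell$ and $r_\phi\to 1$ as $\kappa\to\infty$.

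For the limit, I would clear denominators to obtain the identity
\begin{align*}
1-r_\phi \;=\; \frac{\delta_\ell + b\sqrt{\kappa-1} - a\sqrt{\kappa-(1+\delta_\ell)}}{\sqrt{\kappa-1}\,(\sqrt{\kappa-1}+b)}.
\end{align*}
Using $\sqrt{\kappa-c}=\sqrt{\kappa}(1-c/(2\kappa)+O(\kappa^{-2}))$, the numerator equals $(b-a)\sqrt{\kappa}+O(1)$ and the denominator equals $\kappa+O(\sqrt{\kappa})$, so $1-r_\phi=(b-a)/\sqrt{\kappa}+O(1/\kappa)$. Since $1+r_\phi\to 2$, this gives $(1-r_\phi^{2})^{2}=(1-r_\phi)^{2}(1+r_\phi)^{2}=4(b-a)^{2}/\kappa+O(\kappa^{-3/2})$. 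Therefore $\kappa(1-r_\phi^{2})^{2}\to 4(b-a)^{2}$ and $\sigma_\phi\to 1/[4(b-a)^{2}]$, which is precisely the claimed limit after substituting $a$ and $b$.

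Boundedness of $\kappa\mapsto\sigma_\phi$ on $[1+\delta_\ell,+\infty)$ then follows by soft analysis: $r_\phi$ is continuous in $\kappa$, so $\sigma_\phi$ is continuous on that half-line (the denominator never vanishes for finite $\kappa$, since $r_\phi<1$); its value at the left endpoint is $\sigma_\phi(1+\delta_\ell)=1/(1+\delta_\ell)$; and its limit at infinity is finite by the previous paragraph. A continuous function on a half-line with a finite limit at infinity is bounded, completing the argument. The only place where real work is needed is the algebraic identity for $1-r_\phi$ and the Taylor expansion underlying the limit, both of which are routine; no essential obstacle arises.
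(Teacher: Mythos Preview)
Your proposal is correct and follows essentially the same approach as the paper: define $\sigma_\phi = 1/[\kappa(1-r_\phi^2)^2]$ so the inequality holds with equality, verify $r_\phi\in[0,1)$ (the paper does this by inspection, you do it via the clean factoring $r_\phi=\tfrac{x}{y}\cdot\tfrac{x+a}{y+b}$), compute the limit of $(1-r_\phi)\sqrt{\kappa}$ to get the asymptotic value, and conclude boundedness from continuity plus a finite limit at infinity. The only cosmetic difference is that the paper evaluates the limit by direct algebraic manipulation of the numerator and denominator, whereas you invoke the Taylor expansion $\sqrt{\kappa-c}=\sqrt{\kappa}(1-c/(2\kappa)+O(\kappa^{-2}))$; both routes yield $(1-r_\phi)\sqrt{\kappa}\to b-a$ and hence the same limit.
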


\begin{proof}
    Let \(\delta_{u}>0\), \(\delta_{\ell}>0\) and \(\kappa \geq 1 + \delta_{\ell}\).
    By direct algebraic manipulation, we obtain
    \begin{align}
        r_{\phi}(\delta_{u}, \delta_{\ell}, \kappa)^{2} 
        \leq \rnag(\sigma_{\phi}\kappa)
        =\frac{\sqrt{\sigma_{\phi}\kappa}-1}{\sqrt{\sigma_{\phi}\kappa}}
        \iff
        \frac{1}{(1-r_{\phi}(\delta_{u}, \delta_{\ell}, \kappa)^{2})^{2}\kappa} \leq \sigma_{\phi}.
        \label{la:iff:rphi-sq-leq-racc}
    \end{align}
    For such \(\delta_{u}\), \(\delta_{\ell}\) and \(\kappa\), we have \(r_{\phi}(\delta_{u}, \delta_{\ell}, \kappa)\in\mathopen{(}0,1\mathclose{)}\), so that \(1-r_{\phi}^{2}>0\).
    Therefore, the lower bound of the inequality on the right-hand side of \eqref{la:iff:rphi-sq-leq-racc} is well-defined.
    So, let \(\sigma_{\phi}\) be defined such that \eqref{la:iff:rphi-sq-leq-racc} holds with equality:
    \begin{align*}
        \sigma_{\phi}(\delta_{u}, \delta_{\ell}, \kappa)
        = \frac{1}{(1-r_{\phi}(\delta_{u}, \delta_{\ell}, \kappa)^{2})^{2}\kappa}.
    \end{align*}
    For fixed \(\delta_{u}>0\) and \(\delta_{\ell}>0\), the map \(r_{\phi}(\delta_{u}, \delta_{\ell}, \kappa)\) is continuous in \(\kappa > 1 + \delta_{\ell}\) and right-continuous at \(\kappa=1+\delta_{\ell}\), hence so is \((1 - r_{\phi}(\delta_{u}, \delta_{\ell}, \kappa)^{2})\sqrt{\kappa}\).
    Moreover, \((1 - r_{\phi}(\delta_{u}, \delta_{\ell}, \kappa)^{2})\sqrt{\kappa} > 0\) for \(\kappa \geq 1 + \delta_{\ell}\).
    Therefore, \(1/((1 - r_{\phi}(\delta_{u}, \delta_{\ell}, \kappa)^{2})^{2}\kappa)\) is continuous in \(\kappa > 1 + \delta_{\ell}\) and right-continuous at \(\kappa=1+\delta_{\ell}\).
    Furthermore, \(\lim_{\kappa \to +\infty} 1 + r_{\phi}(\delta_{u}, \delta_{\ell}, \kappa) = 2\) and
    \begin{align*}
        \lim_{\kappa\to+\infty}(1-r_{\phi}(\delta_{u},\delta_{\ell},\kappa))\sqrt{\kappa}
        &= \lim_{\kappa\to+\infty}\frac{\delta_{\ell}\sqrt{\kappa} + \sqrt{\kappa(\kappa-1)\delta_{s}} - \sqrt{\kappa(\kappa - (1+\delta_{\ell}))\delta_{u}(1+\delta_{\ell})}}{\kappa-1 + \sqrt{(\kappa - 1)\delta_{s}}}
        \\
        &= \sqrt{\delta_{s}} - \sqrt{\delta_{u}(1+\delta_{\ell})},
    \end{align*}
    where \(\delta_{s}=\delta_{\ell}+\delta_{u}+\delta_{u}\delta_{\ell}\).
    It follows that
    \begin{align*}
        \lim_{\kappa\to +\infty} \sigma_{\phi}(\delta_{u},\delta_{\ell},\kappa)
        = \lim_{\kappa \to +\infty} \frac{1}{((1 - r_{\phi}(\delta_{u}, \delta_{\ell}, \kappa)^{2})^{2}\kappa}
        = \frac{1}{4(\sqrt{\delta_{s}} - \sqrt{\delta_{u}(1+\delta_{\ell})})^{2}}.
    \end{align*}
    Hence, \(\kappa \mapsto \sigma_{\phi}(\delta_{u}, \delta_{\ell}, \kappa)\) attains a maximum on \(\mathopen{[}1 + \delta_{\ell},\infty\mathclose{)}\) and is bounded.
\end{proof}

\cref{la:fig:rphi_bounds} shows a plot of the map \(\kappa \mapsto 1/((1-r_{\phi}(\kappa)^{2})^{2}\kappa)\) for \(\kappa=10, \ldots, 10^{9}\) and the asymptotic value of \(\sigma_{\phi}\),
\begin{align*}
    \lim_{\kappa \to +\infty} \sigma_{\phi}(\delta_{u}, \delta_{\ell}, \kappa)
    = \frac{1}{4(\sqrt{\delta_{s}} - \sqrt{\delta_{u}(1+\delta_{\ell})})^{2}}
    \approx 2.31,
\end{align*}
for \(\delta_{u}=0.01\) and \(\delta_{\ell}=0.18\).
We see that the asymptotic value of \(\sigma_{\phi}\) is slightly less than the peak value of \(\sigma_{\phi}\), but the first still provides a good approximation to the second.

\begin{figure}[tb]
    \begin{center}
        \centerline{\includegraphics[width=0.75\columnwidth]{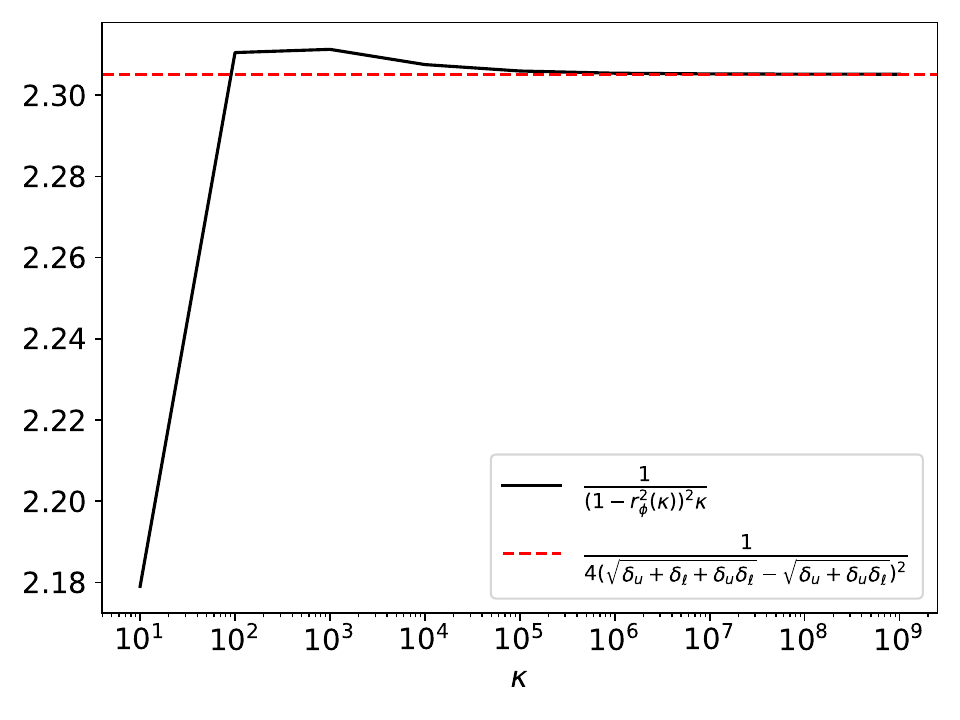}}
        \caption{Numerical (black solid line) lower bound on and asymptotic value (dashed red line) of \(\sigma_{\phi}\) such that \(r_{\phi}^{2}\leq \rnag(\sigma_{\phi} \kappa)\) holds for all \(\kappa \geq 1 + 1/\delta_{s}\), with \(\delta_{u}=0.01\) and \(\delta_{\ell}=0.18\).}
        \label{la:fig:rphi_bounds}
    \end{center}
\end{figure}

Building upon the two lemmas above, we now establish that \(\phi((1+\delta)\ell, \ell, m)\) is actually much faster than \(\rnag(\sigma_{\phi}\kappa)\) for most values of \(\ell\).

\begin{lemma}\label{la:lem:phi-sq-leq-racc-pwr}
    Given \(\delta_{u} \in \mathopen{(}0,1\mathclose{]}\), \(\delta_{\ell}\in \mathopen{(}0,1\mathclose{]}\) and \(\kappa \geq 2\), there exist \(\sigma_{\phi}=\sigma_{\phi}(\delta_{u},\delta_{\ell},\kappa)>0\) and \(\alpha_{\phi}=\alpha_{\phi}(\delta_{u},\delta_{\ell},m)>0\) such that for all \(\ell \in \mathopen{[}(1+\delta_{\ell})m,L/(1+\delta_{u})\mathclose{]}\)
    \begin{align}
        \phi((1+\delta_{u})\ell,\ell,m)^{2}
        \leq \rnag(\sigma_{\phi}\kappa)^{1+\alpha_{\phi}(\ell-(1+\delta_{\ell})m)},
        \label{la:ineq:phi-sq-leq-racc-pwr}
    \end{align}
    where the function \(\kappa \mapsto \sigma_{\phi}(\delta_{u}, \delta_{\ell}, \kappa)\) is bounded and satisfies
    \begin{align*}
        \lim_{\kappa\to+\infty} \sigma_{\phi}(\delta_{u}, \delta_{\ell}, \kappa) 
        = \frac{1}{4(\sqrt{\delta_{u} + \delta_{\ell} + \delta_{u}\delta_{\ell}} - \sqrt{\delta_{u}(1+\delta_{\ell})})^{2}}.
    \end{align*}
\end{lemma}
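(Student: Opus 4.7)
The plan is to upgrade the endpoint bound from \cref{la:lem:rphi-sq-leq-racc} into an exponentially improving bound as $\ell$ grows above $(1+\delta_{\ell})m$, using the monotonicity established in \cref{la:lem:phi-decreasing}. First, I would take $\sigma_{\phi}(\delta_{u},\delta_{\ell},\kappa)$ as given by \cref{la:lem:rphi-sq-leq-racc}, which automatically yields the asymptotic bound on $\sigma_{\phi}$ as $\kappa\to+\infty$. At the left endpoint $\ell_{0}\coloneqq(1+\delta_{\ell})m$, chaining \eqref{la:ineq:phi-leq-rphi} with \cref{la:lem:rphi-sq-leq-racc} gives $\phi((1+\delta_{u})\ell_{0},\ell_{0},m)^{2}\leq r_{\phi}(\delta_{u},\delta_{\ell},\kappa)^{2}\leq r_{\textup{acc}}(\sigma_{\phi}\kappa)$, which is exactly \eqref{la:ineq:phi-sq-leq-racc-pwr} at $\ell=\ell_{0}$ (exponent equal to $1$).

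To extend the bound to $\ell>\ell_{0}$, I would pass to logarithms. Write $\psi(\ell)=2\log\phi((1+\delta_{u})\ell,\ell,m)$ and $R=\log r_{\textup{acc}}(\sigma_{\phi}\kappa)$; both are strictly negative. The target inequality \eqref{la:ineq:phi-sq-leq-racc-pwr} becomes $\psi(\ell)\leq R+\alpha_{\phi}(\ell-\ell_{0})R$. Since $\psi(\ell_{0})\leq R$ by the previous paragraph, it suffices to prove $\psi(\ell)-\psi(\ell_{0})\leq \alpha_{\phi}(\ell-\ell_{0})R$. By the mean value theorem applied to $\psi$ (which is smooth in $\ell$ on the stated interval because $\rho(s,\ell)$ is a smooth function of $\ell$ bounded away from $0$), this reduces to showing $\psi'(\xi)\leq \alpha_{\phi}R$ for every $\xi\in[\ell_{0},L/(1+\delta_{u})]$, or equivalently $\psi'(\xi)/R\geq \alpha_{\phi}>0$.

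By \cref{la:lem:phi-decreasing}, the map $\ell\mapsto \phi((1+\delta_{u})\ell,\ell,m)$ is strictly decreasing on $[\ell_{0},L/(1+\delta_{u})]$, so $\psi'(\xi)<0$ pointwise, and combined with $R<0$ the ratio $\psi'(\xi)/R$ is strictly positive. Since $\psi'$ is continuous and the interval is compact, $\psi'(\xi)/R$ attains a strictly positive minimum, which I would take as $\alpha_{\phi}$. To exhibit the claimed dependence of $\alpha_{\phi}$ only on $\delta_{u},\delta_{\ell},m$, I would use the explicit expression for $\phi_{\ell}=\partial_{\ell}\phi((1+\delta_{u})\ell,\ell,m)$ computed in the proof of \cref{la:lem:phi-decreasing}: dividing by $\phi$ and factoring out the explicit $L$-dependence of $R$ (which behaves like $-1/\sqrt{\sigma_{\phi}\kappa}$ up to higher-order terms) allows one to cancel the $L$ and $\kappa$ dependencies and pick $\alpha_{\phi}$ as an explicit function of $\delta_{u},\delta_{\ell}$, and $m$ alone.

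The main obstacle is precisely this last step: producing a clean lower bound on $|\psi'|/|R|$ that is independent of $\kappa$ and $L$. The explicit formula for $\phi_{\ell}$ from \cref{la:lem:phi-decreasing} is cumbersome, and after dividing by $\phi$ the resulting expression mixes $L,m,\ell,\delta_{u},\delta_{\ell}$ nontrivially; the asymptotic scalings as $\kappa\to\infty$ of numerator and denominator have to match in order for $\alpha_{\phi}$ to be bounded away from zero uniformly in $\kappa\geq 2$. The cleanest route is likely to rewrite both $\psi'$ and $R$ in terms of the dimensionless variables $\ell/L$ and $m/L$, verify that the resulting ratio is continuous and strictly positive on the compact region $\{\delta_{u},\delta_{\ell}\in(0,1],\ \kappa\in[2,\infty]\}$ (including the limit $\kappa=\infty$ from \cref{la:lem:rphi-sq-leq-racc}), and then take $\alpha_{\phi}$ to be its infimum.
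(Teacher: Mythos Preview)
Your approach is essentially the paper's: both rest on \cref{la:lem:phi-decreasing} for strict negativity of $\partial_\ell\phi$ on the compact interval and on \cref{la:lem:rphi-sq-leq-racc} for the endpoint bound, and both convert this into an exponentially improving inequality in $\ell$. The only cosmetic difference is that the paper writes the last step as a Gr\"onwall-type differential inequality rather than as MVT on $\psi=2\log\phi$: from $\partial_\ell\phi\leq s<0$ (the maximum slope over the interval) and the uniform upper bound $\phi\leq\sqrt{r_{\textup{acc}}(\sigma_\phi\kappa)}$, it gets $\partial_\ell\phi\leq a\phi$ with $a=s/\sqrt{r_{\textup{acc}}(\sigma_\phi\kappa)}<0$, integrates to $\phi(\ell)^2\leq e^{2a(\ell-\ell_0)}r_{\textup{acc}}(\sigma_\phi\kappa)$, and sets $\alpha_\phi=2a\log_{r_{\textup{acc}}(\sigma_\phi\kappa)}e$. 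This route is marginally cleaner than yours because it only needs an \emph{upper} bound on $\phi$ (to turn the additive slope bound into a multiplicative one) rather than a lower bound on $\phi$ to keep $\psi'=2\phi'/\phi$ continuous; your version still works since $\phi$ is strictly positive on the closed interval, but you should state that explicitly. As for the ``main obstacle'' you raise (isolating the dependence of $\alpha_\phi$ on $\delta_u,\delta_\ell,m$ only), the paper's proof does not carry this out either: it simply defines $\alpha_\phi$ as above and stops, so your scrupulousness here goes beyond what the paper actually verifies.
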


\begin{proof}
    Combining \cref{la:lem:phi-decreasing,la:lem:rphi-sq-leq-racc}, we have that
    \begin{align*}
        \phi((1+\delta_{u})\ell,\ell,m)^{2}
        \leq \rnag(\sigma_{\phi}\kappa)
    \end{align*}
    for all \(\ell \in \mathopen{[}(1+\delta_{\ell})m,L/(1+\delta_{u})\mathclose{]}\).
    Moreover, \(\phi((1+\delta_{u})\ell,\ell,m)\) is decreasing and continuously differentiable with respect to \(\ell\).
    So, consider the maximum slope of \(\phi((1+\delta_{u}\ell,\ell,m))\) over the interval \(\mathopen{[}(1+\delta_{\ell})m,L/(1+\delta_{u})\mathclose{]}\):
    \begin{align*}
        s=\max_{\ell \in \mathopen{[}(1+\delta_{\ell})m,L/(1+\delta_{u})\mathclose{]}} 
        \frac{\partial }{\partial \ell}\phi((1+\delta_{u})\ell,\ell,m)
        < 0.
    \end{align*}
    Then, it follows that for all \(\mathopen{[}(1+\delta_{\ell})m,L/(1+\delta_{u})\mathclose{]}\)
    \begin{align*}
        \frac{\partial }{\partial \ell}\phi((1+\delta_{u}\ell,\ell,m))
        \leq s
        \leq a \sqrt{\rnag(\sigma_{\phi})}
        \leq a \phi((1+\delta_{u}\ell,\ell,m))
        < 0,
    \end{align*}
    where \(a = s / \rnag(\sigma_{\phi}\kappa) < 0\).
    Hence, Gr\"onwall's inequality implies that
    \begin{align}
        \phi((1+\delta_{u}\ell,\ell,m))^{2}
        \leq \exp(2a(\ell-(1+\delta_{\ell})m))\rnag(\sigma_{\phi}\kappa)
        = \rnag(\sigma_{\phi}\kappa)^{1+\alpha_{\phi}(\ell-(1+\delta_{\ell})m)},
    \end{align}
    where, since \(a<0\) and \(\log_{\rnag(\sigma_{\phi}\kappa)}e<0\), \(\alpha_{\phi}\) is a positive constant given by
    \begin{align*}
        \alpha_{\phi}
        = 2a\log_{\rnag(\sigma_{\phi}\kappa)}e
        >0,
    \end{align*}
    which proves the claim.
\end{proof}

\cref{la:fig:alpha_phi} illustrates \cref{la:lem:phi-sq-leq-racc-pwr} numerically with \(L=10,m=1,\delta_{u}=1,\sigma_{\phi}=4\) and \(\alpha_{\phi}=10\).
We see that \(\phi((1+\delta_{u})\ell,\ell,m)^{2}\) becomes significantly smaller than \(\rnag(\sigma_{\phi}\kappa)\) as \(\ell\) approaches \(L\).
In fact, \(\phi(L,L,m)=0\), therefore the estimate adjustments take place extremely fast when the estimate is large and gradually slow down as the estimate improves, but always at an accelerated rate.
As we now show, this implies drastic estimate convergence speed-up.

\begin{figure}[tb]
    \begin{center}
        \centerline{\includegraphics[width=0.75\columnwidth]{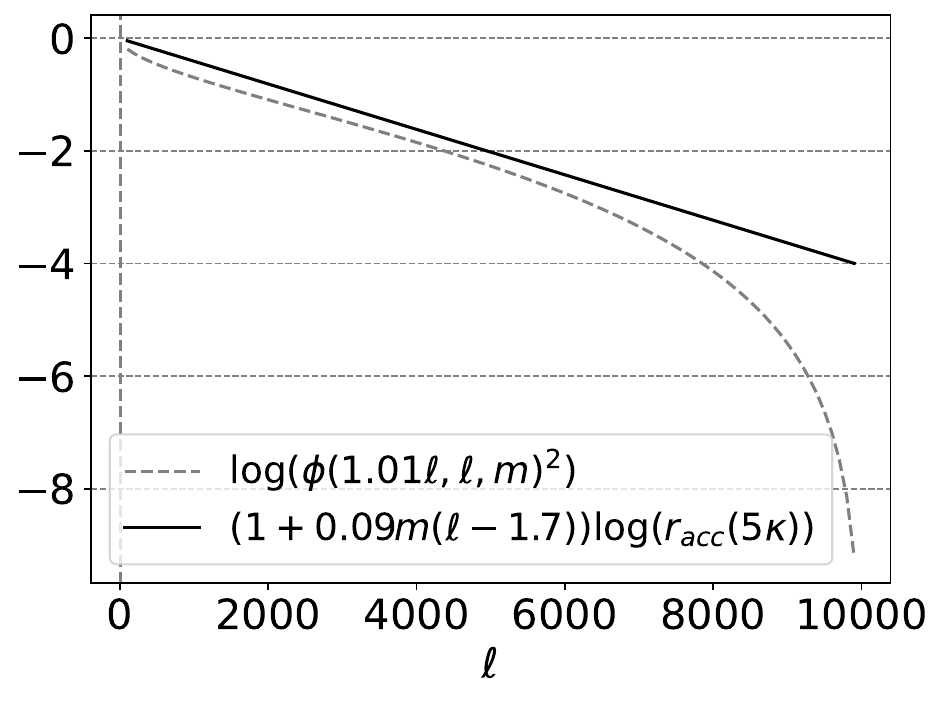}}
        \caption{Numerical illustration of \cref{la:lem:phi-sq-leq-racc-pwr} with \(L=10^{4},m=1,\delta_{\ell}=0.01,\sigma_{\phi}=5\) and \(\alpha_{\phi}=0.09\).}
        \label{la:fig:alpha_phi}
    \end{center}
\end{figure}

\begin{lemma}\label{la:lem:ct}
    Let \(f\in\mathcal{S}(L,m)\) be a quadratic function, suppose that \cref{ass:known-lipschitz-upper-bound} holds for some \(L_{0}>L\) and let \(\kappa=L_{0}/m\).
    Also, let \(\delta_{m}\) and \(\delta_{u}\) be positive numbers such that \(\delta_{m}>\delta_{u}>0\) and let \(r'=r'(\delta_{u},\delta_{\ell},\kappa)\) be a function such that \(\rnag(\sigma_{\phi}\kappa) \leq r' < 1\) for all \(\kappa \geq 1+\delta_{\ell}\), where \(\delta_{\ell} = ((1+\delta_{m})/(1+\delta_{u}))-1>0\) and \(\sigma_{\phi}=\sigma_{\phi}(\delta_{u},\delta_{\ell},\kappa)\) is given by \cref{la:lem:rphi-sq-leq-racc}.
    Then, there exists some \(\nu \geq 1\) such that the estimates \(m_{t}\) of \cref{alg:nag-free} reach \([m/\gamma,(1+\delta_{m})m]\) after no more than \(\tau\) iterations, where
    \begin{align}
        \tau
        =
        \nu \frac{-\log(4\kappa^{2}M_{1}\omega/\delta_{u})}{\log r'}
        \label{la:ineq:ct}
    \end{align}
    and \(M_{1}=\max_{i}\overline{C}_{i}/\underline{C}_{1}\), with \(\omega\) given by \cref{ass:xi0-lower-bound}.
\end{lemma}

\begin{proof}
    Suppose that the last value of \(m_{t}\) before reaching the interval \([m/\gamma,(1+\delta_{m})m)\) is \((1+\delta_{m})m\).
    Then, suppose that the value before last is \(\gamma(1+\delta_{m})m\), and so on, up to \(\gamma^{K}(1+\delta_{m})m\) for some \(K\) such that \(\gamma^{K}(1+\delta_{m})m\leq L < \gamma^{K+1}(1+\delta_{m})m\).
    Using this \(m_{t}\) schedule, we bound the number of iterations that \(m_{t}\) takes to reach the interval \([m/\gamma,(1+\delta_{m})m]\), and then we argue that no other \(m_{t}\) schedule can lead to a worse bound.
    
    Let \(\ell_{j}=\gamma^{j}(1+\delta_{m})m/(1+\delta_{u})\).
    Then, we have that \(\ell_{j} \geq (1+\delta_{\ell})m\) for \(\delta_{\ell}=((1+\delta_{m})/(1+\delta_{u}))-1\).
    Since \(\delta_{m}>\delta_{u}\), then \(\delta_{\ell}>0\), and \cref{la:lem:phi-sq-leq-racc-pwr} applies.
    Now, let \(I_{j} = \min\left\{ i: \lambda_{i} \geq \ell_{j} \right\}\).
    That is, \(\lambda_{i} \geq \ell_{j}\) if and only if \(i \geq I_{j}\).
    Then, using this fact and separating the terms indexed by \(i < I_{0}\) from those indexed by \(i \geq I_{0}\) in \eqref{la:eq:curvature-weighted-average} into two sums yields
    \begin{align*}
        c^{2}_{t+1}
        < \ell_{0}^{2}\frac{\sum_{i=1}^{I_{0}-1}(x_{t+1,i}-x_{t,i})^{2}}{\sum_{i=1}^{d}(x_{t+1,i}-x_{t,i})^{2}}
        +\frac{\sum_{i=I_{0}}^{d}\lambda_{i}^{2}(x_{t+1,i}-x_{t,i})^{2}}{\sum_{i=1}^{d}(x_{t+1,i}-x_{t,i})^{2}}.
    \end{align*}
    In turn, plugging the above inequality into the identity \((c_{t+1} + \ell_{0})(c_{t+1} - \ell_{0})=c^{2}_{t+1} - \ell_{0}^{2}\), and then using the fact that \(\lambda_{i} \leq L\) and \(\ell_{0}\geq m\), we obtain
    \begin{align}
        c_{t+1} - \ell_{0}
        =\frac{c^{2}_{t+1}-\ell_{0}^{2}}{c_{t+1} + \ell_{0}}
        < \frac{\sum_{i=I_{0}}^{d}(\lambda_{i}^{2} - \ell_{0}^{2})(x_{t+1,i}-x_{t,i})^{2}}{\ell_{0}\sum_{i=1}^{d}(x_{t+1,i}-x_{t,i})^{2}}
        \leq \ell_{0} \kappa^{2}\sum_{i=I_{0}}^{d}\frac{(x_{t+1,i}-x_{t,i})^{2}}{(x_{t+1,1}-x_{t,1})^{2}}.
        \label{la:ineq:conv-2x-rate-aux1}
    \end{align}
    Moreover, using \eqref{la:ineq:Xit-quad-norm-sq-bound}, we have that
    \begin{align}
        (x_{t+1,i}-x_{t,i})^{2}
        =(\begin{bmatrix}
            -1 & 1
        \end{bmatrix}X_{t+1,i})^{2}
        \leq 2\Vert X_{t+1,i} \Vert^{2}
        \leq 2\overline{C}_{i} \rho(\mu_{0},\lambda_{i})^{2t}
        x_{0,i}^{2}.
        \label{la:ineq:xit-diff-norm}
    \end{align}
    To address the terms in the sum in \eqref{la:ineq:conv-2x-rate-aux1}, we combine \eqref{la:ineq:xit-diff-norm} and \eqref{la:ineq:x1t-diff-quad-lower-bound}, assuming \(t_{K} \leq t < t_{K+1}\).
    That is, we consider the last adjustment before \(m_{t}\) reaches the interval \([m/\gamma,(1+\delta_{m})m)\).
    Then, we apply \cref{la:lem:top-rho} twice, to get \(\rho(m_{k},\lambda_{i})\leq \rho(m_{k},\ell_{0})\) and \(\rho(m_{k},\ell_{0})<\rho(m_{k},m)\) for all \(i\geq I_{0}\), which gives
    \begin{align}
        \sum_{i=I_{0}}^{d}\frac{(x_{t+1,i}-x_{t,i})^{2}}{(x_{t+1,1}-x_{t,1})^{2}}
        &\leq 
        2M_{1}\sum_{i=I_{0}}^{d}
        \frac{x_{0,i}^{2}}{x_{0,1}^{2}}
        \prod_{k=t_{K}}^{t}\frac{\rho(m_{k},\lambda_{i})^{2}}{\rho(m_{k},m)^{2}}
        \prod_{k=1}^{\tau_{K}}\frac{\rho(m_{k},\lambda_{i})^{2}}{\rho(m_{k},m)^{2}}
        \nonumber\\
        &\leq
        2M_{1}\omega\phi((1+\delta_{u})\ell_{0},\ell_{0},m)^{2(t-t_{K}+1)}.
        \label{la:ineq:conv-2x-rate-aux2}
    \end{align}
    where \(M_{1}=2\max_{i}\overline{C}_{i}/\underline{C}_{1}\).
    Next, we put \eqref{la:ineq:conv-2x-rate-aux1} and \eqref{la:ineq:conv-2x-rate-aux2} together, and since \(\ell_{0}\geq m\), we get
    \begin{align*}
        c_{t+1} - \ell_{0}
        <
        2\kappa^{2}M_{1}\omega\phi((1+\delta_{m})m,\ell_{0},m)^{2(t-t_{K}+1)}\ell_{0}
        \leq \delta_{u}\ell_{0}/2,
    \end{align*}
    for all \(t\geq t_{K} + \Delta t_{0}\), where
    \begin{align*}
        \Delta t_{0}
        = -\frac{\log (4\kappa^{2}M_{1}\omega/\delta_{u})}{\log \rnag(\sigma_{\phi}\kappa)}.
    \end{align*}
    Therefore, \(c_{t+1}<(1+\delta_{u})\ell_{0}=(1+\delta_{m})m\) for \(t\geq t_{K}+\Delta t_{0}\) or, equivalently,
    \begin{align*}
        t_{K+1}-t_{K}
        \leq \Delta t_{0}.
    \end{align*}
    Note that for every \(m_{t}\) schedule, if \(\mu_{K}\) denotes the last value of \(m_{t}\) before reaching \([m/\gamma,(1+\delta_{m})m]\), then \(\mu_{K}\geq (1+\delta_{m})m\), by definition.
    Hence, letting \(\ell_{0}'=\mu_{K}/(1+\delta_{u})\) and \(I_{0}'=\{i:\lambda_{i}\geq \ell_{0}'\}\), then \(I_{0}'\geq I_{0}\), and it follows that
    \begin{align*}
        \sum_{i=I_{0}'}^{d}\frac{(x_{t+1,i}-x_{t,i})^{2}}{(x_{t+1,1}-x_{t,1})^{2}}
        &\leq 
        2M_{1}\sum_{i=I_{0}'}^{d}
        \frac{x_{0,i}^{2}}{x_{0,1}^{2}}
        \prod_{k=t_{K}}^{t}\frac{\rho(m_{k},\lambda_{i})^{2}}{\rho(m_{k},m)^{2}}
        \prod_{k=1}^{\tau_{K}}\frac{\rho(m_{k},\lambda_{i})^{2}}{\rho(m_{k},m)^{2}}
        \\
        &\leq 
        2M_{1}\sum_{i=I_{0}}^{d}
        \frac{x_{0,i}^{2}}{x_{0,1}^{2}}
        \prod_{k=t_{K}}^{t}\frac{\rho(m_{k},\lambda_{i})^{2}}{\rho(m_{k},m)^{2}}
        \prod_{k=1}^{\tau_{K}}\frac{\rho(m_{k},\lambda_{i})^{2}}{\rho(m_{k},m)^{2}}
        \\
        &\leq
        2M_{1}\omega\phi((1+\delta_{u})\ell_{0},\ell_{0},m)^{2(t-t_{K}+1)}.
    \end{align*}
    Therefore, the last adjustment cannot take more than \(\Delta t_{0}\) iterations for any \(m_{t}\) schedule.
    
    Then, let \(\Delta t_{j}\) be quantities analogous to \(\Delta t_{0}\), defined for \(j=0,\ldots,K\) as
    \begin{align*}
        \Delta t_{j}
        = 
        \frac{-\log (4\kappa^{2}M_{1}\omega/\delta_{u})}{\log \rnag(\sigma_{\phi}\kappa)}
        \frac{1}{(1 + \alpha_{\phi}(\ell_{j}-(1+\delta_{\ell})m))}.
    \end{align*}
    If \(\gamma>1\), then \(m_{t}\) decreases by a factor of at least \(\gamma\) every time it is adjusted to a new value.
    Hence, \(\mu_{K-1}\geq \gamma\mu_{K}\) for every \(m_{t}\) schedule, which implies that \(\ell_{1}\leq \mu_{K-1}/(1+\delta_{u})\) for every \(m_{t}\) schedule.
    Hence, by the same rationale above, it cannot take more than \(\Delta t_{1}\) for \(m_{t}\) to be adjusted to its second last value before reaching the interval \([m/\gamma,(1+\delta_{m})m]\).
    It follows by induction that it cannot take more than \(\Delta t_{j}\) for \(m_{t}\) to be adjust to its \(K-j\)-th to last value before reaching the interval \([m/\gamma,(1+\delta_{m})m]\).
    Moreover, since by design \(m_{t}\leq L\), it cannot more than \(K\leq \log_{\gamma}(\kappa/(1+\delta_{m}))\) adjustments before \(m_{t}\) reaches the interval \([m/\gamma,(1+\delta_{m})m]\).
    Therefore, letting
    \begin{align}
        \nu
        = \sum_{j=0}^{+\infty}
        \frac{1}{1 + \alpha_{\phi}m(1+\delta_{\ell})(\gamma^{j}-1)},
        \label{la:def:nu}
    \end{align}
    we conclude that \(m_{t+1}\leq (1+\delta_{m})m\) for all \(t\geq \tau\), where
    \begin{align*}
        \tau 
        = 
        \nu
        \frac{-\log(4\kappa^{2}M_{1}\omega/\delta_{u})}{\log r'}
        \geq 
        \frac{-\log(4\kappa^{2}M_{1}\omega/\delta_{u})}{\log \rnag(\sigma_{\phi}\kappa)}
        \sum_{j=0}^{K}
        \frac{1}{1 + \alpha_{\phi}m(1+\delta_{\ell})(\gamma^{j}-1)},
    \end{align*}
    because \(\log\) is monotone and \(\rnag(\sigma_{\phi}\kappa) \leq r' < 1\), and \(1 + \alpha_{\phi}m(1+\delta_{\ell})(\gamma^{j}-1) > 0\).
\end{proof}

\subsection*{Main Result in the Quadratic Case}

We now prove the main local convergence result for NAG-free when the objective function is quadratic.
There is no difference between local and global convergence in this case, but it will be the foundation to derive the main local convergence in the general case later.
To this end, we first establish that for every \(G_{i}(m_{t})\), there is a quadratic Lyapunov function certifying convergence of \(X_{t,i}\) at rate \(\rho(m_{t},\lambda_{i})\) up to arbitrary precision, at the expense of worse condition numbers.

\begin{lemma}\label{la:lem:lyap-eps}
    Let \(m_{t}\in \mathopen{[}m/\gamma,L\mathclose{]}\) for some \(\gamma>1\), and let \(\rho(G_{i}(m_{t}))\) denote the spectral radius of \(G_{i}(m_{t})\).
    Then, given \(r\in \mathopen{[}\rho(G_{i}(m_{t})),1\mathclose{)}\) and \(\delta>0\) such that \((1+\delta)r<1\), there is some \(P=P(G_{i}(m_{t}),r,\delta) \in \mathbb{R}^{d\times d}\) such that \(G_{i}(m_{t})^{\T}PG_{i}(m_{t}) \prec (1+\delta)^{2}r^{2}P\) and \(P \succeq I\).
    Moreover, letting \(\lambda_{\min}(P)\) and \(\lambda_{\max}(P)\) denote the least and the greatest eigenvalues of \(P\), then
    \begin{align}
        \max_{m_{t}\in \mathopen{[}m/\gamma,L\mathclose{]}}\Vert P(G_{i}(m_{t}), r, \delta) \Vert
        <\ \frac{1 + (1+\delta)^{-2}}{1-(1+\delta)^{-2}}
        + \frac{2M_{2}^{2}}{(1+\delta)^{2}r^{2}}
        \frac{1+(1+\delta)^{-2}}{(1-(1+\delta)^{-2})^{3}},
        \label{la:ineq:max-cond-P-bound}
    \end{align}
    where \(M_{2}\) is an appropriate constant that does not depend on neither \(\delta\) nor \(r\).
\end{lemma}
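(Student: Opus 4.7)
The plan is to construct $P$ explicitly via the standard discrete-time Lyapunov series
\begin{align*}
P = \sum_{k=0}^{\infty} \sigma^{-2k} (G_i^\top)^k G_i^k,
\qquad \sigma = (1+\delta)\,r,
\end{align*}
writing $G_i = G_i(m_t)$ throughout. Since $r \geq \rho(G_i)$ by hypothesis, $\rho(G_i)/\sigma \leq 1/(1+\delta) < 1$, so Gelfand's formula guarantees absolute convergence. The $k=0$ term gives $P \succeq I$ immediately, and the shifted series telescopes to $G_i^\top P G_i = \sigma^2(P - I) \prec \sigma^2 P$, establishing both claimed matrix inequalities at once.

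The main content is the uniform bound \eqref{la:ineq:max-cond-P-bound} on $\|P\|$. The naive route of diagonalizing $G_i = T_i D_i T_i^{-1}$ via \eqref{la:eq:Git-diagonal-form} and using $\|G_i^k\| \leq \|T_i\|\|T_i^{-1}\|\,r^k$ fails because $\|T_i^{-1}\|$ blows up as $m_t \to \lambda_i$ and the eigenvalues $\zeta_i,\xi_i$ coalesce---which is precisely why the present lemma avoids \cref{ass:separation-from-eigs}. I would instead pass to the complex Schur decomposition $G_i = U T U^*$ with $U$ unitary and $T = \bigl(\begin{smallmatrix} \zeta_i & \tau \\ 0 & \xi_i \end{smallmatrix}\bigr)$ upper triangular; direct computation yields
\begin{align*}
T^k = \begin{pmatrix} \zeta_i^k & \tau\,S_k \\ 0 & \xi_i^k \end{pmatrix},
\qquad
|S_k| = \biggl\lvert \sum_{j=0}^{k-1} \zeta_i^{j}\,\xi_i^{k-1-j} \biggr\rvert \leq k\,r^{k-1},
\end{align*}
with the Jordan limit $\zeta_i=\xi_i$ handled seamlessly (so that $S_k = k\,\zeta_i^{k-1}$).

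Bounding $\|G_i^k\|^2 = \|T^k\|^2 \leq \|T^k\|_F^2 = |\zeta_i|^{2k} + |\xi_i|^{2k} + |\tau|^2|S_k|^2$ and summing against $\sigma^{-2k}$ using the closed-form identities
\begin{align*}
\sum_{k=0}^\infty (1+\delta)^{-2k} = \frac{1}{1-(1+\delta)^{-2}},
\qquad
\sum_{k=0}^\infty k^2 (1+\delta)^{-2k} = \frac{(1+\delta)^{-2}\bigl(1+(1+\delta)^{-2}\bigr)}{\bigl(1-(1+\delta)^{-2}\bigr)^3},
\end{align*}
produces the two terms on the right-hand side of \eqref{la:ineq:max-cond-P-bound} after routine algebra, with $M_2$ an upper bound on $|\tau|$. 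Uniformity in $m_t \in [m/\gamma,L]$ follows because the basis-free identity $|\tau|^2 = \|G_i(m_t)\|_F^2 - |\zeta_i(m_t)|^2 - |\xi_i(m_t)|^2$ (Frobenius norm is unitarily invariant) is continuous in $m_t$ and hence uniformly bounded on the compact interval $[m/\gamma,L]$.

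The main obstacle I anticipate is this last uniformity step: I need $M_2$ to depend only on the problem data $(L,m,\gamma)$ and not on the Schur factorization, which is non-unique at eigenvalue coincidences. The basis-free expression for $|\tau|^2$ above circumvents the non-uniqueness entirely, so producing the explicit constant is then a matter of maximizing a continuous function of the entries of $G_i(m_t)$---smooth in $m_t$ by inspection of \eqref{la:def:Git}---over the compact parameter interval.
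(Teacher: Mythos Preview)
Your proposal is correct and follows essentially the same route as the paper: both construct $P$ as the discrete Lyapunov series $\sum_{k\geq 0}\sigma^{-2k}(G_i^\top)^kG_i^k$ with $\sigma=(1+\delta)r$, pass to the (complex) Schur form $G_i=UTU^*$ to control $\|G_i^k\|$ without diagonalizability assumptions, bound the upper-triangular $T^k$ via $|S_k|\leq kr^{k-1}$ for the off-diagonal entry, and sum the resulting series using the closed forms for $\sum x^k$ and $\sum k^2 x^k$. Your basis-free identity $|\tau|^2=\|G_i\|_F^2-|\zeta_i|^2-|\xi_i|^2$ for the Schur off-diagonal is a slightly cleaner way to obtain a uniform $M_2$ than the paper's explicit Gram--Schmidt construction of $Q_i$, but the two arguments are otherwise interchangeable up to bookkeeping constants.
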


\begin{proof}
    By \cref{la:lem:rho-lt-1}, \(\rho(m_{t},\lambda_{i})<1\) for all \(m_{t}\in \mathopen{(}0,L\mathclose{]}\) and \(i=1,\ldots,d\).
    Thus, \(\rho(G_{i}(m_{t}))<1\), where \(\rho(G_{i}(m_{t}))\) denotes the spectral radius of \(G_{i}(m_{t})\).
    Therefore, the interval \(\mathopen{[}\rho(G_{i}(m_{t})),1\mathclose{)}\) is nonempty.
    So, let \(r\) and \(\delta\) be two positive numbers such that \(r\in \mathopen{[}\rho(G_{i}(m_{t})),1\mathclose{)}\) and \((1+\delta)r<1\).
    Then, take \(r_{\delta}=(1+\delta)r\) and \(P=\sum_{k=0}^{+\infty}(G_{i}(m_{t})^{\T}/r_{\delta})^{k}(G_{i}(m_{t})/r_{\delta})^{k}\).
    The matrix \(P\) is well-defined because \(\rho(G_{i}(m_{t})/r_{\delta})\leq 1/(1+\delta)<1\), and \(P \succeq I\), by construction.
    Moreover, it satisfies
    \begin{align*}
        (G_{i}(m_{t}))/r_{\delta})^{\T}P(G_{i}(m_{t})/r_{\delta})
        = \sum_{k=1}^{+\infty}(G_{i}(m_{t}))^{\T}/r_{\delta})^{k}(G_{i}(m_{t}))/r_{\delta})^{k}
        = P - I.
    \end{align*}
    Therefore, \(G_{i}(m_{t})^{\T}PG_{i}(m_{t}) \prec (1+\delta)^{2}r^{2}P\), which proves the first claim.
    
    To prove the second claim, we first express \(G_{i}(m_{t})\) in Schur form \cite[section~7.1.3]{Golub2013}.
    To this end, we construct a two-by-two orthogonal matrix \(Q_{i}(m_{t})\), whose first column is a unit eigenvector \(q_{1,i}\) associated with \(\zeta_{i}=\zeta_{i}(m_{t})\), the top eigenvalue of \(G_{i}(m_{t})\), as in
    \begin{align*}
        q_{1,i} 
        = \frac{1}{\sqrt{1 + \vert \zeta_{i} \vert^{2}}}
        \begin{bmatrix}
            1 \\ \zeta_{i}
        \end{bmatrix}.
    \end{align*}
    To determine the second column of \(Q_{i}(m_{t})\), we apply the Gram-Schmidt orthogonalization procedure \cite[section~5.2.7]{Golub2013} to obtain from \(e_{1}\) a vector orthonormal to \(q_{1,i}\):
    \begin{align*}
        e_{1} 
        -\frac{\langle e_{1}, q_{1,i} \rangle}{\langle q_{1,i}, q_{1,i} \rangle}q_{1,i}
        = \begin{bmatrix}
            1 
            \\ 
            0
        \end{bmatrix}
        -\frac{1}{1 + \vert\zeta_{i}\vert^{2}}
        \begin{bmatrix}
            1 
            \\ 
            \zeta_{i}
        \end{bmatrix}
        = \frac{1}{1 + \vert\zeta_{i}\vert^{2}}
        \begin{bmatrix}
            \vert \zeta_{i} \vert^{2}
            \\
            -\zeta_{i}
        \end{bmatrix}.
    \end{align*}
    Normalizing the vector above, we obtain
    \begin{align*}
        q_{2,i}
        = \frac{1}{\sqrt{1 + \vert\zeta_{i}\vert^{2}}}
        \frac{1}{\vert \zeta_{i} \vert}
        \begin{bmatrix}
            \vert \zeta_{i} \vert^{2}
            \\
            -\zeta_{i}
        \end{bmatrix}.
    \end{align*}
    So, letting \(Q_{i}(m_{t})\) be the orthogonal matrix given by
    \begin{align}
        Q_{i}(m_{t})
        = 
        \begin{bmatrix} q_{1,i} & q_{2,i} \end{bmatrix},
        \label{la:def:Qi}
    \end{align}
    and letting \(T_{i}(m_{t})\) be the matrix given by
    \begin{align}
        T_{i}(m_{t})
        =
        Q_{i}(m_{t})^{\ct}G_{i}(m_{t})Q_{i}(m_{t}),
        \label{la:def:Ti}
    \end{align}
    where \(Q_{i}(m_{t})^{\ct}\) denotes the conjugate-transpose of \(Q_{i}(m_{t})\), it follows that
    \begin{align*}
        T_{i}(m_{t})
        &= \begin{bmatrix}
            q_{1,i}(m_{t})^{\ct}G_{i}(m_{t})q_{1,i}(m_{t}) & q_{1,i}(m_{t})^{\ct}G_{i}(m_{t})q_{2,i}(m_{t})
            \\
            q_{2,i}(m_{t})^{\ct}G_{i}(m_{t})q_{1,i}(m_{t}) & q_{2,i}(m_{t})^{\ct}G_{i}(m_{t})q_{2,i}(m_{t})
        \end{bmatrix}
        \\
        &= \begin{bmatrix}
            \zeta_{i} & q_{1,i}(m_{t})^{\ct}G_{i}(m_{t})q_{2,i}(m_{t})
            \\
            \zeta_{i}q_{2,i}(m_{t})^{\ct}q_{1,i}(m_{t}) & q_{2,i}(m_{t})^{\ct}G_{i}(m_{t})q_{2,i}(m_{t})
        \end{bmatrix}
        \\
        &= \begin{bmatrix}
            \zeta_{i} & q_{1,i}(m_{t})^{\ct}G_{i}(m_{t})q_{2,i}(m_{t})
            \\
            0 & q_{2,i}(m_{t})^{\ct}G_{i}(m_{t})q_{2,i}(m_{t})
        \end{bmatrix}
    \end{align*}
    because \(q_{1,i}(m_{t})\) is a unit eigenvector of \(G_{i}(m_{t})\) associated with \(\zeta_{i}\), and is orthogonal to \(q_{2,i}(m_{t})\).
    Moreover, the product \(Q_{i}(m_{t})^{\ct}G_{i}(m_{t})Q_{i}(m_{t})\) preserves the eigenvalues of \(G_{i}(m_{t})\) because \(Q_{i}(m_{t})\) is orthogonal, therefore
    \begin{align}
        T_{i}(m_{t})
        = \begin{bmatrix}
            \zeta_{i} & q_{1,i}(m_{t})^{\ct}G_{i}(m_{t})q_{2,i}(m_{t})
            \\
            0 & \xi_{i}
        \end{bmatrix},
        \label{la:eq:Ti}
    \end{align}
    where \(\xi_{i}\) denotes the other eigenvalue of \(G_{i}(m_{t})\).
    Now, \(G_{i}(m_{t})\) and, therefore, \(Q_{i}(m_{t})\) are continuous functions of \(m_{t}\), thus
    \begin{align*}
        M_{2}
        = \max_{m_{t}\in \mathopen{[}m/\gamma,L\mathclose{]}} q_{1,i}(m_{t})^{\ct}G_{i}(m_{t})q_{2,i}(m_{t})
        < +\infty
    \end{align*}
    is well-defined.
    Moreover, left-multiplying and right-multiplying \eqref{la:eq:Ti} by \(Q_{i}(m_{t})\) and \(Q_{i}(m_{t})^{\ct}\), respectively, yields
    \begin{align*}
        Q_{i}(m_{t})T_{i}(m_{t})Q_{i}(m_{t})^{\ct}
        = T_{i}(m_{t}).
    \end{align*}
    Substituting the above for \(G_{i}(m_{t})\), using submultiplicativity of the Euclidean norm, the fact that \(Q_{i}(m_{t})\) are orthogonal, and the fact that \(\rho(m_{t},\lambda_{i})/r_{\delta} \leq 1/(1+\delta)\), where \(r_{\delta}=(1+\delta)r\), we get
    \begin{align*}
        \Vert P \Vert
        &\leq 
        \sum_{k=0}^{+\infty}
        \Vert ((G_{i}(m_{t})/r_{\delta})^{k})^{\T} (G_{i}(m_{t}^{k})/r_{\delta}) \Vert
        \\
        &\leq
        \sum_{k=0}^{+\infty}
        \Vert Q_{i}(m_{t})(T_{i}(m_{t})/r_{\delta})^{k}Q_{i}(m_{t})^{\ct} \Vert^{2}
        \\
        &\leq 
        1
        + \sum_{k=0}^{+\infty}
        \Vert Q_{i}(m_{t})^{\ct} \Vert^{2}
        \Vert T_{i}(m_{t})/r_{\delta} \Vert^{2k}
        \Vert Q_{i}(m_{t}) \Vert^{2}
        \\
        &\leq 
        1
        + \sum_{k=0}^{+\infty}
        r_{\delta}^{-2(k+1)}
        \biggl\Vert
        \begin{bmatrix}
            \rho(m_{t},\lambda_{i})^{k+1} & (k+1)M_{2}\rho(m_{t},\lambda_{i})^{k}
            \\
            0 & \rho(m_{t},\lambda_{i})^{k+1}
        \end{bmatrix}
        \biggr\Vert^{2}
        \\
        &\leq 
        1
        + \sum_{k=0}^{+\infty}
        r_{\delta}^{-2(k+1)}
        \Bigl(
        \rho(m_{t},\lambda_{i})^{k+1}
        + (k+1)M_{2}\rho(m_{t},\lambda_{i})^{k}
        \Bigr)^{2}
        \\
        &=
        1
        + \sum_{k=0}^{+\infty}
        \biggl(
        (1+\delta)^{-(k+1)} + \frac{M_{2}}{r_{\delta}}(k+1)(1+\delta)^{-k}
        \biggr)^{2}.
    \end{align*}
    Then, using the fact that \((a+b)^{2}\leq 2a^{2} + 2b^{2}\) for any \(a\) and \(b\), yields
    \begin{align*}
        \Vert P \Vert
        &\leq 
        1 
        + \sum_{k=0}^{+\infty}
        \biggl(
        2(1+\delta)^{-2(k+1)} + \frac{2M_{2}^{2}}{r_{\delta}^{2}}(k+1)^{2}(1+\delta)^{-2k}
        \biggr)
        \\
        &\leq
        1
        + \frac{2(1+\delta)^{-2}}{1-(1+\delta)^{-2}}
        + \frac{2M_{2}^{2}}{r_{\delta}^{2}}
        \frac{1+(1+\delta)^{-2}}{(1-(1+\delta)^{-2})^{3}}
        \\
        &=
        \frac{1+(1+\delta)^{-2}}{1-(1+\delta)^{-2}}
        + \frac{2M_{2}^{2}}{r_{\delta}^{2}}
        \frac{1+(1+\delta)^{-2}}{(1-(1+\delta)^{-2})^{3}},
    \end{align*}
    where the second inequality follows by noting that for any \(\alpha\) such that \(0<\alpha<1\), we have that
    \begin{align*}
        \sum_{k=0}^{+\infty}(k+1)^{2}\alpha^{k}
        = \frac{1}{\alpha}\sum_{k=1}^{+\infty}k^{2}\alpha^{k}
        = \frac{1}{\alpha}\frac{\alpha(1+\alpha)}{(1-\alpha)^{3}}
        = \frac{1+\alpha}{(1-\alpha)^{3}},
    \end{align*}
    and then plugging \((1+\delta)^{-2}\) into \(\alpha\).
\end{proof}

\begin{proposition}
\label{la:prop:main-quad}
    Let \(f\in\mathcal{S}(L,m)\) be a quadratic function with \(\kappa = L/m \geq 2\), and let \(L_{0}>L\).
    Suppose that \cref{ass:xi0-lower-bound} holds for some \(\omega>0\), and that \cref{ass:mt-separation-from-eigs} holds as well.
    Also, let \(\delta_{m}\) and \(\delta_{u}\) be positive numbers such that \(\delta_{u} < \min\{\delta_{m}, 1/2\}\) and \(\delta_{m}\leq \gamma-1\).
    If \cref{alg:nag-free} is initialized with \(L_{0}\) as input, then its iterates \(x_{t}\) satisfy
    \begin{align}
        \Vert x_{t+1} - x^{\star} \Vert
        \leq C\bar{\kappa}^{7/2 + 2\nu}\rnag(2\sigma\bar{\kappa})^{t} \Vert x_{0} - x^{\star} \Vert,
        \label{la:ineq:main-quad}
    \end{align}
    where \(\bar{\kappa}=L_{0}/m > \kappa\), \(\sigma = \max\{\gamma,\sigma_{m},\sigma_{\phi}\}\), \(\sigma_{m}=1+2\delta_{m}+2\sqrt{\delta_{m}(1+\delta_{m})}\), \(\sigma_{\phi}=\sigma_{\phi}(\delta_{u},\delta_{\ell},\bar{\kappa})\) is a function of \(\delta_{\ell}=(1+\delta_{m})/(1+\delta_{u})-1\) and is bounded in \(\bar{\kappa}\geq 1 + \delta_{\ell}\), such that
    \begin{align*}
        \lim_{\bar{\kappa} \to + \infty}\sigma_{\phi}(\delta_{u}, \delta_{\ell}, \bar{\kappa}) 
        = \frac{1}{4(\sqrt{\delta_{u}(1+\delta_{\ell}) + \delta_{\ell}} - \sqrt{\delta_{u}(1+\delta_{\ell})})^{2}},
    \end{align*}
    and \(C\) and \(\nu\) are constants that depend on \(\gamma, \delta_{u}, \sigma\) and \(\omega\).
\end{proposition}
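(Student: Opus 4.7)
The plan is to split the trajectory into two regimes: an \emph{initial regime} \(0 \leq t \leq \tau\) where \(m_{t}\) has not yet reached \(\mathopen{[}m/\gamma,(1+\delta_{m})m\mathclose{]}\), and a \emph{final regime} \(t > \tau\) where it has. By \cref{la:lem:ct}, the initial regime has length at most \(\tau = \bigO\bigl((-1/\log r_{\textup{acc}}(\sigma\bar\kappa))\log\bar\kappa\bigr) = \bigO(\sqrt{\sigma\bar{\kappa}}\log\bar{\kappa})\). In the final regime, \cref{la:lem:top-rho} combined with \cref{la:cor:rho-leq-racc} yields \(\rho(m_{t},\lambda_{i}) \leq \rho(m_{t},m) \leq r_{\textup{acc}}(\sigma\bar{\kappa})\) uniformly in \(i\), which is the foundation for the accelerated rate.

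For the initial regime, I would invoke \cref{thm:gc-main-additional} to obtain \(\Vert x_{\tau}-x^{\star}\Vert \leq C_{0}\sqrt{\bar\kappa}\,\rgd(\gamma\bar\kappa)^{\tau/2}\Vert x_{0}-x^{\star}\Vert\) via strong convexity. Since this bound must be composed with the accelerated rate from the final regime, the crucial observation is that \(r_{\textup{acc}}(2\sigma\bar{\kappa})^{-\tau}\) grows only polynomially in \(\bar{\kappa}\): using \((1-1/\sqrt{2\sigma\bar{\kappa}})^{-\tau} \approx e^{\tau/\sqrt{2\sigma\bar{\kappa}}}\) together with \(\tau = \bigO(\sqrt{\sigma\bar\kappa}\log\bar\kappa)\), this is \(\bar\kappa^{\bigO(1)}\). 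This is precisely where the prefactor \(\bar{\kappa}^{2(1+\nu)}\) originates, with \(\nu\) absorbing the implicit constants from \cref{la:lem:ct}, the \(\sqrt{\bar\kappa}\) from strong convexity, and the \(\Vert P_i\Vert\) bound below.

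For the final regime, I would apply \cref{la:lem:lyap-eps} to each block \(G_{i}(m_{t})\) with \(r = r_{\textup{acc}}(\sigma\bar{\kappa})\) and a carefully chosen \(\delta > 0\) satisfying \((1+\delta)\,r_{\textup{acc}}(\sigma\bar{\kappa}) \leq r_{\textup{acc}}(2\sigma\bar{\kappa})\). This yields, for each \(i\), a matrix \(P_{i}\) with \(P_i \succeq I\) and \(G_{i}(m_{t})^{\top}P_{i}G_{i}(m_{t}) \preceq r_{\textup{acc}}(2\sigma\bar{\kappa})^{2}P_{i}\), so that \(X_{i,t}^{\top}P_{i}X_{i,t}\) contracts by a factor of \(r_{\textup{acc}}(2\sigma\bar{\kappa})^{2}\) at each step. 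Because \(m_{t}\) takes only finitely many values in the final regime (by \cref{ass:geometric-update}) and \(\Vert P_{i}\Vert\) is uniformly bounded via \eqref{la:ineq:max-cond-P-bound}, summing \(x_{i,t}^{2} \leq X_{i,t}^{\top}P_{i}X_{i,t}\) over \(i\) and using orthonormality of the Hessian eigenbasis recovers the desired \(\Vert x_{t}-x^{\star}\Vert^{2}\) contraction, with a constant absorbing \(\max_{i}\Vert P_{i}\Vert\) and the (finitely many) \(P_i\) switches.

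The main obstacle is the tight choice of \(\delta\) in \cref{la:lem:lyap-eps}: \(\delta\) must be small enough that \((1+\delta)r_{\textup{acc}}(\sigma\bar{\kappa}) \leq r_{\textup{acc}}(2\sigma\bar{\kappa})\), yet large enough that \(\Vert P_{i}\Vert\) stays controlled --- the bound \eqref{la:ineq:max-cond-P-bound} blows up as \(\delta \to 0\) through the \((1-(1+\delta)^{-2})^{-3}\) term. Finding a \(\delta = \Theta(1)\) that makes both work uses the fact that \(r_{\textup{acc}}(\sigma\bar\kappa)/r_{\textup{acc}}(2\sigma\bar\kappa) \to \sqrt{2}/(2-\sqrt{2}) \cdot \bigO(1)\) as \(\bar\kappa \to \infty\), which is bounded away from \(1\). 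Combining this with the transition argument between the two regimes --- passing from the Euclidean-norm bound of the initial regime to the \(P_{i}\)-weighted bound of the final regime incurs one additional factor of \(\max_i\Vert P_i\Vert\) --- yields the explicit form of \(C\) and the exponent in \(\bar{\kappa}^{2(1+\nu)}\).
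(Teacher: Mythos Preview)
Your two-regime decomposition and the invocation of \cref{la:lem:ct}, \cref{la:lem:top-rho}, \cref{la:cor:rho-leq-racc}, and \cref{la:lem:lyap-eps} match the paper's proof closely. However, your resolution of what you correctly identify as the ``main obstacle'' contains a genuine error.

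You claim that a \(\delta = \Theta(1)\) can be chosen so that \((1+\delta)\,r_{\textup{acc}}(\sigma\bar{\kappa}) \leq r_{\textup{acc}}(2\sigma\bar{\kappa})\), justified by the assertion that \(r_{\textup{acc}}(\sigma\bar\kappa)/r_{\textup{acc}}(2\sigma\bar\kappa)\) tends to a limit bounded away from \(1\). This is false. Since \(r_{\textup{acc}}(z) = 1 - 1/\sqrt{z}\),
\[
\frac{r_{\textup{acc}}(2\sigma\bar\kappa)}{r_{\textup{acc}}(\sigma\bar\kappa)}
= \frac{1 - 1/\sqrt{2\sigma\bar\kappa}}{1 - 1/\sqrt{\sigma\bar\kappa}}
= 1 + \frac{(1-1/\sqrt{2})}{\sqrt{\sigma\bar\kappa}-1}
\longrightarrow 1 \quad\text{as } \bar\kappa \to \infty,
\]
so the admissible \(\delta\) is at most \((1-1/\sqrt{2})/(\sqrt{\sigma\bar\kappa}-1)\), forcing \(\delta = O(1/\sqrt{\bar\kappa})\). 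In fact, for any fixed \(\delta>0\) one has \((1+\delta)\,r_{\textup{acc}}(\sigma\bar\kappa) > 1\) once \(\bar\kappa\) is large, so a \(\Theta(1)\) choice does not even yield a contraction.

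The paper instead takes \(\delta_{\sigma}=1/(4\sqrt{\sigma\bar\kappa})\). Feeding this into \eqref{la:ineq:max-cond-P-bound}, the dominant term \((1-(1+\delta_{\sigma})^{-2})^{-3}\) scales like \(\delta_{\sigma}^{-3}\sim(\sigma\bar\kappa)^{3/2}\), and the paper obtains \(M_{3}=O(\bar\kappa^{3/2})\) for the product of Lyapunov condition numbers. This \(\bar\kappa\)-dependence is not a nuisance to be sidestepped: it is precisely the source of the \(\bar\kappa^{3/2}\) contribution to the prefactor, which together with the \(\sqrt{\bar\kappa}\) from bounding \(\Vert X_{\lceil\tau\rceil}\Vert\) and the \(\bar\kappa^{2\nu}\) from \(((1+\delta_{\sigma})r_{\textup{acc}}(\sigma\bar\kappa))^{-\lceil\tau\rceil}\) produces the stated exponent \(\bar\kappa^{2(1+\nu)}\). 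Under your assumption \(\Vert P_{i}\Vert=O(1)\), the bookkeeping would give a strictly smaller exponent and the argument would not close to the form in \eqref{la:ineq:main-quad}.
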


\begin{proof}
    Let \(r'=r'(\delta_{u},\delta_{\ell},\kappa)\) be a function such that \(\rnag(\sigma_{\phi}\kappa) \leq r' < 1\) for all \(\kappa\geq 1+\delta_{\ell}\), where \(\sigma_{\phi}=\sigma_{\phi}(\delta_{u}, \delta_{\ell}, \kappa)\) is given by \cref{la:lem:rphi-sq-leq-racc}.
    Then, by \cref{la:lem:ct} we have that \(m_{t}\leq (1+\delta_{m})m\) for all \(t\geq \tau\), where 
    \begin{align}
        \tau
        =
        \nu\frac{-\log (4\kappa^{2}M_{1}\omega/\delta_{u})}{\log r'},
        \label{la:ineq:tau-prime}
    \end{align}
    \(\nu\) is given by \eqref{la:def:nu} and \(M_{1}=\max_{i}\overline{C}_{i}/\underline{C}_{1}\).
    
    By design, we have that \(m_{t}\geq m/\gamma\).
    If \(m_{t}<m\), then by \cref{la:lem:rho-expression}, and unsing the fact that \((\sqrt{L/m_{t}}-1)/(\sqrt{L/m_{t}}+1)\) is decreasing in \(m_{t}\) and \((\kappa-1)/\kappa\) is increasing in \(\kappa\), we get
    \begin{align*}
        \rho(m_{t},m)
        = \sqrt{\frac{\sqrt{L/m_{t}}-1}{\sqrt{L/m_{t}}+1}\frac{\kappa-1}{\kappa}}
        \leq \sqrt{\frac{\sqrt{\gamma\kappa}-1}{\sqrt{\gamma\kappa}+1}\frac{\gamma\kappa-1}{\gamma\kappa}}
        = \rnag(\gamma\kappa).
    \end{align*}
    Otherwise, if \(m_{t} \in \mathopen{[}m,(1+\delta_{m})m\mathclose{]}\), then by \cref{la:cor:rho-leq-racc}, we have that \(\rho(m_{t},m)\leq \rnag(\sigma_{m}\kappa)\) for all \(\kappa\geq 1+ \delta_{m}\), where \(\sigma_{m}=\sigma_{m}(\delta_{m})=1+2\delta_{m}+2\sqrt{\delta_{m}(1+\delta_{m})}\).
    Hence, \(\rho(m_{t},m) \leq \rnag(\sigma_{1}\kappa)\) for all \(t\geq \tau\), where \(\sigma_{1}=\max\{\gamma,\sigma_{m}\}\).
    
    Now, by \cref{la:lem:top-rho}, we have that \(\rho(m_{t},\lambda_{i}) \leq \rnag(\sigma_{1}\kappa)\) for all \(\lambda_{i}\).
    Hence, given \(\delta_{\sigma}\) such that \((1+\delta_{\sigma})\rnag(\sigma_{1}\kappa)<1\), by \cref{la:lem:lyap-eps} there is a \(P_{i}(m_{t})=P_{i}(m_{t},\delta_{\sigma})\succeq I\) for each \(\lambda_{i}\) such that \(G_{i}(m_{t})^{\T}P_{i}(m_{t})G_{i}(m_{t})\preceq (1+\delta_{\sigma})^{2}\rnag(\sigma_{1}\kappa)^{2}P_{i}(m_{t})\).
    Hence, if \(t_{j} \leq t < t_{j+1}\) and \(t \geq \tau\), then
    \begin{align*}
        X_{t+1,i}^{\T}P_{i}(m_{t})X_{t+1,i}
        &= X_{t,i}^{\T}G_{i}(\mu_{j})^{\T}P_{i}(\mu_{j})G_{i}(\mu_{j})X_{t,i}
        \\
        &\leq (1+\delta_{\sigma})^{2}\rnag(\sigma_{1}\kappa)^{2}X_{t,i}^{\T}P_{i}(\mu_{j})X_{t,i},
    \end{align*}
    since \(m_{t}=\mu_{j}\).
    Consecutively applying this inequality, we obtain
    \begin{align*}
        \lambda_{\min}(P_{i}(m_{t}))\Vert X_{t,i} \Vert^{2}
        \leq X_{t,i}^{\T}P_{i}(m_{t})X_{t,i}
        &\leq ((1+\delta_{\sigma})\rnag(\sigma_{1}\kappa))^{2(t-t_{j})}X_{t_{j},i}^{\T}P_{i}(\mu_{j})X_{t_{j},i}
        \\
        &\leq \lambda_{\max}(P_{i}(\mu_{j})) ((1+\delta_{\sigma})\rnag(\sigma_{1}\kappa))^{2(t-t_{j})} \Vert X_{t_{j},i} \Vert^{2}.
    \end{align*}
    Rearranging the above yields
    \begin{align*}
        \Vert X_{t,i} \Vert^{2}
        \leq \frac{\lambda_{\max}(P_{i}(\mu_{j}))}{\lambda_{\min}(P_{i}(\mu_{j}))}
        ((1+\delta_{\sigma})\rnag(\sigma_{1}\kappa))^{2(t-t_{j})} \Vert X_{t_{j},i} \Vert^{2}.
    \end{align*}
    Moreover, since by assumption \(1+\delta_{m} < \gamma\), \(m_{t}\) is adjusted at most once if \(m_{t}\leq (1+\delta_{m})m\), therefore denoting by \(\mu_{-1}\) and \(\mu_{-2}\) respectively the last and before last values taken by \(m_{t}\), for all \(t\geq \tau\) we have that
    \begin{align*}
        \Vert X_{t,i} \Vert^{2}
        \leq 
        \frac{\lambda_{\max}(P_{i}(\mu_{-1}))}{\lambda_{\min}(P_{i}(\mu_{-1}))}
        \frac{\lambda_{\max}(P_{i}(\mu_{-2}))}{\lambda_{\min}(P_{i}(\mu_{-2}))}
        ((1+\delta_{\sigma})\rnag(\sigma_{1}\kappa))^{2(t-\ceil{\tau})} \Vert X_{\ceil{\tau},i} \Vert^{2},
    \end{align*}
    In turn, the above bound yields
    \begin{align*}
        \Vert X_{t} \Vert^{2}
        &= \sum_{i=1}^{d}\Vert X_{t,i} \Vert^{2}
        \\
        &\leq 
        \sum_{i=1}^{d} 
        \frac{\lambda_{\max}(P_{i}(\mu_{-1}))}{\lambda_{\min}(P_{i}(\mu_{-1}))}
        \frac{\lambda_{\max}(P_{i}(\mu_{-2}))}{\lambda_{\min}(P_{i}(\mu_{-2}))}
        ((1+\delta_{\sigma})\rnag(\sigma_{1}\kappa))^{2(t-\ceil{\tau})} \Vert X_{\ceil{\tau},i} \Vert^{2}.
    \end{align*}
    Since \(\rnag\) is monotone and \(\sum_{i=1}^{d}\Vert X_{t,i} \Vert^{2} = \Vert X_{t} \Vert^{2}\), defining \(\sigma = \max\{\gamma, \sigma_{m}, \sigma_{\phi}\}\), it follows that
    \begin{align}
        \Vert X_{t} \Vert^{2}
        \leq 
        M_{3}^{2}
        ((1+\delta_{\sigma})\rnag(\sigma\kappa))^{2(t-\ceil{\tau})} 
        \Vert X_{\ceil{\tau}} \Vert^{2},
        \label{la:ineq:xt-norm-sq-ub}
    \end{align}
    where \(M_{3}^{2}\) is given by the product of the worst condition numbers of all \(P_{i}(\mu_{-1})\) and \(P_{i}(\mu_{-2})\):
    \begin{align*}
        M_{3}
        = 
        \sqrt{\max_{i=1,\ldots,d}\frac{\lambda_{\max}(P_{i}(\mu_{-1}))}{\lambda_{\min}(P_{i}(\mu_{-1}))}
        \max_{i=1,\ldots,d}\frac{\lambda_{\max}(P_{i}(\mu_{-2}))}{\lambda_{\min}(P_{i}(\mu_{-2}))}}.
    \end{align*}
    
    Plugging \(r' = (1+\delta_{\sigma})\rnag(\sigma\kappa)\) in \eqref{la:ineq:tau-prime}, it follows that \(m_{t}\in\mathopen{[}m/\gamma,(1+\delta_{m})m\mathclose{]}\) for all \(t\geq \tau\), where
    \begin{align*}
        \tau
        &= 
        \nu\frac{-\log(4\kappa^{2}M_{1}\omega/\delta_{u})}{\log (1+\delta_{\sigma})\rnag(\sigma\kappa)}.
    \end{align*}
    Therefore, since \(\rnag(\sigma\kappa) \geq \rnag(\kappa) \geq (\sqrt{2}-1)/\sqrt{2} \geq 1/4\) and \(\ceil{\tau} \leq \tau + 1\), it follows that
    \begin{align}
        ((1+\delta_{\sigma})\rnag(\sigma\kappa))^{-\ceil{\tau}}
        \leq 4M_{4},
        \label{la:ineq:racc-pow-tau}
    \end{align}
    for a constant \(M_{4}\) given by
    \begin{align*}
        M_{4} = (4\kappa^{2}M_{1}\omega/\delta_{u})^{\nu}.
    \end{align*}
    Then, plugging \eqref{la:ineq:racc-pow-tau} into \eqref{la:ineq:xt-norm-sq-ub}, we obtain
    \begin{align*}
        \Vert x_{t} \Vert
        \leq 
        \Vert X_{t} \Vert
        \leq 
        4M_{3}M_{4}
        ((1+\delta_{\sigma})\rnag(\sigma\kappa))^{t}
        \Vert X_{\ceil{\tau}} \Vert.
    \end{align*}
    To establish \eqref{la:ineq:main-quad}, it remains to bound \(\Vert X_{\ceil{\tau}} \Vert\).
    To this end, we plug \(x=x^{\star}\) and \(y=y_{t+1}\) into \eqref{ineq:strong_convexity}, and then use the global convergence bound from \cref{thm:gc} to get
    \begin{align*}
        \Vert y_{t+1} - x^{\star} \Vert^{2}
        \leq (2/m)( f(y_{t+1}) - f(x^{\star}) )
        \leq\rgd(\kappa)^{t} 16 \kappa^{4}\Vert x_{0} - x^{\star} \Vert^{2}.
    \end{align*}
    Then, substituting \(x_{t+1}\) with its definition from \cref{alg:nag-free}, summing \(\pm\beta_{t}x^{\star} = 0\), using the above bound and then the fact that \(\beta_{t}\in \mathopen{[}0,1\mathclose{)}\) and that \(\rgd\in \mathopen{(}0,1\mathclose{)}\), we obtain
    \begin{align}
        \Vert x_{t+1} - x^{\star} \Vert^{2}
        = \Vert (1+\beta_{t})y_{t+1} -\beta_{t}y_{t} - x^{\star} \pm \beta_{t}x^{\star}\Vert^{2}
        &= \Vert (1+\beta_{t})(y_{t+1}-x^{\star}) -\beta_{t}(y_{t} - x^{\star})\Vert^{2}
        \nonumber\\
        &\leq (2\Vert y_{t+1} - x^{\star} \Vert + \Vert y_{t} - x^{\star} \Vert)^{2}
        \nonumber\\
        &\leq \rgd(\kappa)^{t-1}
        144 \kappa^{4}
        \Vert x_{0} - x^{\star} \Vert^{2},
        \label{la:ineq:xt-norm-gd-bound}
    \end{align}
    which implies that
    \begin{align*}
        \Vert X_{\ceil{\tau}} \Vert
        \leq \Vert x_{\ceil{\tau}} - x^{\star} \Vert 
        + \Vert x_{\ceil{\tau}-1} - x^{\star} \Vert
        &\leq 
        \rgd(\kappa)^{(\ceil{\tau}-3)/2}
        24 \kappa^{2} \Vert x_{0} - x^{\star} \Vert
        \\
        &\leq 
        48\sqrt{2} \kappa^{2} \Vert x_{0} - x^{\star} \Vert,
    \end{align*}
    since \(r_{\textup{GD}}(\kappa)\geq r_{\textup{GD}}(2)=2\).
    Combining the two bounds above yields
    \begin{align*}
        ((1+\delta_{\sigma})\rnag(\sigma\kappa))^{-\ceil{\tau}}
        \rgd(\kappa)^{-3/2}
        \leq 3\cdot 2^{6} \sqrt{2} M_{4}
        \leq 5\cdot 2^{6} M_{4}.
    \end{align*}
    Therefore, we have that
    \begin{align}
        \Vert x_{t+1} - x^{\star} \Vert
        \leq 
        5\cdot 2^{6} M_{3}M_{4}\sqrt{\kappa}
        ((1+\delta_{\sigma})\rnag(\sigma\kappa))^{t} \Vert x_{0} - x^{\star} \Vert.
        \label{la:ineq:main-quad-aux}
    \end{align}
    Our next step is to express the rate \((1+\delta_{\sigma})\rnag(\sigma\kappa)\) in terms of \(\rnag(\sigma_{2}\kappa)\) for some \(\sigma_{2}\), as in
    \begin{align*}
        (1+\delta_{\sigma})\rnag(\sigma\kappa)
        = (1+\delta_{\sigma})\frac{\sqrt{\sigma\kappa}-1}{\sqrt{\sigma\kappa}}
        = \frac{\sqrt{\sigma_{2}\kappa}-1}{\sqrt{\sigma_{2}\kappa}}.
    \end{align*}
    Solving the above identity for \(\sigma_{2}\), we obtain
    \begin{align*}
        \sigma_{2}
        = \frac{\sigma}{(1+\delta_{\sigma}-\delta_{\sigma}\sqrt{\sigma\kappa})^{2}}
        \leq \frac{\sigma}{(1-\delta_{\sigma}\sqrt{\sigma\kappa})^{2}}.
    \end{align*}
    That is, \(\sigma_{2}=(1+\delta)\sigma\), where
    \begin{align*}
        \delta 
        = \frac{\delta_{\sigma}\sqrt{\sigma\kappa}(2-\delta_{\sigma}\sqrt{\sigma\kappa})}{(1-\delta_{\sigma}\sqrt{\sigma\kappa})^{2}}.
    \end{align*}
    So, if \(\delta_{\sigma}=1/(4\sqrt{\sigma\kappa})\), then \(\delta\leq 7/9\), which implies that \(1 + \delta \leq 2\) and
    \begin{align*}
        (1+\delta_{\sigma})\rnag(\sigma\kappa)
        \leq \rnag(2\sigma\kappa).
    \end{align*}
    Moreover, since \(\sigma \geq 2\) and \(\kappa \geq 2\), it follows that \(\delta_{\sigma}=1/(4\sqrt{\sigma\kappa}) \leq 1/8\) and
    \begin{align*}
        \frac{1}{1-(1+\delta_{\sigma})^{-2}}
        &= \frac{(1+\delta_{\sigma})^{2}}{\delta_{\sigma}(2+\delta_{\sigma})}
        \leq \frac{(1+1/8)^{2}}{2}\frac{1}{\delta_{\sigma}}
        = \frac{9^2}{2 \cdot 8^2}\frac{1}{\delta_{\sigma}},
        \\
        1+(1+\delta_{\sigma})^{-2}
        &= \frac{2+\delta_{\sigma}(2+\delta_{\sigma})}{1+\delta_{\sigma}}
        \leq 2(1+\delta_{\sigma}).
    \end{align*}
    In the same vein, using the fact that \(\lambda_{\min}(P_{i}(\mu_{-1}))\geq 1 \) and that \(\lambda_{\max}(P_{i}(\mu_{-1}))\leq \Vert P_{i}(\mu_{-1})\Vert\), plugging \(\delta_{\sigma}=1/(4\sqrt{\sigma\kappa})\) into \eqref{la:ineq:max-cond-P-bound} and using the fact that \(\rnag(\sigma\kappa)\geq \rnag(4)=1/2\) yields
    \begin{align*}
        \max_{i=1,\ldots,d}\frac{\lambda_{\max}(P_{i}(\mu_{-1}))}{\lambda_{\min}(P_{i}(\mu_{-1}))}
        &\leq \max_{i=1,\ldots,d} \Vert P_{i}(\mu_{-1})\Vert
        \\
        &< 1
        + 2\frac{(1+\delta_{\sigma})^{-2}}{1-(1+\delta_{\sigma})^{-2}}
        + 2\frac{M_{2}^{2}}{(1+\delta_{\sigma})^{2}\rnag(\sigma\kappa)^{2}}
        \frac{1+(1+\delta_{\sigma})^{-2}}{(1-(1+\delta_{\sigma})^{-2})^{3}}
        \\
        &< 1 
        + \frac{9^2}{2 \cdot 8^2}\frac{(1+\delta_{\sigma})^{-2}}{\delta_{\sigma}}
        + 4\frac{M_{2}^{2}}{(1+\delta_{\sigma})^{2}\rnag(\sigma\kappa)^{2}}
        \frac{1+\delta_{\sigma}}{\delta_{\sigma}^{3}}
        \\
        &< 
        \biggl(
        \frac{1}{8^{3}}
        + \frac{9^{2}}{2 \cdot 8^{4}}
        + \frac{4}{\rnag(\sigma\kappa)^{2}}
        \biggr)
        \frac{M_{2}^{2}}{\delta_{\sigma}^{3}}
        \\
        &< 18 \frac{M_{2}^{2}}{\delta_{\sigma}^{3}}.
    \end{align*}
    Using the above bound twice yields
    \begin{align}
        M_{3}
        = 
        \sqrt{\max_{i=1,\ldots,d}\frac{\lambda_{\max}(P_{i}(\mu_{-1}))}{\lambda_{\min}(P_{i}(\mu_{-1}))}
        \max_{i=1,\ldots,d}\frac{\lambda_{\max}(P_{i}(\mu_{-2}))}{\lambda_{\min}(P_{i}(\mu_{-2}))}}
        <
         \frac{M_{2}^{2}}{\delta_{\sigma}^{3}}
        = 18 \cdot 4^{3}M_{2}^{2}\sigma^{3/2}\kappa^{3/2}.
        \label{la:ineq:M3-bound}
    \end{align}
    Finally, we prove \eqref{la:ineq:main-quad} by plugging \eqref{la:ineq:M3-bound} into \eqref{la:ineq:main-quad-aux}, and then replacing \(\kappa\) with \(\bar{\kappa}\), so that
    \begin{align*}
        \Vert x_{t+1} - x^{\star} \Vert
        \leq C\bar{\kappa}^{7/2+2\nu}\rnag(2\sigma\bar{\kappa})^{t} \Vert x_{0} - x^{\star} \Vert,
    \end{align*}
    where the constant \(C\) is given by
    \begin{align*}
        C = 9 \cdot 2^{13} M_{2}(4M_{1}\omega/\delta_{u})^{\nu}\sigma^{3/2}.
    \end{align*}
\end{proof}

\subsection{General Case}
\label{la:general-case}

We now build on the quadratic case to prove that the iterates \(x_{t}\) of \cref{alg:nag-free} also converge to the optimum \(x^{\star}\) at an accelerated rate when the objective function \(f\) is not necessarily quadratic.
Our approach is to show that if \(x_{t}\) is sufficiently close to \(x^{\star}\), then \(x_{t}-x^{\star}\) consists of a perturbation of the iterate when the objective is given by the local quadratic approximation of \(f\) at \(x^{\star}\).

\subsection*{Iterate Dynamics in the General Case}

Under \cref{ass:locally-smooth-hessian}, it follows that \(f\) is twice continuously differentiable at \(x^{\star}\).
Hence, by Taylor's theorem \cite[theorem~2.1]{Nocedal2006}, the gradient at \(x_{t}\) can be expressed as
\begin{align}
    \nabla f(x_{t})
    =& \nabla f(x^{\star}) + \int_{0}^{1}\nabla^{2}f(x^{\star} + s(x_{t}-x^{\star}))(x_{t}-x^{\star})ds
    \nonumber\\
    =& \nabla^{2}f(x^{\star})x_{t} + \int_{0}^{1}(\nabla^{2}f(x^{\star} + s(x_{t}-x^{\star})) - \nabla^{2}f(x^{\star}))(x_{t}-x^{\star})ds
    \nonumber\\
    =& (H+\tilde{H}_{t})(x_{t}-x^{\star}),
    \label{la:eq:gradient-perturbation}
\end{align}
where the Hessian error term \(\tilde{H}_{t} = \tilde{H}_{t}(x_{t})\) is given by
\begin{align}
    \tilde{H}_{t}    
    = \int_{0}^{1}(\nabla^{2}f(x^{\star} + s(x_{t}-x^{\star})) - \nabla^{2}f(x^{\star}))ds.
    \label{la:def:hessian-error}
\end{align}
Moreover, by \eqref{la:ineq:xt-norm-gd-bound}, we have that \(\Vert x_{t+1} - x^{\star}\Vert \leq \sqrt{144\bar{\kappa}^{4} \rgd(\kappa)^{t-1}}\Vert x_{0} - x^{\star}\Vert\).
Hence, since \(\rgd(\kappa)\in \mathopen{(}0,1\mathclose{)}\), if \(\Vert x_{0} - x^{\star}\Vert \leq \epsilon^{\max(1,1/\alpha_{H})}\sqrt{\rgd(\kappa)/144\bar{\kappa}^{4}}\) and \(\epsilon \leq \delta_{H}\), then for all \(t\geq 0\), we have that \(\Vert x_{t} - x^{\star}\Vert \leq \delta_{H}\), and it follows from \cref{ass:locally-smooth-hessian} that
\begin{align}
    \Vert \tilde{H}_{t} \Vert
    &\leq \int_{0}^{1}\Vert \nabla^{2}f(x^{\star} + s(x_{t}-x^{\star})) - \nabla^{2}f(x^{\star})) \Vert ds
    \nonumber\\
    &\leq L_{H}\int_{0}^{1}s\Vert x_{t} - x^{\star} \Vert ds
    \nonumber\\
    &\leq \epsilon L_{H}\rgd(\kappa)^{t\alpha_{H}/2}.
    \label{la:ineq:hessian-error-norm-bound}
\end{align}
Since \(v_{j}\) form an eigenbasis for \(\mathbb{R}^{d}\), \(\tilde{H}_{t}v_{j}\) can be expressed in \(v_{j}\)-coordinates, \(\tilde{h}_{t,i,j}\), as
\begin{align}
    \tilde{H}_{t}v_{j} 
    = \sum_{i=1}^{d}\tilde{h}_{t,i,j}v_{i},
    && j=1,\ldots,d.
    \label{la:Ht-vj-eigendecomposition}
\end{align}
Then, using \eqref{la:Ht-vj-eigendecomposition} and the decomposition \(x_{t}-x^{\star}=\sum_{j=1}^{d}x_{j,t}v_{j}\) yields
\begin{align}
    \tilde{H}_{t}(x_{t}-x^{\star})
    = \tilde{H}_{t}\sum_{j=1}^{d}x_{j,t}v_{j}
    = \sum_{j=1}^{d}x_{j,t}\tilde{H}_{t}v_{j}
    = \sum_{j=1}^{d}x_{j,t}\sum_{i=1}^{d}\tilde{h}_{t,i,j}v_{i}
    = \sum_{i=1}^{d}\sum_{j=1}^{d}\tilde{h}_{t,i,j}x_{j,t}v_{i}.
    \label{la:eq:hessian-xt-eigendecomposition}
\end{align}
In turn, combining the decomposition \(x_{t}-x^{\star}=\sum_{j=1}^{d}x_{j,t}v_{j}\) with \labelcref{la:eq:gradient-perturbation,la:eq:hessian-xt-eigendecomposition}, we obtain
\begin{align*}
    y_{t+1}-x^{\star}
    &= x_{t} - (1/L) \nabla f(x_{t}) -x^{\star}
    \\
    &= (I - H/L - \tilde{H}_{t}/L )(x_{t}-x^{\star})
    \\
    &= \sum_{i=1}^{d}\Bigl[(1 - \lambda_{i}/L)x_{t,i} + \sum_{j=1}^{d}(\tilde{h}_{t,i,j}/L)x_{j,t}\Bigr]v_{i},
\end{align*}
from which it follows that
\begin{align*}
    \sum_{j=1}^{d}x_{j,t+1}v_{j}
    =&\ x_{t+1}-x^{\star}
    \\
    =&\ (1+\beta_{t})y_{t+1} - \beta_{t}y_{t} -x^{\star} \mp \beta_{t}x^{\star}
    \\
    =&\ \sum_{i=1}^{d}
    \biggl[ 
    (1+\beta_{t})\Bigl(1 - \frac{\lambda_{i}}{L}\Bigr)x_{t,i} -\beta_{t}\Bigl(1 - \frac{\lambda_{i}}{L}\Bigr)x_{t-1,i} 
    \\
    &+ \sum_{j=1}^{d}
    \Bigl(
    (1+\beta_{t})\frac{\tilde{h}_{t,i,j}}{L}x_{j,t} -\beta_{t}\frac{\tilde{h}_{t,i,j}}{L}x_{j,t-1}
    \Bigr)
    \biggr] 
    v_{i}.
\end{align*}
Therefore, we have that
\begin{align}
    X_{t+1}
    = (G(m_{t}) + \tilde{G}_{t}) X_{t},
    \label{la:eq:Xt-dynamics-perturbed}
\end{align}
where \(X_{t}\) is the vector with ``stacked'' \(X_{t,i}\), as in 
\begin{align}
    X_{t} = 
    \begin{bmatrix}
        X_{t,1} \\ \vdots \\ X_{d,t}
    \end{bmatrix},
    \label{la:def:Xt}
\end{align}
while \(G(m_{t})\) and \(\tilde{G}_{t}\) are matrices given by
\begin{align}
    G(m_{t})
    &= \diag(G_{1}(m_{t}), \ldots,G_{d}(m_{t})),
    \label{la:def:Gt}\\
    \tilde{G}_{t}
    &= \frac{1}{L}
    \begin{bmatrix}
        0 & 0 & \ldots & 0 & 0
        \\
        -\beta_{t}\tilde{h}_{1,1,t-1} & (1+\beta_{t})\tilde{h}_{1,1,t} & \ldots & -\beta_{t}\tilde{h}_{1,d,t-1} & (1+\beta_{t})\tilde{h}_{1,d,t}
        \\
        \vdots & \vdots & \ddots & \vdots & \vdots
        \\
        0 & 0 & \ldots & 0 & 0
        \\
        -\beta_{t}\tilde{h}_{d,1,t-1} & (1+\beta_{t})\tilde{h}_{d,1,t} & \ldots & -\beta_{t}\tilde{h}_{d,d,t-1} & (1+\beta_{t})\tilde{h}_{d,d,t}
    \end{bmatrix},
    \label{la:def:Gtildet}
\end{align}
where \(G_{i}(m_{t})\), defined by \eqref{la:def:Git}, are the system matrices governing the dynamics of each \(X_{t,i}\) in the quadratic case where \(f(x)=(x-x^{\star})^{\T}H_{t}(x-x^{\star})\).
Using the fact that \(\tilde{h}_{t,i,j}=v_{i}^{\T}\tilde{H}_{t}v_{j}\), the matrix \(\tilde{G}_{t}\) given by \eqref{la:def:Gtildet} can be expressed as
\begin{align}
    \tilde{G}_{t}
    &= 
    \frac{1+\beta_{t}}{L} W_{1}^{\T} V^{\T}\tilde{H}_{t} V W_{1}
    -\frac{\beta_{t}}{L} W_{1}^{\T} V^{\T} \tilde{H}_{t} V W_{2},
    \label{la:eq:Gtilde-decomposition}
\end{align}
where the matrices \(V\in \mathbb{R}^{d\times d}\), \(W_{1}, W_{2} \in \mathbb{R}^{d\times 2d}\) are given by
\begin{align*}
    V &= \begin{bmatrix} v_{1}^{\T} \\ \vdots \\ v_{d}^{\T} \end{bmatrix}^{\T},
    &&
    W_{1} =
    \begin{bmatrix}
        0 & 1 & 0 & 0 & \ldots & 0 & 0
        \\
        0 & 0 & 0 & 1 & \ldots & 0 & 0
        \\
        \vdots & \vdots & \vdots & \vdots & \ddots & \vdots & \vdots
        \\
        0 & 0 & 0 & 0 & \ldots & 0 & 0
        \\
        0 & 0 & 0 & 0 & \ldots & 0 & 1
    \end{bmatrix},
    &&
    W_{2}
    =
    \begin{bmatrix}
        1 & 0 & 0 & 0 & \ldots & 0 & 0
        \\
        0 & 0 & 1 & 0 & \ldots & 0 & 0
        \\
        \vdots & \vdots & \vdots & \vdots & \ddots & \vdots & \vdots
        \\
        0 & 0 & 0 & 0 & \ldots & 0 & 0
        \\
        0 & 0 & 0 & 0 & \ldots & 1 & 0
    \end{bmatrix}.
\end{align*}
Since \(v_{i}\) are orthonormal, so is \(V\).
Thus, \(V\) has unitary norm, as do \(W_{1}\) and \(W_{2}\).
Therefore, applying the triangle inequality and norm submultiplicativity to \eqref{la:eq:Gtilde-decomposition}, then using the fact that \(\beta_{t} \in \mathopen{[}0,1\mathclose{)}\) and lastly plugging in \eqref{la:ineq:hessian-error-norm-bound}, we obtain
\begin{align}
    \Vert \tilde{G}_{t} \Vert
    &\leq \frac{2}{L}\Vert W_{1}^{\T} \Vert \Vert V^{\T} \Vert \Vert \tilde{H}_{t} \Vert \Vert V \Vert \Vert W_{1} \Vert 
    + \frac{1}{L}\Vert W_{1}^{\T} \Vert \Vert V^{\T} \Vert \Vert \tilde{H}_{t} \Vert \Vert V \Vert \Vert W_{2} \Vert 
    \nonumber\\
    &= \frac{3}{L}\Vert \tilde{H}_{t} \Vert
    \nonumber\\
    &\leq \epsilon\frac{L_{H}}{L}\rgd(\kappa)^{t\alpha_{H}/2}.
    \label{la:ineq:Gtilde-norm-bound}
\end{align}

We continue by noting that if \cref{ass:mt-separation-from-eigs} holds for some \(\delta_{\lambda}>0\), then it also holds for every \(\delta_{\lambda}'<\delta_{\lambda}\).
So, without loss of generality, suppose that \cref{ass:mt-separation-from-eigs} holds for some \(\delta_{\lambda}\leq \delta_{m}m\).
Then, while \(m_{t} > (1+\delta_{m})m\), we have that \(\vert m_{t} - \lambda_{i} \vert \geq \delta_{\lambda}\) for all \(i=1,\ldots,d\).
Hence, noting that for all \(\lambda_{i}\) we have that \(\lambda_{i} < L_{0}\), then from \cref{la:cor:Gi-eig}, it follows that the two eigenvalues \(\zeta_{i}(m_{t})\) and \(\xi_{i}(m_{t})\) of each \(G_{i}(m_{t})\) are distinct.
Therefore, because \(\zeta_{i}(m_{t})\) and \(\xi_{i}(m_{t})\) are continuous in \(m_{t}\), we have that
\begin{align}
    \delta_{T}
    = \min_{m_{t}\in \mathcal{S}}
    \min_{i=1,\ldots,d}\vert \zeta_{i}(m_{t}) - \xi_{i}(m_{t}) \vert
    \label{la:def:delta_T}
    > 0,
\end{align}
where \(\mathcal{S}=\mathcal{S}(\delta_{\lambda})\) is a compact set defined in terms
\begin{align*}
    \mathcal{S}
    = \mathopen{[}(1+\delta_{m})m,L\mathclose{]}\setminus \cup_{i=1}^{d}B(\lambda_{i},\delta_{\lambda}),
\end{align*}
and \(B(\lambda_{i}, \delta_{\lambda}) = \{ x: \vert x - \lambda_{i} \vert < \delta_{\lambda} \}\) is the open ball of radius \(\delta_{\lambda}\) centered at \(\lambda_{i}\).
In the same vein, since \(T_{i}\) defined by \eqref{la:def:diagonalizing-matrix-comp} are continuous in \(\zeta_{i}\) and \(\xi_{i}\), and \(\Vert \cdot \Vert\) is continuous, it follows that
\begin{align*}
    \max_{m_{t}\in \mathcal{S}}
    \max_{i=1,\ldots,d} \Vert T_{i}(m_{t}) \Vert
    < \infty.
\end{align*}
Furthermore, explicitly computing the inverse of \(T_{i}\) for \(m_{t}\in\mathcal{S}\) yields
\begin{align}
    \Vert T_{i}(m_{t})^{-1} \Vert
    = \frac{1}{\vert \zeta_{i}-\xi_{i} \vert}
    \biggl\Vert
    \begin{bmatrix}
        \xi_{i} & -1
        \\
        -\zeta_{i} & 1
    \end{bmatrix}
    \biggr\Vert
    \leq \frac{1}{\delta_{T}}
    \biggl\Vert
    \begin{bmatrix}
        \xi_{i} & -1
        \\
        -\zeta_{i} & 1
    \end{bmatrix}
    \biggr\Vert
    \label{la:ineq:inv-diagonalizing-matrix-norm}.
\end{align}
Hence, since both sides of \eqref{la:ineq:inv-diagonalizing-matrix-norm} are continuous in \(m_{t}\), it follows that
\begin{align*}
    \max_{m_{t}\in \mathcal{S}}
    \max_{i=1,\ldots,d} \Vert T_{i}(m_{t})^{-1} \Vert
    < \infty.
\end{align*}
Therefore, we have that
\begin{align}
    M_{T}
    =
    \max_{m_{t}\in \mathcal{S}}
    \max_{i=1,\ldots,d} \Vert T_{i}(m_{t}) \Vert \Vert T_{i}(m_{t})^{-1} \Vert
    < + \infty.
    \label{la:ineq:diagonalizing-matrix-comp-norm-bound}
\end{align}
Then, let \(T\) denote the coordinate transformation given by
\begin{align}
    T(m_{t})
    = \diag(T_{1}(m_{t}),\ldots,T_{d}(m_{t})).
    \label{la:def:diagonalizing-matrix}
\end{align}
The block-diagonal structure of \(T\) combined with \eqref{la:ineq:diagonalizing-matrix-comp-norm-bound} implies that
\begin{align}
    \max_{m_{t}\in \mathcal{S}}
    \Vert T(m_{t}) \Vert \Vert T(m_{t})^{-1} \Vert
    \leq M_{T}.
    \label{la:ineq:diagonalizing-matrix-norm-bound}
\end{align}
Furthermore, \(T(m_{t})\) diagonalizes \(G(m_{t})\), as in
\begin{align}
    G(m_{t})
    = T(m_{t})D(m_{t})T(m_{t})^{-1},
    \label{la:eq:Gt-diagonal-form}
\end{align}
where \(D(m_{t})\) is the block-diagonal matrix defined as \(D(m_{t}) = \diag(D_{1}(m_{t}), \ldots, D_{d}(m_{t}))\) and \(D_{i}(m_{t})\) are the diagonal matrices given by \eqref{la:eq:Git-diagonal-form}.
So, defining the state \(Z_{t}=T^{-1}(\mu_{0})X_{t}\) for \(t\in \mathopen{[}t_{0},t_{1}\mathclose{]}\) and plugging \(Z_{t}\) and \eqref{la:eq:Gt-diagonal-form} into \eqref{la:eq:Xt-dynamics-perturbed}, since \(m_{t}\equiv\mu_{0}\) for \(t\in \mathopen{[}t_{0},t_{1}\mathclose{)}\), it follows that
\begin{align}
    Z_{t+1}
    &= T^{-1}(\mu_{0})X_{t+1}
    \nonumber\\
    &= T^{-1}(\mu_{0})(G(m_{t}) + \tilde{G}_{t})X_{t}
    \nonumber\\
    &= T^{-1}(G(\mu_{0}) + \tilde{G}_{t})T(\mu_{0})Z_{t}
    \nonumber\\
    &= (D(\mu_{0}) + \tilde{D}_{t})Z_{t},
    \label{la:eq:Zt-dynamics}
\end{align}
where \(\tilde{D}_{t}\) is a perturbation matrix given by
\begin{align}
    \tilde{D}_{t}
    = T^{-1}(m_{t})\tilde{G}_{t}T(m_{t}).
    \label{la:eq:Dt-perturbation}
\end{align}
Using submultiplicativity and then combining \eqref{la:ineq:Gtilde-norm-bound} with \eqref{la:ineq:diagonalizing-matrix-norm-bound}, yields
\begin{align}
    \Vert \tilde{D}_{t} \Vert
    \leq \Vert T^{-1}(m_{t}) \Vert \Vert \tilde{G}_{t} \Vert \Vert T(m_{t}) \Vert
    \leq \epsilon M_{T}\frac{L_{H}}{L}\rgd(\kappa)^{t\alpha_{H}/2}.
    \label{la:ineq:Dtilde-norm-bound}
\end{align}
Then, summing \eqref{la:ineq:Dtilde-norm-bound}, we obtain
\begin{align}
    \sum_{t=0}^{+\infty}\Vert \tilde{D}_{t} \Vert
    \leq \epsilon M_{T} \frac{L_{H}}{L}\sum_{t=0}^{+\infty}\rgd(\kappa)^{t\alpha_{H}/2}
    \leq 
    \epsilon
    M_{T}
    \frac{L_{H}}{L}
    \frac{1}{1-\rgd(\kappa)^{\alpha_{H}/2}}.
    \label{la:ineq:Dtilde-norm-sum-bound}
\end{align}
Moreover, since \(\lambda_{i}\leq L < L_{0}\), it follows that \(G(m_{t})\) are nonsingular.
This fact combined with \eqref{la:ineq:Dtilde-norm-sum-bound} allows us to use results from the theory of asymptotic integration of difference equations \cite{Bodine2015} to establish that the solutions to \eqref{la:eq:Zt-dynamics} are perturbed solutions of the particular case when \(\tilde{D}_{t}\equiv 0\).
Namely, by \cite[theorem~3.4]{Bodine2015}, for \(t\in \mathopen{[}t_{0},t_{1}\mathclose{)}\) we have that
\begin{align*}
    Z_{t+1} = [I+O(\epsilon)]D(\mu_{0})^{t}Z_{0},
\end{align*}
which implies that for \(t\in \mathopen{[}t_{0},t_{1}\mathclose{)}\), we have that
\begin{align*}
    X_{t+1} 
    = T(\mu_{0})[I+O(\epsilon)]D(\mu_{0})^{t}Z_{0}
    = T(\mu_{0})[I+O(\epsilon)]D(\mu_{0})^{t}T(\mu_{0})^{-1}X_{0},
\end{align*}
which can also be written as a perturbation of the solution of the quadratic case \(G(\mu_{0})^{t}X_{0}\):
\begin{align*}
    X_{t+1}
    = G(\mu_{0})^{t}X_{0} + T(\mu_{0})D_{0}^{t}O(\epsilon)T(\mu_{0})^{-1}X_{0}.
\end{align*}

By repeatedly following the above procedure, we conclude that
\begin{align}
    X_{t+1} 
    =& 
    T(\mu_{J})[I+O(\epsilon)]D(\mu_{J})^{t-t_{J}}T(\mu_{J})^{-1}
    \biggl(
    \prod_{j=0}^{J-1}T(\mu_{j})[I+O(\epsilon)]D(\mu_{j})^{t_{j+1}-t_{j}}T(\mu_{j})^{-1}
    \biggr)
    X_{0}
    \nonumber
    \\
    =& 
    T(\mu_{J})D(\mu_{J})^{t-t_{J}}T(\mu_{J})^{-1}
    \biggl(
    \prod_{j=0}^{J-1}T(\mu_{j})D(\mu_{j})^{t_{j+1}-t_{j}}T(\mu_{j})^{-1}
    \biggr)
    X_{0}
    \nonumber
    \\
    &+ T(\mu_{J})O(\epsilon)D(\mu_{J})^{t-t_{J}}T(\mu_{J})^{-1}
    \biggl(
    \prod_{j=0}^{J-1}T(\mu_{j})D(\mu_{j})^{t_{j+1}-t_{j}}T(\mu_{j})^{-1}
    \biggr)
    X_{0}
    + \ldots
    \nonumber
    \\
    &+ T(\mu_{J})O(\epsilon)D(\mu_{J})^{t-t_{J}}T(\mu_{J})^{-1}
    \biggl(
    \prod_{j=0}^{J-1}T(\mu_{j})O(\epsilon)D(\mu_{j})^{t_{j+1}-t_{j}}T(\mu_{j})^{-1}
    \biggr)
    X_{0},
    \label{la:eq:Xt-perturbed-solution}
\end{align}
where \(t\in \mathopen{[}t_{J},t_{J+1}\mathclose{)}\).

\subsection*{The Dynamics of \texorpdfstring{\(c_{t}\)}{ct} in the General Case}

Having established that the components \(X_{t,i}\) in the general case behave like perturbed components of the quadratic case, we can also derive the dynamics of \(c_{t}\) in the general case.
To this end, we establish bounds on the differences \(x_{t+1,i}-x_{t,i}\).
First, we notice that
\begin{align*}
    X_{t+1,i}
    = \begin{bmatrix}
        0 & \ldots & 0 & I & 0 & \ldots & 0
    \end{bmatrix}
    X_{t+1},
\end{align*}
where \(\begin{bmatrix} 0 & \ldots & 0 & I & 0 & \ldots & 0 \end{bmatrix}\in \mathbb{R}^{2\times 2d}\) is a matrix made of a row of two-by-two blocks, where the \(i\)-th block is \(I\in \mathbb{R}^{2\times 2}\) and all other blocks are \(0\in\mathbb{R}^{2\times 2}\).
Also, we have that
\begin{align*}
    &\begin{bmatrix}
        0 & \ldots & 0 & I & 0 & \ldots & 0
    \end{bmatrix}
    T(\mu_{J})D(\mu_{J})^{t-t_{J}}T(\mu_{J})^{-1}
    \biggl(
    \prod_{j=0}^{J-1}T(\mu_{j})D(\mu_{j})^{t_{j+1}-t_{j}}T(\mu_{j})^{-1}
    \biggr)
    X_{0}
    \\
    &= \begin{bmatrix}
        0 & \ldots & 0 & I & 0 & \ldots & 0
    \end{bmatrix}
    G(\mu_{J})^{t-t_{J}}
    \prod_{j=0}^{J-1}G(\mu_{j})^{t_{j+1}-t_{j}}
    X_{0}
    \\
    &= \begin{bmatrix}
        0 & \ldots & 0 & G_{i}^{t-t_{J}}\prod_{j=0}^{J-1}G_{i}(\mu_{j})^{t_{j+1}-t_{j}} & 0 & \ldots & 0
    \end{bmatrix}
    X_{0}
    \\
    &= G_{i}^{t-t_{J}}\prod_{j=0}^{J-1}G_{i}(\mu_{j})^{t_{j+1}-t_{j}}X_{0,i},
\end{align*}
since \(G_{i}(\mu_{j})=T_{i}(\mu_{j})D_{i}(\mu_{j})T_{i}(\mu_{j})^{-1}\), by \eqref{la:eq:Git-diagonal-form}, and \(G,T\) and \(D\) are block-diagonal matrices with blocks given by \(G_{i},T_{i}\) and \(D_{i}\), respectively.
Then, we notice that all but the first term in \eqref{la:eq:Xt-perturbed-solution} are \(O(\epsilon)\), and \(\rho(\mu_{j},\lambda_{i})\leq \rho(\mu_{j},m)\) for all the eigenvalues \(\rho(\mu_{j},\lambda_{i})\) of \(D(\mu_{j})\), by \cref{la:lem:top-rho}.
Therefore, combining the above remarks with \cref{ass:xi0-lower-bound}, it follows that for \(t\in \mathopen{[}t_{j},t_{j+1}\mathclose{)}\)
\begin{align*}
    \Vert X_{t+1,i} \Vert^{2}
    \leq & \
    \overline{C}_{i} \rho(\mu_{j},\lambda_{i})^{2(t-t_{j})}
    \Biggl(
    \prod_{k=0}^{j-1}\rho(\mu_{k},\lambda_{i})^{2(t_{k+1}-t_{k})}
    \Biggr)
    x_{0,i}^{2}
    \\
    &+ O(\epsilon) \rho(\mu_{j},m)^{2(t-t_{j})}
    \Biggl(
    \prod_{k=0}^{j-1}\rho(\mu_{k},m)^{2(t_{k+1}-t_{k})}
    \Biggr)
    x_{0,1}^{2},
\end{align*}
for some \(\overline{C}_{i}\).
The above bound is analogous to \eqref{la:ineq:Xit-quad-norm-sq-bound}, but with an additional \(O(\epsilon)\) term accounting for the perturbation of the quadratic solution.
Thus, for \(t\in \mathopen{[}t_{j},t_{j+1}\mathclose{)}\), we have that
\begin{align}
    (x_{t+1,i}-x_{t,i})^{2}
    =&\ (\begin{bmatrix}
        -1 & 1
    \end{bmatrix}
    X_{t+1,i})^{2}
    \nonumber\\
    \leq &\  2\overline{C}_{i} \rho(\mu_{j},\lambda_{i})^{2(t-t_{j})}
    \Biggl(
    \prod_{k=0}^{j-1}\rho(\mu_{k},\lambda_{i})^{2(t_{k+1}-t_{k})}
    \Biggr)
    x_{0,i}^{2}
    \nonumber\\
    &+ O(\epsilon) \rho(\mu_{j},m)^{2(t-t_{j})}
    \Biggl(
    \prod_{k=0}^{j-1}\rho(\mu_{k},m)^{2(t_{k+1}-t_{k})}
    \Biggr)
    x_{0,1}^{2}.
    \label{la:ineq:xit-diff-norm-perturbed}
\end{align}
In the same vein, combining \eqref{la:eq:Xt-perturbed-solution} with \cref{la:lem:top-rho}, we have that for \(t\in \mathopen{[}t_{j},t_{j+1}\mathclose{)}\)
\begin{align}
    \Vert X_{t+1} \Vert^{2}
    \leq 2(1+O(\epsilon))\overline{C} \rho(\mu_{j},m)^{2(t-t_{j})}
    \Biggl(
    \prod_{k=0}^{j-1}\rho(\mu_{k},m)^{2(t_{k+1}-t_{k})}
    \Biggr)
    \Vert x_{0} \Vert^{2},
    \label{la:ineq:Xt-norm-sq-aux-perturbed}
\end{align}
where \(\overline{C}=\max_{i=1,\ldots,d}\overline{C}_{i}\).
Similarly, combining the derivation of \eqref{la:ineq:x1t-diff-quad-lower-bound} with \eqref{la:eq:Xt-perturbed-solution} and \cref{ass:xi0-lower-bound}, for \(t\in \mathopen{[}t_{j},t_{j+1}\mathclose{)}\) we have that
\begin{align}
    (x_{t+1,1}-x_{t,1})^{2}
    \geq (1-O(\epsilon))
    \underline{C}_{1}\rho(\mu_{j},m)^{2(t-t_{j})}
    \Biggl(
    \prod_{k=0}^{j-1}\rho(\mu_{k},m)^{2(t_{k+1} - t_{k})}
    \Biggr)
    x_{0,1}^{2},
    \label{la:ineq:x1t-diff-norm-perturbed}
\end{align}
for some \(\underline{C}_{1}\).

Our next step is to also express \(\nabla f(x_{t+1}) - \nabla f(x_{t})\) as a perturbation of the quadratic case.
To this end, we substitute \eqref{la:eq:gradient-perturbation} for \(\nabla f(x_{t+1})\) and \(\nabla f(x_{t})\), and obtain
\begin{align*}
    \nabla f(x_{t+1}) - \nabla f(x_{t})
    &= (H+\tilde{H}_{t+1})(x_{t+1}-x^{\star})
    - (H+\tilde{H}_{t})(x_{t}-x^{\star})
    \\
    &= H(x_{t+1}-x_{t})
    + \tilde{H}_{t+1}(x_{t+1}-x^{\star})
    - \tilde{H}_{t}(x_{t}-x^{\star}).
\end{align*}
Using \(x_{t}-x^{\star}=\sum_{j=1}^{d}x_{j,t}v_{j}\), the terms of \(\nabla f(x_{t+1}) - \nabla f(x_{t})\) above can be written as
\begin{align*}
    H(x_{t+1}-x_{t})
    &= \sum_{i=1}^{d}(x_{t+1,i}-x_{t,i})\lambda_{i}v_{i},
    \\
    \tilde{H}_{t+1}(x_{t+1}-x^{\star})
    - \tilde{H}_{t}(x_{t}-x^{\star})
    &= \sum_{i=1}^{d}
    \Biggl(
    \sum_{j=1}^{d}(\tilde{h}_{t,i,j+1}x_{j,t+1}-\tilde{h}_{t,i,j}x_{j,t})
    \Biggr)
    v_{i}.
\end{align*}
In turn, using the above expressions, it follows that
\begin{align*}
    \Vert \nabla f(x_{t+1}) - \nabla f(x_{t}) \Vert^{2}
    =&\ \sum_{i=1}^{d}\lambda_{i}^{2}(x_{t+1,i}-x_{t,i})^{2}
    \\
    &+ 2\sum_{i=1}^{d}\lambda_{i}(x_{t+1,i}-x_{t,i})
    \sum_{j=1}^{d}(\tilde{h}_{t,i,j+1}x_{j,t+1}-\tilde{h}_{t,i,j}x_{j,t})
    \\
    &+ \sum_{i=1}^{d}
    \Biggl(
    \sum_{j=1}^{d}(\tilde{h}_{t,i,j+1}x_{j,t+1}-\tilde{h}_{t,i,j}x_{j,t})
    \Biggr)^{2}.
\end{align*}
Our next step is to bound the third term above in \(\Vert \nabla f(x_{t+1}) - \nabla f(x_{t}) \Vert^{2}\).
Combining the identities \(\Vert \tilde{H}_{t+1} \Vert_{F}^{2} + \Vert \tilde{H}_{t} \Vert_{F}^{2} = \sum_{i=1}^{d}\sum_{j=1}^{d}(\tilde{h}_{t,i,j+1}^{2} + \tilde{h}_{t,i,j}^{2})\) and \(\Vert X_{t+1} \Vert^{2} = \sum_{j=1}^{d}(x_{j,t+1}^{2} + x_{j,t}^{2})\) with the bound \eqref{la:ineq:hessian-error-norm-bound}, it follows that
\begin{align}
    &\sum_{i=1}^{d}
    \Biggl(
    \sum_{j=1}^{d}(\tilde{h}_{t,i,j+1}x_{j,t+1}-\tilde{h}_{t,i,j}x_{j,t})
    \Biggr)^{2}
    \nonumber\\
    &\leq
    \sum_{i=1}^{d}
    \Biggl(
    \sum_{j=1}^{d}(\vert \tilde{h}_{t,i,j+1} \vert + \vert \tilde{h}_{t,i,j} \vert)
    (\vert x_{j,t+1} \vert + \vert x_{j,t} \vert)
    \Biggr)^{2}
    \nonumber\\
    &\leq
    \sum_{i=1}^{d}
    \Biggl(
    \sum_{j=1}^{d}(\vert \tilde{h}_{t,i,j+1} \vert + \vert \tilde{h}_{t,i,j} \vert)^{2}
    \Biggr)
    \Biggl(
    \sum_{j=1}^{d}(\vert x_{j,t+1} \vert + \vert x_{j,t} \vert)^{2}
    \Biggr)
    \nonumber\\
    &\leq
    \sum_{i=1}^{d}
    \Biggl(
    2\sum_{j=1}^{d}(\tilde{h}_{t,i,j+1}^{2} + \tilde{h}_{t,i,j}^{2})
    \Biggr)
    \Biggl(
    2\sum_{j=1}^{d}(x_{j,t+1}^{2} + x_{j,t}^{2})
    \Biggr)
    \nonumber\\
    &= 4(\Vert \tilde{H}_{t+1} \Vert_{F}^{2} + \Vert \tilde{H}_{t} \Vert_{F}^{2})\Vert X_{t+1} \Vert^{2}
    \nonumber\\
    &\leq 4d(\Vert \tilde{H}_{t+1} \Vert^{2} + \Vert \tilde{H}_{t} \Vert^{2})\Vert X_{t+1} \Vert^{2}
    \nonumber\\
    &\leq 4\epsilon^{2} L_{H}^{2}d\Vert X_{t+1} \Vert^{2}.
    \label{la:ineq:grad-diff-term-3-bound}
\end{align}
In turn, we address the second term of \(\Vert \nabla f(x_{t+1}) - \nabla f(x_{t}) \Vert^{2}\) using \eqref{la:ineq:grad-diff-term-3-bound}, which gives
\begin{align}
    &\sum_{i=1}^{d}(x_{t+1,i}-x_{t,i})
    \sum_{j=1}^{d}(\tilde{h}_{t,i,j+1}x_{j,t+1}-\tilde{h}_{t,i,j}x_{j,t})
    \nonumber\\
    &\leq
    \sqrt{\sum_{i=1}^{d}(x_{t+1,i}-x_{t,i})^{2}}
    \sqrt{\sum_{i=1}^{d}\Biggl( \sum_{j=1}^{d}(\tilde{h}_{t,i,j+1}x_{j,t+1}-\tilde{h}_{t,i,j}x_{j,t}) \Biggr)^{2}}
    \nonumber\\
    &\leq \sqrt{2\sum_{i=1}^{d}(x_{t+1,i}^{2}+x_{t,i}^{2})}
    \sqrt{4\epsilon^{2} L_{H}^{2}d\Vert X_{t+1} \Vert^{2}}
    \nonumber\\
    &\leq 2\epsilon L_{H}\sqrt{2d}\Vert X_{t+1} \Vert^{2}.
    \label{la:ineq:grad-diff-term-2-bound}
\end{align}
Then, combining \labelcref{la:ineq:Xt-norm-sq-aux-perturbed,la:ineq:grad-diff-term-3-bound,la:ineq:grad-diff-term-2-bound} we obtain
\begin{align}
    \Vert \nabla f(x_{t+1}) - \nabla f(x_{t}) \Vert^{2}
    &\leq 
    \sum_{i=1}^{d}\lambda_{i}^{2}(x_{t+1,i}-x_{t,i})^{2}
    \nonumber\\
    &+ O(\epsilon) \rho(\mu_{j},m)^{2(t-t_{j})}
    \Biggl(
    \prod_{k=0}^{j-1}\rho(\mu_{k},m)^{2(t_{k+1}-t_{k})}
    \Biggr)
    x_{0,1}^{2}.
    \label{la:ineq:grad-diff-aux}
\end{align}
In turn, plugging \eqref{la:ineq:grad-diff-aux} into \eqref{ineq:effective-curvature}, and then using \eqref{la:ineq:x1t-diff-norm-perturbed}, it follows that
\begin{align}
    c_{t+1}^{2}
    = c(x_{t+1},x_{t})^{2}
    &\leq \frac{\sum_{i=1}^{d}\lambda_{i}^{2}(x_{t+1,i}-x_{t,i})^{2}}{\sum_{i=1}^{d}(x_{t+1,i}-x_{t,i})^{2}}
    + O(\epsilon).
    \label{la:ineq:curvature-weighted-average-perturbed}
\end{align}

\begin{lemma}\label{la:lem:ct-perturbed}
    Let \(f\in\mathcal{S}(L,m)\), suppose that \cref{ass:locally-smooth-hessian,ass:known-lipschitz-upper-bound,ass:mt-separation-from-eigs,ass:xi0-lower-bound} hold and let \(\kappa=L_{0}/m\).
    Also, let \(\delta_{m}\) and \(\delta_{u}\) be positive numbers such that \(\delta_{m}>\delta_{u}>0\) and let \(r'=r'(\delta_{u},\delta_{\ell},\kappa)\) be a function such that \(\rnag(\sigma_{\phi}\kappa) \leq r' < 1\) for all \(\kappa \geq 1+\delta_{\ell}\), where \(\delta_{\ell} = ((1+\delta_{m})/(1+\delta_{u}))-1>0\) and \(\sigma_{\phi}=\sigma_{\phi}(\delta_{u},\delta_{\ell},\kappa)\) is given by \cref{la:lem:rphi-sq-leq-racc}.
    Then, there exist \(\nu\) and \(\epsilon>0\) such that if \(\Vert x_{0} -x^{\star}\Vert \leq \epsilon\), then the estimates \(m_{t}\) of \cref{alg:nag-free} reach \([m/\gamma,(1+\delta_{m})m]\) after no more than \(\tau\) iterations, where
    \begin{align}
        \tau
        =
        \nu
        \frac{-\log(8\kappa^{2}M_{1}\omega/\delta_{u})}{\log r'}
        \label{la:ineq:ct-perturbed}
    \end{align}
    \(M_{1}=\max_{i}\overline{C}_{i}/\underline{C}_{1}\), with \(\omega\) given by \cref{ass:locally-smooth-hessian,ass:mt-separation-from-eigs,ass:xi0-lower-bound}.
\end{lemma}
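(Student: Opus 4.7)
The plan is to repeat the proof of \cref{la:lem:ct} step-by-step, swapping the quadratic-case bounds for their perturbed counterparts, and to choose $\epsilon$ small enough that every resulting $O(\epsilon)$ term is absorbed into a constant factor. First I would use the perturbed curvature bound \eqref{la:ineq:curvature-weighted-average-perturbed} in place of the identity \eqref{la:eq:curvature-weighted-average}. Defining $\ell_{j}=\gamma^{j}(1+\delta_{m})m/(1+\delta_{u})$ and $I_{j}=\min\{i:\lambda_{i}\geq \ell_{j}\}$ exactly as in the quadratic argument, splitting the weighted sum at $I_{0}$, and invoking the identity $(c_{t+1}+\ell_{0})(c_{t+1}-\ell_{0})=c_{t+1}^{2}-\ell_{0}^{2}$ together with $\lambda_{i}\leq L$ and $\ell_{0}\geq m$ yields
\begin{align*}
    c_{t+1}-\ell_{0}
    < \ell_{0}\kappa^{2}\sum_{i=I_{0}}^{d}\frac{(x_{i,t+1}-x_{i,t})^{2}}{(x_{1,t+1}-x_{1,t})^{2}} + \frac{O(\epsilon)}{2\ell_{0}},
\end{align*}
where the extra term collects the additive $O(\epsilon)$ appearing in \eqref{la:ineq:curvature-weighted-average-perturbed}.

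Next I would substitute the perturbed bounds \eqref{la:ineq:xit-diff-norm-perturbed} and \eqref{la:ineq:x1t-diff-norm-perturbed} into the numerators and denominator, respectively. Each such bound matches its quadratic analog from \eqref{la:ineq:Xit-quad-norm-sq-bound}--\eqref{la:ineq:x1t-diff-quad-lower-bound} up to a multiplicative $(1+O(\epsilon))$ factor. Applying \cref{la:lem:top-rho} to reduce $\rho(\mu_{k},\lambda_{i})/\rho(\mu_{k},m)\leq \phi(\mu_{k},\lambda_{i},m)$ for every $i\geq I_{0}$ and then invoking \cref{ass:xi0-lower-bound} to bound $\sum_{i=I_{0}}^{d}x_{i,0}^{2}/x_{1,0}^{2}\leq \omega$, the ratio is bounded by $(1+O(\epsilon))\cdot 2 M_{1}\omega\,\phi((1+\delta_{u})\ell_{0},\ell_{0},m)^{2(t-t_{K}+1)}$, matching the quadratic estimate \eqref{la:ineq:conv-2x-rate-aux2} up to the $(1+O(\epsilon))$ factor.

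Then I would pick $\epsilon$ small enough that (i) this multiplicative $(1+O(\epsilon))$ factor is at most $2$ and (ii) the additive term $O(\epsilon)/(2\ell_{0})$ is at most $\delta_{u}\ell_{0}/4$. Under this choice the same algebra as in the quadratic proof yields $c_{t+1}-\ell_{0}<\delta_{u}\ell_{0}/2$, i.e.\ $c_{t+1}<(1+\delta_{m})m$, for every $t\geq t_{K}+\Delta t_{0}$, where now
\begin{align*}
    \Delta t_{0}
    = -\frac{\log(8\kappa^{2}M_{1}\omega/\delta_{u})}{\log r_{\textup{acc}}(\sigma_{\phi}\kappa)},
\end{align*}
the constant $8$ replacing the $4$ of the quadratic case precisely to absorb the factor of $2$ from step (i). The induction over the adjustments $j=0,\dots,K$, combined with \cref{ass:geometric-update} and the exponent bound $\phi((1+\delta_{u})\ell_{j},\ell_{j},m)^{2}\leq r_{\textup{acc}}(\sigma_{\phi}\kappa)^{1+\alpha_{\phi}m(1+\delta_{\ell})(\gamma^{j}-1)}$ from \cref{la:lem:phi-sq-leq-racc-pwr}, then goes through verbatim and delivers \eqref{la:ineq:ct-perturbed}.

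The main obstacle will be showing that a single $\epsilon$ can be chosen uniformly in $t$ and independently of the schedule $\{\mu_{j}\}$. This works because the perturbation $\tilde{D}_{t}$ driving \eqref{la:eq:Zt-dynamics} is summable in $t$ by \eqref{la:ineq:Dtilde-norm-sum-bound}, so the implicit constants in the asymptotic-integration representation \eqref{la:eq:Xt-perturbed-solution} are $O(\epsilon)$ with constants depending only on $\kappa$, $M_{T}$ and $L_{H}/L$, not on $t$ or on the switching times $t_{j}$. The hypothesis $\Vert x_{0}-x^{\star}\Vert\leq \epsilon$ combined with \cref{thm:gc-main} keeps every iterate inside the neighborhood where \cref{ass:hessian-lipschitz} applies, which is what makes the Hessian-error bound \eqref{la:ineq:hessian-error-norm-bound} summable and unlocks this uniform perturbation control.
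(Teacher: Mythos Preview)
Your proposal is correct and follows essentially the same approach as the paper's proof: repeat the quadratic-case argument of \cref{la:lem:ct} verbatim, replacing \eqref{la:eq:curvature-weighted-average}, \eqref{la:ineq:Xit-quad-norm-sq-bound} and \eqref{la:ineq:x1t-diff-quad-lower-bound} by their perturbed counterparts \eqref{la:ineq:curvature-weighted-average-perturbed}, \eqref{la:ineq:xit-diff-norm-perturbed} and \eqref{la:ineq:x1t-diff-norm-perturbed}, and absorb the resulting $O(\epsilon)$ terms into the extra factor of $2$ that turns the $4$ into an $8$ in $\Delta t_{0}$. One small correction: the perturbed upper bound \eqref{la:ineq:xit-diff-norm-perturbed} is not purely a multiplicative $(1+O(\epsilon))$ correction of its quadratic analog---its additive $O(\epsilon)$ piece carries $\rho(\mu_{k},m)$ and $x_{1,0}^{2}$ rather than $\rho(\mu_{k},\lambda_{i})$ and $x_{i,0}^{2}$, so after dividing by $(x_{1,t+1}-x_{1,t})^{2}$ you pick up an \emph{additive} $O(\epsilon)$ in the ratio (as the paper records in \eqref{la:ineq:conv-2x-rate-aux2-perturbed}), not just the multiplicative one you describe; but your plan to choose $\epsilon$ small enough already covers this.
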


\begin{proof}
    Suppose that the last value of \(m_{t}\) before reaching the interval \([m/\gamma,(1+\delta_{m})m)\) is \((1+\delta_{m})m\).
    Then, suppose that the value before last is \(\gamma(1+\delta_{m})m\), and so on, up to \(\gamma^{K}(1+\delta_{m})m\) for some \(K\) such that \(\gamma^{K}(1+\delta_{m})m\leq L < \gamma^{K+1}(1+\delta_{m})m\).
    Using this \(m_{t}\) schedule, we bound the number of iterations that \(m_{t}\) takes to reach the interval \([m/\gamma,(1+\delta_{m})m]\), and then we argue that no other \(m_{t}\) schedule can lead to a worse bound.
    
    Let \(\ell_{j}=\gamma^{j}(1+\delta_{m})m/(1+\delta_{u})\).
    Then, we have that \(\ell_{j} \geq (1+\delta_{\ell})m\) for \(\delta_{\ell}=((1+\delta_{m})/(1+\delta_{u}))-1\).
    Since \(\delta_{m}>\delta_{u}\), then \(\delta_{\ell}>0\), and \cref{la:lem:phi-sq-leq-racc-pwr} applies.
    Now, let \(I_{j} = \min\left\{ i: \lambda_{i} \geq \ell_{j} \right\}\).
    That is, \(\lambda_{i} \geq \ell_{j}\) if and only if \(i \geq I_{j}\).
    Then, using this fact and separating the terms indexed by \(i < I_{0}\) from those indexed by \(i \geq I_{0}\) in \eqref{la:ineq:curvature-weighted-average-perturbed} into two sums yields
    \begin{align*}
        c^{2}_{t+1}
        < \ell_{0}^{2}\frac{\sum_{i=1}^{I_{0}-1}(x_{t+1,i}-x_{t,i})^{2}}{\sum_{i=1}^{d}(x_{t+1,i}-x_{t,i})^{2}}
        +\frac{\sum_{i=I_{0}}^{d}\lambda_{i}^{2}(x_{t+1,i}-x_{t,i})^{2}}{\sum_{i=1}^{d}(x_{t+1,i}-x_{t,i})^{2}}
        +O(\epsilon).
    \end{align*}
    In turn, plugging the above inequality into the identity \((c_{t+1} + \ell_{0})(c_{t+1} - \ell_{0})=c^{2}_{t+1} - \ell_{0}^{2}\), and then using the fact that \(\lambda_{i} \leq L\) and \(\ell_{0}\geq m\), we obtain
    \begin{align}
        c_{t+1} - \ell_{0}
        =\frac{c^{2}_{t+1}-\ell_{0}^{2}}{c_{t+1} + \ell_{0}}
        &< \frac{\sum_{i=I_{0}}^{d}(\lambda_{i}^{2} - \ell_{0}^{2})(x_{t+1,i}-x_{t,i})^{2}}{\ell_{0}\sum_{i=1}^{d}(x_{t+1,i}-x_{t,i})^{2}}
        + O(\epsilon)
        \nonumber\\
        &\leq \ell_{0} \kappa^{2}\sum_{i=I_{0}}^{d}\frac{(x_{t+1,i}-x_{t,i})^{2}}{(x_{t+1,1}-x_{t,1})^{2}}
        +O(\epsilon).
        \label{la:ineq:conv-2x-rate-aux1-perturbed}
    \end{align}
    To address the terms in the sum in \eqref{la:ineq:conv-2x-rate-aux1-perturbed}, we combine  \eqref{la:ineq:xit-diff-norm-perturbed} and \eqref{la:ineq:x1t-diff-norm-perturbed}, assuming \(t_{K} \leq t < t_{K+1}\).
    That is, we consider the last adjustment before \(m_{t}\) reaches the interval \([m/\gamma,(1+\delta_{m})m)\).
    Then, we apply \cref{la:lem:top-rho} twice, to get \(\rho(m_{k},\lambda_{i})\leq \rho(m_{k},\ell_{0})\) and \(\rho(m_{k},\ell_{0})<\rho(m_{k},m)\) for all \(i\geq I_{0}\), which gives
    \begin{align}
        \sum_{i=I_{0}}^{d}\frac{(x_{t+1,i}-x_{t,i})^{2}}{(x_{t+1,1}-x_{t,1})^{2}}
        &\leq 
        2(1+O(\epsilon))M_{1}\sum_{i=I_{0}}^{d}
        \frac{x_{0,i}^{2}}{x_{0,1}^{2}}
        \prod_{k=t_{K}}^{t}\frac{\rho(m_{k},\lambda_{i})^{2}}{\rho(m_{k},m)^{2}}
        \prod_{k=1}^{\tau_{K}}\frac{\rho(m_{k},\lambda_{i})^{2}}{\rho(m_{k},m)^{2}}
        +O(\epsilon)
        \nonumber\\
        &\leq
        2(1+O(\epsilon))M_{1}\omega\phi((1+\delta_{u})\ell_{0},\ell_{0},m)^{2(t-t_{K}+1)}
        +O(\epsilon).
        \label{la:ineq:conv-2x-rate-aux2-perturbed}
    \end{align}
    where \(M_{1}=2\max_{i}\overline{C}_{i}/\underline{C}_{1}\).
    Next, we put \eqref{la:ineq:conv-2x-rate-aux1-perturbed} and \eqref{la:ineq:conv-2x-rate-aux2-perturbed} together, and since \(\ell_{0}\geq m\), by choosing \(\epsilon\) sufficiently small, we get
    \begin{align*}
        c_{t+1} - \ell_{0}
        \leq
        2(1+O(\epsilon))\kappa^{2}M_{1}\omega\phi((1+\delta_{m})m,\ell_{0},m)^{2(t-t_{K}+1)}\ell_{0}
        + O(\epsilon)
        \leq 
        \delta_{u}\ell_{0}/2,
    \end{align*}
    for all \(t\geq t_{K} + \Delta t_{0}\), where
    \begin{align*}
        \Delta t_{0}
        = -\frac{\log (8\kappa^{2}M_{1}\omega/\delta_{u})}{\log \rnag(\sigma_{\phi}\kappa)}.
    \end{align*}
    Therefore, \(c_{t+1}<(1+\delta_{u})\ell_{0}=(1+\delta_{m})m\) for \(t\geq t_{K}+\Delta t_{0}\) or, equivalently,
    \begin{align*}
        t_{K+1}-t_{K}
        \leq \Delta t_{0}.
    \end{align*}
    Note that for every \(m_{t}\) schedule, if \(\mu_{K}\) denotes the last value of \(m_{t}\) before reaching \([m/\gamma,(1+\delta_{m})m]\), then \(\mu_{K}\geq (1+\delta_{m})m\), by definition.
    Hence, letting \(\ell_{0}'=\mu_{K}/(1+\delta_{u})\) and \(I_{0}'=\{i:\lambda_{i}\geq \ell_{0}'\}\), then \(I_{0}'\geq I_{0}\), and by applying \cref{la:lem:top-rho} before, it follows that
    \begin{align*}
        \sum_{i=I_{0}'}^{d}\frac{(x_{t+1,i}-x_{t,i})^{2}}{(x_{t+1,1}-x_{t,1})^{2}}
        &\leq 
        2(1+O(\epsilon))M_{1}\sum_{i=I_{0}'}^{d}
        \frac{x_{0,i}^{2}}{x_{0,1}^{2}}
        \prod_{k=t_{K}}^{t}\frac{\rho(m_{k},\lambda_{i})^{2}}{\rho(m_{k},m)^{2}}
        \prod_{k=1}^{\tau_{K}}\frac{\rho(m_{k},\lambda_{i})^{2}}{\rho(m_{k},m)^{2}}
        +O(\epsilon)
        \\
        &\leq 
        2(1+O(\epsilon))M_{1}\sum_{i=I_{0}}^{d}
        \frac{x_{0,i}^{2}}{x_{0,1}^{2}}
        \prod_{k=t_{K}}^{t}\frac{\rho(m_{k},\lambda_{i})^{2}}{\rho(m_{k},m)^{2}}
        \prod_{k=1}^{\tau_{K}}\frac{\rho(m_{k},\lambda_{i})^{2}}{\rho(m_{k},m)^{2}}
        +O(\epsilon)
        \\
        &\leq
        2(1+O(\epsilon))M_{1}\omega\phi((1+\delta_{u})\ell_{0},\ell_{0},m)^{2(t-t_{K}+1)}
        + O(\epsilon).
    \end{align*}
    Therefore, the last adjustment cannot take more than \(\Delta t_{0}\) iterations for any \(m_{t}\) schedule.
    
    Then, let \(\Delta t_{j}\) be quantities analogous to \(\Delta t_{0}\), defined for \(j=0,\ldots,K\) as
    \begin{align*}
        \Delta t_{j}
        = 
        \frac{-\log (8\kappa^{2}M_{1}\omega/\delta_{u})}{\log \rnag(\sigma_{\phi}\kappa)}
        \frac{1}{1 + \alpha_{\phi}(\ell_{j}-(1+\delta_{\ell})m)}.
    \end{align*}
    If \(\gamma>1\), then \(m_{t}\) decreases by a factor of at least \(\gamma\) every time it is adjusted to a new value.
    Hence, \(\mu_{K-1}\geq \gamma\mu_{K}\) for every \(m_{t}\) schedule, which implies that \(\ell_{1}\leq \mu_{K-1}/(1+\delta_{u})\) for every \(m_{t}\) schedule.
    Hence, by the same rationale above, it cannot take more than \(\Delta t_{1}\) for \(m_{t}\) to be adjusted to its second last value before reaching the interval \([m/\gamma,(1+\delta_{m})m]\).
    It follows by induction that it cannot take more than \(\Delta t_{j}\) for \(m_{t}\) to be adjust to its \(K-j\)-th to last value before reaching the interval \([m/\gamma,(1+\delta_{m})m]\).
    Moreover, since by design \(m_{t}\leq L\), it cannot more than \(K\leq \log_{\gamma}(\kappa/(1+\delta_{m}))\) adjustments before \(m_{t}\) reaches the interval \([m/\gamma,(1+\delta_{m})m]\).
    Therefore, letting \(\nu\) be given by \eqref{la:def:nu}, as
    \begin{align*}
        \nu
        = \sum_{j=0}^{+\infty}
        \frac{1}{1 + \alpha_{\phi}m(1+\delta_{\ell})(\gamma^{j}-1)},
    \end{align*}
    we conclude that \(m_{t+1}\leq (1+\delta_{m})m\) for all \(t\geq \tau\), where
    \begin{align*}
        \tau 
        = 
        \nu \frac{-\log(8\kappa^{2}M_{1}\omega/\delta_{u})}{\log r'}
        \geq 
        \frac{-\log(8\kappa^{2}M_{1}\omega/\delta_{u})}{\log \rnag(\sigma_{\phi}\kappa)}
        \sum_{j=0}^{K}
        \frac{1}{1 + \alpha_{\phi}m(1+\delta_{\ell})(\gamma^{j}-1)},
    \end{align*}
    because \(\log\) is monotone and \(\rnag(\sigma_{\phi}\kappa) \leq r' < 1\), and \(1 + \alpha_{\phi}m(1+\delta_{\ell})(\gamma^{j}-1) > 0\).
\end{proof}

\subsection{Main Result}

At last, we are ready to prove \cref{thm:la}, establishing that \cref{alg:nag-free} achieves acceleration around the minimum.

\begin{proof}[Proof of \cref{thm:la}]
    Let \(\delta_{m}\) and \(\delta_{u}\) be positive numbers such that \(\delta_{u} < \min\{\delta_{m},1/2\}\) and \(\delta_{m}\leq \gamma-1\).
    Then, define \(\delta_{\ell}=(1+\delta_{m})/(1+\delta_{u})-1\), and let \(r'=r'(\delta_{u},\delta_{\ell},\kappa)\) be a function such that \(\rnag(\sigma_{\phi}\kappa) \leq r' < 1\) for all \(\kappa\geq 1+\delta_{\ell}\), where \(\sigma_{\phi}=\sigma_{\phi}(\delta_{u}, \delta_{\ell}, \kappa)\) is given by \cref{la:lem:rphi-sq-leq-racc}.
    By \cref{la:lem:ct-perturbed}, there is some \(\nu\) such that \(m_{t}\leq (1+\delta_{m})m\) for all \(t\geq \tau\), where 
    \begin{align}
        \tau
        =
        \nu
        \frac{-\log (8\kappa^{2}M_{1}\omega/\delta_{u})}{\log r'}
        \label{la:ineq:tau-prime-perturbed}
    \end{align}
    \(M_{1}=\max_{i}\overline{C}_{i}/\underline{C}_{1}\).
    In turn, as in the proof of \cref{la:prop:main-quad}, \cref{la:cor:rho-leq-racc} then implies that \(\rho(m_{t},m)\leq \rnag(\sigma_{1}\kappa)\) for all \(t\geq \tau\), where \(\sigma_{1}=\max\{\gamma,\sigma_{m}\}\), and \(\sigma_{m}=1+2\delta_{m}+2\sqrt{\delta_{m}(1+\delta_{m})}\).

    Now, by \cref{la:lem:top-rho}, we have that \(\rho(m_{t},\lambda_{i}) \leq \rnag(\sigma_{1}\kappa)\) for all \(\lambda_{i}\).
    Hence, given \(\delta_{\sigma}\) such that \((1+\delta_{\sigma})\rnag(\sigma_{1}\kappa)<1\), by \cref{la:lem:lyap-eps} there is a \(P_{i}(m_{t})=P_{i}(m_{t},\delta_{\sigma})\succeq I\) for each \(\lambda_{i}\) such that \(G_{i}(m_{t})^{\T}P_{i}(m_{t})G_{i}(m_{t})\preceq (1+\delta_{\sigma})^{2}\rnag(\sigma_{1}\kappa)^{2}P_{i}(m_{t})\).
    Using \(P_{i}(m_{t})\) as diagonal blocks, we define the matrix \(P(m_{t})=P(m_{t},\delta_{\sigma})=\diag(P_{1}(m_{t}), \ldots, P_{d}(m_{t}))\).
    The block diagonal structure of \(P\) and \(G\) implies that \(P(m_{t})\succeq I\), and that \(G(m_{t})^{\T}P(m_{t})G(m_{t})\preceq (1+\delta_{\sigma})^{2}\rnag(\sigma_{1}\kappa)^{2}P(m_{t})\).
    Hence, if \(t_{j} \leq t < t_{j+1}\) and \(t \geq \tau\), then \eqref{la:eq:Xt-dynamics-perturbed} yields
    \begin{align}
        X_{t+1}^{\T}P(m_{t})X_{t+1}
        &= X_{t}^{\T}(G(\mu_{j}) + \tilde{G}_{t})^{\T}P(\mu_{j})(G(\mu_{j}) + \tilde{G}_{t})X_{t}
        \nonumber\\
        &\leq (1+\delta_{\sigma})^{2}\rnag(\sigma_{1}\kappa)^{2}X_{t}^{\T}P(\mu_{j})X_{t}
        + X_{t}^{\T}\tilde{P}_{t}X_{t},
        \label{la:ineq:Lyap-aux-perturbed}
    \end{align}
    since \(m_{t}=\mu_{j}\), where
    \begin{align*}
        \tilde{P}_{t}
        = \tilde{G}_{t}^{\T}P(m_{t})G(m_{t})
        + G(m_{t})^{\T}P(m_{t})\tilde{G}_{t}
        + \tilde{G}_{t}^{\T}P(m_{t})\tilde{G}_{t}.
    \end{align*}
    By \cref{la:def:Git}, we have that
    \begin{align}
        \Vert G_{i}(m_{t}) \Vert
        &\leq 
        \left\Vert
        \begin{bmatrix} 
            0 & 1 \\ 
            -\beta(m_{t})\Bigl( 1-\dfrac{\lambda_{i}}{L} \Bigr) & 0
        \end{bmatrix}
        \right\Vert
        + \left\Vert
        \begin{bmatrix} 
            0 & 0 \\ 
            0 & (1+\beta(m_{t}))\Bigl( 1-\dfrac{\lambda_{i}}{L} \Bigr) 
        \end{bmatrix}
        \right\Vert
        \nonumber\\
        &=
        \max
        \left\{
        1, \beta(m_{t})\Bigl( 1-\dfrac{\lambda_{i}}{L} \Bigr)
        \right\}
        + (1+\beta(m_{t}))\Bigl( 1-\dfrac{\lambda_{i}}{L} \Bigr)
        \nonumber\\
        &\leq 3.
        \label{la:ineq:Gi-norm}
    \end{align}
    Furthermore, the block diagonal structure of \(P\) combined with \eqref{la:ineq:max-cond-P-bound} yields
    \begin{align}
        \Vert P(m_{t}) \Vert
        \leq \frac{1+(1+\delta_{\sigma})^{-2}}{1-(1+\delta_{\sigma})^{-2}}
        +\frac{2M_{2}^{2}(1+(1+\delta_{\sigma})^{-2})}{\rnag(\sigma_{1}\kappa)^{2}(1-(1+\delta_{\sigma})^{-2})^{3}}
        = M_{\delta}.
        \label{la:ineq:P-norm-bound}
    \end{align}
    Therefore, combining \labelcref{la:ineq:Gtilde-norm-bound,la:ineq:Gi-norm,la:ineq:P-norm-bound}, and taking \(\epsilon\) such that \(\epsilon L_{H}/L < 1\), we obtain
    \begin{align}
        \Vert \tilde{P}_{t} \Vert
        \leq (2\Vert G(m_{t}) \Vert + \Vert \tilde{G}_{t} \Vert)\Vert \tilde{G}_{t} \Vert \Vert P(m_{t}) \Vert
        \leq 7\epsilon M_{\delta} L_{H}/L.
        \label{la:ineq:Ptilde-norm-bound}
    \end{align}
    Then, since \(P(m_{t})\succeq I\), from \labelcref{la:ineq:Lyap-aux-perturbed,la:ineq:Ptilde-norm-bound} it follows that
    \begin{align}
        X_{t+1}^{T}PX_{t+1}
        \leq ((1+\delta)^{2}r^{2} + 7\epsilon M_{\delta}L_{H}/L)X_{t}^{T}PX_{t}
        = \tilde{r}^{2}X_{t}^{T}PX_{t},
        \label{la:ineq:Lyap-aux2-perturbed}
    \end{align}
    where we conveniently use a perturbed rate \(\tilde{r}\), given by
    \begin{align}
        \tilde{r} 
        = \sqrt{(1+\delta_{\sigma})^{2}r^{2} + 7\epsilon M_{\delta}L_{H}/L}.
        \label{la:def:rtilde}
    \end{align}
    Consecutively applying \eqref{la:ineq:Lyap-aux2-perturbed} and reproducing the steps in the proof of \cref{la:prop:main-quad}, we get
    \begin{align}
        \Vert x_{t+1} - x^{\star} \Vert
        \leq C'\bar{\kappa}^{7/2 + 2\nu}\tilde{r}^{t} \Vert x_{0} - x^{\star} \Vert,
        \label{la:ineq:xt-norm-perturbed-aux}
    \end{align}
    where the constant \(C\) is given by
    \begin{align*}
            C' = 9\cdot2^{13}M_{2}(8M_{1}\omega/\delta_{u})^{\nu}\sigma_{2}^{3/2},
    \end{align*}
    with \(\nu\), \(\sigma_{2}=\max\{\gamma, \sigma_{m}, \sigma_{\phi}\}\), and  \(\delta_{\sigma}=1/(4\sqrt{\sigma_{2}}\kappa)\) is chosen such that
    \begin{align*}
        (1+\delta_{\sigma})\rnag(\sigma_{2}\kappa)
        < \rnag(2\sigma_{2} \kappa).
    \end{align*}
    Hence, choosing \(\epsilon\) sufficiently small such that
    \begin{align*}
        \sqrt{(1+\delta_{\sigma})^{2}r^{2} + 7\epsilon M_{\delta}L_{H}/L}
        \leq \rnag(2\sigma_{2} \kappa),
    \end{align*}
    and then plugging this choice of \(\epsilon\) back into \eqref{la:ineq:xt-norm-perturbed-aux}, we obtain
    \begin{align*}
        \Vert x_{t+1} - x^{\star} \Vert
        \leq C\rnag(\sigma\bar{\kappa})^{t} \Vert x_{0} - x^{\star} \Vert,
    \end{align*}
    where \(\sigma=2\sigma_{2}\) and \(C=C'\bar{\kappa}^{7/2 + 2\nu}\), which concludes the proof.
\end{proof}

To conclude this section, we make a few remarks on \(C\) and \(\sigma\) in \cref{thm:la}.

One of the factors of \(C\) involves a power \(\nu\), which is defined by \eqref{la:def:nu} and implicitly involves some quantities that are arbitrarily set in the local analysis such as \(\delta_{\ell}\).
\cref{la:fig:alpha_phi} illustrates a numerical example for a particular choice of these quantities, in which case \(\nu\approx 1.9\).
In reality, \(\nu\) is an artifact of a conservative analysis and does not play a role in practical performance.
Indeed, in \cref{la:lem:ct,la:lem:ct-perturbed} we bound the number of iterates that \(m_{t}\) takes to be updated to a new value disregarding that the \(\lambda_{i}\)-coordinates for \(\lambda_{i}\geq m_{t}\) have already been reduced substantially relative to the others in the previous update, which is reflected in the value of \(c_{t}\).
In other words, we analyze the convergence of \(m_{t}\) as if it was starting from the same initial conditions every time for every update.

Now consider the suboptimality factor \(\sigma\).
In the proof of \cref{thm:la}, we work with \(\sigma=2\sigma_{2}\), where \(\sigma_{2}=\max\{\gamma,\sigma_{m},\sigma_{\phi}\}\), which is a function of \(\sigma_{m}=1+2\delta_{m}+2\sqrt{\delta_{m}(1+\delta_{m})}\) and \(\sigma_{\phi} \lesssim 1/4(\sqrt{\delta_{u} + \delta_{\ell}(1+\delta_{u})}-\sqrt{\delta_{u}(1+\delta_{\ell})})^{2}\).
The suboptimality factor \(\sigma_{m}\) is decreasing in \(\delta_{m}\), which determines the gap in the upper bound of \(\mathopen{[}m/\gamma,(1+\delta_{m})m\mathclose{]}\), the interval that contains \(m_{t}\) in the final convergence regime of NAG-free around \(x^{\star}\).
Intuitively, the smaller \(\delta_{m}\) is, the better the estimate \(m_{t}\) is in the final regime, therefore a smaller suboptimality rate.
On the other hand, if \(\delta_{m}\) is small, then so is \(\delta_{\ell}<\delta_{m}\), therefore \(\sigma_{\phi}\) increases.
Intuitively, \(\sigma_{\phi}\) represents that the time that \(m_{t}\) takes to become sufficiently accurate.
Thus, a smaller \(\delta_{m}\) means that \(m_{t}\) takes longer to reach the interval \(\mathopen{[}m/\gamma,(1+\delta_{m})m\mathclose{]}\).
More importantly, as \(m_{t}\) approaches \(m\), the rate at which \(m_{t}\) converges to \(m\) slows down.
The factor 2 in \(\sigma=2\sigma_{2}\) is a result of a compromise to obtain a reasonable condition number for the matrix \(P\) in the Lyapunov analysis of the final regime of NAG-free, when \(m_{t}\) is sufficiently accurate for accelerated convergence.
In reality, this compromise is an artifact of any Lyapunov analysis of linear systems, whose solutions are linear combinations of some \(t^{k}r^{t}\) terms, rather than purely exponential solutions \(r^{t}\).
Hence, this compromise is typically ignored, e.g. as in \cite{Lessard2016}, in which case the convergence rate is \(\max\{\gamma,\sigma_{m},\sigma_{\phi}\}\).
Then, for example, letting \(\gamma=2,\delta_{m}=0.2, \delta_{u}=0.01\) and \(\delta_{\ell}=(1+\delta_{m})/(1+\delta_{u})-1\), we obtain \(\max\{\gamma,\sigma_{m},\sigma_{\phi}\}\leq 2.4\).
Therefore, the convergence rate in this case would not be worse than \(\rnag(2.4\kappa)\), which is competitive with restart schemes, where ``the convergence rate is slowed down by roughly a factor four'' \cite[page 167]{dAspremont2021}.
Notwithstanding, \(2.4\) is still conservative and, in the next section, we present experiments in which we see that the suboptimality factor is much closer to 1.

\newpage
\clearpage
\section{Numerical experiments}
\label{app:ne}

In this appendix, we present additional experiments.

\subsection{Logistic regression}

\cref{app:ne:tab:logreg-mts} presents the values of \(m_{\infty}(L_{0})\) held by NAG-free at the last iteration for a given \(L_{0}\).
\cref{app:ne:fig:logreg_summary_2} shows the results for three more datasets from LIBSVM \cite{Chang2011}.

\setlength{\tabcolsep}{6pt} 
\begin{table}[h]
    \centering
    \caption{Regularization parameter \(\eta\), \(\eta/\gamma\) and the estimates \(m_{\infty}(L_{0})\) held by NAG-free at the last iteration solving the logistic regression problem for a given \(L_{0}\).
    For most datasets, \(m_{\infty}\) fall within the interval \(\mathopen{[}\eta/\gamma,\eta\mathclose{]}\), except for \textsc{phishing}, which is highlighted in gray.}
    \begin{tabular}{lcccc}
        \toprule
        Dataset & \(\eta/\gamma\) & \(m_{\infty}(\bar{L})\) & \(m_{\infty}(0.01\bar{L})\) & \(\eta\) \\ 
        \hline
        a9a & 3.22e-6 & 4.09e-6 & 4.73e-6 & 4.83e-6 \\
        gisette{\_}scale & 9.37e-3 & 1.24e-2 & 1.28e-2 & 1.40e-2 \\
        mushrooms & 2.12e-5 & 2.33e-5 & 2.62e-5 & 3.18e-5 \\
        \rowcolor{gray!30} phishing & 9.80e-7 & 8.02e-6 & 8.35e-6 & 1.47e-6 \\
        svmguide1 & 1.90e-1 & 1.76e-1 & 2.01e-1 & 2.85e-1 \\
        w1a & 1.67e-5 & 2.26e-5 & 2.13e-5 & 2.51e-5 \\
        \bottomrule
    \end{tabular}
    \label{app:ne:tab:logreg-mts}
\end{table}

\begin{figure}[hb]
    \centering
    \includegraphics[width=.9\linewidth]{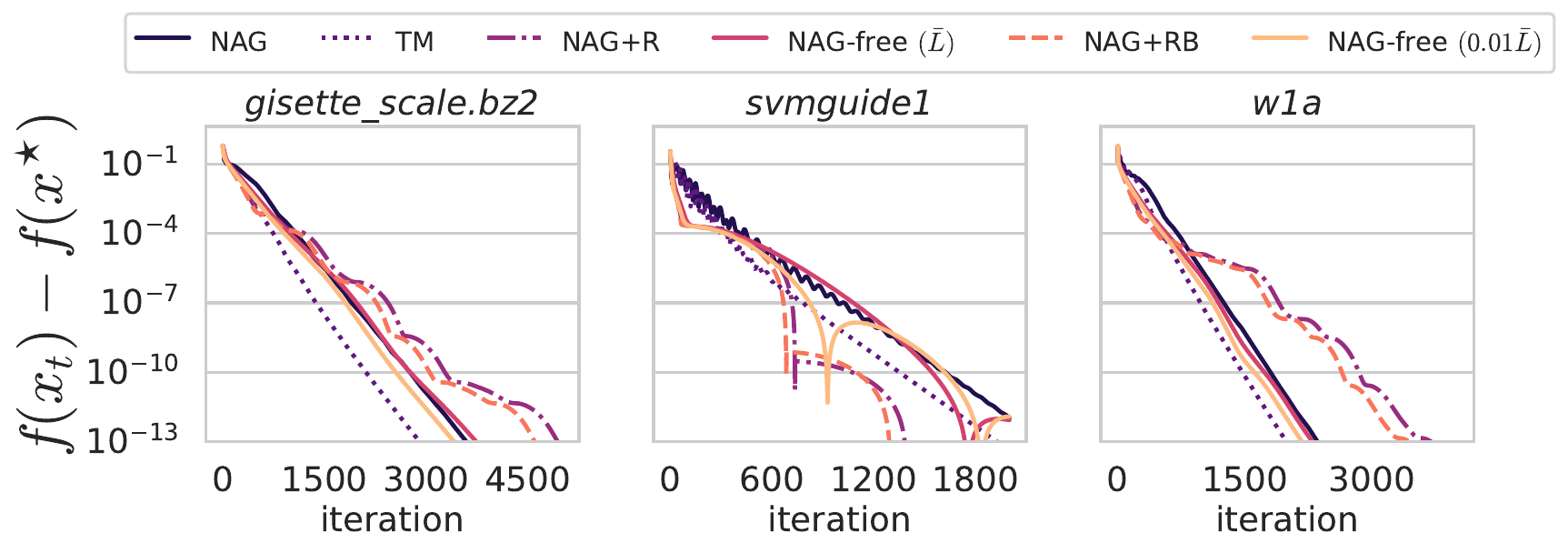}
    \caption{Suboptimality gap for logistic regression on three datasets.}
    \label{app:ne:fig:logreg_summary_2}
\end{figure}

\subsection{Log-sum-exp}

\cref{app:ne:fig:log-sum-exp_summary_0,app:ne:fig:log-sum-exp_summary_1,app:ne:fig:log-sum-exp_summary_2} show the suboptimality gap for several \(\eta,\eta\) settings.

\begin{figure}[htbp]
    \centering
    \includegraphics[width=\linewidth]{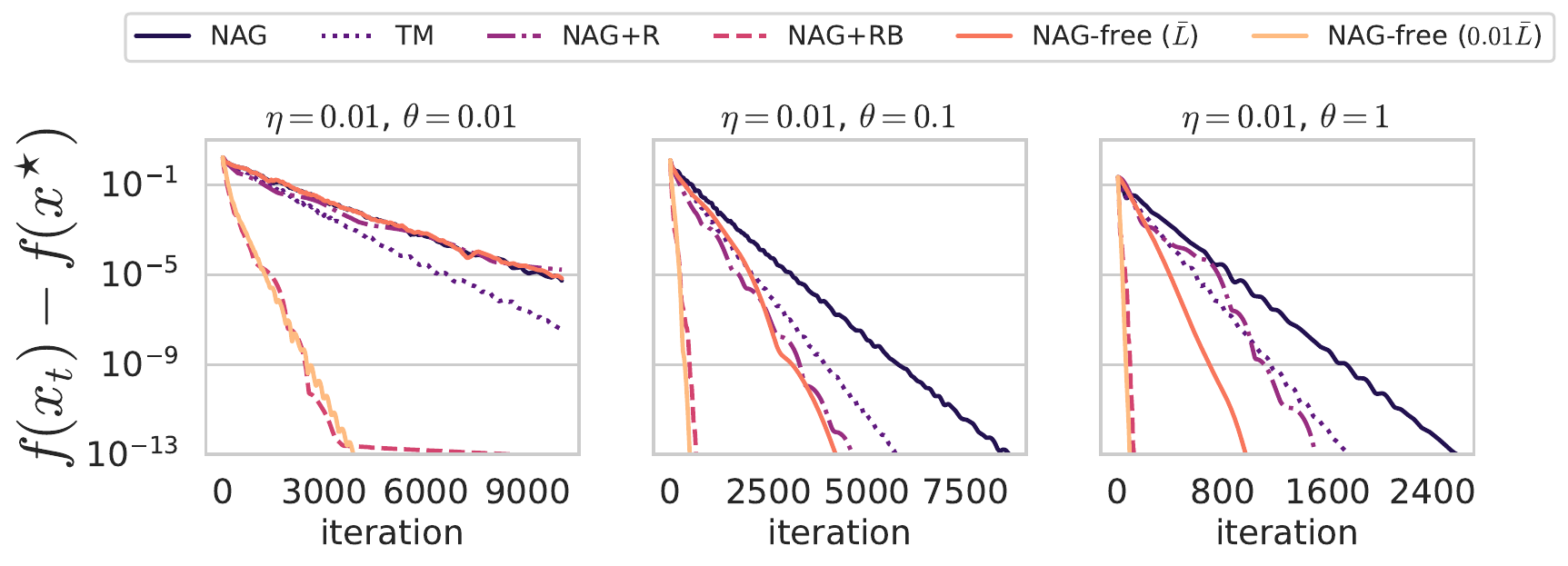}
    \caption{Suboptimality gap \(f(x_{t})-f(x^{\star})\) for log-sum-exp under different \((\eta,\theta)\) settings.}
    \label{app:ne:fig:log-sum-exp_summary_0}
\end{figure}

\begin{figure}
    \centering
    \includegraphics[width=\linewidth]{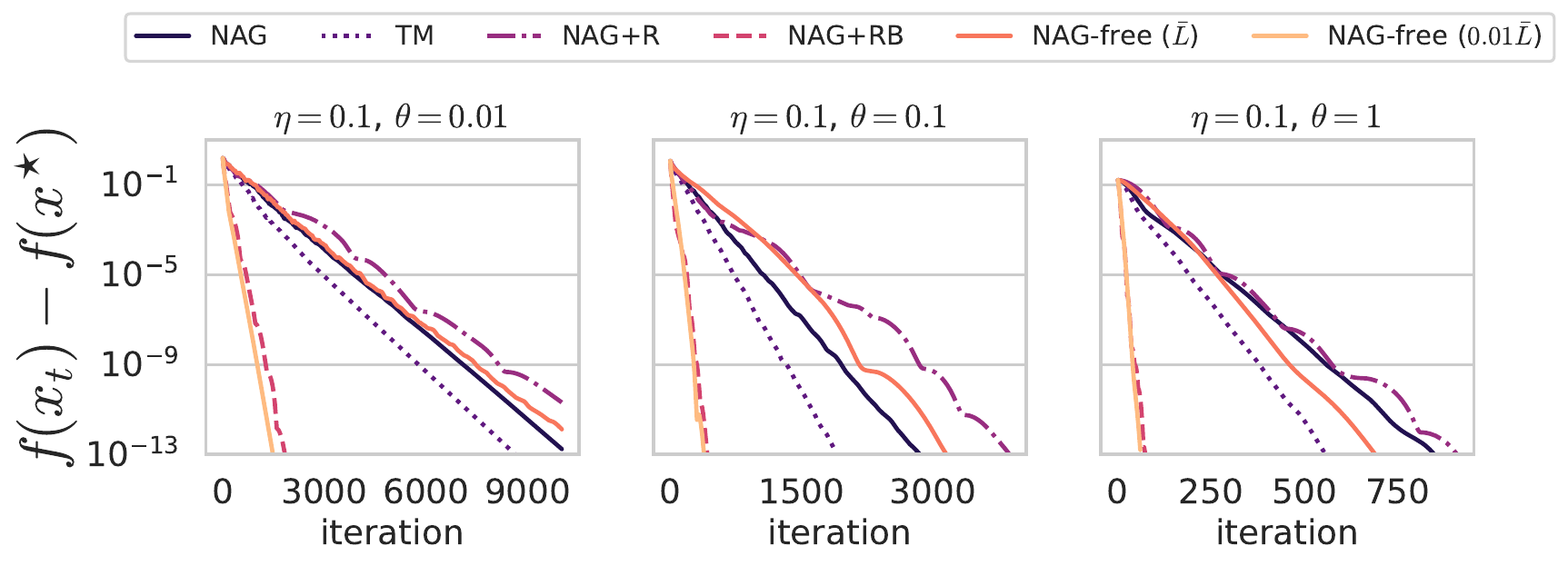}
    \caption{Suboptimality gap \(f(x_{t})-f(x^{\star})\) for log-sum-exp under different \((\eta,\theta)\) settings.}
    \label{app:ne:fig:log-sum-exp_summary_1}
\end{figure}

\begin{figure}
    \centering
    \includegraphics[width=\linewidth]{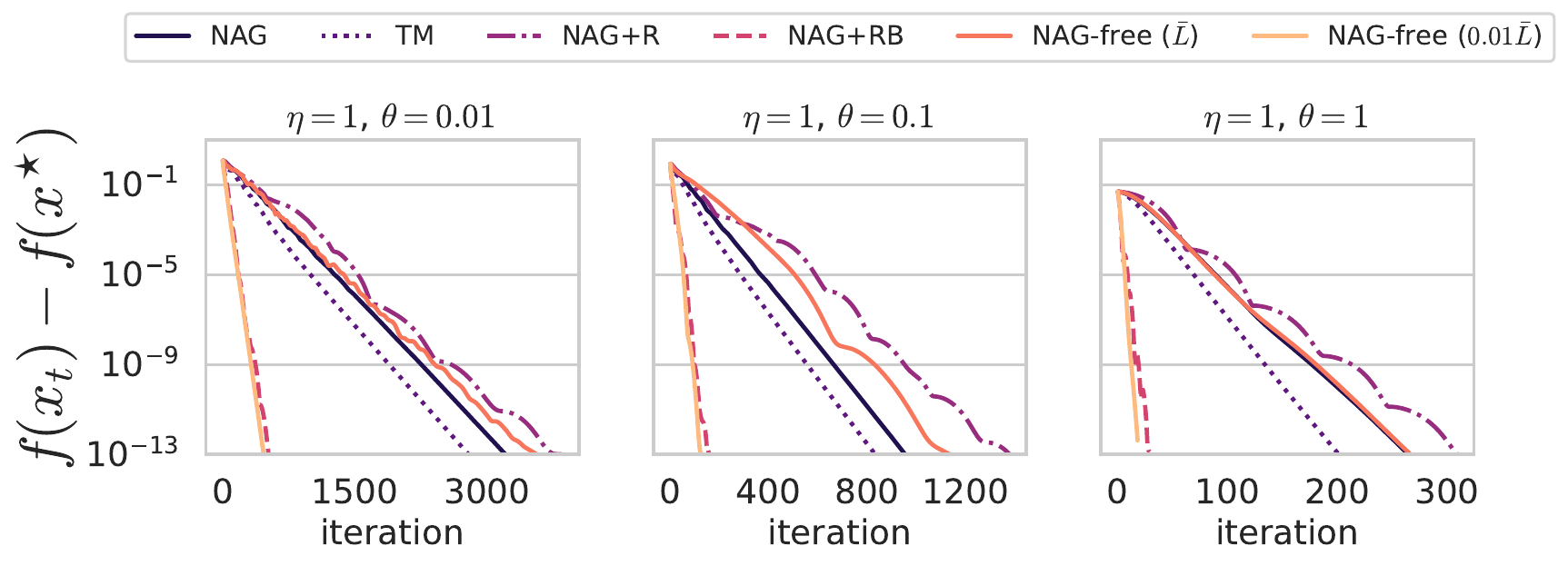}
    \caption{Suboptimality gap \(f(x_{t})-f(x^{\star})\) for log-sum-exp under different \((\eta,\theta)\) settings.}
    \label{app:ne:fig:log-sum-exp_summary_2}
\end{figure}

\setlength{\tabcolsep}{5pt} 
\begin{table}
    \centering
    \caption{Smoothing parameter \(\theta\), regularization parameter \(\eta\), and the estimates \(m_{\infty}\) and \(L_{\infty}\) held by different methods at the last iteration.
    Rows highlighted in gray indicate that \(m_{\infty} > \eta\).}
    \begin{tabular}{ccccc}
        \toprule
        \(\eta\) & \(\theta\) & \(m_{\infty} (\bar{L})\) & \(m_{\infty} (0.01\bar{L})\)\\
        \hline
        \num{1e-2} & \num{1e-2} & \num{1e-2} & \num{1e-2}
        \\
        \rowcolor{gray!30} \num{1e-2} & \num{1e-1} & \num{6.13e-2} & \num{5.92e-2}
        \\
        \rowcolor{gray!30} \num{1e-2} & \num{1e0} & \num{6.68e-2} & \num{6.57e-2}
        \\
        \num{1e-1} & \num{1e-2} & \num{6.67e-2} & \num{1e-1}
        \\
        \num{1e-1} & \num{1e-1} & \num{9.26e-2} & \num{8.94e-2}
        \\
        \rowcolor{gray!30} \num{1e-1} & \num{1e0} & \num{1.74e-1} & \num{1.72e-1}
        \\
        \num{1e0} & \num{1e-2} & \num{6.67e-1} & \num{1e0}
        \\
        \num{1e0} & \num{1e-1} & \num{7.48e-1} & \num{7.57e-1}
        \\
        \num{1e0} & \num{1e0} & \num{9.69e-1} & \num{9.65e-1}
        \\
        \bottomrule
    \end{tabular}
    \label{app:ne:tab:log-sum-exp}
\end{table}


\subsection{Triple momentum extension}
\label{app:ne:tm}

In \cref{conclusion}, we briefly discussed extending the triple momentum method (TM) with an \(m\) estimator, which is shown by \cref{app:ne:alg:tm-free}.

Also in \cref{conclusion}, we mentioned that even in the worst case represented by \textsc{a9a} in \cref{fig:logreg_tm_free}, when \(m\) can essentially be estimated offline, TM-free simply recovers an accurate estimate of \(m\) without compromising performance.
The exact values of the offline and TM-free estimates for this experiment were \(\eta=\num{4.83e-6}\) and \(m_{\infty}=\num{4.69e-6}\), respectively.
We note that as for NAG-free, the TM-free estimate falls in the interval \([\eta/\gamma,\eta]\), where \(\gamma=1.5\) was used.

\begin{algorithm2e}
    \caption{TM-free, an extension of TM that estimates the strong convexity parameter.}
    \label{app:ne:alg:tm-free}
    \SetAlgoLined
    \KwData{\(T>0, xi_{0}, L > 0, \gamma > 1, \gamma_{L} > 1\)}
    \KwResult{\(x_{T}\)}
    $\xi_{-1} \gets \xi_{0}$
    \tcp*{initialization}
    $y_{0} \gets \xi_{0}$
    \\
    $m_{0} \gets L$
    \\
    \For{$t=0,1,\ldots,T-1$}{
        $\rho_{t} \gets 1 - \sqrt{m_{t}/L}$
        \tcp*{update hyperparameters}
        $\alpha_{t} \gets (1+\rho_{t})/L$
        \\
        $\beta_{t} \gets \rho_{t}^{2}/(2-\rho_{t})$
        \\
        $\gamma_{t} \gets \rho_{t}^{2}/((1+\rho_{t})(2-\rho_{t}))$
        \\
        $\xi_{t+1} \gets (1+\beta_{t})\xi_{t} -\beta_{t}\xi_{t-1} - \alpha_{t}\nabla f(y_{t})$
        \tcp*{compute iterates}
        $y_{t+1} \gets (1+\gamma_{t})\xi_{t+1} -\gamma_{t}\xi_{t}$
        \\
        $x_{t+1} \gets (1+\delta_{t})\xi_{t+1} -\delta_{t}\xi_{t}$
        \\
        $c_{t+1} \gets \Vert \nabla f(y_{t+1}) - \nabla f(y_{t}) \Vert/\Vert y_{t+1} - y_{t} \Vert$\tcp*{estimate \(m\)}
        \eIf{$c_{t+1} < m_{t}$}{
            $m_{t+1} \gets \min(m_{t}/\gamma,c_{t+1})$\\
        }
        {
            $m_{t+1} \gets m_{t}$\\
        }
    }
\end{algorithm2e}

\end{document}